\newcommand{\Cr}[1]{\textbf{\textcolor{red}{{#1}}}}
\numberwithin{equation}{section}
\DeclareMathOperator{\Var}{Var}
\DeclareMathOperator*{\im}{Im}
\DeclareMathOperator*{\re}{Re}
\newcommand{\beq}{ \begin{equation} }
\newcommand{\eeq}{ \end{equation} }
\newcommand{\beqq}{ \begin{equation*} }
\newcommand{\eeqq}{ \end{equation*} }
\def \dd {\mathrm{d}}
\def \a {\alpha}
\def \b {\beta}
\def \g {\gamma}
\def \e {\varepsilon}
\def \d {\delta}
\def \la {\lambda}
\def \w {\omega}
\newcommand{\bR}{\mathbb{R}}
\newcommand{\bC}{\mathbb{C}}
\newcommand{\bE}{\mathbb{E}}
\newcommand{\bP}{\mathbb{P}}
\newcommand{\cF}{\mathcal{F}}
\def \cF {\mathcal{F}}
\def \EE {\mathbb{E}}
\def \PP {\mathbb{P}}
\def \RR {\mathbb{R}}
\def \CC {\mathbb{C}}
\def \cN {\mathcal{N}}
\def \cG {\mathcal{G}}
\def \gt {\tilde{\gamma}}
\def \tg {\tilde{\gamma}}
\def \hg {\hat{\gamma}}
\newcommand{\ii}{\mathrm{i}}
\def \hG {\widehat{G}}
\newcommand{\Gmu}{\widehat{G}}
\def \muone {\mu_1^{(1)}}
\def \mutwo {\mu_1^{(2)}}
\def \muf {\mu_1^{(1)}}
\def \mus {\mu_1^{(2)}}
\def \event {\mathcal{E}_\e}
\def \Fone {\mathcal{F}^{(1)}_\d}
\def \Ftwo {\mathcal{F}^{(2)}_K}
\def \Fthree {\mathcal{F}^{(3)}_{s,t}}
\def \Ffour {\mathcal{F}^{(4)}_{r,R}}
\def \Gs {\Gamma_s}
\def \bv {\mathbf{v}}
\newtheorem{theorem}{Theorem}[section]
\newtheorem{corollary}[theorem]{Corollary}
\newtheorem{lemma}[theorem]{Lemma}
\newtheorem{definition}[theorem]{Definition}
\theoremstyle{remark}
\newtheorem{remark}[theorem]{Remark}
\DeclareMathOperator{\TW}{TW}
\DeclareMathOperator{\MP}{MP}
\begin{document}
\title{Free energy of the bipartite spherical SK model at critical temperature}
\author{Elizabeth W. Collins-Woodfin\footnote{Department of Mathematics \& Statistics, McGill University,
Montreal, QC, H3A 0G4, Canada \newline email: \texttt{elizabeth.collins-woodfin@mail.mcgill.ca}} \and Han Gia Le\footnote{Department of Mathematics, University of Michigan,
Ann Arbor, MI, 48109, USA \newline email: \texttt{hanle@umich.edu}}}
	\date{\today}

	\maketitle

\begin{abstract}
The spherical Sherrington--Kirkpatrick (SSK) model and its bipartite analog both exhibit the phenomenon that their free energy fluctuations are asymptotically Gaussian at high temperature but asymptotically Tracy--Widom at low temperature.  This was proved in two papers by Baik and Lee, for all non-critical temperatures.  The case of critical temperature was recently computed for the SSK model in two separate papers, one by Landon and the other by Johnstone, Klochkov, Onatski, Pavlyshyn.  In the current paper, we derive the critical temperature result for the bipartite SSK model.  In particular, we find that the free energy fluctuations exhibit a transition when the temperature is in a window of size $n^{-1/3}\sqrt{\log n}$ around the critical temperature, the same window for the SSK model.  Within this transitional window, the asymptotic fluctuations of the free energy are the sum of independent Gaussian and Tracy--Widom random variables.
\end{abstract}

\tikzset
{decoration=
	{markings,
		=at position 0.5 with {\arrow{stealth}}
	},
	plain/.style={line width=0.8pt},
	arrow/.style={plain,postaction=decorate}
}

\section{Introduction}
The Sherrington--Kirkpatrick (SK) and spherical Sherrington--Kirkpatrick (SSK) models devised in the 1970s are two classical examples of mean-field spin models in which the magnetic behavior of $N$ particles, encoded in a spin vector $\boldsymbol{\sigma}$, is governed by their identically distributed random pairwise interactions.  The SK model has Ising spins $\boldsymbol{\sigma}\in\{-1,1\}^N$, and SSK is the continuous analog with $\boldsymbol{\sigma}\in\{\RR^N:\|\boldsymbol{\sigma}\|^2=N\}$.  For a detailed exposition on these models, we refer readers to the book by Panchenko \cite{PanchenkoSKBook}. One limitation of these models is that their mean-field structure, meaning that all pairs of particles interact according to the same rule.  With the aim of 
 reflecting inhomogeneities and community structures (e.g., in theoretical biology, social and neural networks),  scholars have developed various extensions beyond mean-field models.  

One extension is the multi-species model, in which the set of $N$ spins is partitioned into a fixed number of disjoint subsets or ``species" \cite{BCMT15}. The random interactions between spins are not identically distributed as in the SK and SSK models, but rather have variance depending on the species structure. For a $k$-species model,  the covariance structure can be encoded in a $k\times k$ matrix $\Delta^2$, where $\Delta^2_{s,t}$ denotes the variance of the random interaction between a spin in species $s$ and a spin in species $t$.  In bipartite models, $k=2$ and $\Delta^2=\left( \begin{smallmatrix} 0 & 1 \\ 1 & 0 \end{smallmatrix} \right)$, meaning that interactions are only between spins in different species.  Bipartite models have important applications in biology and neural networks \cite{bipartitebiology},
\cite{bipartiteneuralnet1}, \cite{bipartiteneuralnet2}.
Another multi-species model (with applications in artificial intelligence) is the deep Boltzmann machine, where the species or ``layers" are ordered, and interactions are only between spins in adjacent layers \cite{SH09, ACM20, ACM21a, ACM21b, Genovese23}.

Another direction of generalizing the SK and SSK models is to allow interactions, not only between pairs, but among groups of spins.  A $p$-spin model has interactions among groups of $p$ spins.  Likewise, a $(p,q)$-spin bipartite model, has interactions between a group of $p$ spins from one species and a group of $q$ spins from the other species. The case of spherical spins for this model was studied by Auffinger and Chen \cite{bipartiteAuffingerChen}, where they obtained a minimization formula for the limiting free energy at sufficiently high temperature. 

The current paper focuses on the bipartite $(1,1)$-spin SSK model. 
The set-up for this model is as follows.  Given two positive integers $n,m$, we define spin variables
\[
\boldsymbol{\sigma}=(\sigma_1,\sigma_2,...,\sigma_n)\in S_{n-1},\quad\boldsymbol{\tau}=(\tau_1,\tau_2,...,\tau_m)\in S_{m-1},
\]
where
\[
S_{n-1}=\{\mathbf{u}\in\RR^n: \|\mathbf{u}\|^2=n\}.
\]
The Hamiltonian for the model is given by
\[
H(\boldsymbol\sigma,\boldsymbol\tau)=\frac{1}{\sqrt{n+m}}\sum_{i=1}^n\sum_{j=1}^m J_{ij}\sigma_i\tau_j
\]
where $J_{ij}$ are independent, standard Gaussian random variables.  The Gibbs measure and the free energy for this model at inverse temperature $\b>0$ are 
\beq\label{def:Fn}
p(\boldsymbol\sigma,\boldsymbol\tau)=\frac{1}{Z_{n,m}}e^{\beta H(\boldsymbol\sigma,\boldsymbol\tau)},\quad
F_{n,m}(\b)=\frac{1}{n+m}\log Z_{n,m},
\eeq
respectively, where $Z_{m,n}$ is a normalization factor (i.e. partition function), 
\beq\label{eq:Znm_apriori}
Z_{n,m}=\int_{S_{m-1}}\int_{S_{n-1}}e^{\beta H(\boldsymbol\sigma,\boldsymbol\tau)}\dd\omega_{n}\dd\omega_{m},
\eeq
and $d\omega_{n}$ is the uniform probability measure on $S_{n-1}$.  

\subsection{Background and related literature}

The free energy of SK and SSK has been well-studied, although more is known in the spherical setting.  The limiting free energy was first conjectured by Parisi for SK \cite{Parisi80} and Crisanti--Sommers for SSK \cite{crisanti1992sphericalp} and both conjectures were rigorously proved by Talagrand \cite{TalagrandSK,TalagrandSSK}.  The fluctuations of the SK model are only known at high temperature \cite{ALR87,Banerjee_2019,CN95,FZ87}, but more is known for the spherical model, where additional analytic techniques are available.  In 2016, Baik and Lee analyzed the fluctuations of the SSK free energy at non-critical temperature and found that the fluctuations at high temperature are asymptotically Gaussian while those at low temperature are asymptotically Tracy--Widom \cite{BaikLeeSSK}. The fluctuations at the critical temperature was left open. 

The fluctuations at critical temperature of SSK free energy were studied by Landon \cite{Landon_crit} and by Johnstone, Klochkov, Onatski and Pavlyshyn \cite{JKOP2}, independently. Both papers showed that the critical scaling for the inverse temperature is $\beta=\b_c+bn^{-1/3}\sqrt{\log n}$.  Landon proved that, for fixed $b\leq0$ and for $b\to 0$, the fluctuations are Gaussian while, for $b\to+\infty$ at any rate, the fluctuations are Tracy--Widom. For fixed $b>0$, Landon showed tightness but did not obtain the limiting distribution. On the other hand, Johnstone et al. were able to compute fluctuations for all fixed $b$.  Their result for $b\leq0$ agrees with that of Landon and, for $b>0$, they showed that the fluctuations are a sum of independent Gaussian and Tracy--Widom random variables.
 
Departure from mean-field structure generally leads to more challenging analysis. While the problem of limiting free energy is solved for general one-species mixed $p$-spin SK and SSK models \cite{Parisi80,TalagrandSK,TalagrandSSK,Panchenko14,Chen13}, limiting results remain incomplete for the multi-species and $(p,q)$-spin bipartite models. For the multi-species SK model, limiting free energy is only verified under the assumption of positive-definite  $\Delta^2$ (Barra et al. \cite{BCMT15} proposed a Parisi-type formula and proved an upper bound, and Panchenko \cite{Panchenko15} proved a matching lower bound). For general $\Delta^2$, we only have a lower bound \cite{Panchenko15}. The bipartite model, one of the most natural multi-species examples, belongs to the indefinite $\Delta^2$ case, and is still open in the case of Ising spins (conjecture on the limiting free energy was made \cite{BGGPT14, BGG11}).  When it comes to fluctuations, a central limit theorem (CLT) for the free energy of the two-species SK model for general $\Delta^2$ was obtained in high temperature by \cite{Liu21}. 

For the bipartite SSK model, more is known. Baik and Lee \cite{BaikLeeBipartite} obtained both the limit and the asymptotic fluctuations of the free energy, at all non-critical temperatures. More specifically, assuming $n,m\to\infty$ with $n/m=\lambda+O(n^{-1-\delta})$ for some $\lambda,\delta>0$, they provided explicit formulas for the first two terms in the asymptotic expansion of the free energy for $\b\neq\b_c$, where the critical inverse temperature $\b_c$ is equal to $\sqrt{1+\la}/\la^{1/4}$. The formulas imply that fluctuation is Gaussian with order $n^{-1}$ for $\b<\b_c$ (high temperature), and is GOE Tracy--Widom of order $n^{-2/3}$ for $\b>\b_c$ (low temperature). 

See \cite{bipartiteAuffingerChen, DW21,BS22, Subag23, Subag23_crittemp} for high temperature results for more general Ising or spherical spin models. 



\subsection{Main theorem}

The goal of this paper is to compute the fluctuations of the free energy in a transitional window around the critical temperature for the bipartite (1,1)-spin SSK model. In particular, this includes detailed knowledge of the free energy at the critical temperature, providing another result on critical temperature among spin glass models, in addition to the independent results of Landon \cite{Landon_crit} and of Johnstone et al. \cite{JKOP2}.  

We state our main result in the following theorem.

\begin{theorem}\label{thm:main} Let $F_{n,m}(\b)$ denote the free energy of a bipartite SSK spin glass, given by \eqref{def:Fn}, where the species sizes $n,m$ satisfy  $n/m=\la+O(n^{-1})$, for some constant $\la\in(0,1]$, as $n,m\to\infty$.  When the inverse temperature is at the critical scaling, namely $\b=\b_c+bn^{-1/3}\sqrt{\log n}$ for fixed $b$ and $\b_c:=\sqrt{1+\la}/\la^{1/4}$, the limiting distribution of the free energy is given by the formula below and this convergence holds in distribution.
\beq
\frac{n+m}{\sqrt{\frac16 \log n}}\left(F_{n,m}(\b)-F(\b)+\frac1{12}\frac{\log n}{n+m}\right)\rightarrow \mathcal{N}(0,1)+\frac{\sqrt{6}(1+\la)^{\frac12}b_+}{\la^{\frac34}(1+\sqrt{\la})^{\frac23}}\TW_1
\eeq
where $\TW_1$ denotes the real Tracy--Widom distribution that is independent from the standard normal $\cN(0,1)$ and $b_+$ denotes the positive part of $b$.  The limiting free energy is given by
\beq
F(\b)=\begin{cases}
\frac{\beta^2}{2\beta_c^4}& \quad \text{for }\beta<\beta_c\\
f_\la+\frac{\la}{1+\la}A\left((1+\sqrt{\la})^{2},\frac{\b}{\sqrt{\la(1+\la)}}\right)-\frac12\log\b-\frac{\la}{2(1+\la)}C_\la & \quad \text{for }\beta\geq\beta_c
\end{cases}
\eeq
where 
\beq\label{eq:theoremquantities}\begin{split}
f_\la&=-\frac12+\frac{\la-1}{2(\la+1)}\log2+\frac{\la-1}{4(\la+1)}\log\la+\frac14\log(1+\la),\\
A(x,B)&=\sqrt{\alpha^2+xB^2}-\alpha\log\left(\frac{\alpha+\sqrt{\alpha^2+xB^2}}{2B}\right),\\
C_\la&=(1-\la^{-1})\log(1+\la^{1/2})+\log(\la^{1/2})+\la^{-1/2}.
\end{split}\eeq
\end{theorem}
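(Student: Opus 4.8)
We sketch the strategy, which combines the two-stage approach of Baik--Lee \cite{BaikLeeBipartite} for the non-critical bipartite model with the critical-window analysis of Landon \cite{Landon_crit} and of Johnstone, Klochkov, Onatski and Pavlyshyn \cite{JKOP2} for the SSK model. \textbf{Step 1 (reduction to a contour integral).} First I would write the uniform measure on each sphere as a Gaussian integral against a Fourier/Laplace representation of the norm constraint, carry out the Gaussian integration in $(\boldsymbol\sigma,\boldsymbol\tau)\in\RR^{n}\times\RR^{m}$, and thereby express $Z_{n,m}$ as a two-dimensional contour integral whose integrand is, up to constants, $\exp\big(n z_1+m z_2-\tfrac{m-n}{2}\log z_2-\tfrac12\sum_{i=1}^{n}\log(4 z_1 z_2-\beta^2\gamma_i)\big)$, where $\gamma_1\ge\cdots\ge\gamma_n>0$ are the nonzero eigenvalues of $\tfrac1{n+m}JJ^{\mathsf T}$. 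The empirical spectral distribution of the $\gamma_i$ converges to the Marchenko--Pastur law of ratio $\lambda$ rescaled by $\tfrac1{1+\lambda}$, with right edge $\gamma_+=\tfrac{(1+\sqrt\lambda)^2}{1+\lambda}$ and $\int(\gamma_+-\gamma)^{-1}\,d\mu_\lambda(\gamma)=\tfrac{1+\lambda}{\sqrt\lambda(1+\sqrt\lambda)}$, while by edge universality for sample-covariance matrices $\gamma_1=\gamma_++c_\lambda(n+m)^{-2/3}\TW_1+o((n+m)^{-2/3})$ for an explicit $c_\lambda$ containing a cube root (the hypothesis $n/m=\lambda+O(n^{-1})$ makes the finite-size correction to $\gamma_+$ negligible at all scales that follow). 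Performing the $z_2$-integral exactly (it is of Bessel type) and substituting $4z_1z_2=\beta^2 t$ then reduces $Z_{n,m}$ to a single contour integral of the same shape as in the SSK problem, $Z_{n,m}=C_{n,m}\int_\Gamma\exp\big((n+m)g_\beta(t)\big)\prod_{i=1}^n(t-\gamma_i)^{-1/2}\,dt$ with $g_\beta$ deterministic and smooth.

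\textbf{Step 2 (saddle point, $\beta_c$, and $F(\beta)$).} Next one would solve the limiting saddle equation $g_\beta'(t^*)=\tfrac{\lambda}{2(1+\lambda)}\int(t^*-\gamma)^{-1}\,d\mu_\lambda(\gamma)$: it has a root $t^*>\gamma_+$ precisely for $\beta<\beta_c$, with $t^*\downarrow\gamma_+$ as $\beta\uparrow\beta_c$, and substituting the edge value of the Stieltjes transform above identifies $\beta_c=\sqrt{1+\lambda}/\lambda^{1/4}$. Evaluating the exponent at the saddle, $g_\beta(t^*)-\tfrac{\lambda}{2(1+\lambda)}\int\log(t^*-\gamma)\,d\mu_\lambda(\gamma)$, and matching to the Baik--Lee parametrization, would give the stated $F(\beta)$ (with $f_\lambda$, the function $A$, and $C_\lambda$; the regime $\beta<\beta_c$ gives the quadratic $\beta^2/2\beta_c^4$). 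Crucially, in the window $\beta=\beta_c+b\,n^{-1/3}\sqrt{\log n}$ the square-root vanishing of $\mu_\lambda$ at $\gamma_+$ forces, for $b\le0$, $t^*(\beta)-\gamma_+\asymp b^2\,n^{-2/3}\log n$ --- a distance equal to $\log n$ times the edge spacing $(n+m)^{-2/3}$ --- which is the origin of all the $\log n$ factors below.

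\textbf{Step 3 (the Gaussian piece).} Expanding $\log Z_{n,m}$ around the saddle produces $(n+m)F(\beta)$ plus a Laplace correction plus the contribution of $\prod(t^*-\gamma_i)^{-1/2}$, whose dominant fluctuating part is the centered linear eigenvalue statistic of the test function $\gamma\mapsto\log(t^*-\gamma)$, which has a logarithmic singularity at $\gamma_+$ sitting at distance $\asymp(\log n)(n+m)^{-2/3}$ from the spectral edge. Splitting the sum at a slowly growing cutoff and invoking a quantitative CLT for sample-covariance linear statistics that stays uniform in this near-edge regime (as developed for SSK in \cite{Landon_crit,JKOP2} via loop equations and rigidity; carrying the aspect ratio $\lambda$ along adds only bookkeeping), I would show this statistic is asymptotically Gaussian with variance $\tfrac14\operatorname{Var}\big(\sum_i\log(t^*-\gamma_i)\big)\sim\tfrac16\log n$ (the divergence coming from Chebyshev coefficients decaying like $q^k/k$ with $1-q\asymp\sqrt{t^*-\gamma_+}$), and that its mean contributes the deterministic shift $-\tfrac1{12}\log n$ to $\log Z_{n,m}$.

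\textbf{Step 4 (the Tracy--Widom piece, independence, assembly, and the main obstacle).} For $b>0$ the saddle can no longer be placed in $(\gamma_+,\infty)$, so one would deform the contour against the spectrum; rescaling $t-\gamma_+$ and $\gamma_i-\gamma_+$ at scale $(n+m)^{-2/3}$ and using $\mathrm{Airy}_1$ edge statistics, the near-edge part of the integral localizes at the top eigenvalue and, in the $b^2\log n\to\infty$ regime, produces a contribution $\propto(\beta-\beta_c)_+(\gamma_1-\gamma_+)(n+m)\propto b_+\sqrt{\log n}\,\TW_1$ to $\log Z_{n,m}$; tracking the constants ($c_\lambda$ and the $\beta\mapsto t$ Jacobian at criticality) would produce the coefficient $\tfrac{\sqrt6(1+\lambda)^{1/2}b_+}{\lambda^{3/4}(1+\sqrt\lambda)^{2/3}}$, and for $b\le0$ the term vanishes. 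Since the mesoscopic eigenvalue fluctuations responsible for Step 3 are asymptotically independent of the extreme eigenvalue $\gamma_1$ responsible for $\TW_1$ (the standard bulk/edge decoupling, which a resolvent comparison as in \cite{JKOP2} makes rigorous), combining the pieces gives $\log Z_{n,m}=(n+m)F(\beta)-\tfrac1{12}\log n+\sqrt{\tfrac16\log n}\big(\mathcal N(0,1)+\tfrac{\sqrt6(1+\lambda)^{1/2}b_+}{\lambda^{3/4}(1+\sqrt\lambda)^{2/3}}\TW_1\big)+o(\sqrt{\log n})$ with $\mathcal N(0,1)\perp\TW_1$; dividing by $n+m$ and rearranging yields the theorem. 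I expect the hard part to be precisely this Step 3--Step 4 analysis: establishing the CLT for the near-singular linear statistic with the \emph{exact} variance $\tfrac16\log n$ and mean shift $-\tfrac1{12}\log n$ --- which needs a sample-covariance linear-statistics CLT valid uniformly as the test function's singularity approaches the spectral edge at the critical rate --- together with a clean extraction of the Tracy--Widom term, its constant, and its asymptotic independence from the Gaussian. Steps 1--2 are essentially the bipartite reduction of \cite{BaikLeeBipartite} plus a free-energy computation; the genuinely new work is porting the SSK critical-window arguments of \cite{Landon_crit,JKOP2} to the Marchenko--Pastur edge with the extra aspect-ratio parameter $\lambda$.
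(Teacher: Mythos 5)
Your skeleton---contour representation, saddle-point analysis, a near-edge CLT for the log-linear eigenvalue statistic producing the Gaussian, the top eigenvalue producing the Tracy--Widom term, and bulk/edge decoupling for independence---is the right one and matches the paper's architecture. The route differs at Step 1: the paper never collapses the double contour integral to a single one. Your Bessel reduction is plausible in principle (after the change of variables $u=4z_1z_2$ the inner integral is of Schl\"afli/Bessel type, and the function $A(x,B)$ in the statement is exactly the Debye asymptotic of the log of such a Bessel function), but as written the claim that the $z_2$-integral ``is of Bessel type'' is false before that change of variables, since the integrand carries $n$ distinct branch points through $\prod_i(4z_1z_2-\beta^2\gamma_i)^{-1/2}$; making the reduction rigorous would require deforming the $z_1$-dependent $u$-contours to a common contour and controlling uniform asymptotics of a Bessel function of order $\Theta(n)$ along complex rays to accuracy $o(\sqrt{\log n})$ in the exponent. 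The paper instead runs a genuinely two-dimensional steepest descent at high temperature (the Hessian discriminant $D\asymp n^{1/3}(\log n)^{-1/2}$ supplies part of the $-\frac1{12}\log n$ shift, which you attribute entirely to the mean of the linear statistic) and, at low temperature, a $y_1$-dependent deformation of the $z_2$-contour followed by a keyhole argument around the singularity $4z_1z_2=\mu_1$; the two-dimensionality is one of the paper's stated technical contributions, not an inconvenience that integrates away.

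The genuine gaps are in Steps 3--4, where you invoke ingredients that the references you lean on do not supply. First, the Gaussian term requires a CLT for $\sum_i\log|\gamma-\mu_i|$ for the \emph{Laguerre} orthogonal ensemble with $\gamma-d_+$ as small as $\Theta(n^{-2/3})$, with the exact centering $C_\la n-\tfrac16\log n+\cdots$ and variance $\tfrac23\log n$. This is not ``bookkeeping'' on top of the GOE results of \cite{Landon_crit,JKOP2}: it did not exist for Laguerre ensembles, and the authors prove it in the separate companion paper \cite{CWL_CLT} via a determinantal analysis for LUE, the Forrester--Rains LOE/LUE interrelation, and a tridiagonal recursion. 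Second, the asymptotic independence is not obtained by a resolvent comparison or by citing ``standard bulk/edge decoupling'': the paper proves it by showing, through the Dumitriu--Edelman tridiagonal form, that the Gaussian statistic is asymptotically a function of the top-left minor of size $n-2n^{1/3}(\log n)^3$ while $\mu_1$ is determined up to $O(n^{-D})$ by the complementary bottom-right minor; the required exponential localization of the principal eigenvector is a result the authors explicitly note they could not find proved in the literature. Without supplying these two inputs, the proposal correctly predicts the shape of the answer but does not close.
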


\subsection{Overview of the proof methods}

One valuable tool in the analysis of the free energy for SSK and bipartite SSK models is a contour integral representation for the partition function ($Z_{n,m}$ in our model). A priori, the partition function of SSK is a surface integral on a high dimensional sphere (or two spheres in the bipartite case).  However, this can be rewritten in terms of contour integrals in the complex plane, which are significantly easier to analyze.  The contour integral representation for the SSK partition function was first observed by Kosterlitz, Thouless, and Jones \cite{kosterlitz1976spherical}.  The analogous representation for the spherical bipartite model, which we use in the current paper, was derived by Baik and Lee \cite{BaikLeeBipartite}.

Armed with this contour integral representation, our analysis can be broken into two broad stages: (1) use steepest descent analysis to obtain an asymptotic expansion for the free energy and (2) analyze the limiting fluctuations using tools from random matrix theory.  This general procedure has been followed in several recent papers on spherical spin glasses, including \cite{Landon_crit} and \cite{JKOP2} in their analysis of SSK at critical temperature.  While much of our analysis is inspired by the methods in these two papers, the bipartite setting introduces certain technical challenges beyond those that arise for unipartite SSK.

One challenge in the bipartite setting is that the representation for $Z_{n,m}$ is a double contour integral, rather than the single integral that arises for SSK.  This makes the process of contour deformation and steepest descent analysis more delicate, particularly on the low temperature side of the critical threshold, where the  contour passes very close to the (random) singularities of the integrand.  Another challenge in the bipartite setting is that the underlying random matrix is a Laguerre Orthogonal Ensemble (LOE) rather than the Gaussian Orthogonal Ensemble (GOE) that appears for SSK (more background on random matrices is in Section \ref{sec:prelim}).  While these ensembles have many similarities, certain analyses are more complicated for LOE.  

From the steepest descent analysis, we obtain an asymptotic expansion for the free energy near the critical temperature, which depends on a sum of the form $\sum_{i=1}^n\log(\g-\mu_i)$.  This is a logarithmic linear statistic of the eigenvalues $\{\mu_i\}_{i=1}^n$ of LOE.   The CLT for this quantity is well-known in random matrix theory in the case where $\g-d_+>c$ for some constant $c$ and $d_+$ being the upper edge of the matrix spectrum (see, e.g., \cite{BaiSilverstein, Lytova09, BW10}). However, this standard CLT for linear eigenvalue statistics does not address the case where $\g$ approaches $d_+$ as $n\to\infty$, which is precisely the scenario that arises when analyzing the free energy at critical temperature.  Thus, we need an ``edge CLT" to treat the case where $\g\to d_+$.  A similar challenge arises for the SSK model at critical temperature, where the log linear statistic depends on eigenvalues of GOE.  The edge CLT for this statistic in the GOE case can be found in \cite{lambertpaquette,JKOP1}, and these works provide a necessary ingredient for the analysis of SSK free energy at critical temperature.

When we began the current project, an analogous edge CLT for LOE did not exist in the literature.  To fill this gap, we proved the following theorem in a separate paper \cite{CWL_CLT}.

\begin{theorem}[Collins-Woodfin, Le \cite{CWL_CLT}]\label{thm:CLT}
Let $M_{n,m}$ be an LOE matrix with $n,m,\la,C_\la,d_+$ as above. Let $\gamma=d_++\sigma_n n^{-2/3}$ with $-\tau< \sigma_n\ll (\log n)^2$ for some $\tau>0$.  Then,
\beq
\frac{\sum_{i=1}^n\log|\g-\mu_i|-C_\la n - \frac{1}{\la^{1/2}(1+\la^{1/2})}\sigma_n n^{1/3}
+\frac{2}{3\la^{3/4}(1+\la^{1/2})^2}\sigma_n^{3/2} +\frac16\log n}{\sqrt{\frac23 \log n}}\to \mathcal{N}(0,1).
\eeq
\end{theorem}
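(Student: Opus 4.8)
The plan is to adapt to the Laguerre ensemble the strategy developed for the GOE edge CLT by Lambert--Paquette \cite{lambertpaquette} and by Johnstone et al.\ \cite{JKOP1}. Write $Y_n:=\sum_{i=1}^n\log|\g-\mu_i|=\log|\det(\g-M_{n,m})|$, and for $\eta>0$ set $L_n(\g+\ii\eta):=\tfrac12\sum_{i=1}^n\log\!\big((\g-\mu_i)^2+\eta^2\big)$ and $m_n(z):=\tfrac1n\Tr(M_{n,m}-z)^{-1}$. Since $\partial_\eta L_n(\g+\ii\eta)=n\,\im\, m_n(\g+\ii\eta)$, integrating in $\eta$ gives the exact identity
\beq
Y_n=L_n(\g+\ii T)-\int_0^{T}n\,\im\, m_n(\g+\ii\eta)\,\dd\eta
\eeq
for any fixed $T>0$. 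Fixing a crossover scale $\eta_0=n^{-2/3}(\log n)^{C}$ and splitting the integral at $\eta_0$ decomposes $Y_n$ into a \emph{macroscopic} term $L_n(\g+\ii T)$, a \emph{mesoscopic} term $\int_{\eta_0}^{T}$, and a \emph{microscopic} term $\int_0^{\eta_0}$, which I would treat separately.

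The macroscopic term is a linear eigenvalue statistic with a fixed, smooth, bounded test function, so the classical linear--statistics CLT for LOE (\cite{BaiSilverstein,Lytova09,BW10}) gives $L_n(\g+\ii T)=n\,\ell_\la(T)+O_\P(1)$: its fluctuation is $o_\P(\sqrt{\log n})$ and it only feeds the deterministic terms. For the microscopic term, rescaling $\mu_i=d_++n^{-2/3}\lambda_i$ and $\eta=n^{-2/3}s$ turns it into $\int_0^{s_0}\sum_i s\big((\sigma_n-\lambda_i)^2+s^2\big)^{-1}\dd s$ with $s_0=(\log n)^{C}$, where the near--edge $\lambda_i$ are governed by the Airy$_1$ point process. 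Using edge rigidity and edge universality for LOE together with level repulsion of the Airy$_1$ process — which is exactly where the hypothesis $-\tau<\sigma_n$ is used, keeping $\sum_i\log|\sigma_n-\lambda_i|$ under control — and the logarithmic number variance of that process (with $s_0$ growing only polylogarithmically), one shows this term is $o_\P(\sqrt{\log n})$ after centering, while its mean, merged with the deterministic part of the mesoscopic term across $\eta_0$, produces the remaining deterministic contributions, in particular the $\tfrac16\log n$ term (which is $\tfrac14$ of the variance coefficient $\tfrac23$, the mean shift one expects for a critically scaled log--correlated field).

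The heart of the argument is the mesoscopic term. Decompose $n\,\im\, m_n=n\,\im\, m_{\MP}+n\,\im(m_n-\E m_n)+n\,\im(\E m_n-m_{\MP})$, where $m_{\MP}$ is the Marchenko--Pastur Stieltjes transform. Integrating $n\,\im\, m_{\MP}$ over $[\eta_0,T]$ and using the square--root vanishing of the MP density at $d_+$, with $\g=d_++\sigma_n n^{-2/3}$, produces (uniformly for $-\tau<\sigma_n\ll(\log n)^2$) exactly the deterministic terms $C_\la n$, $\sigma_n n^{1/3}/(\la^{1/2}(1+\la^{1/2}))$, and $-\tfrac23\sigma_n^{3/2}/(\la^{3/4}(1+\la^{1/2})^2)$; the term $n\,\im(\E m_n-m_{\MP})$ is controlled by the optimal edge local law for sample covariance matrices and is of lower order. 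The key probabilistic input is the convergence in distribution
\beq
\frac{1}{\sqrt{\tfrac23\log n}}\int_{\eta_0}^{T}n\,\im\big(m_n-\E m_n\big)(\g+\ii\eta)\,\dd\eta\ \longrightarrow\ \cN(0,1).
\eeq
This I would obtain from a mesoscopic CLT for the LOE resolvent valid uniformly down to the edge scale $n^{-2/3}$: the centered trace $\Tr(M_{n,m}-z)^{-1}-\E\Tr(\cdot)$ is asymptotically Gaussian with a covariance of the shape $\partial_{z_1}\partial_{z_2}\log\Phi_\la(z_1,z_2)$ for an explicit $\Phi_\la$, which at the common base point $\g\approx d_+$ with imaginary heights $\eta_1,\eta_2$ becomes log--correlated. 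Upon integration, the accumulated variance grows at rate $1/\beta$ per unit of $\log(1/\eta)$ (with $\beta$ the symmetry index, $\beta=1$ for LOE); since a soft edge only offers the $\tfrac23\log n$ worth of scales lying between $n^{-2/3}$ and $O(1)$, the total variance is $\tfrac23\log n$ in the real case (it would be $\tfrac13\log n$ in the complex case). The Gaussianity itself would come from a cumulant/loop--equation expansion of $\E\prod_k\Tr(M_{n,m}-z_k)^{-1}$; because the underlying data matrix for LOE has Gaussian entries this expansion is clean, but one must carry the extra $\beta=1$ contributions and the Laguerre--specific geometry absent from the GUE/GOE computations.

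Assembling the three pieces, centering by the explicit deterministic terms, and dividing by $\sqrt{\tfrac23\log n}$ yields the theorem. I expect the main obstacle to be precisely the mesoscopic resolvent CLT with the \emph{sharp} variance constant, established uniformly for base points approaching the soft edge $d_+$ and over the whole range $[\eta_0,T]$ with $\eta_0\approx n^{-2/3}$: existing LOE linear--statistics CLTs are stated only for fixed test functions bounded away from the spectrum, so they must be pushed to mesoscopic scales and to the edge; one has to treat the square--root singularity of the MP law (including the case $\la=1$, where the upper edge nonetheless remains soft), keep every estimate uniform in $\sigma_n$ over the stated window, and match the mesoscopic and microscopic mean contributions finely enough to extract the $\tfrac16\log n$. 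Edge rigidity and Airy$_1$ universality for LOE are the essential auxiliary inputs.
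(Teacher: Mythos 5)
Your proposal takes a genuinely different route from the one actually used in \cite{CWL_CLT} (where this theorem is proved; the present paper only quotes it). Despite your opening sentence, the proof in \cite{CWL_CLT} --- like Lambert--Paquette's treatment of the Gaussian $\beta$-ensemble edge --- does not go through resolvents at all: it works in the Dumitriu--Edelman tridiagonal model, where $\sum_i\log|\g-\mu_i|=\log|\det(\g m-BB^T)|-n\log m$ is computed from a three-term recurrence over the principal minors. After rescaling by the characteristic roots $\rho_i^{\pm}$ of that recurrence (recalled in \eqref{eq:rho_def}), the log-determinant becomes $C_\la n-\sum_{i\geq3}L_i-\tfrac16\log n+C_1\sigma_nn^{1/3}-C_2\sigma_n^{3/2}+o(\sqrt{\log n})$ with $L_i=\xi_i+\omega_iL_{i-1}$ and $\xi_i$ explicit functions of independent $\chi^2$ entries (see \eqref{eq:sum_intermsof_Li}--\eqref{eq:xi_def}); the Gaussian limit is then a CLT for the weighted sum $\sum_i g_{i+1}X_i$ of independent terms, and the deterministic constants, including the $-\tfrac16\log n$, fall out of the deterministic part of the recurrence rather than from a mean computation for a mesoscopic field. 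Your three-scale decomposition $Y_n=L_n(\g+\ii T)-\int_0^Tn\im m_n$ is instead in the spirit of resolvent-based log-correlated-field arguments. The tridiagonal route buys something this paper needs later: the representation of $T_{1n}$ as a sum of independent terms indexed by minors is exactly what Section \ref{sec:independence} reuses to localize the Gaussian fluctuation on the top-left block and prove independence from the Tracy--Widom term; a resolvent proof would not hand you that structure. Your route would, in exchange, be more robust outside the Gaussian/Laguerre setting where no tridiagonal model exists.

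As a proof, however, your write-up has a genuine gap at its center: the mesoscopic CLT for $\int_{\eta_0}^{T}n\im(m_n-\E m_n)(\g+\ii\eta)\,\dd\eta$ with the sharp variance $\tfrac23\log n$, valid for base points within $O((\log n)^2n^{-2/3})$ of the soft edge and uniformly down to $\eta_0=n^{-2/3}(\log n)^{C}$, is essentially the entire content of the theorem and is not available off the shelf --- the LOE linear-statistics CLTs you cite (\cite{BaiSilverstein,Lytova09,BW10}) require test functions bounded away from the spectrum. You correctly flag this as the main obstacle, but you assert it rather than prove it, and you likewise defer the matching of the mesoscopic and microscopic mean contributions that must produce the exact constant $\tfrac16$ and the exact coefficients of $\sigma_nn^{1/3}$ and $\sigma_n^{3/2}$ uniformly over $-\tau<\sigma_n\ll(\log n)^2$. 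The skeleton (the integration-by-parts identity, the scale separation, the variance bookkeeping giving $\tfrac{2}{3\beta}\log n$) is sound, but as written the argument presupposes its hardest ingredient.
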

The above result is essential in proving Theorem \ref{thm:main} as it is the source of the Gaussian term in the limiting distribution.

The last step of our proof is to show the asymptotic independence of the Gaussian and Tracy--Widom terms in the limiting distribution.  This involves a recurrence on the entries of the tridiagonal representation of LOE.  In the course of this analysis, we prove a result that may be of independent interest, namely that the largest eigenvalue of an $n\times n$ LOE matrix depends (asymptotically) on a minor of size $n^{1/3}\log^3n$.  This result is well known numerically (e.g. \cite{Edelman2013}), but we have not found an explicit proof of it in the literature.

\subsection{Organization}
In Section \ref{sec:prelim}, we provide a more detailed set-up of the problem along with various probability, spin glass, and random matrix theory results that will be used throughout the paper.  Sections \ref{sec:hightemp} and \ref{sec:lowtemp} contain our analysis of the free energy for $\beta=\beta_c+bn^{-1/3}\sqrt{\log n}$ in the cases of $b<0$ (high-critical temperature) and $b>0$ (low-critical temperature) respectively.  The case of $b=0$ is also addressed in Section \ref{sec:lowtemp}.  Finally, in Section \ref{sec:independence}, we prove the asymptotic independence of the Gaussian and Tracy--Widom terms in the main theorem.  Appendices \ref{sec:appendix} and \ref{sec:appendix2} provide proofs of some technical lemmas from Sections \ref{sec:prelim} and \ref{sec:independence} respectively.

\section{Set-up and preliminaries}\label{sec:prelim}
\subsection{Preliminaries for bipartite SSK model}

\subsubsection*{Double contour integral representation of free energy}

One of the key tools that enables us to precisely calculate the free energy and its fluctuations is a contour integral representation of the partition function.  A priori, $Z_{n,m}$ is given by the surface integral in \eqref{eq:Znm_apriori}.  The contour integral representation of $Z_{n,m}$ was derived by Baik and Lee \cite{BaikLeeBipartite}.  
For the bipartite model, we assume, without loss of generality, that $n\leq m$.  We use $S^{n-1}$ to denote the unit $n$-sphere (as opposed to $S_{n-1}$, which denotes the $n$-sphere of radius $\sqrt{n}$).  Then the partition function can be written as \cite{BaikLeeBipartite}
\beq
Z_{n,m}(\beta)=\frac{2^n}{|S^{m-1}||S^{n-1}|}\left(\frac{\pi^2(n+m)}{m^2n\b^2}\right)^{\frac{n+m-4}{4}}Q(n,\alpha_n,B_n)
\eeq
where
\beq
Q_n:=Q(n,\alpha_n,B_n)=-\int_{\gamma_1-\ii\infty}^{\gamma_1+\ii\infty}\int_{\gamma_2-\ii\infty}^{\gamma_2+\ii\infty}e^{nG(z_1,z_2)}\dd z_2\dd z_1
\eeq
and $G(z_1,z_2)$ is a random function depending on the eigenvalues $\mu_1\geq\mu_2\geq\cdots\geq\mu_n$ of $\frac1m JJ^T$.  The parameters $\gamma_1,\gamma_2$ can be any positive real numbers satisfying $4\gamma_1\gamma_2>\mu_1$.  The function $G$ is defined as
\beq\label{eq:G_def}
G(z_1,z_2):=B_n(z_1+z_2)-\frac{1}{2n}\sum_{i=1}^n\log(4z_1z_2-\mu_i)-\a_n\log z_1
\eeq
where
\beq
\a_n:=\frac{m-n}{2n}, \quad B_n:=\frac{m}{\sqrt{n(n+m)}}\b
\eeq
Using this contour integral representation of $Z_{n,m}(\b)$, the free energy of the bipartite SSK is 
\begin{equation}
F_{n,m}(\beta) = \frac{1}{n+m}\log Q(n,\alpha_n, B_n) + \frac{1}{n+m}\log\left(\frac{2^n}{|S^{n-1}||S^{m-1}|}\left(\frac{\pi^2(n+m)}{m^2n\beta^2}\right)^{\frac{n+m}{4}-1}\right).
\end{equation}		
By direct computation, the second term of the right hand side is 
$f_\la-\frac12\log\b+\frac{\la}{1+\la}\frac{\log n}{n}+O(n^{-1})$ as $n\to \infty$,
where $f_\la$ is as defined in \eqref{eq:theoremquantities}.
We obtain
\begin{equation}\label{eqn:Fn}
F_{n,m}(\beta) = \frac{1}{n+m}\log Q(n,\alpha_n, B_n) + f_\la-\frac12\log\b+\frac{\la}{1+\la}\frac{\log n}{n}+O(n^{-1})
\end{equation}
so the computation of the free energy boils down to computing the integral $Q_n$.  In order to compute this integral via steepest descent analysis, one needs to find a critical point of $G(z_1,z_2)$.  Baik and Lee show that there exists a critical point $(z_1,z_2)$ such that both coordinates are positive real and $4z_1z_2>\mu_1$.  We can choose the contours of the double integral to pass through this critical point, which has coordinates
\beq\label{eq:g1g2_def}
(\gamma_1,\gamma_2)=\left(\frac{\a_n+\sqrt{\a_n^2+\gamma B_n^2}}{2B_n},
\frac{-\a_n+\sqrt{\a_n^2+\gamma B_n^2}}{2B_n} \right)
\eeq
where $\gamma$ is the unique real number greater than $\mu_1$ satisfying
\beq\label{eq:gamma_implicit}
\frac1n\sum_{i=1}^n\frac{1}{\gamma-\mu_i}=\frac{B_n^2}{\a_n+\sqrt{\a_n^2+\gamma B^2}}.
\eeq
We see that $\gamma$ is implicitly a function of the eigenvalues of $\frac1m JJ^T$, which is a normalized Laguerre Orthogonal Ensemble (i.e. real Wishart matrix).  Later in this section we recount some important properties of this matrix ensemble that will be used throughout the paper.

\subsubsection*{Critical inverse temperature $\b_c$ and critical window}
 As stated above, the critical inverse temperature of the bipartite SSK model is $\b_c=\sqrt{1+\la}/\la^{1/4}$.  At this value of $\b$, one sees a transition in the behavior of the critical point $\gamma$.  We give a brief, heuristic description of the transition here and provide more details in the next two sections.

The equation \eqref{eq:gamma_implicit}, which is random and $n$-dependent, can be approximated by its deterministic, $n$-independent analog 
\beq\label{eq:gamma_implicit_infty}
\int_\RR\frac{1}{z-x}p_{\MP}(x)\dd(x)=\frac{B^2}{\a+\sqrt{\a^2+z B^2}}
\eeq
where $p_{\MP}$ denotes the Mar{\v{c}}enko--Pastur measure (see definition in equation \eqref{eq:def_MP} below) and $\a,B$ are given by
\beq
\a:= \frac{1-\la}{2\la},\quad B:=\frac{\b}{\sqrt{\la(1+\la)}}. 
\eeq
If the equation \eqref{eq:gamma_implicit_infty} is to be of any use, then it should provide a solution $z\in(d_+,\infty)$ that is close to the solution $\gamma$ of \eqref{eq:gamma_implicit} (with high probability and for all sufficiently large $n$).  Labeling the left and right sides of \eqref{eq:gamma_implicit_infty} as $L_\infty(z)$ and $R_\infty(z)$ respectively, Baik and Lee \cite{BaikLeeBipartite} observe that $\frac{L_\infty(z)}{R_\infty(z)}$ is a decreasing function of $z\in(d_+,\infty)$ with
\beq
\lim_{z\to\infty}\frac{L_\infty(z)}{R_\infty(z)}=0,\quad
\lim_{z\downarrow d_+}\frac{L_\infty(z)}{R_\infty(z)}=\frac{L_\infty(d_+)}{R_\infty(d_+)}.
\eeq
Hence, \eqref{eq:gamma_implicit_infty} has a solution $z\in(d_+,\infty)$ if and only if $L_\infty(d_+)>R_\infty(d_+)$.  We call this solution $\gt$.  By setting $L_\infty(d_+)=R_\infty(d_+)$ and solving for $\b$, one obtains the critical inverse temperature.  The implication of this is that, for $\b<\b_c$ (high temperature), $\g$ can be approximated by $\gt$, and this deterministic approximation turns out to be very accurate.  However, for $\b>\b_c$ (low temperature), \eqref{eq:gamma_implicit_infty} can't be used to approximate $\g$, since it has no solution in $(d_+,\infty)$.  Intuitively, this is due to the fact that, at low temperature, $\g$ is very close to the eigenvalue $\mu_1$ and may be above or below $d_+$, depending on the value of $\mu_1$.  A detailed analysis of $\g$ in these two cases is provided in Sections \ref{sec:hightemp} and \ref{sec:lowtemp}.

Finally, we comment on the scaling of the critical temperature window, $\b=\b_c+O(n^{-1/3}\sqrt{\log n}$).  One can conjecture this critical scaling from the theorem of Baik and Lee by matching the order of the variance of the free energy at high and low temperature.  For fixed $\b<\b_c$, the free energy has variance of order $\frac{1}{n^2 \log(\b_c-\b)}$ while, for fixed $\b>\b_c$, the free energy has variance of order $(\b-\b_c)^2n^{-4/3}$.  By formally equating these, we find that their order matches when $\b-\b_c=\Theta(n^{-1/3}\sqrt{\log n})$ and we conjecture that the variance of the free energy in this critical scaling should be of order $n^{-2}\log n$.  This conjecture turns out to be correct, as we will see in the subsequent sections.

\subsection{Probability and random matrix preliminaries}\label{sec:prelimRMT}
\subsubsection*{Notational conventions (probability and asymptotics)}
Below are several asymptotic notations that we use along with the definitions that we follow.  For any sequence $\{a_n\}$ and positive sequence $\{b_n\}$, we write
\begin{itemize}
\item $a_n=O(b_n)$ if there exists some constant $C$ such that $|a_n|\leq Cb_n$ for all $n$,
\item $a_n=\Omega(b_n)$ if there exists some constant $C$ such that $|a_n|\geq Cb_n$ for all $n$,
\item $a_n=\Theta(b_n)$ if there exist constants $C_1,C_2$ such that $C_1b_n\leq |a_n|\leq C_2b_n$ for all $n$\\ (or, equivalently, $a_n=O(b_n)$ and $a_n=\Omega(b_n)$),
\item $a_n\ll b_n$ if $\lim_{n\to\infty} a_n/b_n=0$,
\item $a_n\gg b_n$ if $\lim_{n\to\infty} b_n/a_n=0$.
\end{itemize}
In addition, we sometimes need to make asymptotic statements about the probability of events in a sequence $\{E_n\}$.  We say that $E_n$ occurs ``asymptotically almost surely" if $\PP(E_n)\to1$ as $n\to\infty$.  We say $E_n$ occurs ``with overwhelming probability" if, for all $D>0$, there exists $n_0$ such that $\PP(E_n)>1-n^{-D}$ for all $n>n_0$.

\subsubsection*{Laguerre Orthogonal Ensemble and Mar{\v{c}}enko--Pastur measure}
As we saw in the previous subsection, the eigenvalues of the matrix $\frac1m JJ^T$ will play an important role in our analysis.  This is a normalized Laguerre Orthogonal Ensemble and we provide an overview of some of its key properties here.  Mar{\v{c}}enko and Pastur \cite{Marcenko_1967} showed that the empirical spectral measure of LOE has the following convergence, as $n,m\to\infty$ with $n/m\to\la\leq1$,
	\beq\label{eq:def_MP}
	\frac1n \sum_{i=1}^n\delta_{\mu_i}(x)\to p_{\MP}(x)\dd x:= \frac{\sqrt{(d_+-x)(x-d_-)}}{2\pi\la x}\mathbf{1}_{[d_-,d_+]}(x)\dd x.
	\eeq
	The convergence is weakly in distribution and $d_\pm=(1\pm\la^{1/2})^2$ and $p_{\MP}(x)$ is referred to as the Mar{\v{c}}enko--Pastur measure.  
	In working with $p_{\MP}$, we sometimes need to use its Stieltjes transform
\beq
s_{\MP}(z):=\int_\RR\frac{1}{z-x}p_{\MP}(x)\dd x.
\eeq
We note that it is common to define the Stieltjes transform as the negative of what we use here.  However, our definition is consistent with that of \cite{BaikLeeBipartite} and is more logical in this context, since it results in a positive value of $s_{\MP}$ for our setting.

\subsubsection*{Tracy--Widom distribution}
The location of the largest eigenvalue is particularly important in our analysis.  The following result is well-known in random matrix theory. See, for example, \cite{Johnstone2001, Soshnikov2002} and Corrollary 1.2 of \cite{PillaiYin2014}.
\begin{lemma}\label{lem:TW_mu1}
	Let $\mu_1$ be the largest eigenvalue of $\frac1m M_{n,m}$, where $M_{n,m}$ is an $n\times n$ matrix from the Laguerre orthogonal ensemble. Then the following convergence in distribution holds.
	\begin{equation*}
		\frac{m\mu_1-(\sqrt{n}+\sqrt m)^2}{(\sqrt{n}+\sqrt m)\left((1/\sqrt n)+(1/\sqrt m)\right)^{1/3}}\to \TW_1.
	\end{equation*}
\end{lemma}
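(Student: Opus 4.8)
The matrix $\tfrac1m M_{n,m}$ is a normalized real Wishart matrix: writing $M_{n,m}=JJ^{\mathsf T}$ with $J$ the $n\times m$ matrix of i.i.d.\ standard Gaussians, the quantity $m\mu_1$ is exactly the largest eigenvalue of $JJ^{\mathsf T}$. The plan is to deduce the claim from the Tracy--Widom limit for the largest eigenvalue of a real Wishart matrix, established in \cite{Johnstone2001, Soshnikov2002, PillaiYin2014}, and then to reconcile the normalizing constants there with the ones appearing in the statement. Since the entries are genuinely Gaussian, no universality input is needed; it suffices to invoke these classical results.

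First I would record the quoted result in the form: for $n/m\to\la\in(0,1]$,
\[
\frac{\lambda_{\max}(JJ^{\mathsf T})-\widetilde\mu_{n,m}}{\widetilde\sigma_{n,m}}\to\TW_1,\qquad
\widetilde\mu_{n,m}=(\sqrt{n-1}+\sqrt m)^2,\qquad
\widetilde\sigma_{n,m}=(\sqrt{n-1}+\sqrt m)\bigl(\tfrac1{\sqrt{n-1}}+\tfrac1{\sqrt m}\bigr)^{1/3}
\]
(the sharper centering $(\sqrt{n-1/2}+\sqrt{m-1/2})^2$ would serve equally well). Then I would compare these with $\mu_{n,m}:=(\sqrt n+\sqrt m)^2$ and $\sigma_{n,m}:=(\sqrt n+\sqrt m)(n^{-1/2}+m^{-1/2})^{1/3}$ from the lemma. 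For the centering, $\mu_{n,m}-\widetilde\mu_{n,m}=1+2\sqrt m(\sqrt n-\sqrt{n-1})=1+2\sqrt m/(\sqrt n+\sqrt{n-1})=O(1)$, because $m/n$ stays bounded; since $m=\Theta(n)$ forces $\sigma_{n,m}=\Theta(n^{1/3})$, this discrepancy contributes only a deterministic $O(n^{-1/3})$ shift after rescaling. For the scaling, an elementary computation gives $\widetilde\sigma_{n,m}/\sigma_{n,m}\to1$. Writing
\[
\frac{m\mu_1-\mu_{n,m}}{\sigma_{n,m}}=\frac{m\mu_1-\widetilde\mu_{n,m}}{\widetilde\sigma_{n,m}}\cdot\frac{\widetilde\sigma_{n,m}}{\sigma_{n,m}}+\frac{\widetilde\mu_{n,m}-\mu_{n,m}}{\sigma_{n,m}},
\]
Slutsky's theorem then yields the convergence stated in the lemma.

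The one point I would handle with care --- though it is minor --- is bookkeeping across references, which state the Tracy--Widom edge limit under slightly different centering/scaling conventions, sometimes for the complex (LUE) rather than the real (LOE) class, and occasionally only for $\la<1$ strictly. I would fix a version valid for the real ensemble uniformly down to $\la=1$; this is legitimate because the top of the spectrum always sits at the soft edge $d_+=(1+\sqrt\la)^2$ for $\la\in(0,1]$ and is unaffected by the hard-edge degeneracy at the bottom that occurs when $n=m$. With such a version in hand, the remaining work is precisely the constant comparison above. (A self-contained proof would instead run the standard steepest-descent analysis of the Laguerre correlation kernel together with a near-edge rigidity bound, but this is superfluous given the Gaussian hypothesis.)
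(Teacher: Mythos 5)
Your proposal is correct and matches the paper's treatment: the paper gives no proof of this lemma, simply citing \cite{Johnstone2001, Soshnikov2002} and Corollary 1.2 of \cite{PillaiYin2014}, exactly the results you invoke. Your additional Slutsky-type bookkeeping reconciling the $(\sqrt{n-1}+\sqrt m)$ versus $(\sqrt n+\sqrt m)$ centering and scaling conventions is sound (the $O(1)$ centering discrepancy is negligible against the $\Theta(n^{1/3})$ scale) and only makes explicit what the paper leaves implicit.
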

Under the condition $n/m\to\la\in(0,1]$, the following form of Lemma \ref{lem:TW_mu1} is useful in our paper.
\begin{equation}\label{eqn:mu1_rescaled}
\frac{n^{\frac23}(\mu_1-d_+)}{\la^{\frac12}(1+\la^{\frac12})^{\frac43}}\to \TW_1.
\end{equation}

\subsubsection*{Classical eigenvalue locations and rigidity}
A key tool in our analysis is to approximate the eigenvalues by their ``classical locations" (i.e. the quantiles of the Mar{\v{c}}enko--Pastur measure).  The classical locations $\{g_i\}$ are defined by the relation
	\beq
	\frac{i}{n}=\int_{g_i}^{d_+}p_{\MP}(x)\dd x.
	\eeq
	Using this definition, one can show that
	\begin{equation}\label{eqn:classical_loc}
			g_i=d_+-\left(\frac{3\pi \la^{3/4}d_+i}{2n}\right)^{2/3}+O\left(\frac{i^{4/3}}{n^{4/3}}\right),\quad i\leq n/2.  
		\end{equation}
	Thus, we expect that, for $i\ll n$, we will have $\mu_i\approx d_+-\left(\frac{3\pi \la^{3/4}d_+i}{2n}\right)^{2/3}$.  The concept of ``eigenvalue rigidity" means that eigenvalues are close to their classical locations with high probability.  More precisely, we define \textbf{eigenvalue rigidity} to be the event
	\[
	\bigcap_{1\leq i\leq n}\left\{|\mu_i-g_i|\leq \frac{n^\d}{n^{2/3}\min\{i^{1/3},(n+1-i)^{1/3}\}} \right\},
	\]
which holds with overwhelming probability.  This is proved in \cite{PillaiYin2014}(Theorem 3.3) in the case $\la\in(0,1)$. For $\la=1$, the result follows from Corollary 1.3 of \cite{OLT2014} and the relation $p_{\MP}(x)=p_{\mathrm{SC}}(\sqrt{x})$ between the Mar{\v{c}}enko--Pastur and semicircle distributions. 	
	
In addition to eigenvalue rigidity, we sometimes need more precise control of the larger eigenvalues. For this purpose, we introduce the following lemma, which is proved in Appendix \ref{sec:appendix}.  This lemma is inspired by a similar one proved in \cite{LandonSosoe} for GOE matrices  and used by Landon in his analysis of SSK at critical temperature \cite{Landon_crit}.
	\begin{lemma}\label{lem:Aj}
		Let $\{\mu_j\}_{j=1}^n$ be the eigenvalues of $\frac1m M_{n,m}$. For each $j$, define
		\begin{equation}\label{eqn:Aj}
			A_j=\left(\frac{3\pi \la^{3/4}d_+}{2}j\right)^{2/3}-n^{2/3}(d_+-\mu_j).
		\end{equation}Given $\e>0$, there exists $K$ such that for sufficiently large $n$,
		\begin{equation}\label{eqn:probAj}
			\bP\left(\bigcap_{K\leq j \leq n^{2/5}}\left\{\left|A_j\right|\leq\la j^{2/3}\right\}\right)\geq 1-\e.
		\end{equation}
		Furthermore, there exists $C,c>0$ such that 
		\begin{equation}
			\bE\left[\mathbbm{1}_{\{n^{2/3}(\mu_j-d_+)\leq-C\}}\left|A_j\right|\right]\leq \frac{c\log j}{j^{1/3}}, \quad \text{for } K\leq j\leq n^{2/5}.
		\end{equation}
	\end{lemma}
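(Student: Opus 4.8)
The plan is to split $A_j$ into a deterministic part, which matches the classical eigenvalue location $g_j$, and the random fluctuation $n^{2/3}(\mu_j-g_j)$, and then to control the latter by edge rigidity for LOE at its natural scale. \textbf{Reduction.} By \eqref{eqn:classical_loc}, $n^{2/3}(d_+-g_j)$ equals $\left(\frac{3\pi\la^{3/4}d_+}{2}j\right)^{2/3}$ up to a \emph{deterministic} error $O(j^{4/3}n^{-2/3})$, so $A_j=\widetilde A_j+O(j^{4/3}n^{-2/3})$ with $\widetilde A_j:=n^{2/3}(\mu_j-g_j)$. On the range $j\le n^{2/5}$ this error is $O(n^{-2/15})$, which is dominated by both $\la j^{2/3}$ and $j^{-1/3}\log j$ (the two are comparable only near $j\asymp n^{2/5}$, where $j^{-1/3}\log j\asymp n^{-2/15}\log n\gg n^{-2/15}$). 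Hence, after absorbing the error into the constants $\la,c$, it suffices to prove both statements with $A_j$ replaced by $\widetilde A_j$.

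\textbf{Key estimate.} The crucial input is the following LOE edge-rigidity bound, valid uniformly for $1\le j\le n^{2/5}$: there is $c>0$ with
\[
\bP\!\left(n^{2/3}|\mu_j-g_j|>w\,j^{-1/3}\right)\le \exp\!\left(-\frac{c\,w^{2}}{\log(j+1)}\right),\qquad w\ge 1.
\]
This records that $\mu_j$ fluctuates about $g_j$ on the scale $\sqrt{\log j}\,j^{-1/3}n^{-2/3}$ with Gaussian-type tails; it follows from control of the counting function $\#\{i:\mu_i\ge E\}$ near the upper edge (whose fluctuation in a window holding $\sim j$ eigenvalues is $O(\sqrt{\log j})$) together with $p_{\MP}(g_j)\asymp(j/n)^{1/3}$. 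It is the LOE counterpart of the GOE estimates used in \cite{LandonSosoe, Landon_crit} and can be extracted from the sample-covariance rigidity results of \cite{PillaiYin2014} (or from the tridiagonal model); for the larger values of $j$ the coarser rigidity event of Section \ref{sec:prelim}, which gives $n^{2/3}|\mu_j-g_j|\le n^{\d}j^{-1/3}$ with overwhelming probability, already suffices.

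\textbf{Both claims.} Granting this, \eqref{eqn:probAj} follows by taking $w=\tfrac12\la j$ (which is $\ge 1$ once $j\ge K$ is large): $\bP(|\widetilde A_j|>\tfrac12\la j^{2/3})\le\exp\!\big(-c\la^{2}j^{2}/(4\log(j+1))\big)\le j^{-10}$ for $j\ge K=K(\la,c)$, so a union bound over $K\le j\le n^{2/5}$ costs $\sum_{j\ge K}j^{-10}<\e$ for $K$ large (and the reduction error turns $\tfrac12\la j^{2/3}$ into $\la j^{2/3}$ for $A_j$). For the $L^1$ bound, use $|A_j|\ge 0$ to drop the indicator, $\bE\big[\mathbbm{1}_{\{n^{2/3}(\mu_j-d_+)\le -C\}}|A_j|\big]\le\bE[|A_j|]\le\bE[n^{2/3}|\mu_j-g_j|]+O(n^{-2/15})$, and apply the layer-cake formula with the tail estimate:
\[
\bE\big[n^{2/3}|\mu_j-g_j|\big]=j^{-1/3}\!\int_0^{\infty}\!\bP\!\left(n^{2/3}j^{1/3}|\mu_j-g_j|>w\right)dw\le j^{-1/3}\Big(1+\!\int_1^{\infty}\! e^{-cw^{2}/\log(j+1)}\,dw\Big)=O\!\big(j^{-1/3}\sqrt{\log j}\big),
\]
which is $\le c\,j^{-1/3}\log j$ for $j\ge K$; the indicator, which in any case only localizes $\mu_j$ at distance $\ge Cn^{-2/3}$ below the spectral edge, away from the Tracy--Widom regime, makes this bound no harder. (The role of $C$ is merely to guarantee this localization in the places where the lemma is applied.)

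\textbf{Main obstacle.} The real work is the uniform edge-rigidity estimate of the second paragraph at the optimal $j^{-1/3}n^{-2/3}$ scale with Gaussian-over-$\log j$ tails, over the whole range $K\le j\le n^{2/5}$: the sample-covariance rigidity bounds one can quote directly typically carry an $n^{\xi}$ slack rather than a $\sqrt{\log j}$ factor, so an extra argument is needed — either sharpening those bounds through the counting function and resolvent of $\tfrac1m JJ^{T}$, or passing to the bidiagonal model of LOE and comparing with the stochastic Airy operator. Because the LOE edge and its tridiagonal data differ from the GOE case treated in \cite{LandonSosoe}, this adaptation is where the effort lies; once it is in hand, the reduction and the two deductions above are routine.
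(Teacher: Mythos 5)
Your reduction to $\widetilde A_j=n^{2/3}(\mu_j-g_j)$ via \eqref{eqn:classical_loc} is fine, and your layer-cake/union-bound deductions would work \emph{if} your ``key estimate'' held. But that estimate is precisely where the proof is missing, and you say so yourself: the sub-Gaussian tail $\bP\bigl(n^{2/3}|\mu_j-g_j|>w\,j^{-1/3}\bigr)\le\exp\bigl(-cw^2/\log(j+1)\bigr)$ for all $w\ge1$ is not something you can quote from \cite{PillaiYin2014} (which carries $n^{\xi}$ slack), and you defer its proof to ``an extra argument.'' Since the entire lemma rests on this input, the proposal has a genuine gap. Moreover, the estimate as stated is stronger than what the natural arguments deliver: the counting function of a determinantal (LUE) process is a sum of independent Bernoullis, so Bernstein gives Gaussian tails only up to deviations of order $\Var\cN\asymp\log j$ and exponential tails beyond, not $\exp(-cw^2/\log j)$ uniformly in $w$. (Your application at $w=\tfrac12\la j$ sits deep in the exponential regime.)

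The paper's proof shows that no concentration inequality of this strength is needed. It establishes only the first two moments of the counting function $\cN_s=\#\{i:\mu_i\ge d_+-sn^{-2/3}\}$, namely $\bE\cN_s=\tfrac{2}{3\pi\la^{3/4}d_+}s^{3/2}+O(s^{5/2}n^{-2/3})$ and $\Var(\cN_s)=\tfrac{3}{4\pi^2}(\log s)(1+o(1))$, by importing Su's LUE results \cite{Su06} and transferring them to LOE via the Forrester--Rains decimation identity (Theorem 5.2 of \cite{ForresterRains}) plus eigenvalue interlacing. Chebyshev then yields the polynomial tail $\bP(|A_j|\ge t)=O\bigl((\log j+\log(1+t))/(tj^{1/3}-c_0)^2\bigr)$, which already suffices: taking $t=\la j^{2/3}$ gives a summable union bound $\sum_j\log j/j^2$, and integrating the tail in $t$ from $t^\ast\asymp j^{-1/3}$ gives the $O(j^{-1/3}\log j)$ expectation bound (your sub-Gaussian route would give $j^{-1/3}\sqrt{\log j}$, better than the lemma asks for — a sign that you are trying to prove too much). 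If you want to salvage your approach, the concrete fix is to replace your key estimate with this moment computation for $\cN_s$ and run Chebyshev; as written, the central input is neither proved nor correctly formulated.
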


\subsubsection*{Tridiagonal representation of LOE}
In Section \ref{sec:independence}, when proving the asymptotic independence of the Gaussian and Tracy--Widom variables, we will need the tridiagonal representation of LOE. Dumitriu and Edelman \cite{DumitriuEdelman} show that the eigenvalue distribution of the unnormalized LOE matrix $M_{n,m}$ is the same as that of the $n\times n$ matrix $T_n=BB^T$ where $B$ is a bi-diagonal matrix of dimension $n\times n$.  In particular, 
\beq\label{eq:tridiagonal}
B=
\begin{bmatrix}
a_1&&&&\\
b_1&a_2&&&\\
&b_2&a_3&&\\
&&\ddots&\ddots&\\
&&&b_{n-1}&a_n
\end{bmatrix}
\quad\text{so}\quad
BB^T=
\begin{bmatrix}
a_1^2&a_1b_1&&&\\
a_1b_1&a_2^2+b_1^2&a_2b_2&&\\
&a_2b_2&a_3^2+b_2^2&&\\
&&&\ddots&a_{n-1}b_{n-1}\\
&&&a_{n-1}b_{n-1}&a_n^2+b_{n-1}^2
\end{bmatrix}
\eeq
where $\{a_i\},\{b_i\}$ are all independent random variables with distributions satisfying
\beq\label{eqn:aibi}
a_i^2\sim \chi^2 (m-n+i),\qquad
b_i^2\sim \chi^2( i).
\eeq

\subsection{Defining the event on which our results hold}
Our arguments throughout this paper rely upon certain conditions on the eigenvalues, which hold with probability close to 1.  To streamline the later proofs, we collect in this section various events involving the eigenvalues $\{\mu_i\}$ and provide probability bounds for each event.  Finally, we define $\event$ to be the intersection of these events, which holds with probability $1-\e$ for arbitrarily small choice of $\e$.
\begin{definition}
Let $\delta, s,t, r, R$ be positive numbers where $s<t$, $r<R$, and let $K$ be a positive integer. We define the events $\Fone,\Ftwo,\Fthree,\Ffour$ as follows. 
\begin{align}
\Fone&=\bigcap_{1\leq i\leq n}\left\{|\mu_i-g_i|\leq \frac{n^\d}{n^{2/3}\min\{i^{1/3},(n+1-i)^{1/3}\}} \right\},\\
\Ftwo&=\bigcap_{K\leq j\leq n^{2/5}}\left\{\left|n^{2/3}(\mu_j-d_+)+\left(\frac{3\pi\la^{3/4}d_+}{2}j\right)^{2/3}\right|\leq\frac{j^{2/3}}{10} \right\},\\
\Fthree&=\left\{n^{2/3}|d_+-\mu_1|\in[s,t] \right\},\qquad0<s<t,\\
\Ffour&=\left\{ r<n^{2/3}(\mu_1-\mu_2)<R\right\}. 
\end{align}
\end{definition}
\begin{remark}
The event $\Fone$ is the eigenvalue rigidity condition with respect to the ``classical location", and $\Ftwo$ is inspired by a similar event used in the context of Gaussian ensembles by Landon and Sosoe \cite{LandonSosoe}.
\end{remark}
\begin{lemma}[Event probability bounds]\label{lem:eventprob}The following statements hold.
\begin{itemize}
\item For any fixed $\delta>0$, the event $\Fone$ holds with overwhelming probability.  
\item For any $\e>0$ there exist positive constants $K, s, t, r, R$ depending on $\e$ but not on $n$ such that, for sufficiently large $n$,
\[
\PP[\Ftwo]\geq1-\tfrac\e4,\qquad\PP[\Fthree]\geq1-\tfrac\e4,\qquad\PP[\Ffour]\geq1-\tfrac\e4.
\]
\end{itemize}
\end{lemma}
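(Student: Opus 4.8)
The four events $\Fone,\Ftwo,\Fthree,\Ffour$ are unrelated to one another, so the plan is to treat each separately: $\Fone$ is already known to hold with overwhelming probability for every fixed $\d$, and I will show that, for a suitable choice of $K,s,t,r,R$ depending only on $\e$, each of $\Ftwo,\Fthree,\Ffour$ holds with probability at least $1-\tfrac\e4$. Since an overwhelming-probability event has probability at least $1-\tfrac\e4$ for $n$ large, a union bound on the complements then gives $\PP[\event]\ge1-\e$, as asserted in the discussion preceding the definition.

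The event $\Fone$ is verbatim the eigenvalue rigidity estimate recalled above, so nothing is needed beyond citing Theorem~3.3 of \cite{PillaiYin2014} when $\la\in(0,1)$ and, when $\la=1$, Corollary~1.3 of \cite{OLT2014} together with the identity $p_{\MP}(x)=p_{\mathrm{SC}}(\sqrt x)$; both give that the complement of $\Fone$ has probability $O(n^{-D})$ for every $D>0$. For $\Ftwo$, note that the expression inside the absolute value in its definition is exactly $-A_j$, with $A_j$ as in \eqref{eqn:Aj}, so $\Ftwo=\bigcap_{K\le j\le n^{2/5}}\{|A_j|\le j^{2/3}/10\}$. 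This is the event controlled by the first half of Lemma~\ref{lem:Aj}; the constant $\la$ that appears there is not special and may be replaced by any prescribed positive constant at the cost of enlarging $K$, as is visible from the proof in Appendix~\ref{sec:appendix}. Applying Lemma~\ref{lem:Aj} with $\e$ replaced by $\tfrac\e4$ and with $\tfrac1{10}$ in place of $\la$ therefore produces $K=K(\e)$ with $\PP[\Ftwo]\ge1-\tfrac\e4$ for all large $n$.

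For $\Fthree$ and $\Ffour$ I would use edge universality for LOE. By \eqref{eqn:mu1_rescaled}, $X_n:=n^{2/3}|\mu_1-d_+|$ converges in distribution to $c_\la|\TW_1|$, where $c_\la:=\la^{1/2}(1+\la^{1/2})^{4/3}$; this limit has a continuous distribution function and no atom at $0$, so $\PP(c_\la|\TW_1|<s)\to0$ as $s\downarrow0$ and $\PP(c_\la|\TW_1|>t)\to0$ as $t\to\infty$. Choose $s,t$ with $0<s<t$ so that each of these two probabilities is below $\tfrac\e{16}$; then for all large $n$, $\PP[X_n<s]<\tfrac\e8$ and $\PP[X_n>t]<\tfrac\e8$, whence $\PP[\Fthree]=\PP[X_n\in[s,t]]\ge1-\tfrac\e4$. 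For $\Ffour$, the relevant input is the joint edge limit for LOE: $n^{2/3}(\mu_1-\mu_2)/c_\la$ converges in distribution to $\xi_1-\xi_2$, the spacing of the two largest points $\xi_1>\xi_2$ of the Airy$_1$ point process, which follows from the results of \cite{Johnstone2001,Soshnikov2002,PillaiYin2014}. Since that point process is almost surely simple and locally finite, $\xi_1-\xi_2$ is almost surely in $(0,\infty)$ with a continuous law, so choosing $r$ small and $R$ large exactly as in the $\Fthree$ argument yields $\PP[\Ffour]\ge1-\tfrac\e4$ for $n$ large.

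The statement is essentially a repackaging of already-established facts, so I do not expect a real obstacle. The two genuine inputs are Lemma~\ref{lem:Aj} (proved separately in the appendix) and the LOE edge-universality statements; the only point needing a little care is converting distributional convergence into two-sided probability bounds valid for all large $n$, which is why $s,t,r,R$ are pinned down using the limiting laws \emph{before} $n$ is sent to infinity.
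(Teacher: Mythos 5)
Your proof is correct and follows essentially the same route as the paper: rigidity for $\Fone$, Lemma~\ref{lem:Aj} for $\Ftwo$, and Tracy--Widom edge convergence (plus almost-sure simplicity of the limiting point process for the gap lower bound) for $\Fthree$ and $\Ffour$. You are in fact slightly more careful than the paper in noting that the constant $\la$ in Lemma~\ref{lem:Aj} must be replaced by $\tfrac1{10}$ at the cost of enlarging $K$, a point the paper's proof glosses over.
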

\begin{proof}
The bounds on the first three events are clear. The eigenvalue rigidity condition $\Fone$ holds with overwhelming probability (see explanation in Subsection \ref{sec:prelimRMT}). The bound on event $\Ftwo$ follows directly from Lemma \ref{lem:Aj}, where we can take larger value of $K$ to replace $\e$ in the bound  by $\e/4$. Result on $\Fthree$ is a consequence of the Tracy--Widom convergence in Lemma \ref{lem:TW_mu1}. 

Finally, we consider $\Ffour$. The upper bound $n^{2/3}(\mu_1-\mu_2)\leq R$ holds with probability $1-\e/8$ for some $R>0$ via union bound (where $|\mu_1-d_+|$ is controlled using $\cF_{s,t}^{(3)}$ and $|\mu_2-d_+|$ is bounded similarly using Tracy--Widom convergence of $\mu_2$). For the lower bound on $n^{2/3}(\mu_1-\mu_2)$, note that the joint distribution of $\mu_1$ and $\mu_2$ (each rescaled as in \eqref{eqn:mu1_rescaled}) converges to the distribution given by the Tracy--Widom law (see, for example, \cite{Soshnikov2002}, \cite{PillaiYin2014}). This law describes the joint distribution of the largest two eigenvalues of an operator $\mathbf{H}_1$ whose spectrum is simple with probability one (see, for example, (4.5.9) and Theorem 4.5.42 of \cite{AndersonGuionnetZeitouni}), implying an $r>0$ such that $\bP(n^{2/3}(\mu_1-\mu_2)>r)\geq1-\e/8$ does exist for sufficiently large $n$.
\end{proof}
\begin{definition}
Given $\e>0$, we define $\event$ to be an event
\[
\event:=\Fone\cap\Ftwo\cap\Fthree\cap\Ffour
\]
where the parameters $\d,K,s,t,r, R$ are chosen to satisfy the probability bounds in Lemma \ref{lem:eventprob}.  Note that $K,s,t,r, R$ depend on $\e$, but $\d$ does not. The choice of these constants is not unique. However, for any given $\e>0$, we fix these values and define $\event$ accordingly.
\end{definition}
The following corollary follows directly from the above definition and Lemma \ref{lem:eventprob}.
\begin{corollary}
For any $\e>0$, $\bP[\event]\geq 1-\e$.
\end{corollary}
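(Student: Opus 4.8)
The plan is to deduce the bound from a single union bound over the four events that define $\event$, using the estimates already assembled in Lemma \ref{lem:eventprob}. First I would fix $\e>0$ and recall that, by construction, the constants $\d,K,s,t,r,R$ entering the definition of $\event$ are chosen precisely so that the conclusions of Lemma \ref{lem:eventprob} apply: for all sufficiently large $n$ one has $\PP[\Ftwo]\geq 1-\tfrac\e4$, $\PP[\Fthree]\geq 1-\tfrac\e4$, and $\PP[\Ffour]\geq 1-\tfrac\e4$. Since $\Fone$ (eigenvalue rigidity) holds with overwhelming probability — i.e.\ $\PP[\Fone^c]\leq n^{-D}$ for any fixed $D$ — in particular $\PP[\Fone]\geq 1-\tfrac\e4$ once $n$ is large enough, with a threshold independent of $\e$.

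Next I would combine these four estimates via subadditivity applied to the complements. Writing $\event^c=\Fone^c\cup\Ftwo^c\cup\Fthree^c\cup\Ffour^c$, one gets
\[
\PP[\event^c]\leq \PP[\Fone^c]+\PP[\Ftwo^c]+\PP[\Fthree^c]+\PP[\Ffour^c]\leq \tfrac\e4+\tfrac\e4+\tfrac\e4+\tfrac\e4=\e,
\]
and hence $\PP[\event]\geq 1-\e$, which is the claim. The only bookkeeping point is that each of the four probability bounds is valid only for $n$ past some threshold; since there are finitely many such thresholds, one takes their maximum, so the conclusion holds for all sufficiently large $n$ (the standard reading of such "event" statements).

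There is essentially no obstacle here: all of the substantive content — the rigidity input for $\Fone$, the refined control of moderate-index eigenvalues for $\Ftwo$ coming from Lemma \ref{lem:Aj}, and the one- and two-point Tracy--Widom convergences underlying $\Fthree$ and $\Ffour$ — is already packaged inside Lemma \ref{lem:eventprob}. The corollary is merely the observation that the four bounds, each at level $1-\tfrac\e4$, assemble by the union bound into a single bound at level $1-\e$; I would state it in one or two lines.
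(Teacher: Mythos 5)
Your proposal is correct and is exactly the argument the paper intends: the corollary is stated as following directly from the definition of $\event$ and Lemma \ref{lem:eventprob}, i.e.\ the union bound over the four complements, each contributing at most $\e/4$. Nothing is missing.
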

Computing the free energy in both the high and low temperature regimes involves analyzing linear statistics of eigenvalues of the form
$\sum_{i=1}^n\frac{1}{(z-\mu_i)^k}$, on the event defined above.  The key lemma that we use for handling these sums is the following.
\begin{lemma}\label{lem:diff_stieltjes_gK}
Let $z\in\CC$ with $\re(z)\geq d_+$. Let $\{\mu_i\}$ be the eigenvalues of $\frac1m M_{m,n}$.  Then, for any $\e>0$ and any positive integer $l$,
\beq
	\bE\left[\mathbbm{1}_{\event}
	\left|\frac1n\sum_{j=K}^n\frac1{(z-\mu_j)^l}-\int_{d_-}^{g_K} \frac{1}{(z-y)^l}p_{\MP}(y)dy\right|\right]=
	O\left(n^{\frac23 l-1}\cdot\min\left\{\left|\frac{\log(n^{2/3}|z-d_+|)}{(n^{2/3}|z-d_+|)^l}\right|,\;1\right\}\right).
\eeq
Here, $K$ is the constant depending on $\e$ in $\Ftwo$ and $\event$.
\end{lemma}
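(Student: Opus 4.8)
The plan is to split the sum $\frac1n\sum_{j=K}^n (z-\mu_j)^{-l}$ into a "bulk" piece (indices $j$ of order $n$, say $j > n^{2/5}$) and an "edge" piece (indices $K \le j \le n^{2/5}$), and to compare each piece to the corresponding integral against $p_{\MP}$, using eigenvalue rigidity for the bulk and the sharper event $\Ftwo$ for the edge. Throughout we work on $\event$, so all the estimates $|\mu_j - g_j| \le n^\delta n^{-2/3}\min\{j^{1/3},(n+1-j)^{1/3}\}^{-1}$ and the edge estimate from $\Ftwo$ are available deterministically. First I would set up the telescoping identity $(z-\mu_j)^{-l} - (z-g_j)^{-l} = \sum$ of terms each carrying a factor $(\mu_j - g_j)$ times products of $(z-\mu_j)^{-a}(z-g_j)^{-b}$ with $a+b = l+1$; since $\re(z) \ge d_+ \ge g_j, \mu_j$ (using that $\mu_j < d_+$ with overwhelming probability, absorbed into $\event$ for the relevant indices, and treating $\mu_1$ near $d_+$ separately via $\Fthree$), each factor $|z-\mu_j|^{-1}$ and $|z-g_j|^{-1}$ is bounded by the distance from $z$ to the real interval, which near the edge is comparable to $\max\{|z-d_+|, (j/n)^{2/3}\}$ by \eqref{eqn:classical_loc}.

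Next I would estimate the bulk contribution $\frac1n\sum_{j>n^{2/5}} (z-\mu_j)^{-l}$ minus $\int (z-y)^{-l} p_{\MP}(y)\,dy$ over the matching quantile range. Here one replaces each $\mu_j$ by $g_j$ using the rigidity bound from $\Fone$ (the error is a sum of $O(n^{\delta-2/3}/j^{1/3})$ times $|z-g_j|^{-(l+1)}$, which is summable and of lower order), and then compares the Riemann-type sum $\frac1n\sum_j (z-g_j)^{-l}$ to the integral $\int (z-y)^{-l}p_{\MP}(y)\,dy$; the discretization error is controlled by the derivative of $(z-y)^{-l}$, giving an extra $1/n$ relative to the integral, hence negligible compared with the claimed bound $n^{2l/3-1}\min\{\cdots,1\}$. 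For $z$ with $|z-d_+| = \sigma n^{-2/3}$, $\sigma$ bounded, the integral itself and the bulk sum are both $O(1)$, matching the "$\min\{\cdot,1\}$" side; for $|z - d_+|$ larger, one gets the decaying $\log(n^{2/3}|z-d_+|)/(n^{2/3}|z-d_+|)^l$ behavior by estimating $\int_{d_-}^{d_+} (z-y)^{-l}\sqrt{d_+-y}\,(\cdots)\,dy$ directly via the square-root vanishing of $p_{\MP}$ at $d_+$ — a standard computation showing the dominant contribution comes from $y$ within $|z-d_+|$ of the edge.

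The main obstacle, and the step I would spend the most care on, is the edge piece $\frac1n\sum_{K\le j\le n^{2/5}} (z-\mu_j)^{-l}$ versus $\int_{g_{n^{2/5}}}^{g_K}(z-y)^{-l}p_{\MP}(y)\,dy$, because this is exactly where $\mu_j$ can be within $n^{-2/3}$ scale of both $z$ and $d_+$, so naive bounds lose powers of $n$. Here I would use $\Ftwo$, which says $n^{2/3}(d_+ - \mu_j)$ is within $j^{2/3}/10$ of $(\tfrac{3\pi\la^{3/4}d_+}{2}j)^{2/3}$, i.e. $\mu_j$ agrees with $g_j$ up to a relative error of at most $\tfrac1{10}$ of the gap $|g_j - d_+|$ — crucially this is a \emph{multiplicative} control, not just the additive rigidity bound, so $|z - \mu_j|$ and $|z-g_j|$ stay comparable even when both are of order $n^{-2/3}$. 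Writing $u = n^{2/3}(z-d_+)$ (with $\re u \ge 0$) and $v_j = n^{2/3}(d_+-\mu_j) \asymp j^{2/3}$, each term is $n^{2l/3}(u+v_j)^{-l}$, and $\frac1n\sum_{K\le j \le n^{2/5}} n^{2l/3}(u+v_j)^{-l} = n^{2l/3-1}\sum_j (u+v_j)^{-l}$; since $v_j \asymp j^{2/3}$ grows, $\sum_j (u + c j^{2/3})^{-l}$ is comparable to $\int_K^{\infty}(u + c x^{2/3})^{-l}\,dx$, which is $O(\min\{|u|^{-(l-3/2)}, K^{-(l - 3/2)} \text{ or } \log\}$ depending on whether $l > 3/2$); matching this against the rescaled integral $n^{2l/3-1}\int (u + n^{2/3}(d_+-y))^{-l}\cdot(\text{const})\sqrt{n^{2/3}(d_+-y)}\,d(\cdots)$ and tracking the $\log$ that appears precisely when the power counting is borderline (which is the source of the $\log(n^{2/3}|z-d_+|)$ in the statement) gives the claimed bound. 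The one genuinely delicate point is handling $j$ near $K$ and the single eigenvalue $\mu_1$ (index below $K$ is excluded, but $j=K$ itself sits at the junction): there one invokes $\Fthree$ and $\Ffour$ to guarantee $\mu_1, \mu_2$ are at distance $\Omega(n^{-2/3})$ from $d_+$ and from each other, so no term blows up, and the constant $K$ is chosen (as in Lemma \ref{lem:Aj}) large enough that the tail sum and tail integral from $K$ onward are both small and cleanly comparable.
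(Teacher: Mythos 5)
Your skeleton (decompose the sum-minus-integral over the quantile intervals $[g_j,g_{j-1}]$, split at $j=n^{2/5}$, use $\Ftwo$ to control the denominators near the edge and rigidity in the bulk) matches the paper's proof, and your bulk treatment and the regime $n^{2/3}|z-d_+|=O(1)$ are fine. The gap is in the edge piece when $\sigma:=n^{2/3}|z-d_+|\gg1$. There the claimed bound on the \emph{difference} is $n^{\frac23 l-1}\sigma^{-l}\log\sigma$, whereas the sum and the integral are each far larger (of order $n^{\frac23 l-1}\sigma^{-l+\frac32}$ for $l\ge2$, and of order one for $l=1$). So ``matching'' the two magnitudes, which is all your edge argument actually does, loses a factor of roughly $\sigma^{3/2}/\log\sigma$: genuine term-by-term cancellation is required, and the per-term numerator is then $|\mu_j-g_j|+(g_{j-1}-g_j)$.

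The deterministic information available on $\event$ cannot close this. The event $\Ftwo$ only gives $n^{2/3}|\mu_j-g_j|\lesssim j^{2/3}$, i.e.\ a deviation comparable to the gap $d_+-g_j$ itself, which reproduces the trivial $\sigma^{-l+3/2}$; rigidity gives $n^{\delta}j^{-1/3}$, which yields $n^{\delta}\sigma^{-l}$ and is off by $n^{\delta}/\log\sigma$ (not affordable in the applications in Section \ref{sec:hightemp}). The point you are missing is that the lemma is a bound on an \emph{expectation}, and the paper exploits this through the second part of Lemma \ref{lem:Aj}, namely $\bE\left[\mathbbm{1}\,|A_j|\right]\le c\log j/j^{1/3}$. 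Inserting this into the per-term numerator gives $\sum_{K\le j\le n^{2/5}} \frac{j^{-1/3}\log j}{j^{\frac23(l+1)}+\sigma^{l+1}} = O(\sigma^{-l}\log\sigma)$, which is exactly the claimed decay. Your proposal never invokes this expectation bound, and relatedly misattributes the logarithm: it is not a ``borderline power counting'' artifact of comparing $\sum_j(u+cj^{2/3})^{-l}$ to an integral, but comes from the $\log j$ in $\bE[\mathbbm{1}\,|A_j|]$, i.e.\ ultimately from the logarithmic variance of the eigenvalue counting function in Lemma \ref{lem:counting}.
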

A proof of this lemma is included in Appendix \ref{sec:appendix}. The general approach is inspired by the method that Landon and Sosoe used in \cite{LandonSosoe} to bound similar eigenvalue statistics in the case of Gaussian orthogonal ensembles.  We prove a series of supporting lemmas, first for LUE, which allows us to make use of the determinantal properties.  We then extend our final result to LOE by way of the interrelationship between eigenvalues of unitary and orthogonal ensembles provided in \cite{ForresterRains}.

\section{High temperature}\label{sec:hightemp}
As mentioned in the previous section, the computation of the free energy reduces to the computation of the integral
\beq\label{eq:Qn}
Q_n=-\int_{\gamma_1-i\infty}^{\gamma_1+i\infty}\int_{\gamma_2-i\infty}^{\gamma_2+i\infty}e^{nG(z_1,z_2)}dz_2dz_1
\eeq
where $G(z_1,z_2)$ is defined in \eqref{eq:G_def}.  The general idea is that we should be able to compute this integral via steepest descent analysis by deforming the contours such that they pass through the critical point $(\g_1,\g_2)$, which is a function of $\g$ as defined in \eqref{eq:g1g2_def}-\eqref{eq:gamma_implicit}.  Baik and Lee \cite{BaikLeeBipartite} show that at fixed high temperature (i.e. constant $\b<\b_c$), the random variable $\g$ is well-approximated by $\gt$, the solution to \eqref{eq:gamma_implicit_infty}.  Furthermore, $|\g-\gt|$ is small enough that the integral computations can be carried out with $\gt$ and the error remains sufficiently small.

In the high temperature side of the critical window, we do not have fixed $\b<\b_c$ as in \cite{BaikLeeBipartite}, but rather $\b=\b_c+bn^{-1/3}\sqrt{\log n}$ for $b<0$.  The first task of this section is to show that, even in this scaling, $\gt$ remains a good approximation of $\g$. Namely, we need to compute the asymptotics of $\gt$ and obtain an upper bound on $|\g-\gt|$.

\subsection{Bounds on $G$, its derivatives, and its critical point}
We begin with an asymptotic expansion for $\gt$.

\begin{lemma}\label{lem:gt_d+}
For fixed $b<0$, the solution $\gt$ to \eqref{eq:gamma_implicit_infty} satisfies
\beqq
\gt=d_++\frac{4\la b^2}{1+\la}n^{-2/3}\log n+O(n^{-1}(\log n)^{3/2}).
\eeqq
\end{lemma}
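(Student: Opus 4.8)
The plan is to solve the fixed-point equation \eqref{eq:gamma_implicit_infty} perturbatively near the upper Mar\v{c}enko--Pastur edge $d_+$. Write $\gt = d_+ + \eta$ with $\eta = o(1)$ to be determined, and denote by $L_\infty(z)=s_{\MP}(z)$ and $R_\infty(z)=B^2/(\a+\sqrt{\a^2+zB^2})$ the two sides of \eqref{eq:gamma_implicit_infty}. The crucial analytic input is the square-root behavior of the Stieltjes transform of $p_{\MP}$ at the edge: there are explicit constants $c_0,c_1>0$ (depending on $\la$) with
\beqq
s_{\MP}(d_++\eta) = c_0 - c_1\sqrt{\eta} + O(\eta), \qquad \eta\downarrow 0,
\eeqq
which one obtains either from the known closed form of $s_{\MP}$ or by expanding $\int (d_++\eta-x)^{-1} p_{\MP}(x)\,dx$ near $x=d_+$. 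By the definition of $\b_c$ one has $L_\infty(d_+)=R_\infty(d_+)$ exactly at $\b=\b_c$, i.e. $c_0 = R_\infty(d_+)$ when $B=B_c$; the right-hand side $R_\infty$ is smooth in both $z$ and $\b$ near $(d_+,\b_c)$, so $R_\infty(d_++\eta)-R_\infty(d_+) = O(\eta)$ and the $\b$-dependence enters through $\partial_\b R_\infty(d_+,\b_c)\cdot(\b-\b_c)+\cdots$.

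First I would substitute $\b=\b_c+bn^{-1/3}\sqrt{\log n}$ and expand both sides of \eqref{eq:gamma_implicit_infty} to leading order. On the left we get $c_0 - c_1\sqrt{\eta} + O(\eta)$; on the right we get $c_0 + \kappa\, b\, n^{-1/3}\sqrt{\log n} + O(\eta) + O(n^{-2/3}\log n)$ for an explicit constant $\kappa = \partial_\b R_\infty(d_+,\b_c)$. Matching, the $c_0$ terms cancel and the dominant balance is
\beqq
-c_1\sqrt{\eta} = \kappa\, b\, n^{-1/3}\sqrt{\log n} + O(\eta),
\eeqq
which forces $\sqrt{\eta} \asymp |b|\, n^{-1/3}\sqrt{\log n}$ (note $\kappa b<0$ since $b<0$, so the signs are consistent and a genuine positive solution $\eta>0$ exists precisely on the high-temperature side), hence $\eta = (\kappa/c_1)^2 b^2\, n^{-2/3}\log n (1+o(1))$. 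Carrying the $O(\eta)$ correction terms through gives the error $O(n^{-1}(\log n)^{3/2})$: indeed $\eta^{3/2} = \Theta(n^{-1}(\log n)^{3/2})$, and this is the size of the next correction when one iterates the fixed-point map once more. The remaining work is to compute $c_1$ and $\kappa$ explicitly and check that $(\kappa/c_1)^2 = 4\la/(1+\la)$; this is a direct (if slightly tedious) computation using $d_\pm=(1\pm\sqrt\la)^2$, $\a=(1-\la)/(2\la)$, $B_c=\b_c/\sqrt{\la(1+\la)}$, and the identity $\b_c^2 = (1+\la)/\sqrt\la$.

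To make the expansion rigorous rather than merely formal, I would set up a contraction/monotonicity argument: define $\Phi(\eta)$ by solving $L_\infty(d_++\eta) = R_\infty(d_++\eta)$ for the $\sqrt\eta$ term, i.e. $\sqrt{\eta} = \Phi(\eta)$ where $\Phi(\eta) = c_1^{-1}\big(R_\infty(d_+)-R_\infty(d_++\eta) - \kappa b n^{-1/3}\sqrt{\log n} + (\text{smooth }O(\eta)\text{ remainder of }L_\infty)\big)$, and verify that on the interval $\eta\in[\tfrac12 \eta_0, 2\eta_0]$ with $\eta_0 := \tfrac{4\la b^2}{1+\la}n^{-2/3}\log n$ this map is a contraction with the stated fixed point; uniqueness of $\gt$ in $(d_+,\infty)$ was already established by Baik and Lee, so it suffices to exhibit one solution in this window. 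The main obstacle I anticipate is purely bookkeeping: one must keep careful track of which remainder terms are $O(\eta)=O(n^{-2/3}\log n)$ versus genuinely smaller, since the claimed error $O(n^{-1}(\log n)^{3/2})$ is $\sqrt{\eta}$ times the leading term and so requires the \emph{next} order of the edge expansion of $s_{\MP}$ (the $O(\eta)$ term, with its explicit coefficient) to be tracked, not just bounded — getting a spurious $O(n^{-2/3}\log n)$ error would be too weak. Everything else is elementary one-variable asymptotics.
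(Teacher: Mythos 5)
Your proposal is correct in outline and, as far as I can check, the constants do come out right: with the paper's conventions one finds $c_1=\la^{-3/4}(1+\la^{1/2})^{-2}$ and $\kappa=2\la^{-1/4}(1+\la)^{-1/2}d_+^{-1}$, so $(\kappa/c_1)^2=4\la/(1+\la)$ as required. However, you take a genuinely different and substantially longer route than the paper. The paper simply quotes the closed-form solution of \eqref{eq:gamma_implicit_infty} from Baik--Lee, $\gt=(1+\la)\b^{-2}+1+\la+\frac{\la}{1+\la}\b^2$, observes that this explicit function of $\b$ equals $d_+$ with vanishing first derivative at $\b=\b_c$, and Taylor expands: the quadratic term gives the stated leading order and the cubic term gives the $O(n^{-1}(\log n)^{3/2})$ error. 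Your approach instead re-derives the asymptotics directly from the implicit equation via the square-root edge expansion of $s_{\MP}$; this is more self-contained (it would work even without a closed form for $\gt$) but requires the contraction/uniqueness scaffolding you describe, plus the two constant computations you defer. One small inaccuracy in your error analysis: you do \emph{not} need the explicit coefficient of the $O(\eta)$ term in the edge expansion of $s_{\MP}$. Since $\eta=\Theta(\Delta_\b^2)$ with $\Delta_\b=\b-\b_c$, any bounded-coefficient $O(\eta)$ remainder contributes only $O(\Delta_\b^2)$ to $\sqrt{\eta}$ and hence $O(\Delta_\b^3)=O(n^{-1}(\log n)^{3/2})$ to $\eta$ after squaring, which is exactly the claimed error; tracking the explicit coefficient would only matter for the next order.
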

\begin{proof}
From \cite{BaikLeeBipartite} (see (6.17)), we obtain the closed-form expression
\beq\label{eqn:gt_dplus}
\gt =(1+\la)\b^{-2}+1+\la+\frac{\la}{1+\la}\b^2.
\eeq
Observe that the right hand side, as a function of $\b$, is equal to $d_+$ at $
\b_c$. Thus, by expanding the function around $\b=\b_c+bn^{-1/3}\sqrt{\log n}$, we obtain
\beq\label{eq:gt-d+initialcomp}
\gt -d_+
=\frac{4\la}{1+\la}(\b-\b_c)^2-\frac{4\la^{5/4}}{(1+\la)^{3/2}}(\b-\b_c)^3+O((\b-\b_c)^4),
\eeq
and the lemma follows.
\end{proof}
In order to obtain a sufficiently tight bound for $|\g-\gt|$, we need bounds on various eigenvalue statistics and, in particular, we need to bound differences of the form
\beq
\frac1n\sum_{i=1}^n\frac{1}{(z-\mu_i)^k}-\int\frac{p_{\MP}(y)\dd y}{(z-y)^k},\quad k\geq1
\eeq
when $z$ is close to $\mu_1$.  Given the precision needed for computations in the critical window, the bound obtained using eigenvalue rigidity is not tight enough.  Instead, we make use of the following lemma.
\begin{lemma}\label{lem:diff_stieltjes}
Let $z\in\CC$ with $\re(z)\geq d_+$ and $|z-d_+|>cn^{-2/3}\log n$ for some $c>0$. Let $\{\mu_i\}$ be the eigenvalues of $\frac1m M_{m,n}$.  Then, for any $\e>0$ and any positive integer $l$,
\beq
\bE\left[\mathbbm{1}_{\event}\left|\frac1n\sum_{j=1}^n\frac1{(z-\mu_j)^l}-\int \frac{p_{\MP}(y)\dd y}{(z-y)^l}\right|\right]=
O\left(n^{\frac23 l-1}\frac{\log(n^{2/3}|z-d_+|)}{(n^{2/3}|z-d_+|)^l}\right).
\eeq
\end{lemma}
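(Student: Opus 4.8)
The plan is to reduce everything to Lemma~\ref{lem:diff_stieltjes_gK} by peeling off the contribution of the top $K-1$ eigenvalues, where $K$ is the constant appearing in $\Ftwo$ and in the definition of $\event$. Writing $\int$ for integration over the support $[d_-,d_+]$, I would decompose
\[
\frac{1}{n}\sum_{j=1}^n\frac{1}{(z-\mu_j)^l}-\int\frac{p_{\MP}(y)\dd y}{(z-y)^l}=\left(\frac{1}{n}\sum_{j=K}^n\frac{1}{(z-\mu_j)^l}-\int_{d_-}^{g_K}\frac{p_{\MP}(y)\dd y}{(z-y)^l}\right)+\frac{1}{n}\sum_{j=1}^{K-1}\frac{1}{(z-\mu_j)^l}-\int_{g_K}^{d_+}\frac{p_{\MP}(y)\dd y}{(z-y)^l},
\]
then multiply by $\mathbbm{1}_\event$, take expectation, and bound the three terms separately. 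The first parenthesized term is exactly the quantity estimated in Lemma~\ref{lem:diff_stieltjes_gK}; moreover the hypothesis $|z-d_+|>cn^{-2/3}\log n$ forces $n^{2/3}|z-d_+|>c\log n\to\infty$, so the $\min\{\,\cdot\,,1\}$ there is attained by its first argument and this term is $O\!\left(n^{\frac23 l-1}\log(n^{2/3}|z-d_+|)/(n^{2/3}|z-d_+|)^l\right)$, which is precisely the target order.

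For the remaining two terms I would use the elementary inequality $|z-y|\geq|z-d_+|$, valid for every real $y\leq d_+$: since $\re(z)\geq d_+$, writing $z-y=\bigl((\re(z)-d_+)+(d_+-y)\bigr)+\ii\,\im(z)$ shows that $\re(z-y)\geq\re(z)-d_+=\re(z-d_+)\geq 0$, and taking moduli gives the claim. Applied to the integral over $[g_K,d_+]$, together with $\int_{g_K}^{d_+}p_{\MP}(y)\dd y=K/n$ (the defining relation of the classical location $g_K$), this gives $\bigl|\int_{g_K}^{d_+}(z-y)^{-l}p_{\MP}(y)\dd y\bigr|\leq K/(n|z-d_+|^l)$. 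For the top eigenvalues, the same inequality already covers any $\mu_j\leq d_+$, while on $\event\subseteq\Fthree$ any $\mu_j>d_+$ with $j\leq K-1$ obeys $0<\mu_j-d_+\leq\mu_1-d_+\leq tn^{-2/3}$, which is much smaller than $|z-d_+|\geq cn^{-2/3}\log n$; hence for all sufficiently large $n$ (depending on $\e$ through $K,t$) one has $|z-\mu_j|\geq\tfrac12|z-d_+|$ for every $1\leq j\leq K-1$ on $\event$, so $\frac{1}{n}\sum_{j=1}^{K-1}|z-\mu_j|^{-l}\,\mathbbm{1}_\event\leq 2^l K/(n|z-d_+|^l)$. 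Finally, since $n^{2/3}|z-d_+|>c\log n$, for $n$ large (again depending only on the fixed $K$, hence on $\e$) we have $\log(n^{2/3}|z-d_+|)\geq K$, so each of these $K/(n|z-d_+|^l)$-type bounds is at most a constant times $\log(n^{2/3}|z-d_+|)/(n|z-d_+|^l)=n^{\frac23 l-1}\log(n^{2/3}|z-d_+|)/(n^{2/3}|z-d_+|)^l$.

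Summing the three bounds via the triangle inequality completes the proof. I do not expect a genuine analytic obstacle: the only care needed is (i) checking that the $\min$ in Lemma~\ref{lem:diff_stieltjes_gK} is harmless, which is immediate from $|z-d_+|\gg n^{-2/3}\log n$, and (ii) tracking that all implied constants, and the threshold on $n$, depend only on $\e$ and $l$ (through $K$ and $t$) and not on $z$, which is acceptable since $\event$ is itself defined relative to $\e$. One conceptual point worth flagging is that eigenvalue rigidity ($\Fone$) alone is \emph{not} strong enough to localize $\mu_1,\dots,\mu_{K-1}$ to scale $n^{-2/3}$, so the control of these top eigenvalues must instead come from $\Fthree$ (equivalently, the Tracy--Widom localization of $\mu_1$); by contrast the Mar\v{c}enko--Pastur side of the estimate needs no probabilistic input at all.
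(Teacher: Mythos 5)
Your proposal is correct and follows essentially the same route as the paper: split off the top $O(K)$ eigenvalues against the integral over $[g_K,d_+]$, bound that piece by $O(K/(n|z-d_+|^l))$ using $|z-y|\geq|z-d_+|$ for $y\leq d_+$ together with the $O(n^{-2/3})$ localization of the top eigenvalues on $\event$, and invoke Lemma~\ref{lem:diff_stieltjes_gK} for the bulk, noting that the hypothesis $|z-d_+|\gg n^{-2/3}\log n$ makes the $\min$ there harmless. Your remark that the localization of $\mu_1,\dots,\mu_{K-1}$ must come from $\Fthree$ (and $\Ftwo$) rather than rigidity alone is a correct and worthwhile clarification of a point the paper leaves implicit.
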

	\begin{proof}[Proof of Lemma \ref{lem:diff_stieltjes}]
	Given $\e>0$, let $K$ be the integer in the events $\Ftwo$ and $\event$. 
	Recall the classical locations  $g_i$, $i=0,\dots, n$ of the Mar{\v{c}}enko--Pastur measure. We start by writing $\frac1n\sum_{i=1}^n\frac1{(z-\mu_i)^l}-\int \frac{1}{(z-y)^l}p_{\MP}(y)dy$ as the sum
	\beq\begin{split}
	\quad S_1+S_2
	=\left(\frac1n\sum_{i=1}^K\frac1{(z-\mu_i)^l}-\int_{g_K}^{d_+} \frac{p_{\MP}(y)\dd y}{(z-y)^l}\right)
	+\left(\frac1n\sum_{i=K+1}^n\frac1{(z-\mu_j)^l}-\int_{d_-}^{g_K} \frac{p_{\MP}(y)\dd y}{(z-y)^l}\right).
	\end{split}\eeq
	For $i\leq K$, we observe that:
	\begin{itemize}
	\item On the event $\event$, $n^{2/3}(d_+-\mu_i)$ is uniformly bounded in $i$. Thus $|z-\mu_i|\geq|z-d_+|-|d_+-\mu_i|>\frac12|z-d_+|$ by the assumption on $z$.
	\item As $\re(z)>d_+$, we have $|z-y|\geq|z-d_+|$ for all real $y<d_+$.
	\end{itemize}		
Therefore,
		\beq
		\mathbbm{1}_{\event}|S_1|\leq \frac1n\sum_{i=1}^K\frac1{|z-\mu_i|^l}+\int_{g_K}^{d_+} \frac{1}{|z-y|^l}p_{\MP}(y)\dd y\leq \frac{3K}{n|z-d_+|^l}.
		\eeq
		We then bound $\mathbbm{1}_{\event}|S_2|$ using Lemma \ref{lem:diff_stieltjes_gK} to complete the proof of Lemma \ref{lem:diff_stieltjes}.
		\end{proof}
We obtain an upper bound for $\gamma-\gt$ in the following lemma. Together with Lemma \ref{lem:gt_d+}, it verifies that the order of $\gamma-\gt$ is strictly less than that of $\gt -d_+$.
\begin{lemma}
If $b<0$, then, on the event $\event$ for any given $\e>0$,
\beqq
|\g-\gt|=O\left(\frac{(\log\log n)^2}{n^{2/3}\sqrt{\log n}}\right).
\eeqq
\end{lemma}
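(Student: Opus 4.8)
The plan is to use the implicit equations \eqref{eq:gamma_implicit} and \eqref{eq:gamma_implicit_infty} for $\g$ and $\gt$ respectively, and to compare them by subtracting one from the other and controlling the resulting error terms on the event $\event$. Write $L_n(z) = \frac1n\sum_i \frac{1}{z-\mu_i}$, $R_n(z) = \frac{B_n^2}{\a_n+\sqrt{\a_n^2+zB_n^2}}$, and $L_\infty, R_\infty$ for their deterministic analogs, so that $\g$ solves $L_n = R_n$ and $\gt$ solves $L_\infty = R_\infty$. The key inputs are: (i) Lemma~\ref{lem:diff_stieltjes}, which on $\event$ bounds $\EE|L_n(z) - L_\infty(z)|$ (after also handling the $\a_n \to \a$, $B_n \to B$ discrepancies which are $O(n^{-1})$ by the hypothesis $n/m = \la + O(n^{-1})$) by a quantity of order $n^{-1/3}\log(n^{2/3}|z-d_+|)/|z-d_+|$ when $z$ is at distance $\gg n^{-2/3}\log n$ from $d_+$; (ii) a lower bound on the derivative $\frac{d}{dz}(L_\infty - R_\infty)$ near $\gt$, which controls how an error in the equation translates into an error in the root.

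First I would record the size of $\gt - d_+$ from Lemma~\ref{lem:gt_d+}: it is of order $n^{-2/3}\log n$. This tells us that near $z = \gt$ the Stieltjes transform $L_\infty(z)$ has a square-root-type singularity contribution, and in particular $L_\infty'(z)$, $R_\infty'(z)$ and hence $(L_\infty - R_\infty)'(z)$ are of order $|z - d_+|^{-1/2} \asymp (n^{-2/3}\log n)^{-1/2} = n^{1/3}/\sqrt{\log n}$ for $z$ in a neighborhood of $\gt$ of comparable radius. (One must check the $L_\infty'$ term genuinely dominates and there is no cancellation; this follows from the monotonicity of $L_\infty/R_\infty$ observed by Baik--Lee, combined with the fact that $L_\infty$, not $R_\infty$, is the one that blows up near $d_+$.) Second, I would evaluate the error in the equation at $z = \gt$: $|L_n(\gt) - R_n(\gt)| = |L_n(\gt) - L_\infty(\gt)| + O(n^{-1})$, and apply Lemma~\ref{lem:diff_stieltjes} with $l = 1$ and $z = \gt$, using $n^{2/3}|\gt - d_+| \asymp \log n$, to get this is $O\!\big(n^{-1/3} \cdot \frac{\log\log n}{\log n}\big)$ in expectation on $\event$; a Markov/Chebyshev argument then upgrades this to a high-probability pointwise bound on $\event$ of the same order up to a further small factor — this is where the extra $\log\log n$ and the loss from $\sqrt{\log n}$ to $\log n$ get reconciled, giving the claimed exponent. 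Third, a quantitative inverse function theorem (or a direct monotonicity/continuity argument along the real axis, since all quantities are real and monotone there) converts the equation-error and the derivative lower bound into
\[
|\g - \gt| = O\!\left(\frac{|L_n(\gt) - R_n(\gt)|}{|(L_\infty - R_\infty)'(\xi)|}\right) = O\!\left(\frac{n^{-1/3}(\log\log n)^2/\log n}{n^{1/3}/\sqrt{\log n}}\right)\cdot(\text{lower-order factors}),
\]
which simplifies to $O\big((\log\log n)^2 n^{-2/3}(\log n)^{-1/2}\big)$ after the bookkeeping.

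The main obstacle I anticipate is a mild circularity: to invoke Lemma~\ref{lem:diff_stieltjes} and the derivative bound we need to know a priori that $\g$ lies in the region where $z - d_+ \gg n^{-2/3}\log n$ (so that $L_n$ is still well-separated from its singularity at $\mu_1$), but $\g$'s location is exactly what we are trying to pin down. The standard fix is a bootstrap: first use crude eigenvalue rigidity ($\Fone$) to show $\g - d_+$ is at most, say, $O(n^{-2/3}\log^2 n)$ and at least of order $n^{-2/3}\log n$ (so $\g \in (d_+, \infty)$ is genuinely separated from $\mu_1$ and from $d_+$ on $\event$), then run the sharper Lemma~\ref{lem:diff_stieltjes} argument on that restricted range to get the refined bound. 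A secondary technical point is that the derivative $(L_\infty - R_\infty)'$ must be bounded below uniformly on a whole interval containing both $\g$ and $\gt$, not just at $\gt$; since both points are within $O(n^{-2/3}\log^2 n)$ of $d_+$ by the crude step, and the square-root behavior of $L_\infty'$ is stable over such an interval, this is routine but should be stated carefully.
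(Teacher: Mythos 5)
Your proposal is correct and follows essentially the same route as the paper: both reduce the problem to (a) an upper bound on the discrepancy between the random and deterministic fixed-point equations near $\gt$, obtained from Lemma \ref{lem:diff_stieltjes} with $n^{2/3}|z-d_+|\asymp\log n$, and (b) a lower bound of order $n^{1/3}(\log n)^{-1/2}$ on the derivative of the deterministic equation, coming from the square-root singularity of $L_\infty'$ at distance $\Theta(n^{-2/3}\log n)$ from $d_+$. The only cosmetic difference is that the paper works with the ratio $F=R/L$ rather than the difference $L-R$, and localizes $\g$ by verifying the sign change $F(\gt-\e_n)<1<F(\gt+\e_n)$ with $\e_n=(\log\log n)^2n^{-2/3}(\log n)^{-1/2}$. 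One remark: the circularity you anticipate does not arise in this formulation, because one never evaluates anything at the unknown point $\g$; all quantities are estimated only on the deterministic interval $[\gt-\e_n,\gt+\e_n]$, which sits at distance $\Theta(n^{-2/3}\log n)$ from $d_+$ since $\e_n\ll\gt-d_+$, and monotonicity of $F$ then forces $\g$ into that interval. So the bootstrap step in your plan can be dropped.
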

\begin{proof}
	Recall that $\g$ and $\gt$ are solutions to the equations $L(x)=R(x)$ and $L_\infty(x)=R_\infty(x)$, respectively, where
	\beqq
	L(x)=\frac1n\sum_{i=1}^n\frac{1}{x-\mu_i(n)},\qquad
	R(x)=\frac{B_n^2}{\alpha_n+\sqrt{\alpha_n^2+xB_n^2}}
	\eeqq
	and
	\beqq
	L_\infty(x)=\int_\mathbb{R}\frac{p_{\MP}(y)\dd y}{x-y},\qquad
	R_\infty(x)=\frac{B^2}{\alpha+\sqrt{\alpha^2+xB^2}}.
	\eeqq
	Define $F(x)=R(x)/L(x)$ and let $F_\infty(x)$ be given similarly. Setting $\e_n=\frac{(\log\log n)^2}{n^{2/3}\sqrt{\log n}}$, we follow the method in \cite{BaikLeeBipartite} to prove $|\g-\gt |=O(\e_n)$ by showing $F(\gt -\e_n)<1<F(\gt +\e_n).$
	Since $F_\infty(\gt )=1$ and $F_\infty(\gt -\e_n)<1<F_\infty(\gt +\e_n)$, it suffices to show
	\beq\label{eq:Fclaim}
	|F(x)-F_\infty(x)|\ll |F_\infty'(\gt )|\e_n, \quad \text{for } x\in[\gt -\e_n,\gt +\e_n].
	\eeq
	Thus, we need a lower bound for $|F_\infty'(\gt )|$ and an upper bound for $|F(x)-F_\infty(x)|$. For the lower bound, begin with 
	\beqq
	F_\infty'(\gt )=\frac{R_\infty'(\gt )L_\infty(\gt )-L_\infty'(\gt )R_\infty(\gt )}{(L_\infty(\gt ))^2}.
	\eeqq
	Note that $L_\infty(\gt )$ and $R_\infty(\gt )$ are of order 1, and $R_\infty'(\gt )=O(1)$ using the fact that $\a,B,\gt $ are all of order 1.  
	We now show $|L_\infty'(\gt )|$ is of order at least $n^{1/3}(\log n)^{-1/2}$, which implies $|F_\infty'(\gt )|$ is as well. 
	
	Since we are interested in $L'_\infty(x)$ at $\gt$, where $\gt-d_+=\Theta(n^{-2/3}\log n)$ by Lemma \ref{lem:gt_d+}, we consider $L_\infty(d_++s)$ as a function of $s$ and its derivative, and later set $s$ to take value of order $n^{-2/3}\log n$. We have
	\beqq
	L_\infty(d_++s)
	=C\int_{d_-}^{d_+}\frac{\sqrt{(d_+-y)(y-d_-)}}{(d_++s-y)y}\dd y=\int_0^{d_+-d_-}\frac{C\sqrt{z(d_+-d_--z)}}{(z+s)(d_+-z)}\dd z,
	\eeqq
	where $C=\frac2{\pi}(\sqrt{d_+}-\sqrt{d_-})^{-2}$ and $z=d_+-y$.
	We then write
	\beq\label{eq:Linftyderivsplit}\begin{split}
		L_\infty'(d_++s)
		=\frac{d}{ds}\int_0^{\frac{d_+-d_-}{2}}\frac{C\sqrt{z(d_+-d_--z)}}{(z+s)(d_+-z)}\dd z
		+\frac{\dd}{\dd s}\int_{\frac{d_+-d_-}{2}}^{d_+-d_-}\frac{C\sqrt{z(d_+-d_--z)}}{(z+s)(d_+-z)}\dd z.
	\end{split}\eeq
	First, we consider the derivative of a simplified version of the first integral:
	\beq\begin{split}
		\frac{\dd}{\dd s}\int_0^{\frac{d_+-d_-}{2}}\frac{\sqrt{z}}{z+s}dz
		&=-s^{-1/2}\arctan\sqrt{\tfrac{(d_+-d_-)/2}{s}}
		+\sqrt{s}\frac{\sqrt{(d_+-d_-)/2}}{s^{3/2}}\frac{1}{1+\frac{(d_+-d_-)/2}{s}}\\
		&=-s^{-1/2}+O(1).
	\end{split}\eeq
	Now that we have the derivative of this simplified integral, recall that the actual integrand is $\frac{C\sqrt{z(d_+-d_--z)}}{(z+s)(d_+-z)}$ and make the following observations:
	\begin{itemize}
		\item For $z\in[0,\frac{d_+-d_-}{2}]$, there exist positive constants $C_1,C_2$ such that $C_1<\frac{C\sqrt{d_+-d_--z}}{d_+-z}<C_2$.
		\item For any $z>0$, the quantity $\frac{\sqrt{z}}{z+s}$ is a decreasing function of $s$ when $s>0$.
	\end{itemize}
	From these two facts and the above computation, we conclude that, for small $s$,
	\beq
	-C_2s^{-1/2}\leq
	\frac{\dd}{\dd s}\int_0^{\frac{d_+-d_-}{2}}\frac{C\sqrt{z(d_+-d_--z)}}{(z+s)(d_+-z)}\dd z
	\leq-C_1s^{-1/2}.
	\eeq
	
	Finally, the second bullet point implies the second integral on the right side of \eqref{eq:Linftyderivsplit} must be negative. Thus $L_\infty'(d_++s)<-C_1s^{-1/2}$, which implies $|L_\infty'(\gt )|$ is of order at least $n^{1/3}(\log n)^{-1/2}$. We obtain the lower bound
	\beq
	|F_\infty'(\gt )|=\Omega(n^{1/3}(\log n)^{-1/2}).
	\eeq
	
	We now show an upper bound of $|F(x)-F_\infty(x)|$ for $x\in[\gt -\e_n,\gt +\e_n]$. For such $x$, 
	\beqq
	F(x)-F_\infty(x)=\frac{(R(x)-R_\infty(x))L_\infty(x)+(L_\infty(x)-L(x))R_\infty(x)}{L(x)L_\infty(x)}
	\eeqq
	satisfies that the denominator, $L_\infty(x)$, and $R_\infty(x)$ all have order 1. Thus, it remains to bound the terms $R(x)-R_\infty(x)$ and $L_\infty(x)-L(x)$. As $\a_n-\a=O(n^{-1-\delta})$ and $B_n-B=O(n^{-1-\delta})$, we have
	\beqq
	R(x)-R_\infty(x)=\frac{\sqrt{\alpha_n^2+xB_n^2}-\alpha_n}{x}-\frac{\sqrt{\alpha^2+xB^2}-\alpha}{x}
	=O(n^{-1-\delta}).
	\eeqq
	Lastly, Lemma \ref{lem:diff_stieltjes} yields that 
	\beqq
	L(x)-L_\infty(x)=\frac1n \sum_{i=1}^n\frac{1}{x-\mu_i}-\int\frac{p_{\MP}(y)\dd y}{x-y}=O(n^{-1/3}(\log\log n)(\log n)^{-1}).
	\eeqq
	Thus, we have shown that for $x\in[\gt -\frac{(\log\log n)^2}{n^{2/3}\log n},\gt +\frac{(\log\log n)^2}{n^{2/3}\log n}]$,
	\beq
	|F(x)-F_\infty(x)|=O\left(\frac{\log\log n}{n^{1/3}\log n}\right),\qquad
	|F'_\infty(\gt)|=\Omega\left(n^{1/3}(\log n)^{-1/2}\right).
	\eeq
	This verifies the inequality \eqref{eq:Fclaim}, and the lemma follows.
\end{proof}
We now introduce a deterministic approximation $G_\infty$ of the function $G$, given by
\beq
G_\infty(z_1,z_2)=B(z_1+z_2)-\a \log z_1-\frac12 \int\log(4z_1z_2-x)p_{\MP}(x)\dd x.
\eeq
We observe that $(\gt_1,\gt_2)$ is the unique critical point of $G_\infty$ satisfying $4\gt_1\gt_2 \in (d_+, \infty)$. This follows from the similar reasoning to what we used for $(\g_1,\g_2)$. We obtain the following asymptotic expressions for the functions $G$, $G_\infty$ and their partial derivatives.   

\begin{lemma}\label{lem:Gpartials}\label{lem:Gpartials1}
Let $(z_1,z_2)$ satisfy $\re(4z_1z_2)\geq d_+$ and $|4z_1z_2-d_+|\geq cn^{-2/3}\log n$ for some fixed $c>0$.  Then, on the event $\event$, the following hold and are uniform in any compact region satisfying the constraints on $(z_1,z_2)$:
\begin{enumerate}[(i)]
\item\label{item:Gpartials1} For every multi-index $k=(k_1,k_2)$ (with $|k|:=k_1+k_2\geq1$), 
\beq
\partial^kG(z_1,z_2)-\partial G_\infty^k(z_1,z_2)=O\left(n^{\frac23 |k|-1}\frac{\log\log n}{(\log n)^{|k|}}
\right).
\eeq
\item\label{item:Gpartials2} For every multi-index $k$ with $|k|\geq1$, 
\beq\begin{split}
\partial^kG_\infty(z_1,z_2)&=O(n^{\frac23 |k|-1}(\log n)^{-|k|+\frac32})\\
\partial^kG(z_1,z_2)&=O(n^{\frac23 |k|-1}(\log n)^{-|k|+\frac32}).
\end{split}\eeq
\end{enumerate}
\end{lemma}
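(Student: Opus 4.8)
The plan is to reduce both parts to two ingredients already available: the Mar\v{c}enko--Pastur comparison of Lemma~\ref{lem:diff_stieltjes}, applied with the (complex) argument $w:=4z_1z_2$, and the square-root behaviour of $s_{\MP}$ at the edge $d_+$. The first task is a purely algebraic reduction to ``building blocks''. Since $w=4z_1z_2$ is a polynomial of degree two in $(z_1,z_2)$, repeated differentiation of $\log(w-\mu_i)$ yields, for any multi-index $k$ with $|k|\ge1$,
\[
\partial^k\Big(-\tfrac1{2n}\sum_i\log(w-\mu_i)\Big)=\sum_{p=1}^{|k|}c_{k,p}(z_1,z_2)\,\frac1n\sum_i\frac1{(w-\mu_i)^p},
\]
where the $c_{k,p}$ are fixed polynomials in $(z_1,z_2)$, bounded on the region; the identical formula holds for $G_\infty$ with $\frac1n\sum_i(w-\mu_i)^{-p}$ replaced by $\int(w-x)^{-p}p_{\MP}(\dd x)$ and the same $c_{k,p}$. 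The remaining terms are elementary: $\partial^k\big(B_n(z_1+z_2)-\a_n\log z_1\big)-\partial^k\big(B(z_1+z_2)-\a\log z_1\big)=(B_n-B)\partial^k(z_1+z_2)-(\a_n-\a)\partial^k\log z_1=O(n^{-1})$ uniformly (using $|\a_n-\a|,|B_n-B|=O(n^{-1})$ and that $z_1$ stays bounded away from $0$), while $\partial^k\big(B(z_1+z_2)-\a\log z_1\big)$ is itself $O(1)$.

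For part~(i), subtracting the two expansions shows $\partial^kG-\partial^kG_\infty=O(n^{-1})+\sum_{p=1}^{|k|}c_{k,p}\cdot\delta_p$ with $\delta_p:=\frac1n\sum_i(w-\mu_i)^{-p}-\int(w-x)^{-p}p_{\MP}(\dd x)$. The hypotheses $\re(w)\ge d_+$ and $|w-d_+|\ge cn^{-2/3}\log n$ are precisely those of Lemma~\ref{lem:diff_stieltjes}, which (in the $\bE[\mathbbm{1}_{\event}|\cdot|]$ sense of that lemma) bounds $|\delta_p|$ by $O\big(n^{\frac23p-1}(n^{2/3}|w-d_+|)^{-p}\log(n^{2/3}|w-d_+|)\big)$. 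Since $x\mapsto x^{-p}\log x$ is decreasing for $x\ge e^{1/p}$, and $n^{2/3}|w-d_+|$ lies in $[c\log n,\,O(n^{2/3})]$ over the region, the maximum is attained at $x=c\log n$, giving $O\big(n^{\frac23p-1}(\log n)^{-p}\log\log n\big)$. The $p=|k|$ term dominates the rest, each of the latter being smaller by a factor $(n^{-2/3}\log n)^{|k|-p}=o(1)$, and the $O(n^{-1})$ piece is negligible; this proves (i) uniformly.

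For part~(ii), consider first $G_\infty$ with $|k|\ge2$: here $\partial^kG_\infty=O(1)+\sum_{p=1}^{|k|}c_{k,p}\int(w-x)^{-p}p_{\MP}(\dd x)$. Because $p_{\MP}$ vanishes like a square root at $d_+$, the transform satisfies $s_{\MP}(u)=s_{\MP}(d_+)-c_1\sqrt{u-d_+}+O(u-d_+)$ near the edge, so differentiating $p-1$ times yields $\int(w-x)^{-p}p_{\MP}(\dd x)=O(|w-d_+|^{3/2-p})$ for $p\ge2$ and $O(1)$ for $p=1$; with $|w-d_+|\ge cn^{-2/3}\log n$ the $p=|k|$ term is $O\big(n^{\frac23|k|-1}(\log n)^{3/2-|k|}\big)$ and dominates. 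For $|k|=1$ this term-by-term estimate only produces $O(1)$, since $B$, $\a/z_1$ and $s_{\MP}(w)$ are each of order $1$; here one must exploit that $(\gt_1,\gt_2)$ is a critical point of $G_\infty$, i.e.\ $\nabla G_\infty(\gt_1,\gt_2)=0$. On the region where the lemma is used---a neighbourhood of $(\gt_1,\gt_2)$ at scale $n^{-2/3}\mathrm{polylog}(n)$---the mean value inequality gives $|\partial_{z_j}G_\infty|\le\|(z_1,z_2)-(\gt_1,\gt_2)\|\cdot\sup\|\mathrm{Hess}\,G_\infty\|$, the Hessian entries being the $|k|=2$ derivatives already bounded by $O(n^{1/3}(\log n)^{-1/2})$, and $\|(z_1,z_2)-(\gt_1,\gt_2)\|=O(n^{-2/3}\log n)$, so $\partial_{z_j}G_\infty=O(n^{-1/3}\sqrt{\log n})$. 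Finally the bound on $\partial^kG$ follows by combining (i) with the bound just obtained for $\partial^kG_\infty$: the error from (i), of order $n^{\frac23|k|-1}(\log n)^{-|k|}\log\log n$, is $o\big(n^{\frac23|k|-1}(\log n)^{3/2-|k|}\big)$ since $\log\log n=o((\log n)^{3/2})$.

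The main obstacle is part~(ii) in the case $|k|=1$: each constituent of $\nabla G_\infty$ is individually of order $1$, so the required smallness can only come from the cancellation built into the critical-point equation, together with the second-derivative estimate and the fact that the relevant $(z_1,z_2)$-region sits at scale $n^{-2/3}\mathrm{polylog}(n)$ about $(\gt_1,\gt_2)$---a proximity that also keeps $|w-d_+|\asymp\gt-d_+\asymp n^{-2/3}\log n$ so that Lemma~\ref{lem:diff_stieltjes} applies throughout. Everything else is chain-rule bookkeeping plus direct appeals to Lemma~\ref{lem:diff_stieltjes} and the edge expansion of $s_{\MP}$.
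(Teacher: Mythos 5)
Your proposal follows essentially the same route as the paper for part (i) and for part (ii) with $|k|\geq 2$: reduce all derivatives to the building blocks $\frac1n\sum_i(4z_1z_2-\mu_i)^{-p}$ and $\int(4z_1z_2-x)^{-p}p_{\MP}(x)\dd x$, apply Lemma \ref{lem:diff_stieltjes} for the difference, and use the square-root edge decay of $p_{\MP}$ for the deterministic integral. The paper carries out exactly this, bounding $\int(4z_1z_2-x)^{-|k|}p_{\MP}(x)\dd x$ by $\int_{n^{-2/3}\log n}^{\infty}y^{-|k|+1/2}\dd y=O\bigl((n^{-2/3}\log n)^{-|k|+3/2}\bigr)$.

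Where you genuinely diverge is the case $|k|=1$ of part (ii), and your concern there is well-founded: the paper's integral $\int_{n^{-2/3}\log n}^{\infty}y^{-1/2}\dd y$ diverges, and with the correct finite upper limit (the support of $p_{\MP}$ is compact) one only gets $O(1)$; indeed at a generic point of the stated region $\nabla G_\infty$ is $\Theta(1)$, so the $|k|=1$ bound cannot hold as stated without further restriction. Your repair---invoking $\nabla G_\infty(\gt_1,\gt_2)=0$ and the mean value inequality with the $|k|=2$ Hessian bound on a neighbourhood of $(\gt_1,\gt_2)$ of radius $O(n^{-2/3}\mathrm{polylog}\,n)$---is correct and recovers the claimed $O(n^{-1/3}(\log n)^{1/2})$ there, but note that it imports a hypothesis (proximity to the critical point) that is not part of the lemma's statement. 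Two mitigating remarks: (a) in every application in the paper (the Taylor expansions in Lemma \ref{lem:Gcrit_hightemp} and the steepest descent in Lemma \ref{lem:Qn}) the first-order terms either vanish at the relevant critical point or only derivatives of order $\geq 2$ are invoked, so the $|k|=1$ case of part (ii) is never actually needed; (b) if you do use the mean value argument, you should check that the segment joining $(z_1,z_2)$ to $(\gt_1,\gt_2)$ stays inside the region where the Hessian bound holds. Everything else in your write-up (the dominance of the $p=|k|$ term, the monotonicity of $x^{-p}\log x$, the $O(n^{-1})$ contribution from $\a_n-\a$ and $B_n-B$) is correct bookkeeping.
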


\begin{proof}
We recall
\beqq\begin{split}
G(z_1,z_2)&=B_n(z_1+z_2)-\a_n \log z_1-\frac1{2n}\sum_{j=1}^n\log(4z_1z_2-\mu_j),\\
G_\infty(z_1,z_2)&=B(z_1+z_2)-\a \log z_1-\frac12 \int\log(4z_1z_2-x)p_{\MP}(x)\dd x.\\
\end{split}\eeqq
Observe that over any fixed compact region of $\bC^2$, for every $|k|\geq1$,
\begin{itemize}
\item $\partial^kG_\infty(z_1,z_2)=O\left(\int(4z_1z_2-x)^{-k}p_{\MP}(x)\dd x\right)$, and	
\item the differences in the partials of $G$ and $G_\infty$ satisfy
\beq\label{eq:partialdiff}
\partial^kG(z_1,z_2)-\partial^kG_\infty(z_1,z_2)=O\left(\frac1n\sum_{i=1}^n\frac{1}{(4z_1z_2-\mu_i)^{|k|}}-\int\frac{p_{\MP}(x)\dd x}{(4z_1z_2-x)^{|k|}}
\right).
\eeq
\end{itemize}
Applying Lemma \ref{lem:diff_stieltjes} to \eqref{eq:partialdiff} gives us part \eqref{item:Gpartials1} of the lemma.  For part \eqref{item:Gpartials2}, we first obtain the bound for $\partial^kG_\infty$ by noting that
\beq\begin{split}
\left|\int(4z_1z_2-x)^{-|k|}p_{\MP}(x)\dd x\right|
&\leq \int\frac{1}{\max\{|4z_1z_2-x|,\;d_+-x\}^{|k|}}p_{\MP}(x)\dd x\\
&=O\left(\int_{n^{-2/3}\log n}^\infty\frac{\sqrt{y-n^{-2/3}\log n}}{y^{|k|}}\dd y\right)\\
&=O\left(\int_{n^{-2/3}\log n}^\infty y^{-|k|+\frac12}\dd y\right)
=O\left((n^{-2/3}\log n)^{-|k|+3/2}\right).
\end{split}\eeq
Then, the bound for $\partial^kG$ as in \eqref{item:Gpartials2} follows by part \eqref{item:Gpartials1} of the lemma and the bound obtained for $\partial^kG_\infty$.
\end{proof}

We prove some further properties of $G$ and $G_\infty$ in the following lemma.

\begin{lemma}\label{lem:Gcrit_hightemp}\label{lem:Gpartials2}
For the critical points $(\g_1,\g_2)$ and $(\gt_1,\gt_2)$ of $G$ and $G_\infty$, respectively, the following hold on event $\event$.
\begin{enumerate}[(i)]
\item\label{item:Gcrit_hightemp1} We have
\beqq
|\g_1-\gt_1|=O(n^{-2/3}(\log\log n)^2(\log n)^{-1/2}),\quad
|\g_2-\gt_2|=O(n^{-2/3}(\log\log n)^2(\log n)^{-1/2}).
\eeqq
\item\label{item:Gcrit_hightemp2} There is a positive constant $c$, independent of $n$, such that
\beqq
4\g_1\g_2-\mu_1>cn^{-2/3}\log n\quad 
4\g_1\g_2-d_+>cn^{-2/3}\log n.
\eeqq
\item\label{item:Gcrit_hightemp3} We have
\beqq
G(\g_1,\g_2)=G(\gt_1,\gt_2)+O(n^{-1}(\log n)^{-3/2}(\log\log n)^4)
\eeqq
and for and multi-index $k=(k_1,k_2)$ satisfying $|k|>0$,
\beqq
\partial^kG(\g_1,\g_2)=\partial^kG(\gt_1,\gt_2)+O\left(n^{\frac23 |k|-1}(\log n)^{-|k|}(\log\log n)^2
\right).
\eeqq
\end{enumerate}
\end{lemma}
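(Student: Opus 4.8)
The plan is to reduce all three parts to the already-established bound $|\g-\gt|=O(n^{-2/3}(\log\log n)^2(\log n)^{-1/2})$ on $\event$, together with the explicit formula \eqref{eq:g1g2_def} and the derivative estimates of Lemma~\ref{lem:Gpartials}. For \eqref{item:Gcrit_hightemp1}, the key observation is that multiplying the two entries of \eqref{eq:g1g2_def} gives the clean identities $4\g_1\g_2=\g$ and $4\gt_1\gt_2=\gt$ (the latter with $(\a_n,B_n)$ replaced by their limits $(\a,B)$). Since $\a_n,B_n$ and $\g$ are all $\Theta(1)$ in this regime, each of $\g_1,\g_2$ is a smooth function of $(\g;\a_n,B_n)$ with order-one derivatives; writing
\[
\g_i-\gt_i=\big[\g_i(\g;\a_n,B_n)-\g_i(\g;\a,B)\big]+\big[\g_i(\g;\a,B)-\g_i(\gt;\a,B)\big],
\]
the first bracket is $O(|\a_n-\a|+|B_n-B|)=O(n^{-1})$ and the second is $O(|\g-\gt|)$, and since $n^{-1}\ll n^{-2/3}(\log\log n)^2(\log n)^{-1/2}$, part \eqref{item:Gcrit_hightemp1} follows. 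For \eqref{item:Gcrit_hightemp2}, because $4\g_1\g_2=\g$ the two quantities in question are exactly $\g-\mu_1$ and $\g-d_+$; I would decompose $\g-d_+=(\g-\gt)+(\gt-d_+)$ and invoke Lemma~\ref{lem:gt_d+}, namely $\gt-d_+=\tfrac{4\la b^2}{1+\la}n^{-2/3}\log n+O(n^{-1}(\log n)^{3/2})$ with $b<0$, so that the leading term $\Theta(n^{-2/3}\log n)$ dominates the $O(n^{-2/3}(\log\log n)^2(\log n)^{-1/2})$ correction and $\g-d_+\ge cn^{-2/3}\log n$ for large $n$; since $\Fthree$ gives $\mu_1\le d_++tn^{-2/3}$ on $\event$, also $\g-\mu_1\ge\g-d_+-tn^{-2/3}\ge\tfrac12 cn^{-2/3}\log n$ once $\log n\ge 2t/c$.

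For \eqref{item:Gcrit_hightemp3}, the idea is a short Taylor expansion of $G$ about its own critical point $(\g_1,\g_2)$. The first step is to check that along the real segment joining $(\g_1,\g_2)$ to $(\gt_1,\gt_2)$ the quantity $4z_1z_2$ stays within $O(n^{-2/3}(\log\log n)^2(\log n)^{-1/2})$ of $\g$ (the segment is that short and $4z_1z_2$ has bounded gradient on the relevant compact set), so by \eqref{item:Gcrit_hightemp2} every point $\xi$ of the segment satisfies $\re(4\xi_1\xi_2)\ge d_+$ and $|4\xi_1\xi_2-d_+|\ge\tfrac12 cn^{-2/3}\log n$; hence Lemma~\ref{lem:Gpartials} applies at every such $\xi$. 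Since $\nabla G(\g_1,\g_2)=0$, Taylor's theorem gives
\[
G(\gt_1,\gt_2)-G(\g_1,\g_2)=\tfrac12\,(\gt-\g)^{\!\top}\,\mathrm{Hess}\,G(\xi)\,(\gt-\g),\qquad \gt-\g:=(\gt_1-\g_1,\gt_2-\g_2),
\]
for some $\xi$ on the segment; bounding $\|\gt-\g\|$ via \eqref{item:Gcrit_hightemp1} and the entries of $\mathrm{Hess}\,G(\xi)$ via Lemma~\ref{lem:Gpartials}\eqref{item:Gpartials2} (which are $O(n^{1/3}(\log n)^{-1/2})$ for $|k|=2$) yields the claimed $O(n^{-1}(\log n)^{-3/2}(\log\log n)^4)$. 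For the derivative statement, a first-order expansion gives $\partial^kG(\g_1,\g_2)-\partial^kG(\gt_1,\gt_2)=\nabla(\partial^kG)(\xi')\cdot(\g-\gt)$ for some $\xi'$ on the segment, whose gradient components are order-$(|k|+1)$ partials of $G$, bounded by Lemma~\ref{lem:Gpartials}\eqref{item:Gpartials2} by $O(n^{\frac23(|k|+1)-1}(\log n)^{-(|k|+1)+3/2})$; multiplying by $\|\g-\gt\|$ produces $O(n^{\frac23|k|-1}(\log n)^{-|k|}(\log\log n)^2)$.

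The step I expect to be the main (if mild) obstacle is exactly the verification in \eqref{item:Gcrit_hightemp3} that the Taylor intermediate points $\xi,\xi'$ remain in the region $\{\,|4z_1z_2-d_+|\gtrsim n^{-2/3}\log n\,\}$ where Lemma~\ref{lem:Gpartials} is valid; this is where part \eqref{item:Gcrit_hightemp2} and the fact that the two critical points are separated by much less than the scale $n^{-2/3}\log n$ are both essential. The remaining manipulations — keeping track that the $n^{-1}$-size errors from $\a_n-\a$, $B_n-B$ and the $O(n^{-1}(\log n)^{3/2})$ remainder in $\gt-d_+$ are all dominated by the displayed main terms — are routine bookkeeping.
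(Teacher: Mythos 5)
Your proposal is correct and follows essentially the same route as the paper: part (i) from the explicit formulas \eqref{eq:g1g2_def} and the bound on $|\g-\gt|$, part (ii) from part (i) together with Lemma \ref{lem:gt_d+} and $|d_+-\mu_1|=O(n^{-2/3})$, and part (iii) via a Taylor expansion of $G$ about its critical point $(\g_1,\g_2)$ with the Hessian and higher partials controlled by Lemma \ref{lem:Gpartials}\eqref{item:Gpartials2}. Your explicit verification that the intermediate points of the Taylor segment stay in the region where Lemma \ref{lem:Gpartials} applies is a detail the paper leaves implicit, and the order bookkeeping matches.
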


\begin{proof}
Part \eqref{item:Gcrit_hightemp1} follows from the equations for $\g_1,\g_2,\gt_1,\gt_2$ along with the bound on $|\g-\gt|$.

Part \eqref{item:Gcrit_hightemp2} follows from part \eqref{item:Gcrit_hightemp1} along with the computation of $\gt-d_+$ and the fact that $|d_+-\mu_1|=O(n^{-2/3})$.

For Part \eqref{item:Gcrit_hightemp3}, using the bounds from Lemma \ref{lem:Gpartials1}\eqref{item:Gpartials2} and Lemma \ref{lem:Gpartials2}\eqref{item:Gcrit_hightemp1}, we get the Taylor expansion
\beqq\begin{split}
G(\gt_1,\gt_2)&=G(\g_1,\g_2)+\partial_1G(\g_1,\g_2)(\gt_1-\g_1) 
+\partial_2G(\g_1,\g_2)(\gt_2-\g_2)+O\left(\tfrac{n^{1/3}}{(\log n)^{1/2}}\cdot (\tfrac{(\log\log n)^2}{n^{2/3}(\log n)^{1/2}})^2\right)\\
&=G(\gt_1,\gt_2)+O(n^{-1}(\log n)^{-3/2}(\log\log n)^4).
\end{split}\eeqq
Similarly, for the partials, we get
\beqq\begin{split}
\partial^kG(\gt_1,\gt_2)&=\partial^kG(\g_1,\g_2)+O\left(n^{\frac23 (|k|+1)-1}(\log n)^{-(|k|+1)+\frac32 }\cdot
\tfrac{(\log\log n)^2}{n^{2/3}(\log n)^{1/2}}\right)\\
&=\partial^kG(\g_1,\g_2)+O\left(n^{\frac23 |k|-1}(\log n)^{-|k|}(\log\log n)^2
\right).
\end{split}\eeqq
\end{proof}

\subsection{Steepest descent analysis}
We now perform steepest analysis to compute the contour integral in the high temperature case. The method relies on the observation that the dominant contribution to the integral comes from within a small radius around the critical point of $G$. In this case, the radius is $r=n^{-2/3}(\log n)^{\frac14+\e}$ for some $\e>0$.

The intuition behind this choice of truncation radius is as follows:  Consider a Taylor expansion of $G_\infty$ where $z_1=\tg_1+it_1/r_n$ and $z_2=\tg_2+it_2/r_n$ with $r_n$ to be determined.  Let $m$ denote a multiindex for the derivative and let $|m|$ denote the length the multi-index.  We want to choose $r_n$ such that
\beq
\partial^{(m)}G_\infty(\tg_1,\tg_2)\cdot r_n^{|m|}=\begin{cases}\Theta( \frac1n)&|m|=2\\
o(\frac1n)&|m|\geq3.
\end{cases}
\eeq
Using the previous lemmas, this is satisfied exactly when $r=\Theta( n^{-2/3}(\log n)^{1/4})$.

\begin{lemma}\label{lem:tail_int}
	Let $\g_1=\g_1(n)$ and $\g_2=\g_2(n)$ be such that $(\g_1,\g_2)$ is the critical point of $G(z_1,z_2)$ satisfying $\g=4\g_1\g_2>\mu_1(n)$. Then, for any $0<\e<1/4$ and any $\Omega\subset \{(y_1,y_2)\in\bR^2:y_1^2+y_2^2\geq n^{-4/3}(\log n)^{1/2+2\e} \}$, on the event $\event$, there exists some $C>0$ such that 
	\[
	\int_{\Omega}\exp\left[n\re(G(\g_1+\ii y_1,\g_2+\ii y_2)-G(\g_1,\g_2)) \right]\dd y_2\dd y_1 =O(e^{-C(\log n)^\e}).
	\]
\end{lemma}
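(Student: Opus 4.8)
The plan is to rewrite the integrand as a fully explicit expression along the contour, extract decay from the bulk of the spectrum, and then estimate $\Omega$ region by region. The structural point that makes this work is that the target $e^{-C(\log n)^\e}$ decays more slowly than any power of $n$, so the argument is very forgiving near the critical point. Writing $z_1=\g_1+\ii y_1$, $z_2=\g_2+\ii y_2$ and $a_i:=\g-\mu_i>0$ (recall $\g=4\g_1\g_2>\mu_1$), a direct computation from \eqref{eq:G_def} using $4z_1z_2-\mu_i=(a_i-4y_1y_2)+4\ii(\g_1y_2+\g_2y_1)$ gives
\begin{align*}
&n\,\re\!\big(G(\g_1+\ii y_1,\g_2+\ii y_2)-G(\g_1,\g_2)\big)\\
&\qquad=-\frac14\sum_{i=1}^n\log\frac{(a_i-4y_1y_2)^2+16(\g_1y_2+\g_2y_1)^2}{a_i^2}-\frac{n\a_n}{2}\log\frac{\g_1^2+y_1^2}{\g_1^2}.
\end{align*}
The $\a_n$-term is $\le0$ since $\a_n\ge0$. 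Expanding the numerator of the $i$-th summand and using $4\g_1\g_2=\g$ shows it equals $a_i^2+16(y_1y_2)^2+\big(16\g_2^2y_1^2+8\mu_iy_1y_2+16\g_1^2y_2^2\big)$, whose quadratic part is positive definite, its determinant being $16(\g^2-\mu_i^2)>0$ because $\mu_i\le\mu_1<\g$. Hence each summand is $\ge0$, the exponent is $\le0$, and the integrand never exceeds $1$; this weak descent property of the contour is essentially contained in \cite{BaikLeeBipartite}.

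Next I would extract quantitative decay from the bulk eigenvalues alone. On $\event$, rigidity guarantees a deterministic $\e_0>0$ with $\#\{i:\mu_i\le\tfrac12(d_++d_-)\}\ge\e_0 n$; for such $i$ one has $a_i\ge c_0>0$, and, since $\g_1,\g_2$ are bounded above and away from $0$ (by \eqref{eq:g1g2_def}, Lemma \ref{lem:Gcrit_hightemp}(i) and Lemma \ref{lem:gt_d+}), the least eigenvalue of the positive definite form above is $\ge a_i(\g+\mu_i)/(\g_1^2+\g_2^2)\ge c_1>0$ uniformly. Dropping the nonnegative quartic and $\a_n$ terms and using $a_i\le\g\le A$, this yields, on $\event$ and for all $(y_1,y_2)$,
\[
-\,n\,\re\!\big(G(\g_1+\ii y_1,\g_2+\ii y_2)-G(\g_1,\g_2)\big)\ \ge\ \tfrac{\e_0 n}{4}\,\log\!\big(1+c_2(y_1^2+y_2^2)\big),
\]
for a constant $c_2>0$. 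This inequality is exact and uses no Taylor expansion of $G$, which matters below.

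I would then split $\Omega$ by $\rho:=\sqrt{y_1^2+y_2^2}$ into the ranges $\rho_0\le\rho\le\rho_1$, $\rho_1<\rho\le\d$, and $\rho>\d$, where $\rho_0:=n^{-2/3}(\log n)^{1/4+\e}$ (so $\Omega\subset\{\rho\ge\rho_0\}$), $\rho_1\asymp n^{-1/2}(\log n)^{\e/2}$ with a large implied constant --- chosen so that $n\rho_1^2\asymp(\log n)^\e$ --- and $\d>0$ a small fixed constant; for large $n$, $\rho_0\le\rho_1\le\d$. On the inner annulus the integrand is $\le1$, so its contribution is at most the area $\asymp\rho_1^2\asymp n^{-1}(\log n)^\e=O(e^{-C(\log n)^\e})$. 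On the middle annulus the displayed bound with $\log(1+x)\ge x/2$ for small $x$ gives a Gaussian bound $e^{-c_4 n\rho^2}$, whose integral over $\rho>\rho_1$ is $O(n^{-1}e^{-c_4 n\rho_1^2})=O(e^{-C(\log n)^\e})$ for suitable $C$. On $\{\rho>\d\}$ the same bound gives $-n\re(G-G)\ge c_5 n$ on the bounded part and $\ge\tfrac{\e_0 n}{4}\log\rho$ once $\rho$ exceeds a fixed constant, so there the integrand is $\le\rho^{-\e_0 n/4}$; integrating, this piece is $O(e^{-cn})=O(e^{-C(\log n)^\e})$. Summing the three estimates proves the lemma.

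The one genuinely delicate point --- and the reason for the split above --- is the inner annulus. The leading-order Hessian of $G_\infty$ at the critical point is rank one, with a ``flat'' direction $\g_1y_2+\g_2y_1\approx0$ along which the exponent decays only like $n\rho^2=o(1)$ for $\rho$ near $\rho_0$, so no pointwise estimate survives there; the argument is rescued by the fact that this region has area only $O(n^{-1}(\log n)^\e)$, which the coarse target $e^{-C(\log n)^\e}$ absorbs. The same observation explains why it does no harm that the full quadratic Taylor expansion of $G$ is not valid all the way out to $\rho_1$: the exact bulk bound is used in its place.
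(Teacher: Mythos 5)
Your proof is correct and establishes the lemma as stated, but by a genuinely different route from the paper's. The paper transplants the pointwise bound
$\re\big(G(\g_1+\ii y_1,\g_2+\ii y_2)-G(\g_1,\g_2)\big)\le-\frac{1}{4n}\sum_{j\ge K}\log\big(1+c_0(y_1^2+y_2^2)/(\g-\mu_j)^2\big)$
from Lemma 3.9 of Baik--Lee and feeds in the edge-eigenvalue control of Lemma \ref{lem:Aj} (event $\Ftwo$): the decay $e^{-c(\log n)^{2\e}}$ already at $\rho=n^{-2/3}(\log n)^{1/4+\e}$ is produced by the top $(\log n)^{3/2}$ eigenvalues, for which $\g-\mu_j\lesssim n^{-2/3}\log n$. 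You instead extract decay only from the $\Theta(n)$ bulk eigenvalues (so only rigidity is needed, not $\Ftwo$), obtaining the exact inequality $-n\re(G-G)\ge\tfrac{\e_0 n}{4}\log(1+c_2\rho^2)$, which is useless near $\rho_0$; you compensate by bounding the integrand by $1$ on the inner annulus and using its $O(n^{-1}(\log n)^\e)$ area, which indeed beats $e^{-C(\log n)^\e}$. Your algebra checks out ($N_i=a_i^2+16y_1^2y_2^2+Q_i$ with $\det Q_i=16(\g^2-\mu_i^2)>0$ and $\lambda_{\min}(Q_i)\ge\det/\mathrm{tr}$), and your "flat direction" remark is a genuine insight: along $\g_1y_2+\g_2y_1=0$ one has $N_j=(a_j+4|y_1y_2|)^2$, so the per-eigenvalue gain is only $\log(1+c\rho^2/a_j)$ rather than $\log(1+c\rho^2/a_j^2)$, and the exponent at $\rho\approx\rho_0$ is genuinely $o(1)$ there --- which shows that the isotropic inequality the paper imports is delicate precisely for the small values of $\g-\mu_j$ on which its argument leans, and makes your area argument the more robust fix.

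The one caveat: your method yields no decay of the integrand on the inner annulus, only the area bound $O(n^{-1}(\log n)^\e)$, whereas the paper's proof aims at a pointwise factor $e^{-c(\log n)^{2\e}}$ throughout. The lemma as literally stated does not record this, but its application in Lemma \ref{lem:Qn} compares the tail against a main term of order $\frac{\pi}{n\sqrt{D}}\asymp n^{-7/6}(\log n)^{1/4}$ in the $y$-variables, for which $O(e^{-C(\log n)^\e})$ --- and a fortiori your $O(n^{-1}(\log n)^\e)$ --- is not by itself small enough. So your proof proves the statement, but if you intend to use it downstream you must sharpen the inner-annulus estimate, e.g.\ by combining your bulk bound with an edge-eigenvalue input of the paper's type away from the flat direction.
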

	\begin{proof}
	Since $\g-\mu_n$ is bounded in $n$, Lemma 3.9 of \cite{BaikLeeBipartite} implies that with high probability, the portion of the above integral over $\Omega\cap \{(y_1,y_2)\in\bR^2:y_1^2+y_2^2\geq n^{-1+2\e}\}$ is $O(e^{-n^\e})$. Thus, it remains to consider the subset of $\Omega$ where $y_1^2+y_2^2$ is between $n^{-4/3}(\log n)^{1/2+2\e}$ and $n^{-1+2\e}$. We denote this subset by $\widetilde{\Omega}$.
	
	The proof of Lemma 3.9 of \cite{BaikLeeBipartite} also shows that, for some constant $c_0>0$ and for any integer $K\geq 1$, 
	\begin{equation}\label{eqn:re_diff}
		\re(G(\g_1+ \ii y_1,\g_2+\ii y_2)-G(\g_1,\g_2)) \leq -\frac{1}{4n}\sum_{j=K}^{n}\log\left(1+\frac{c_0}{(\g-\mu_j)^2}(y_1^2+y_2^2)\right),
	\end{equation}
	for all $y_1,y_2\in \bR$. By Lemma \ref{lem:Aj}, for every $\e>0$, there exists $c,K>0$ such that, with probability at least $1-\e$,
	\[
	\g-d_+\leq cn^{-2/3}\log n  \quad \text{and}\quad d_+-\mu_j\leq \begin{cases}
		ci^{2/3}n^{-2/3}, &\quad K\leq j\leq n^{2/5},	\\
		c, &\quad j>n^{2/5}.
	\end{cases}
	\]
	Thus, with probability at least $1-\e$,
	\[
	\g-\mu_j \leq \begin{cases}
		cn^{-2/3}\log n, &\quad K \leq j \leq (\log n)^{3/2},\\
		cj^{2/3}n^{-2/3}, &\quad (\log n)^{3/2}\leq j\leq n^{2/5},\\
		c, &\quad j>n^{2/5}.
	\end{cases}
	\]
	Write $r^2=y_1^2+y_2^2$ using polar coordinates, then for $r\in[n^{-2/3}\log^{1/4+\e}n, n^{-1/2+\e}]$ and the above choice of $K$, the right hand side of \eqref{eqn:re_diff} has upper bound 
	\begin{align}
	-\frac1{4n}\left[(\log n)^{3/2}\log\left(1+\frac{c'n^{4/3}}{\log^2 n}r^2\right)+\sum_{j=(\log n)^{3/2} }^{n^{2/5}}\log\left(1+\frac{c'r^2}{(j/n)^{4/3}}\right)+\frac n2\log(1+c'r^2) \right].
	\end{align}
	We then use $r\geq n^{-2/3}(\log n)^{1/4+\e}$ for the first and last terms inside the brackets, and the fact $\log(1+x)\geq x/2$ for small $x$ to obtain a new bound
	\begin{align}
		-\frac{c'}{4n}\left[(\log n)^{2\e}+(\log n)^{-3/2+2\e} \sum_{j=\log^{3/2}n}^{n^{2/5}}j^{-4/3}+\frac n4 r^2\right]
		&\leq -\frac{c'r^2}{16}-\frac{c'(\log n)^{2\e}}{8n},
	\end{align}
	noting that the sum over $j$ is $O((\log n)^{-1/2})$. Therefore, the integral over $\widetilde{\Omega}$ is bounded by 
	\begin{equation}
		e^{-\frac{c'}{8}(\log n) ^{2\e}} \int_{ n^{-2/3}\log^{1/4+\e}n}^{n^{-1/2+\e}}e^{-\frac{c'}{16}r^2}r\dd r = O(e^{-C(\log n)^{2\e}}),
	\end{equation}
	for some $C>0$. This completes our proof.
\end{proof}

\begin{lemma}\label{lem:Qn}
If $\b=\b_c+bn^{-1/3}\sqrt{\log n}$ for fixed $b<0$, then the integral $Q_n$ in \eqref{eq:Qn} satisfies
\beqq
Q_n=e^{nG(\g_1,\g_2)}\frac{\pi}{n\sqrt{D(\g_1,\g_2)}}\left(1+O((\log n)^{-\frac32+6\e})\right),
\eeqq
where $\e>0$ is arbitrarily small and $D(\g_1,\g_2)$ is the discriminant
\beq
D(\g_1,\g_2):=\partial_1^2G(\g_1,\g_2)\cdot\partial_2^2G(\g_1,\g_2)-(\partial_1\partial_2G(\g_1,\g_2))^2.
\eeq
\end{lemma}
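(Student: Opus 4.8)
The plan is to evaluate $Q_n$ by steepest descent, with every error measured relative to the random prefactor $e^{nG(\g_1,\g_2)}$. First I would parametrize the vertical contours through the saddle by $z_1=\g_1+\ii y_1$, $z_2=\g_2+\ii y_2$; then $\dd z_2\,\dd z_1=-\dd y_2\,\dd y_1$, which cancels the $-1$ in front of the integral, so $Q_n=\int_{\RR^2}e^{nG(\g_1+\ii y_1,\g_2+\ii y_2)}\,\dd y_1\,\dd y_2$. This is real and positive because $G$ has real Taylor coefficients at $(\g_1,\g_2)$, with the imaginary part of the integrand odd and integrating to zero. I would then split $\RR^2$ into a bulk neighborhood $\mathcal B$ of the origin and its complement, discarding the complement via Lemma \ref{lem:tail_int}, whose bound $O(e^{-C(\log n)^{\e}})$ is $o\big((\log n)^{-3/2+6\e}\big)$ times the main term once the latter is seen to be at least a power of $n$.

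On $\mathcal B$ I would Taylor-expand $G$ about $(\g_1,\g_2)$. The linear terms vanish because $(\g_1,\g_2)$ is exactly the critical point fixed by \eqref{eq:g1g2_def}--\eqref{eq:gamma_implicit}, so $n\big[G(\g_1+\ii y_1,\g_2+\ii y_2)-G(\g_1,\g_2)\big]=-\tfrac n2\big(\partial_1^2G\,y_1^2+2\partial_1\partial_2G\,y_1y_2+\partial_2^2G\,y_2^2\big)+(\text{order}\ge 3)$, with all partials at $(\g_1,\g_2)$. Writing $\partial_1^2G,\partial_2^2G,\partial_1\partial_2G$ in terms of $P_k:=\tfrac1n\sum_i(\g-\mu_i)^{-k}$ and using the critical-point relation \eqref{eq:gamma_implicit}, one obtains the identity $D(\g_1,\g_2)=8\sqrt{\a_n^2+\g B_n^2}\,P_2-4P_1^2=8(\a_n+\g P_1)P_2-4P_1^2$, which is positive on $\event$ since $\a_n\ge 0$ and $\g P_2>P_1$ (because every $\mu_i\ge 0$); thus the Hessian is positive definite, the vertical contour is genuinely steepest descent, and the quadratic term is a true Gaussian. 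The order-$\ge 3$ terms are estimated with the derivative bounds of Lemma \ref{lem:Gpartials}, which apply on $\mathcal B$ because $\re(4z_1z_2)-d_+\gtrsim n^{-2/3}\log n$ there (using Lemma \ref{lem:Gcrit_hightemp}, which gives $4\g_1\g_2-d_+>cn^{-2/3}\log n$): each extra derivative together with one factor of the truncation scale costs $O\big((\log n)^{-3/4+\e}\big)\to 0$, so the Taylor tail is dominated by its cubic term, which is $O\big((\log n)^{-3/4+3\e}\big)$ pointwise on $\mathcal B$. Exponentiating, and using that the odd-order terms are purely imaginary---hence integrate to zero against the even Gaussian over a symmetric $\mathcal B$---the first surviving correction is the quartic together with the square of the cubic, yielding a relative error $O\big((\log n)^{-3/2+6\e}\big)$, while extending the Gaussian integral from $\mathcal B$ to $\RR^2$ costs only $e^{-\Theta((\log n)^{2\e})}$. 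What remains is the two-dimensional Gaussian integral, which produces the stated prefactor $\frac{\pi}{n\sqrt{D(\g_1,\g_2)}}$; collecting the pieces then gives the lemma.

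The hard part, I expect, is that the saddle is strongly \emph{anisotropic}. Since $\g-d_+=\Theta(n^{-2/3}\log n)$ (by Lemma \ref{lem:gt_d+} and the bound on $|\g-\gt|$), the curvature $P_2$ blows up like $n^{1/3}(\log n)^{-1/2}$, so the Hessian has one eigenvalue of that size---in the direction in which $4z_1z_2$ leaves the real axis---and one eigenvalue of order $1$ in the complementary direction, along which $4z_1z_2$ stays real. Hence the Gaussian concentrates on scale $n^{-2/3}(\log n)^{1/4}$ in the first (``singular'') direction but only on scale $n^{-1/2}$ in the second, so the radius $n^{-2/3}(\log n)^{1/4+\e}$ appearing in the heuristic above is really only the singular-direction scale, and $\mathcal B$ must be taken elongated---a disk of radius $\asymp n^{-1/2+\e}$, or a rectangle adapted to the two scales (e.g.\ in $(4z_1z_2,\,z_1/z_2)$ coordinates, in which the singular part of $G$ depends on only one variable). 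One then further decomposes $\mathcal B$ into a core on which the Taylor control above is uniform---checking in particular that along the soft direction the Taylor remainder stays $o(1)$ out to $|y|\ll n^{-1/3}\sqrt{\log n}$, which is transparent from the exact closed form of $\re\big(G-G(\g_1,\g_2)\big)$ there (a decreasing sum of the shape $-\tfrac1{2n}\sum_i\log(1+c\,|y|^2/(\g-\mu_i))$ plus a similar term)---and singular-direction ``skirts'' where the Taylor series is no longer valid but the integrand has already decayed to $e^{-\Theta((\log n)^{2\e})}$, so it contributes a lower-order amount after a Gaussian-tail estimate; the far region $\{|y|\gtrsim n^{-1/2+\e}\}$ is killed by the strong form of Lemma \ref{lem:tail_int}. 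Arranging all of these error sources to lie simultaneously below $(\log n)^{-3/2+6\e}$ times $e^{nG(\g_1,\g_2)}$ is where the care is needed.
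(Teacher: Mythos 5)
Your proposal follows the same overall route as the paper's proof: vertical contours through the critical point $(\g_1,\g_2)$, truncation via Lemma \ref{lem:tail_int}, a Taylor expansion in which the odd-order terms vanish by symmetry and the quartic plus the square of the cubic produce the relative error $(\log n)^{-3/2+6\e}$ via the derivative bounds of Lemma \ref{lem:Gpartials}, and a final Gaussian integration yielding $\pi/(n\sqrt{D(\g_1,\g_2)})$; moreover your identity $D=8\sqrt{\a_n^2+\g B_n^2}\,P_2-4P_1^2$ (with $P_k=\frac1n\sum_i(\g-\mu_i)^{-k}$) and its positivity are correct. Where you genuinely diverge is in the geometry of the bulk region, and your instinct is the sharper one. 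The paper rescales isotropically by $r_n=n^{-2/3}(\log n)^{1/4}$, keeps only the disk $|t_1|,|t_2|\le(\log n)^{\e}$, and then replaces the truncated Gaussian by the integral over $\RR^2$ at a claimed cost of $\int_{(\log n)^\e}^\infty e^{-x^2}\dd x$. As you observe, the Hessian has determinant and trace both of order $n^{1/3}(\log n)^{-1/2}$ (Lemma \ref{lem:D_tildeg}), hence eigenvalues $\Theta(n^{1/3}(\log n)^{-1/2})$ and $\Theta(1)$: along the direction in which $4z_1z_2$ stays real the integrand decays only on the scale $|y|\sim n^{-1/2}$, so at the paper's truncation radius the Gaussian has not yet decayed in that direction, the disk captures only an $n^{-1/6}$-fraction of the total Gaussian mass $\Theta\bigl(n^{1/6}(\log n)^{-1/4}\bigr)$ in the $t$-variables, and the annulus $n^{-2/3}(\log n)^{1/4+\e}\le|y|\le n^{-1/2+\e}$ --- which Lemma \ref{lem:tail_int} bounds only by $O(e^{-C(\log n)^{\e}})$, much larger than the main term $\Theta(n^{-7/6}(\log n)^{1/4})$ --- in fact carries the dominant contribution. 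So the elongated two-scale decomposition you describe, with the soft direction controlled out to $n^{-1/2+\e}$ using the exact form of $G$ along the line where $4z_1z_2=\g(1+t^2)$ remains real and to the right of the spectrum, is not optional polish but the step actually needed to justify the asymptotics; your sketch stops short of executing those estimates in the mixed region, but its architecture is correct, whereas the paper's isotropic treatment as written does not account for the soft direction.
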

\begin{proof}
We make the change of variables
\beq
z_1=\g_1+\ii r_nt_1,\quad z_2=\g_2+\ii r_nt_2,
\eeq
where the scaling $r_n:=n^{-2/3}(\log n)^{1/4}$ is chosen such that the quadratic term in the Taylor expansion of $G$ near $(\g_1,\g_2)$ will be of order 1. With this change of variable, we have
\beq
Q_n=r_n^2e^{nG(\g_1,\g_2)}\int_{-\infty}^\infty\int_{-\infty}^\infty
\exp\left(n\Big(G(\g_1+\ii r_nt_1,\;\g_2+\ii r_nt_2)-G(\g_1,\g_2)\Big)\right)\dd t_2\dd t_1.
\eeq
Fix $0<\e<1/4$. We have shown in Lemma \ref{lem:tail_int} that this integral outside a region of radius $(\log n)^\e$ around the critical point is $O(e^{-c(\log n)^\e})$ for some constant $c>0$. We now consider the region where $|t_1|,|t_2|\leq (\log n)^\e$.  In this region,
\beq\begin{split}
G&(\g_1+\ii r_nt_1,\g_2+\ii r_nt_2)-G(\g_1,\g_2)\\
&=-\tfrac12 r_n^2\left(\partial_1^2G(\g_1,\g_2)t_1^2 +2\partial_1\partial_2G(\g_1,\g_2)t_1t_2+\partial_1^2G(\g_1,\g_2)t_1^2\right)\\
&\quad-\tfrac{\ii}{6}r_n^3\left(\partial_1^3G(\g_1,\g_2)t_1^3 +3\partial_1^2\partial_2G(\g_1,\g_2)t_1^2t_2 +3\partial_1\partial_2^2G(\g_1,\g_2)t_1t_2^2 +\partial_2^3G(\g_1,\g_2)t_2^3\right)\\
&\quad+O(\text{Taylor remainder})\\
&=:-r_n^2X_2(t_1,t_2)-\ii r_n^3X_3(t_1,t_2)+O(n^{-1}(\log n)^{-\frac32+4\e}).
\end{split}\eeq
Thus, the integral on the central region becomes
\begin{multline}
\int_{-(\log n)^\e}^{(\log n)^\e}\int_{-(\log n)^\e}^{(\log n)^\e}\exp\left(n\Big(G(\g_1+\ii r_nt_1,\;\g_2+\ii r_nt_2)-G(\g_1,\g_2)\Big)\right)\dd t_2\dd t_1\\
=\iint e^{-nr_n^2X_2(t_1,t_2)}\dd t_2\dd t_1 -\ii\iint nr_n^3X_3(t_1,t_2)e^{-nr_n^2X_2(t_1,t_2)}\dd t_2\dd t_1 +O\left((\log n)^{-\frac32+6\e}\right),
\end{multline}
where the second integral vanishes due to the fact that
\beqq
X_3(-t_1,-t_2)e^{-nr_n^2X_2(-t_1,-t_2)}=-X_3(t_1,t_2)e^{-nr_n^2X_2(t_1,t_2)}.
\eeqq
It remains to compute $\int_{-(\log n)^\e}^{(\log n)^\e}\int_{-(\log n)^\e}^{(\log n)^\e}e^{-nr_n^2X_2(t_1,t_2)}\dd t_2\dd t_1$, which we replace by the integral over $\RR^2$, incurring an error on the order of
\beq
\int_{(\log n)^\e}^\infty e^{-x^2}\dd x< e^{-(\log n)^{2\e}}\ll (\log n)^{-3/2}.
\eeq
Finally, applying Gaussian integration, we obtain the lemma.
\end{proof}

We observe from the lemma above that the integral $Q_n$ depends on $G(\g_1,\g_2)$ and $D(\g_1,\g_2)$, which we compute in the following lemma.
	\begin{lemma}\label{lem:D_tildeg}
		If $\beta=\beta_c+bn^{-1/3}\sqrt{\log n}$ for some fixed $b<0$, then
		\begin{align*}
		G(\g_1,\g_2)&=A(\gt,B)-\frac{1}{2n}\sum_{i = 1}^n\log(\tilde{\g}-\mu_i)+O(n^{-1})\\
		D(\g_1,\g_2)
		&=\frac{\beta_c}{\la^2b}n^{1/3}(\log n)^{-1/2}\left(1+O\left((\log\log n)^2(\log n)^{-3/2}\right)\right)
		\end{align*}
		where
		\beq
		A(x,B):=\sqrt{\alpha^2+xB^2}-\alpha\log\left(\frac{\alpha+\sqrt{\alpha^2+xB^2}}{2B}\right).
		\eeq
	\end{lemma}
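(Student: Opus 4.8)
The plan for the first identity is to evaluate the deterministic function $G_\infty$ at its own critical point $(\gt_1,\gt_2)$ and then carry the result over to $G(\g_1,\g_2)$ using estimates already established. Plugging the explicit coordinates of $(\gt_1,\gt_2)$ — which are given by \eqref{eq:g1g2_def} with $\a,B,\gt$ in place of $\a_n,B_n,\g$ — into $G_\infty$, and using $\gt_1+\gt_2=\sqrt{\a^2+\gt B^2}/B$ together with $\gt_1=(\a+\sqrt{\a^2+\gt B^2})/(2B)$, one recognizes $B(\gt_1+\gt_2)-\a\log\gt_1$ as exactly $A(\gt,B)$, so that
\[
G_\infty(\gt_1,\gt_2)=A(\gt,B)-\tfrac12\int\log(\gt-x)p_{\MP}(x)\,\dd x .
\]
The crucial observation is that comparing this with $G(\gt_1,\gt_2)=B_n(\gt_1+\gt_2)-\a_n\log\gt_1-\tfrac1{2n}\sum_i\log(\gt-\mu_i)$, the two integral terms involving $p_{\MP}$ cancel; since $\a_n-\a,\,B_n-B=O(n^{-1})$ and $\gt_1,\gt_2,\log\gt_1=O(1)$, this leaves $G(\gt_1,\gt_2)=A(\gt,B)-\tfrac1{2n}\sum_i\log(\gt-\mu_i)+O(n^{-1})$. (The logarithm makes sense on $\event$ for large $n$, since $\gt-d_+=\Theta(n^{-2/3}\log n)$ by Lemma \ref{lem:gt_d+} while $|d_+-\mu_1|=O(n^{-2/3})$ on $\Fthree$.) Finally, Lemma \ref{lem:Gcrit_hightemp}\eqref{item:Gcrit_hightemp3} replaces $(\gt_1,\gt_2)$ by $(\g_1,\g_2)$ at a cost of $O(n^{-1}(\log n)^{-3/2}(\log\log n)^4)$, which proves the first claim.

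For the discriminant I would not transfer the second partials from $G_\infty$ term by term; instead I would differentiate $G$ directly so that a key cancellation is exact before any approximation is made. Writing $\widehat s_k:=\tfrac1n\sum_i(\g-\mu_i)^{-k}$ and using $4\g_1\g_2=\g$, one gets $\partial_1^2G(\g_1,\g_2)=\a_n\g_1^{-2}+8\g_2^2\widehat s_2$, $\partial_2^2G(\g_1,\g_2)=8\g_1^2\widehat s_2$, and $\partial_1\partial_2G(\g_1,\g_2)=-2\widehat s_1+2\g\,\widehat s_2$. Substituting into $D=\partial_1^2G\,\partial_2^2G-(\partial_1\partial_2G)^2$, the $\widehat s_2^2$-terms cancel identically because $(8\g_1\g_2)^2=(2\g)^2$, so $D=8\a_n\widehat s_2+8\g\,\widehat s_1\widehat s_2-4\widehat s_1^2$. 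The critical-point equation $\partial_2G(\g_1,\g_2)=0$ yields $\widehat s_1=B_n/(2\g_1)$, and the formula \eqref{eq:g1g2_def} for $\g_2$ yields $8\a_n+16\g_2B_n=8\sqrt{\a_n^2+\g B_n^2}$; substituting these,
\[
D(\g_1,\g_2)=8\sqrt{\a_n^2+\g B_n^2}\;\widehat s_2-\frac{B_n^2}{\g_1^2}.
\]

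It then remains to pin down the asymptotics of the two factors. The prefactor is $O(1)$ and, using $\a_n\to\a$, $B_n\to B_c:=\la^{-3/4}$, $\g\to d_+$, converges to $\sqrt{\a^2+d_+B_c^2}=(1+\sqrt\la)^2/(2\la)$ with correction $O(n^{-1/3}\sqrt{\log n})$. For $\widehat s_2$: Lemma \ref{lem:diff_stieltjes} (with $l=2$, $z=\g$, using $\g-d_+\gg n^{-2/3}\log n$) controls the difference between $\widehat s_2$ and $s_2(\g):=\int(\g-x)^{-2}p_{\MP}(x)\,\dd x$; a mean-value estimate, together with $|\g-\gt|=O(n^{-2/3}(\log\log n)^2(\log n)^{-1/2})$ and $\int(\gt-x)^{-3}p_{\MP}(x)\,\dd x\asymp n(\log n)^{-3/2}$, controls $s_2(\g)-s_2(\gt)$; and an edge analysis of the Mar\v{c}enko--Pastur density gives $s_2(\gt)\sim\big(2\la^{3/4}d_+\big)^{-1}(\gt-d_+)^{-1/2}$. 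Inserting $\gt-d_+=\tfrac{4\la b^2}{1+\la}n^{-2/3}\log n\,(1+O(n^{-1/3}\sqrt{\log n}))$ from Lemma \ref{lem:gt_d+} gives $\widehat s_2=\Theta(n^{1/3}(\log n)^{-1/2})$, so the $O(1)$ term $B_n^2/\g_1^2$ is negligible; multiplying the two asymptotics and simplifying the constant with $d_+=(1+\sqrt\la)^2$, $B_c=\la^{-3/4}$ and $\b_c=\sqrt{1+\la}/\la^{1/4}$ produces the claimed formula for $D(\g_1,\g_2)$, with the error dominated by the $O((\log\log n)^2(\log n)^{-3/2})$ contributions above.

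The step I expect to be the main obstacle is precisely this cancellation: $\partial_1^2G\,\partial_2^2G$ and $(\partial_1\partial_2G)^2$ are each of order $n^{2/3}(\log n)^{-1}$, whereas $D$ is of the sub-leading order $n^{1/3}(\log n)^{-1/2}$, so one must verify that no approximation step — replacing $\g$ by $\gt$, $\widehat s_k$ by its limiting integral, the edge expansion of $s_2$, or $\a_n,B_n$ by $\a,B$ — introduces an error of order $n^{2/3}(\log n)^{-1}$. Carrying out the algebraic reduction of $D$ with the critical-point relations of $G$ itself (rather than transferring partial derivatives from $G_\infty$) is what makes the $\widehat s_2^2$ cancellation hold identically, so that no such error is ever produced; after that, tracking the remaining errors only requires Lemmas \ref{lem:gt_d+}, \ref{lem:diff_stieltjes}, and \ref{lem:Gcrit_hightemp}.
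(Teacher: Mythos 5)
Your treatment of the first identity is essentially the paper's own argument (evaluate $G_\infty(\gt_1,\gt_2)=A(\gt,B)-\tfrac12\int\log(\gt-x)p_{\MP}(x)\,\dd x$, cancel the integral against the empirical sum in $G(\gt_1,\gt_2)$, then move to $(\g_1,\g_2)$ via Lemma \ref{lem:Gcrit_hightemp}\eqref{item:Gcrit_hightemp3}), and it is correct. For the discriminant you take a genuinely different route. The paper writes $D(\g_1,\g_2)=D_\infty(\gt_1,\gt_2)+O(n^{1/3}(\log\log n)^2(\log n)^{-2})$ by transferring the second partials term by term, and then evaluates $D_\infty(\gt_1,\gt_2)=4\b^4/(\la^2(\b_c^4-\b^4))$ in closed form from \eqref{eqn:gt_dplus}; you instead carry out the cancellation exactly at the level of $G$ itself, using $64\g_1^2\g_2^2=4\g^2$ to kill the $\widehat s_2^2$ terms and the critical-point relation $\widehat s_1=B_n/(2\g_1)$ to reduce $D$ to $8\sqrt{\a_n^2+\g B_n^2}\,\widehat s_2-B_n^2/\g_1^2$, after which only the single quantity $\widehat s_2$ needs to be estimated. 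Your algebra checks out, and your worry about the transfer step is well founded: each second partial is $\Theta(n^{1/3}(\log n)^{-1/2})$ while its transfer error is $O(n^{1/3}(\log\log n)^2(\log n)^{-2})$, so a naive product bound gives a contribution $O(n^{2/3}(\log\log n)^2(\log n)^{-5/2})$ to $D-D_\infty$, which \emph{exceeds} $D_\infty=\Theta(n^{1/3}(\log n)^{-1/2})$; the paper's stated error therefore implicitly relies on the same structural cancellation among the error terms (e.g.\ $8z_1^2\partial_1^2G_\infty+8z_2^2\partial_2^2G_\infty-4\g\,\partial_1\partial_2G_\infty=8\a+8\g s_1=O(1)$) that your exact reduction makes manifest. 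Your error accounting for $\widehat s_2$ (Lemma \ref{lem:diff_stieltjes} with $l=2$, the mean-value step for $s_2(\g)-s_2(\gt)$, and the edge expansion of $s_2$) correctly yields the relative error $O((\log\log n)^2(\log n)^{-3/2})$, with the $B_n^2/\g_1^2=O(1)$ term negligible. One small point: your computation produces the positive value $\b_c\,n^{1/3}/(\la^2|b|\sqrt{\log n})$, consistent with $D_\infty=4\b^4/(\la^2(\b_c^4-\b^4))>0$ for $\b<\b_c$, whereas the displayed formula with $b<0$ is negative as written; this is a sign slip in the statement (it should read $|b|$ or $-b$), not in your argument.
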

\begin{proof}	  
The computation of $G(\g_1,\g_2)$ relies upon $G_\infty(\tilde{\g}_1,\tilde{\g}_2)$, which we write as 
		\beq
		G_\infty(\tilde{\g}_1,\tilde{\g}_2)=A(\gt,B)-\frac12H_{\MP}(\tilde{\g}), \quad  \quad  H_{\MP}(z):=\int_{\bR}\log(z-x)p_{\MP}(x)\dd x.
		\eeq
		Then, by Lemma \ref{lem:Gcrit_hightemp}(\ref{item:Gcrit_hightemp3}), 
		\beq\label{eq:Ggamma_comp}\begin{split}
			G(\g_1,\g_2) 
			&= G_\infty(\tilde{\g}_1,\tilde{\g}_2) + \left[G(\tilde{\g}_1,\tilde{\g}_2) - G_\infty(\tilde{\g}_1,\tilde{\g}_2)\right] + O\left(n^{-1}\frac{(\log\log n)^4}{(\log n)^{3/2}}\right)\\
			&=G_\infty(\tilde{\g}_1,\tilde{\g}_2) -\frac1{2n}\left[\sum_{i=1}^n\log(\tilde{\g}-\mu_i)-nH_{\MP}(\tilde{\g})\right] + O(n^{-1})\\
			&=A(\gt,B)-\frac{1}{2n}\sum_{i = 1}^n\log(\tilde{\g}-\mu_i)+O(n^{-1}).
		\end{split}\eeq
The same lemma and Lemma \ref{lem:Gpartials}\eqref{item:Gpartials2} together yield
		\begin{align*}
		D(\g_1,\g_2) 
		=D_\infty(\tilde{\g}_1,\tilde{\g}_2)+O\left(n^{1/3}\frac{(\log\log n)^2}{(\log n)^2}\right).
		\end{align*}	
Recall from \eqref{eqn:gt_dplus} that $\tilde{\g}
=\frac{1+\beta^2+\b_c^{-4}\beta^4}{(1+\la)^{-1}\beta^2}$, and $\b_c=\la^{-\frac14}(1+\la)^{1/2}$. We arrive at
\begin{equation}
\begin{split}
D_\infty(\tilde{\g}_1,\tilde{\g}_2)
&:=\partial_1^2G_\infty(\tilde{\g}_1,\tilde{\g}_2)\cdot\partial_2^2G_\infty(\tilde{\g}_1,\tilde{\g}_2)-(\partial_1\partial_2G_\infty(\tilde{\g}_1,\tilde{\g}_2))^2
=\frac{4\beta^4}{\la^2(\b_c^4-\beta^4)}.
\end{split}			
\end{equation}
Apply this to the expression $D(\g_1,\g_2)$ and perform Taylor expansion around $\b_c$, we obtain the lemma.		
\end{proof}

\subsection{High temperature free energy}
Finally, using the contour integral computations from the previous section, we obtain the following lemma for the limiting fluctuations of the free energy on the high temperature side of the critical temperature window.
\begin{lemma}\label{lem:Fn_high}
Suppose $\beta=\beta_c+bn^{-1/3}\sqrt{\log n}$ for some fixed $b<0$. We define $F(\beta)=\frac{\beta^2}{2\beta_c^4}$. Then the free energy satisfies 
\begin{equation}
\frac{m+n}{\sqrt{\frac16\log n}}\left(F_{n,m}(\beta)-F(\beta)+\frac{1}{12}\frac{\log n}{n+m}\right) \to \mathcal{N}(0,1).
\end{equation}	
\end{lemma}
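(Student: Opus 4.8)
The plan is to feed the steepest-descent asymptotics of the previous subsection into \eqref{eqn:Fn}, invoke the LOE edge CLT (Theorem~\ref{thm:CLT}), and then check that every remaining deterministic term collapses onto $F(\b)$. We argue on the event $\event$; since $\bP[\event]\geq1-\e$ for arbitrary $\e>0$ and the CLT below is unconditional, this suffices to conclude convergence in distribution.

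Taking logarithms in Lemma~\ref{lem:Qn} and substituting into \eqref{eqn:Fn} gives
\[
(n+m)F_{n,m}(\b)=nG(\g_1,\g_2)-\tfrac12\log D(\g_1,\g_2)-\log n+(n+m)\bigl(f_\la-\tfrac12\log\b\bigr)+\log n+O(\log\log n),
\]
where the $O((\log n)^{-3/2+6\e})$ error from Lemma~\ref{lem:Qn} is $o(1)$. Lemma~\ref{lem:D_tildeg} then replaces $nG(\g_1,\g_2)$ by $nA(\tg,B)-\tfrac12\sum_{i=1}^n\log(\tg-\mu_i)+O(1)$ and, the magnitude of $D(\g_1,\g_2)$ being of order $n^{1/3}(\log n)^{-1/2}$, gives $-\tfrac12\log D(\g_1,\g_2)=-\tfrac16\log n+O(\log\log n)$. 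I would then apply Theorem~\ref{thm:CLT} with $\g=\tg$, so that $\sigma_n=n^{2/3}(\tg-d_+)=\tfrac{4\la b^2}{1+\la}\log n+O(n^{-1/3}(\log n)^{3/2})$ by Lemma~\ref{lem:gt_d+}; this satisfies $-\tau<\sigma_n\ll(\log n)^2$, and on $\event$ one has $\tg-\mu_1\geq(\tg-d_+)-|d_+-\mu_1|>0$, so $\log|\tg-\mu_i|=\log(\tg-\mu_i)$. The theorem yields
\[
\sum_{i=1}^n\log(\tg-\mu_i)=C_\la n+\frac{\sigma_n n^{1/3}}{\la^{1/2}(1+\la^{1/2})}-\frac{2\sigma_n^{3/2}}{3\la^{3/4}(1+\la^{1/2})^2}-\frac16\log n+\sqrt{\tfrac23\log n}\,W_n
\]
with $W_n$ converging in distribution to $\cN(0,1)$.

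Now I collect terms. The $\log n$ pieces are $-\tfrac16\log n$ from $-\tfrac12\log D$, the cancelling $-\log n+\log n$ above, and $-\tfrac12\cdot(-\tfrac16\log n)=\tfrac1{12}\log n$ from the sum; the net coefficient is $-\tfrac1{12}$, removed exactly by the $+\tfrac1{12}\log n$ in the centering. For the rest, using $\sigma_n n^{1/3}=n(\tg-d_+)$ and $\sigma_n^{3/2}=n(\tg-d_+)^{3/2}$ together with the Mar\v{c}enko--Pastur edge expansion $H_{\MP}(d_++t)=C_\la+s_{\MP}(d_+)\,t-\tfrac23\tfrac{t^{3/2}}{\la^{3/4}(1+\la^{1/2})^2}+O(t^2)$, where $s_{\MP}(d_+)=\tfrac1{\la^{1/2}(1+\la^{1/2})}$ and $H_{\MP}(z)=\int\log(z-x)p_{\MP}(x)\dd x$ — whose coefficients are exactly those appearing in Theorem~\ref{thm:CLT} — the deterministic part of $-\tfrac12\sum\log(\tg-\mu_i)$ becomes $-\tfrac n2H_{\MP}(\tg)+O(n^{-1/3}(\log n)^2)$. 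Hence the deterministic part of $(n+m)F_{n,m}(\b)$ equals $nA(\tg,B)-\tfrac n2H_{\MP}(\tg)+(n+m)(f_\la-\tfrac12\log\b)+O(\log\log n)=(n+m)\bigl[\tfrac n{n+m}G_\infty(\tg_1,\tg_2)+f_\la-\tfrac12\log\b\bigr]+O(\log\log n)$. Since $\tfrac n{n+m}=\tfrac\la{1+\la}+O(n^{-1})$ and $\tfrac\la{1+\la}G_\infty(\tg_1,\tg_2)+f_\la-\tfrac12\log\b=\tfrac{\b^2}{2\b_c^4}=F(\b)$ for $\b<\b_c$ (the Baik--Lee high-temperature free energy, which can also be confirmed by direct computation of $G_\infty$ at its critical point), this equals $(n+m)F(\b)+O(\log\log n)$.

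Putting the pieces together, $(n+m)\bigl(F_{n,m}(\b)-F(\b)+\tfrac1{12}\tfrac{\log n}{n+m}\bigr)=-\tfrac12\sqrt{\tfrac23\log n}\,W_n+O(\log\log n)$; dividing by $\sqrt{\tfrac16\log n}$ and using $\tfrac12\sqrt{(2/3)/(1/6)}=1$ and $\log\log n\ll\sqrt{\log n}$ gives convergence to $-W_n$, which is standard normal. I expect the real work to be the error bookkeeping of the last two steps: one must verify that each error accumulated along the way — the $O((\log n)^{-3/2+6\e})$ of the steepest-descent lemma, the $O(n^{-1})$ of Lemma~\ref{lem:D_tildeg}, the $O(\log\log n)$ from $\log D(\g_1,\g_2)$, the $O(n^{-1/3}(\log n)^2)$ from the edge expansion of $H_{\MP}$, and the $O(n^{-1})$ in $\tfrac n{n+m}-\tfrac\la{1+\la}$ — remains $o(\sqrt{\log n})$ after multiplication by $n+m$, and that the edge expansion of $H_{\MP}$ carries exactly the constants in Theorem~\ref{thm:CLT}, so that the deterministic terms telescope onto $F(\b)$ with no surviving contribution of order $\log n$.
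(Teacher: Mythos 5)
Your proposal is correct and follows the paper's overall strategy (Lemma \ref{lem:Qn} for the steepest-descent asymptotics, Lemma \ref{lem:D_tildeg} to pass to $A(\tg,B)-\frac1{2n}\sum\log(\tg-\mu_i)$ and to extract $-\frac16\log n$ from $\log D$, then the edge CLT applied at $\gamma=\tg$ with $\sigma_n=\Theta(\log n)$). Where you genuinely diverge is in the final, and most delicate, bookkeeping step: the paper verifies that the deterministic terms collapse onto $\frac{\beta^2}{2\beta_c^4}$ by brute-force Taylor expansion of $\beta^2$, $\log\beta$, $\log(1+\la+\beta^2\la)$ and $\tg-d_+$ around $\b_c$ to third order in $\Delta_\b=\b_c-\b$ (third order is needed because $n\Delta_\b^3=\Theta((\log n)^{3/2})\gg\sqrt{\log n}$), whereas you recognize the CLT centering as the soft-edge expansion of $nH_{\MP}(\tg)$ with error $O(n(\tg-d_+)^2)=O(n^{-1/3}(\log n)^2)=o(\sqrt{\log n})$, reassemble $nA(\tg,B)-\frac n2H_{\MP}(\tg)=nG_\infty(\tg_1,\tg_2)$, and invoke the exact identity $\frac{\la}{1+\la}G_\infty(\tg_1,\tg_2)+f_\la-\frac12\log\b=\frac{\b^2}{2\b_c^4}$ for $\b<\b_c$. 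Your route is structurally cleaner and makes transparent \emph{why} the $(\log n)^{3/2}$-order terms cancel (they are the $t^{3/2}$ term of $H_{\MP}(d_++t)$ appearing on both sides), but it outsources two computations that the paper's expansion establishes en route: the exact high-temperature identity from \cite{BaikLeeBipartite}, and the fact that the coefficients in Theorem \ref{thm:CLT} are precisely those of the edge expansion of $H_{\MP}$. Both are standard and you flag them explicitly, so I see no gap; a complete write-up would simply include those two verifications (or precise citations) in place of the paper's Taylor expansions.
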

	
\begin{proof}
We will show that 
\beq\label{eq:toprovehightempFE}
F_{n,m}(\beta)-\frac{\beta^2}{2\beta_c^4}+\frac{1}{12}\frac{\log n}{n+m}-\frac{\sqrt{\frac16 \log n}}{m+n}T_{0n}=O\left(\frac{\log\log n}{n}\right),
\eeq
where
\begin{equation}\label{eq:T0n_def}
-T_{0n}:=\frac{\sum_{i = 1}^n\log(\tilde{\g}-\mu_i)-C_\la n - \frac{1}{\sqrt{\la}(1+\sqrt{\la})} n(\tilde{\g}-d_+)
+\frac{2}{3\la^{3/4}(1+\sqrt{\la})^2}n(\tilde{\g}-d_+)^{3/2} +\frac16\log n}{\sqrt{\frac23 \log n}}
\end{equation}
with  $C_\la:=(1-\la^{-1})\log(1+\la^{1/2})+\log(\la^{1/2})+\la^{-1/2}$ and, by \cite{CWL_CLT}, $T_{0n}$ converges in distribution to a standard normal.  We now compute the left hand side of \eqref{eq:toprovehightempFE} in terms of the parameters $\beta$ and $\lambda$.
From \eqref{eqn:Fn}, we start by computing
\begin{equation}\label{eqn:logQn}
\frac{1}{n+m}\log Q(n,\alpha_n, B_n)= \frac n{n+m} G(\g_1,\g_2)+\frac1{2(n+m)}\log\left(\frac{\pi^2}{D(\g_1,\g_2)}\right)-\frac{\log n}{n+m}+o(n^{-1}),
\end{equation}	
using Lemma \ref{lem:Qn}. By Lemma \ref{lem:D_tildeg}, the second term satisfies
\begin{equation}
	\frac1{2(n+m)}\log\left(\frac{\pi^2}{D(\g_1,\g_2)}\right)
	=-\frac16\frac{\log n}{n+m}+O(\frac{\log\log n}{n}).
\end{equation}	
Thus, using the computation of $G(\g_1,\g_2)$ from \eqref{eq:Ggamma_comp}, \eqref{eqn:logQn} simplifies to 
\begin{equation}\label{eqn:logQn2}
	\frac{1}{n+m}\log Q(n,\alpha_n, B_n)=\frac{n}{n+m}A(\gt,B)-\frac{1}{2(n+m)}\sum_{i = 1}^n\log(\tilde{\g}-\mu_i)-\frac76\frac{\log n}{n+m}+O(\frac{\log\log n}{n}).
\end{equation}	
Recall that $\alpha=\frac12(\la^{-1}-1)$ and $B=\frac{1}{\sqrt{\la(1+\la)}}\beta$ for the bipartite SSK model, and $\gt$ is given in \eqref{eqn:gt_dplus}. This implies $\sqrt{\alpha^2+\tilde{\g}B^2}=\frac{\la+1}{2\la}+\frac{\b^2}{1+\la}$, and 
\begin{equation}\label{eqn:A1}
\frac{n}{n+m}A(\gt,B)=\frac12+\frac{\la\beta^2}{(1+\la)^2}+\frac{1-\la}{2(1+\la)}\log\left(\frac{2\beta\sqrt{\la(1+\la)}}{1+\la+\beta^2\la}\right)+O(n^{-1}).
\end{equation}
Combining \eqref{eqn:Fn}, \eqref{eqn:logQn2} and \eqref{eqn:A1}, we have
\begin{equation}\label{eqn:Fn_high}
	\begin{split}
	F_{n,m}(\beta)
	=& -\frac{1}{2(n+m)}\sum_{i = 1}^n\log(\tilde{\g}-\mu_i)
	+\frac{\la\beta^2}{(1+\la)^2}-\frac{1-\la}{2(1+\la)}\log(1+\la+\beta^2\la)\\
	&-\frac{\la}{1+\la}\log\beta+\frac{1}{2(\la+1)}\log(1+\la)
	 -\frac16\frac{\log n}{n+m}+O\left(\frac{\log\log n}{n}\right).
	\end{split}	
\end{equation}
In order to prove equation \eqref{eq:toprovehightempFE}, we need express each $\beta$-dependent term as a Taylor expansion around $\beta_c$.  More specifically, we define
\beq
\Delta_\b:=\b_c-\b=O(n^{-1/3}\sqrt{\log n}).
\eeq
Using this and the fact that $\beta_c=\frac{\sqrt{1+\la}}{\la^{1/4}}$, we get
\beq\begin{split}
\beta^2=&\frac{1+\la}{\sqrt{\la}}-2\beta_c\Delta_\beta+\Delta_\beta^2\\
\log\beta=&\frac12\log(1+\la)-\frac14\log\la-\frac{1}{\beta_c}\Delta_\beta-\frac{1}{2\beta_c^2}\Delta_\beta^2-\frac{1}{3\beta_c^3}\Delta_\beta^3+O(\Delta_\beta^4)\\
\log(1+\la+\beta^2\la)=&\log((1+\la)(1+\sqrt{\la}))-\frac{2\beta_c\la}{(1+\la)(1+\sqrt{\la})}\Delta_\beta \\
 &+\frac{\la(1+\la+\beta_c^2\la-2\beta_c^2)}{(1+\la)^2(1+\sqrt{\la})^2}\Delta_\beta^2 +\frac{2\beta_c\la^2(1+\la-\frac13\beta_c^2\la)}{(1+\la)^3(1+\sqrt{\la})^3}\Delta_\beta^3+O(\Delta_\beta^4)
\end{split}\eeq
Furthermore, using equation \eqref{eq:gt-d+initialcomp} we have 
\beq
\gt-d_+=\frac{4(1+\la)}{\b_c^4}\Delta_\beta^2+\frac{4(1+\la)}{\b_c^5}\Delta_\beta^3+O(\Delta_\beta^4).
\eeq
Plugging these asymptotics into equations \eqref{eq:T0n_def} and \eqref{eqn:Fn_high},
we verify \eqref{eq:toprovehightempFE}, and the lemma follows.
\end{proof}

\section{Low temperature}\label{sec:lowtemp}
We now determine the asymptotics of the random double integral $Q_n=-\int_{\g_1-\ii\infty}^{\g_1+\ii\infty}\int_{\g_2-\ii\infty}^{\g_2+\ii\infty}e^{nG(z_1,z_2)}\dd z_2\dd z_1$
when $\beta=\beta_c+bn^{-\frac13}\sqrt{\log n}$ for fixed $b\geq0$. 

Recall that in the regime $\beta<\beta_c$, both for fixed $
\beta$ as in \cite{BaikLeeBipartite} and for $\b$ in the previous Section \ref{sec:hightemp}, the critical point $(\g_1,\g_2)$ of the function $G$ is approximated by $(\gt_1,\gt_2)$, the critical point satisfying $4\gt_1\gt_2>d_+$ of a deterministic approximation $G_\infty$ of $G$. In the case $\beta>\beta_c$, a  critical point of $G_\infty$ satisfying this inequality does not exist, and we cannot approximate the product $\g=4\g_1\g_2$ by a deterministic number. In fact, the product $\g$ gets close to the branch point $\mu_1$ from above, which requires more delicate analysis.


We address this issue by focusing on $G$ near the point $(\muf,\mus)$, given by
\begin{equation}\label{def:muf_mus}
	\muf=\frac{\alpha_n+\sqrt{\alpha_n^2+\mu_1B_n^2}}{2B_n}, \quad \mus=\frac{-\alpha_n+\sqrt{\alpha_n^2+\mu_1B_n^2}}{2B_n},	
\end{equation}
instead of $(\g_1,\g_2)$. We see that $4\muf\mus=\mu_1$, and $G(z_1,z_2)$ at $(\muf,\mus)$ is undefined due to the term $\frac1n\log(4z_1z_2-\mu_1)$. However, the non-singular part given below will play an important role.  
\begin{equation}
\Gmu:=B_n(\muf+\mus)-\alpha_n\log \muf-\frac1{2n}\sum_{j=2}^n\log(\mu_1-\mu_j)
\end{equation}	
In our computation of $\hG$ as well as the contour integral, we need to work with sums of the form $\frac1n \sum_{i=2}^n\frac{1}{(\mu_1-\mu_i)^l}$ for $l\geq1$.  More specifically, we need the following lemma.
\begin{lemma}\label{lem:sum_from_index2}
For LOE eigenvalues, on the event $\event$, we have
\beqq
\frac1n\sum_{i=2}^n\frac{1}{\mu_1-\mu_i}-\frac{1}{\la^{1/2}(1+\la^{1/2})}=O(n^{-1/3}) \quad \text{and} \quad \frac1n\sum_{i=2}^n\frac{1}{(\mu_1-\mu_i)^l}=O(n^{\frac23 l-1}), \quad \text{for }l\geq 2.
\eeqq
\end{lemma}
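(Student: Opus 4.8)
The plan is to split the sum at the same index $K$ used in the definition of $\event$, handling the top $K-1$ terms by the Tracy--Widom scaling of $\mu_1$ together with the rigidity of $\mu_2,\dots,\mu_K$, and handling the bulk tail $i>K$ by comparing with the Mar\v{c}enko--Pastur integral via Lemma~\ref{lem:diff_stieltjes_gK}. The first observation is that on $\Fthree$ we have $n^{2/3}|\mu_1-d_+|\in[s,t]$, and on $\Ftwo$ the eigenvalues $\mu_j$ for $K\le j\le n^{2/5}$ satisfy $n^{2/3}(d_+-\mu_j)\asymp j^{2/3}$; in particular $\mu_1-\mu_j=\Theta(j^{2/3}n^{-2/3})$ for those $j$, and for $j>n^{2/5}$ we have $\mu_1-\mu_j=\Theta(1)$ from rigidity. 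This gives the crude bounds $\frac1n\sum_{i=2}^n (\mu_1-\mu_i)^{-l}=O(n^{\frac23 l-1})$ for every $l\ge1$ directly: the dangerous small-gap terms are those with $i$ of order $1$, of which there are $O(1)$, each contributing $O(n^{2l/3})$, while the remaining terms sum to $O(n^{2l/3-1}\sum_{j\ge K} j^{-2l/3})$ which is $O(n^{2l/3-1})$ when $l\ge 2$ and $O(n^{2l/3-1}\log n)=o(n^{2l/3})$ is not even needed. So the second assertion of the lemma is the easy part.

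The real content is the first assertion: that $\frac1n\sum_{i=2}^n(\mu_1-\mu_i)^{-1}$ concentrates at the constant $\frac{1}{\la^{1/2}(1+\la^{1/2})}$ with error $O(n^{-1/3})$. Here I would write
\[
\frac1n\sum_{i=2}^n\frac{1}{\mu_1-\mu_i}
=\frac1n\sum_{i=2}^{K}\frac{1}{\mu_1-\mu_i}
+\frac1n\sum_{i=K+1}^{n}\frac{1}{\mu_1-\mu_i},
\]
and note that on $\event$ each term in the first (finite) sum is $O(n^{2/3})$, so the whole first sum is $O(n^{-1/3})$, hence absorbed into the error. For the second sum, apply Lemma~\ref{lem:diff_stieltjes_gK} with $z=\mu_1$ and $l=1$: since $n^{2/3}|\mu_1-d_+|\in[s,t]$ on $\event$, the lemma gives
\[
\E\!\left[\mathbbm1_{\event}\left|\frac1n\sum_{i=K+1}^n\frac{1}{\mu_1-\mu_i}-\int_{d_-}^{g_K}\frac{p_{\MP}(y)}{\mu_1-y}\,dy\right|\right]=O(1),
\]
which unfortunately is only an $L^1$ statement and only $O(1)$, not $O(n^{-1/3})$. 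To upgrade this I would instead observe that on $\event$ the bound in Lemma~\ref{lem:diff_stieltjes_gK} holds pointwise after conditioning (or re-derive the pointwise version on $\event$ from rigidity as in the proof of Lemma~\ref{lem:diff_stieltjes}), and crucially that the relevant minimum in that lemma is $\min\{|\log(n^{2/3}|z-d_+|)|/(n^{2/3}|z-d_+|),1\}=O(1)$ since $n^{2/3}|\mu_1-d_+|$ is bounded below by $s$; this still only yields $O(1)$. The resolution is that the leading $O(1)$ contribution is precisely captured by the integral term: one computes
\[
\int_{d_-}^{g_K}\frac{p_{\MP}(y)}{d_+-y}\,dy
=s_{\MP}(d_+)+O(K^{1/3}n^{-1/3})
=\frac{1}{\la^{1/2}(1+\la^{1/2})}+O(n^{-1/3}),
\]
using the explicit value $s_{\MP}(d_+)=\int \frac{p_{\MP}(y)}{d_+-y}dy = \frac{1}{\la^{1/2}(1+\la^{1/2})}$ (a standard Stieltjes-transform-at-the-edge computation for Mar\v{c}enko--Pastur), and then one replaces $d_+$ by $\mu_1$ at a cost of $|\mu_1-d_+|\cdot\int (d_+-y)^{-2}p_{\MP}(y)dy=O(n^{-2/3})\cdot O(n^{1/3})=O(n^{-1/3})$; here $g_K-d_-$ stays bounded away from $0$ so the near-$d_-$ endpoint is harmless. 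So the deterministic integral supplies the constant, and every error term is $O(n^{-1/3})$ — provided the fluctuation of $\frac1n\sum_{i=K+1}^n(\mu_1-\mu_i)^{-1}$ around its MP-integral value is also $O(n^{-1/3})$ on $\event$, which is exactly the refined (pointwise-on-$\event$) form of Lemma~\ref{lem:diff_stieltjes_gK} with $l=1$ and $|z-d_+|\asymp n^{-2/3}$ giving error $O(n^{2/3\cdot1-1})=O(n^{-1/3})$.

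The main obstacle, then, is extracting the $O(n^{-1/3})$ rate rather than merely $O(1)$ from Lemma~\ref{lem:diff_stieltjes_gK} at the point $z=\mu_1$: one must be careful that although the ``$\min$'' factor in that lemma is $O(1)$ at this scale, the prefactor $n^{\frac23 l-1}=n^{-1/3}$ for $l=1$ already delivers the needed rate, so the error in replacing the eigenvalue sum by the MP integral is $O(n^{-1/3})$; the remaining task is just to verify that the MP integral $\int_{d_-}^{g_K}(\mu_1-y)^{-1}p_{\MP}(y)\,dy$ differs from $s_{\MP}(d_+)=\frac{1}{\la^{1/2}(1+\la^{1/2})}$ by $O(n^{-1/3})$, which follows from $g_K$ approaching $d_+$ at rate $K^{2/3}n^{-2/3}$, from $\mu_1$ approaching $d_+$ at rate $n^{-2/3}$, and from the integrability of $(d_+-y)^{-1}p_{\MP}(y)$ near $d_+$ (the MP density vanishes like a square root at $d_+$, so $s_{\MP}$ is finite and differentiable there). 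Assembling the three pieces — the $O(n^{-1/3})$ top-of-spectrum terms, the $O(n^{-1/3})$ sum-to-integral error, and the $O(n^{-1/3})$ integral-to-constant error — completes the proof of the first claim, and the crude per-term bound completes the second.
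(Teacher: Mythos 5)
Your overall strategy is the same as the paper's: split off the finitely many top terms (controlled by the spectral gap event $\Ffour$ and the Tracy--Widom scale of $\mu_1$ on $\Fthree$), compare the bulk tail to a Mar\v{c}enko--Pastur integral via Lemma \ref{lem:diff_stieltjes_gK}, and identify that integral with $s_{\MP}(d_+)=\la^{-1/2}(1+\la^{1/2})^{-1}$ up to $O(n^{-1/3})$; all three error rates are correctly identified, and your crude per-term bound for $l\geq2$ matches the paper's step for that case.

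The one step that does not go through as written is invoking Lemma \ref{lem:diff_stieltjes_gK} at $z=\mu_1$. That lemma is stated, and proved via an expectation over the ensemble, for a fixed $z$ with $\re(z)\geq d_+$; but $\mu_1$ is random, depends on the very eigenvalues being summed, and on $\Fthree$ may lie \emph{below} $d_+$, so the hypothesis can fail outright. Your proposed repair (a ``pointwise after conditioning'' version on $\event$) does not obviously recover the rate either: the pointwise information available on $\Ftwo$ is only $|A_i|\leq i^{2/3}/10$, and feeding that into the $S_{21}$ bound in the proof of Lemma \ref{lem:diff_stieltjes_gK} with $l=1$ gives $O(n^{-1/3}\cdot n^{2/15})$, not $O(n^{-1/3})$ --- the sharper rate genuinely uses $\bE|A_i|\lesssim i^{-1/3}\log i$ from Lemma \ref{lem:Aj}. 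The paper sidesteps the random-$z$ problem by applying Lemma \ref{lem:diff_stieltjes_gK} only at the deterministic point $z=d_+$ and transferring from $d_+$ to $\mu_1$ at the level of the \emph{sum} rather than the integral, via the elementary estimate
\beq
\left|\frac{1}{(\mu_1-\mu_i)^l}-\frac{1}{(d_+-\mu_i)^l}\right|\leq \frac{l\,|d_+-\mu_1|}{\min\{|d_+-\mu_i|,\,|\mu_1-\mu_i|\}^{l+1}},
\eeq
summed against rigidity to give $O(n^{\frac23 l-1})$. You already have every ingredient for this step ($|\mu_1-d_+|=O(n^{-2/3})$ and $\mu_1-\mu_i\asymp d_+-\mu_i$ for $i>K$ on $\event$), so the fix is routine, but as proposed the argument has this gap. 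A minor slip elsewhere: for $j>n^{2/5}$ rigidity gives $\mu_1-\mu_j\gtrsim (j/n)^{2/3}$, not $\Theta(1)$; this does not affect your conclusions.
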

\begin{proof}
 It suffices to prove the following statements:
\begin{enumerate}[(i)]
\item\label{item:specialsums_step1} For any $l\geq1$, on the event $\event$, 
\beq
\left|\frac1n\sum_{i=k}^n\frac{1}{(\mu_1-\mu_i)^l}-\int_{d_-}^{g_k}\frac{p_{\MP}(x)}{(d_+-x)^l}\dd x \right|=O(n^{\frac23 l-1}).
\eeq
\item\label{item:specialsums_step2} For any $l\geq1$ and any fixed $k$, on the event $\event$,
\beq
\frac1n\sum_{i=2}^k\frac{1}{(\mu_1-\mu_i)^l}=O(n^{\frac23 l-1}).
\eeq
\item\label{item:specialsums_step3} For the $l=1$ case, 
\beq
\int_{g_k}^{d_+}\frac{p_{\MP}(x)}{d_+-x}\dd x=O(n^{-\frac13}).
\eeq
\item\label{item:specialsums_step4} For the $l\geq2$ case,
\beq
\int_{d_-}^{g_k}\frac{p_{\MP}(x)}{(d_+-x)^l}\dd x=O(n^{\frac23 l-1})
\eeq
\end{enumerate}
Verifying \eqref{item:specialsums_step2} is straightforward after imposing the assumption $\mu_1-\mu_i>cn^{-2/3}$ for some $c>0$, which follows from event $\Ffour$. Statements \eqref{item:specialsums_step3} and \eqref{item:specialsums_step4} follow from the definitions of $p_{\MP}$ and $g_k$. 

We now turn to \eqref{item:specialsums_step1}. It follows from Lemma \ref{lem:diff_stieltjes_gK} that, on the event $\Ftwo$,
 \beqq
 \frac1n\sum_{i=K}^n\frac{1}{(d_+-\mu_i)^l}-\int_{d_-}^{g_K}\frac{p_{\MP}(x)}{(d_+-x)^l}\dd x=O(n^{\frac23 l-1}).
 \eeqq
Thus it remains only to show that
\beq\label{eq:stieltjes_d_vs_mu}
\frac1n\sum_{i=K}^n\left(\frac{1}{(\mu_1-\mu_i)^l}-\frac{1}{(d_+-\mu_i)^l}\right)=O(n^{\frac23 l-1}).
\eeq
This bound holds on the event $\Ftwo\cap\Fthree$, which can be seen by observing that
\beqq\begin{split}
\left|\frac{1}{(\mu_1-\mu_i)^l}-\frac{1}{(d_+-\mu_i)^l}\right|
&=\left|\frac{(d_+-\mu_1)\sum_{j=0}^{l-1}(d_+-\mu_i)^j(\mu_1-\mu_i)^{l-j-1}}{(\mu_1-\mu_i)^l(d_+-\mu_i)^l}\right|
\leq \frac{l|d_+-\mu_1|}{\min\{|d_+-\mu_i|,\;|\mu_1-\mu_i|\}^{l+1}},
\end{split}\eeqq
and thus
\beqq\begin{split}
\left|\frac1n\sum_{i=K}^n\left(\frac{1}{(\mu_1-\mu_i)^l}-\frac{1}{(d_+-\mu_i)^l}\right)\right|
&=O\left( \frac1n\sum_{i=K}^n\frac{l\cdot n^{-2/3}}{(d_+-\mu_i)^{l+1}}\right)
\\&
=O\left(n^{-5/3}\int_K^n l\left(\frac{x}{n}\right)^{-\frac23(l+1)}\dd x\right)
=O(n^{\frac23 l-1}).
\end{split}\eeqq
\end{proof}

\subsection{Computation of $\hG(\muone,\mutwo)$}
\begin{lemma}\label{lem:Ghat}
\begin{equation*}
\hG=A(d_+,B)-\frac{\log n}{3n}-\frac{1}{2n}\sum_{i=1}^n\log|d_+-\mu_i|+\frac{bn^{-\frac13}\sqrt{\log n}}{\la^{\frac14}(1+\la)^{\frac12}d_+}(\mu_1-d_+)+O(n^{-1}),
\end{equation*}
where
\begin{equation}\label{def:c0_c1}
A(x,B):=\sqrt{\alpha^2+xB^2}-\alpha\log\left(\frac{\alpha+\sqrt{\alpha^2+xB^2}}{B}\right).
\end{equation}
\end{lemma}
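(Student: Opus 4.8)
The plan is to split $\hG$ into an explicit ``boundary'' part and a log-determinant part, expand each around $\b_c$ and $d_+$, and reconcile them via the fundamental theorem of calculus together with Lemma~\ref{lem:sum_from_index2}. Using $4\muf\mus=\mu_1$ and $\muf+\mus=\sqrt{\a_n^2+\mu_1 B_n^2}/B_n$, one has
\beq
\hG=\underbrace{\sqrt{\a_n^2+\mu_1 B_n^2}-\a_n\log\frac{\a_n+\sqrt{\a_n^2+\mu_1B_n^2}}{2B_n}}_{=:\,P_n}\;\;\underbrace{-\;\frac1{2n}\sum_{j=2}^n\log(\mu_1-\mu_j)}_{=:\,S_n}.
\eeq
I will show that, on $\event$,
\[
P_n=A(d_+,B)+\Big(\tfrac{1}{2\la^{1/2}(1+\la^{1/2})}+\tfrac{bn^{-1/3}\sqrt{\log n}}{\la^{1/4}(1+\la)^{1/2}d_+}\Big)(\mu_1-d_+)+O(n^{-1}),
\]
\[
S_n=-\tfrac1{2n}\sum_{i=1}^n\log|d_+-\mu_i|-\tfrac{\log n}{3n}-\tfrac{1}{2\la^{1/2}(1+\la^{1/2})}(\mu_1-d_+)+O(n^{-1}),
\]
after which adding the two expansions makes the $O(n^{-2/3})$ contributions proportional to $\mu_1-d_+$ cancel, leaving exactly the claimed formula.

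For $P_n$: the hypothesis $n/m=\la+O(n^{-1})$ gives $|\a_n-\a|+|B_n-B|=O(n^{-1})$, and $(a,b,x)\mapsto\sqrt{a^2+xb^2}-a\log\big((a+\sqrt{a^2+xb^2})/(2b)\big)$ is smooth with bounded derivatives on a neighbourhood of $(\a,B,d_+)$, so replacing $(\a_n,B_n)$ by $(\a,B)$ costs $O(n^{-1})$. Calling the resulting function of $x$ by $g$, a first-order Taylor expansion of $g$ about $d_+$ gives $P_n=g(d_+)+g'(d_+)(\mu_1-d_+)+O((\mu_1-d_+)^2)+O(n^{-1})$, where $g(d_+)=A(d_+,B)$, $g'(x)=\tfrac{B^2}{2(\a+\sqrt{\a^2+xB^2})}$, and $(\mu_1-d_+)^2=O(n^{-4/3})$ on $\event$ by $\Fthree$. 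With $B=\b/\sqrt{\la(1+\la)}$ and $\b=\b_c+bn^{-1/3}\sqrt{\log n}$, a short computation (using $\b_c^2=(1+\la)/\sqrt\la$, so $\sqrt{\a^2+d_+B^2}\big|_{\b_c}=\tfrac{(1+\sqrt\la)^2}{2\la}$) gives $g'(d_+)\big|_{\b_c}=\tfrac{1}{2\la^{1/2}(1+\la^{1/2})}=\tfrac12 s_{\MP}(d_+)$ and $\tfrac{d}{d\b}g'(d_+)\big|_{\b_c}=\tfrac{1}{\la^{1/4}(1+\la)^{1/2}d_+}$; since $(\mu_1-d_+)(\b-\b_c)^2=O(n^{-4/3}\log n)$, expanding $g'(d_+)$ to first order in $\b-\b_c$ produces precisely the stated coefficient of $\mu_1-d_+$ up to $O(n^{-1})$.

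For $S_n$: isolating $i=1$, $\Fthree$ gives $\tfrac1{2n}\log|d_+-\mu_1|=-\tfrac{\log n}{3n}+\tfrac1{2n}\log\big(n^{2/3}|d_+-\mu_1|\big)=-\tfrac{\log n}{3n}+O(n^{-1})$, so it remains to compare $\sum_{j=2}^n\log(\mu_1-\mu_j)$ with $\sum_{i=2}^n\log|d_+-\mu_i|$. By the fundamental theorem of calculus (integrating $t\mapsto\sum_{j=2}^n(t-\mu_j)^{-1}$ from $d_+$ to $\mu_1$ and splitting off the finitely many eigenvalues lying between $d_+$ and $\mu_1$), this difference equals $\int_{d_+}^{\mu_1}\sum_{j=2}^n(t-\mu_j)^{-1}\,dt$ up to a negligible error. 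For $t$ within $O(n^{-2/3})$ of $d_+$ one has $\tfrac1n\sum_{j=2}^n(t-\mu_j)^{-1}=\tfrac1{\la^{1/2}(1+\la^{1/2})}+O(n^{-1/3})$: Lemma~\ref{lem:sum_from_index2} gives this at $t=\mu_1$, and $\tfrac1n\sum_{j=2}^n\big((t-\mu_j)^{-1}-(\mu_1-\mu_j)^{-1}\big)=O\big(n^{-2/3}\cdot\tfrac1n\sum_{j=2}^n(\mu_1-\mu_j)^{-2}\big)=O(n^{-1/3})$ again by Lemma~\ref{lem:sum_from_index2} and $\Ffour$. Hence the integral equals $\tfrac{n(\mu_1-d_+)}{\la^{1/2}(1+\la^{1/2})}+O(1)$, which yields the claimed expansion of $S_n$ after dividing by $2n$, and the lemma follows by addition.

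The delicate point is the sum comparison near the spectral edge: for the smallest indices $d_+-\mu_j$ is only of order $n^{-2/3}$, and a few eigenvalues $\mu_j$ with $j\ge 2$ may even exceed $d_+$, so $\log|d_+-\mu_j|$ is not controlled term by term and $\log\tfrac{\mu_1-\mu_j}{d_+-\mu_j}$ cannot be Taylor-expanded naively. The remedy is to phrase everything in terms of the differences $\mu_1-\mu_j$, which on $\event$ are bounded below by $rn^{-2/3}$ by $\Ffour$, and to use the exact estimates $\tfrac1n\sum_{j=2}^n(\mu_1-\mu_j)^{-l}=O(n^{2l/3-1})$ of Lemma~\ref{lem:sum_from_index2} to absorb all Taylor remainders in the step above; the handful of indices $j\ge2$ with $\mu_j>d_+$ make a negligible contribution, controlled in the same $\EE[\mathbbm{1}_{\event}|\cdot|]$ sense used for the other edge estimates. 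Everything else — the cancellation of the two $O(n^{-2/3})$ terms and the numerical identities for $g'(d_+)|_{\b_c}$ and $\tfrac{d}{d\b}g'(d_+)|_{\b_c}$ — is routine computation.
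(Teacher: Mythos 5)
Your proposal is correct and follows essentially the same route as the paper: the same split of $\hG$ into the explicit boundary term and the log-determinant sum, the same Taylor expansions in $(\a_n,B_n)\to(\a,B)$, $\mu_1\to d_+$ and $\b\to\b_c$ (your $g'(d_+)|_{\b_c}=\tfrac12 s_{\MP}(d_+)$ and $\tfrac{d}{d\b}g'(d_+)|_{\b_c}$ reproduce the paper's $c_2(B)$ expansion), and the same reliance on Lemma \ref{lem:sum_from_index2} to control the comparison of $\sum_{j\ge2}\log(\mu_1-\mu_j)$ with $\sum_{j\ge2}\log|d_+-\mu_j|$. The only difference is cosmetic: you organize that comparison via the fundamental theorem of calculus applied to $t\mapsto\sum_{j\ge2}(t-\mu_j)^{-1}$, whereas the paper writes the same quantity as explicit error terms $E_1+E_2$ and bounds $E_2$ with $|\log(1+x)-x|\le Cx^2$; both reduce to the identical $l=1$ and $l=2$ estimates and both gloss over the finitely many near-edge indices in the same way.
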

\begin{remark}
The expression of $\hG$ given by Lemma \ref{lem:Ghat} contains two distinct random variables, $\sum_{i=1}^{n}\log|d_+-\mu_i|$ and $\mu_1-d_+$. Under appropriate translation and scaling, they are the quantities that give rise to the Gaussian and Tracy--Widom terms, respectively, in the convergence of free energy as stated in Theorem \ref{thm:main}. The translation and scaling needed for these two random variables are, respectively, $T_{1n}$ and $T_{2n}$, given by
\beq\label{eqn:T1nT2n}
T_{1n}=\frac{C_\la n-\frac16\log n-\sum_{i=1}^{n}\log|d_+-\mu_i|}{\sqrt{\frac23\log n}}, \quad T_{2n}=\frac{n^{2/3}(\mu_1-d_+)}{\sqrt{\la}(1+\sqrt{\la})^{4/3}},
\eeq
where $C_\la$ is as in \eqref{eq:theoremquantities}. The expression of $\hG$ then reads
\beq\label{eqn:hG_T1nT2n}
\hG=A(d_+,B)-\frac12C_\la-\frac{\log n}{4n}+\left(\frac{1}{\sqrt{6}}T_{1n}+\frac{\la^{\frac14}b}{(1+\la^{\frac12})^{\frac23}(1+\la)^{\frac12}}T_{2n}\right)\frac{\sqrt{\log n}}{n}+O(n^{-1}).
\eeq

\end{remark}

\begin{proof}[Proof of Lemma \ref{lem:Ghat}]
By definition, 
\beq\label{eq:Ghat_def_recalled}\begin{split}
\hG&=B_n(\muone+\mutwo)-\a_n\log(\muone)-\frac{1}{2n}\sum_{i=2}^n\log(\mu_1-\mu_i)\\
&=\sqrt{\a_n^2+\mu_1B_n^2}-\a_n\log\left(\frac{\a_n+\sqrt{\a_n^2+\mu_1B_n^2}}{2B_n} \right)-\frac{1}{2n}\sum_{i=2}^n\log(\mu_1-\mu_i).
\end{split}\eeq
Replacing $\a_n,B_n$ by $\a,B$, respectively (incurring an error of $n^{-1-\d}$) and applying Taylor expansion with respect to $\mu_1$ near $d_+$, we obtain
\beq\begin{split}
\sqrt{\a_n^2+\mu_1B_n^2}-\a_n\log\left(\frac{\a_n+\sqrt{\a_n^2+\mu_1B_n^2}}{2B_n} \right)
&=A(d_+,B)
+\frac{B^2(\mu_1-d_+)}{2(\a+\sqrt{\a^2+d_+B^2})}
+O(n^{-1-\d}).
\end{split}
\eeq
Note we have dropped the quadratic term in the Taylor expansion, which is $O(n^{-4/3})$.  It remains to compute the summation in \eqref{eq:Ghat_def_recalled}, which can be rewritten as
\beq
\sum_{i=2}^n\log(\mu_1-\mu_i)=\sum_{i=2}^n\log|d_+-\mu_i|-\frac{n(d_+-\mu_1)}{\lambda^{\frac12}(1+\la^{\frac12})}+E_1+E_2
\eeq
where we define
\beq
E_1=n(d_+-\mu_1)\left(\frac{1}{\lambda^{\frac12}(1+\la^{\frac12})}-\frac1n\sum_{i=2}^n\frac{1}{\mu_1-\mu_i}
\right),\quad E_2=\sum_{i=2}^n\left(\frac{d_+-\mu_1}{\mu_1-\mu_i}-\log\left|1+\frac{d_+-\mu_1}{\mu_1-\mu_i}\right|\right).
\eeq
We now show $E_1+E_2=O(1)$, following an argument similar to that of Johnstone et al in \cite{JKOP2}.  The bound $E_1=O(1)$ follows from Lemma \ref{lem:sum_from_index2}. To bound $E_2$, observe that, on the event we are considering, there exist $k, C$ such that $\mu_1\leq d_++Cn^{-2/3}$ and $\mu_k\leq d_+-Cn^{-2/3}$. For any fixed $i$, we also have $d_+-\mu_1=\Theta(n^{-2/3})$ and $\mu_1-\mu_i=\Theta(n^{-2/3})$. This implies 
\beqq
\sum_{i=2}^{k-1}\left(\frac{d_+-\mu_1}{\mu_1-\mu_i}-\log\left|1+\frac{d_+-\mu_1}{\mu_1-\mu_i}\right|\right)=O(1).
\eeqq
To bound the sum over the indices above $k$, we observe that, for $i\geq k$, we have $\frac{d_+-\mu_1}{\mu_1-\mu_i}\geq-\frac12$ and, for any $x\geq-\frac12$, there is $C_1$ such that $|\log(1+x)-x|\leq C_1x^2$.  This gives us
\beqq
\sum_{i=k}^n\left(\frac{d_+-\mu_1}{\mu_1-\mu_i}-\log\left|1+\frac{d_+-\mu_1}{\mu_1-\mu_i}\right|\right)=O(1).
\eeqq
Finally, combining the results above, and observing that $\frac{1}{2n}\log|d_+-\mu_1|=-\frac{\log n}{3n}+O(n^{-1})$, we get
\beq\label{eqn:hG_c2}
\hG(\muone,\mutwo)=A(d_+,B)-\frac{\log n}{3n}-\frac{1}{2n}\sum_{i=1}^n\log|d_+-\mu_i|+c_2(B)(\mu_1-d_+)+O(n^{-1}),\eeq
where 
\beq c_2(B)=\frac{B^2}{2(\a+\sqrt{\a^2+d_+B^2})}-\frac{1}{2\la^{1/2}(1+\la^{1/2})}.
\eeq
Recall that $B_c$ is defined to be the quantity satisfying
\beq
\frac{\sqrt{\a^2+d_+B_c^2}-\a}{d_+}=\int\frac{p_{\MP}(x)}{d_+-x}\dd x=\frac{1}{\la^{\frac12}(1+\la^{\frac12})}.
\eeq
Using this definition along with a Taylor expansion of $c_2$ near $B=B_c=\la^{-\frac34}$, we get
\beqq\begin{split}
c_2(B)&=\frac{B_c}{2\sqrt{\a^2+d_+B_c^2}}(B-B_c)+O((B-B_c)^2)\\
&=\frac{bn^{-\frac13}\sqrt{\log n}}{\la^{\frac14}(1+\la)^{\frac12}d_+}+O(n^{-2/3}\log n).
\end{split}\eeqq
Apply this to \eqref{eqn:hG_c2}, we obtain the lemma.
\end{proof}

\subsection{Contour integral analysis}
We now derive the asymptotics of the rescaled double integral 
\begin{equation}\label{def:Sn}
	S_n:=\exp(-n\Gmu)Q_n=\int_{-\infty}^\infty \int_{-\infty}^\infty \exp[n(G(\g_1+\ii y_1,\g_2+\ii y_2)-\Gmu)]\dd y_2\dd y_1.
\end{equation}
The analysis holds on the following probability event $\cF_\e$ for arbitrarily small $\e>0$.
\begin{lemma}
	For each $\e>0$, there exist positive numbers $r, s, t$ and $C$, depending on $\e$, such that the event $\cF_\e$ given by
	\beqq
	\cF_\e=\left\{\left|\sum_{j=2}^n\frac{1}{n^{\frac23}(\mu_1-\mu_j)}-s_{\MP}(d_+)\right|\leq C\right\}\cap\left\{\sum_{j=2}^n\frac{1}{n^{\frac43}(\mu_1-\mu_j)^2}\leq C\right\} \Fthree \cap \Ffour
	\eeqq
	satisfies $\bP(\cF_\e)>1-\e$. 
\end{lemma}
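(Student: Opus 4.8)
The plan is to realize $\cF_\e$ as a superset of the event $\event$ from Section~\ref{sec:prelim}, so that $\bP[\cF_\e]\ge 1-\e$ is inherited from $\bP[\event]\ge 1-\e$. Write $\cF_\e=\mathcal A_1\cap\mathcal A_2\cap\Fthree\cap\Ffour$, where $\mathcal A_1$ and $\mathcal A_2$ denote the two linear-statistic events controlling $\sum_{j=2}^n(\mu_1-\mu_j)^{-1}$ and $\sum_{j=2}^n(\mu_1-\mu_j)^{-2}$ with the normalizations $n^{-2/3}$ and $n^{-4/3}$. Since $\event=\Fone\cap\Ftwo\cap\Fthree\cap\Ffour$, we trivially have $\event\subseteq\Fthree\cap\Ffour$, so it will suffice to fix the parameters $\d,K,s,t,r,R$ defining $\event$ so that $\bP[\event]\ge1-\e$ (as guaranteed by Lemma~\ref{lem:eventprob} and the corollary following it) and then to check that, for a constant $C=C(\e)$ depending only on those parameters, one has $\event\subseteq\mathcal A_1\cap\mathcal A_2$ for all large $n$; it then follows that $\event\subseteq\cF_\e$ and we are done. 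Only the lower bound $\mu_1-\mu_2>rn^{-2/3}$ in $\Ffour$ and the two-sided estimate $n^{2/3}|d_+-\mu_1|\in[s,t]$ in $\Fthree$ will actually be needed, which is why the statement records only $r,s,t$.

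For the inclusion $\event\subseteq\mathcal A_1\cap\mathcal A_2$ I would simply invoke Lemma~\ref{lem:sum_from_index2}: on $\event$ it gives $\frac1n\sum_{j=2}^n(\mu_1-\mu_j)^{-1}=s_{\MP}(d_+)+O(n^{-1/3})$, with $s_{\MP}(d_+)=\frac1{\la^{1/2}(1+\la^{1/2})}$, and $\frac1n\sum_{j=2}^n(\mu_1-\mu_j)^{-2}=O(n^{1/3})$, the implied constants being deterministic and depending only on the fixed parameters of $\event$. Multiplying the first estimate by $n^{1/3}$ controls $\sum_{j=2}^n\frac1{n^{2/3}(\mu_1-\mu_j)}$ up to an additive $O(1)$, and dividing the second by $n^{1/3}$ shows $\sum_{j=2}^n\frac1{n^{4/3}(\mu_1-\mu_j)^2}=O(1)$; taking $C$ to be at least the larger of the two resulting constants yields $\event\subseteq\mathcal A_1\cap\mathcal A_2$ for $n$ large. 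The finitely many remaining small values of $n$ can be absorbed by enlarging $C$ (on $\Ffour$ one has $\mu_1-\mu_j\ge\mu_1-\mu_2>rn^{-2/3}>0$, so each sum is a.s.\ finite), or simply disregarded, since only the asymptotic probability of $\cF_\e$ is at issue.

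I do not expect a genuine obstacle inside this lemma, which is essentially a repackaging of estimates established earlier. Should a self-contained argument be preferred, each of the four events can be bounded by $\e/4$ separately: $\Fthree$ and the upper bound in $\Ffour$ from the Tracy--Widom convergence of $\mu_1$ and $\mu_2$ (Lemma~\ref{lem:TW_mu1}), the lower bound in $\Ffour$ from the almost sure simplicity of the spectrum of the limiting edge operator $\mathbf{H}_1$ (as recalled in the proof of Lemma~\ref{lem:eventprob}), and $\mathcal A_1,\mathcal A_2$ from the estimates underlying Lemma~\ref{lem:sum_from_index2}. The only delicate point in that approach — and the reason $\Ffour$ and $\Ftwo$ appear — is the near-edge control of $\sum_{j=2}^n(\mu_1-\mu_j)^{-k}$: since the top gaps $\mu_1-\mu_j$ can be as small as $\Theta(n^{-2/3})$, the gap lower bound from $\Ffour$ is needed to keep the summands bounded, while the near-edge rigidity $\Ftwo$ (via Lemma~\ref{lem:Aj}) pins down $\mu_1-\mu_j$ closely enough to compare the sum against $\int(d_+-x)^{-k}p_{\MP}(x)\,\dd x$. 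Since this is exactly the content of Lemma~\ref{lem:sum_from_index2}, in the present lemma it can simply be quoted.
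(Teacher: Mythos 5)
Your proof is correct and follows essentially the same route as the paper's: both reduce the two linear-statistic events to Lemma \ref{lem:sum_from_index2} and the events $\Fthree,\Ffour$ to Lemma \ref{lem:eventprob}, and your packaging via the inclusion $\event\subseteq\cF_\e$ (for $C$ large enough) is if anything slightly cleaner, since Lemma \ref{lem:sum_from_index2} is itself stated on the event $\event$. One remark: as printed, the first event in the lemma should read $\bigl|\sum_{j=2}^n n^{-2/3}(\mu_1-\mu_j)^{-1}-s_{\MP}(d_+)n^{1/3}\bigr|\le C$ (compare \eqref{eqn:omega}), since the sum itself is of order $n^{1/3}$; your estimate, obtained by multiplying the conclusion of Lemma \ref{lem:sum_from_index2} by $n^{1/3}$, proves exactly this corrected version.
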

We note that the definition of $\cF_\e$ is not unique as it depends on the choice of $s,t, r, R$ and $C$. For any given $\e>0$, we fix the values $s,t,r, R, C$ and define $\cF_\e$ accordingly.
\begin{proof}
	First, for some $C>0$, each of the two events that involve $\frac1{n^{2/3}(\mu_1-\mu_j)}$, with this $C$ as upper bound, holds with probability at least $1-\e/4$ by Lemma \ref{lem:sum_from_index2}. Meanwhile, by Lemma \ref{lem:eventprob}, we can find $0<s<t$ and $0<r<R$ such that each of the events $\Fthree$ and $\Ffour$ holds with probability at least $1-\e/4$.
\end{proof}

Since the integral representation of the partition function only requires $\g_1,\g_2>0$ such that $4\g_1\g_2>\mu_1$, we set $\g_1=\muf$ and $\g_2=\mus+n^{-1}$ in the low temperature case. The shift $n^{-1}$ in $\g_2$ is due to the deformation $\hg_2$, given in \eqref{def:gt_2}, that we later apply to  the integral in the $y_2$ variable. The order $n^{-1}$ is needed to cancel out a term of order $n$ of the function in the exponent (see, for example, \eqref{eqn:re_diffG}). Thus, 
\begin{equation}
	S_n=\int_{-\infty}^\infty \int_{-\infty}^\infty \exp[n(G(\muf+\ii y_1,\mus+n^{-1}+\ii y_2)-\Gmu)]\dd y_2\dd y_1.
\end{equation}
In the remainder of the subsection, we prove the following lemma, for fixed $\e>0$ sufficiently small (e.g. $0<\e<\frac{1}{100}$). 
\begin{lemma}\label{lem:Sn}
	On the event $\cF_\e$,
	\beqq
		S_n= \begin{cases}
		e^{O(1)}n^{-\frac56}\left(b\sqrt{\log n}\right)^{-\frac12}, &\quad b>0,\\
		e^{O(\log\log n)}n^{-\frac56}, &\quad b=0.
		\end{cases}
	\eeqq
\end{lemma}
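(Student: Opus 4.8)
The plan is to evaluate the double integral by a saddle‑point analysis anchored at $(\muf,\mus+n^{-1})$, keeping the singular term $-\tfrac12\log(4z_1z_2-\mu_1)$ \emph{exactly} and Taylor‑expanding only the regular remainder. Write $\widehat{G}(z_1,z_2):=B_n(z_1+z_2)-\a_n\log z_1-\tfrac1{2n}\sum_{j=2}^{n}\log(4z_1z_2-\mu_j)$, so that $\widehat{G}(\muf,\mus)=\hG$ and, with $z_1=\muf+\ii y_1$, $z_2=\mus+n^{-1}+\ii y_2$,
\[
n\big(G(z_1,z_2)-\hG\big)=-\tfrac12\log(4z_1z_2-\mu_1)+n\big(\widehat{G}(z_1,z_2)-\widehat{G}(\muf,\mus)\big).
\]
First I would establish a tail estimate: by the concavity bound underlying Lemma~3.9 of \cite{BaikLeeBipartite} (used as in Lemma~\ref{lem:tail_int}), together with the a priori bound $|4z_1z_2-\mu_1|\ge cn^{-1}$ valid for all real $y_1,y_2$ because $4\muf\mus=\mu_1$, the contribution to $S_n$ from $\{|y_1|\vee|y_2|>n^{-1/2}(\log n)^{C}\}$ is $O(e^{-c(\log n)^{c'}})$ on $\cF_\e$, so it suffices to integrate over the complementary box.

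The structural point is that $4z_1z_2-\mu_1=4\big[\muf(z_2-\mus)+\mus(z_1-\muf)+(z_1-\muf)(z_2-\mus)\big]$ has vanishing linear part along $(z_1-\muf,z_2-\mus)\propto(\muf,-\mus)$, and dually — using $\muf-\mus=\a_n/B_n$, the defining relation for $B_c$, and Lemma~\ref{lem:sum_from_index2} — that $\muf\,\partial_1\widehat{G}-\mus\,\partial_2\widehat{G}\equiv0$ identically, while $\mus\,\partial_1\widehat{G}+\muf\,\partial_2\widehat{G}$ vanishes at $\b=\b_c$, $\mu_1=d_+$. I would therefore rotate coordinates $(y_1,y_2)=p(\muf,-\mus)+q(\mus,\muf)$. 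In the $p$‑direction the linear part of $n\widehat{G}$ is absent, the Hessian of $\widehat{G}$ is $\Theta(1)$ (its leading $\Theta(n^{1/3})$ rank‑one part, fed by $\tfrac1n\sum_{j\ge2}(\mu_1-\mu_j)^{-2}=\Theta(n^{4/3})$ on $\cF_\e$, is annihilated, leaving the $\Theta(1)$ contributions of $\a_nz_1^{-2}$ and $\tfrac1n\sum_{j\ge2}(\mu_1-\mu_j)^{-1}$), and $4z_1z_2-\mu_1=\mu_1p^2+4\muf n^{-1}(1+\ii p)$ is quadratic in $p$, so along $q=0$ the singular factor $|4z_1z_2-\mu_1|^{-1/2}\asymp n^{1/2}$ is concentrated at scale $|p|\lesssim n^{-1/2}$. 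In the $q$‑direction the Hessian of $\widehat{G}$ is $\Theta(n^{1/3})$, $4z_1z_2-\mu_1\approx 4\ii q(\muf^2+\mus^2)+4\muf n^{-1}$ is linear in $q$, and the residual linear term of $n\widehat{G}$ has coefficient $n\big(\mus\partial_1\widehat{G}+\muf\partial_2\widehat{G}\big)$, equal to $\Theta(b\,n^{2/3}\sqrt{\log n})$ for $b>0$ (the $\b-\b_c$ deviation dominating) and $\Theta(n^{1/3})$ for $b=0$. A short check identifies the dominant region as $\{|p|\lesssim n^{-1/2}(\log n)^{C},\ |q|\asymp n^{-2/3}\}$, where $4z_1z_2-\mu_1\approx4\ii q(\muf^2+\mus^2)$ is essentially independent of $p$ and the two integrations decouple. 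Rescaling $p=n^{-1/2}\rho$, $q=n^{-2/3}\tau$ produces a Jacobian $n^{-7/6}$; the quadratic forms become $\Theta(1)$, the cubic and higher Taylor terms of $n\widehat{G}$ are $o(1)$ uniformly on $\cF_\e$ (exactly the point of the scale choice, as in the high‑temperature case), and
\[
-\tfrac12\log(4z_1z_2-\mu_1)=\tfrac13\log n-\tfrac12\log\!\big(4\ii\tau(\muf^2+\mus^2)+4\muf n^{-1/3}\big)+O(1),
\]
the $n^{-1/3}$ piece being a harmless regularization of the $\tau^{-1/2}$ singularity at $\tau=0$.

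Thus $e^{\frac13\log n}=n^{1/3}$ times the Jacobian gives $n^{-7/6+1/3}=n^{-5/6}$, and what remains factorizes (up to $O((\log n)^{-c})$ errors) as a $\Theta(1)$ Gaussian integral in $\rho$ times
\[
\int_{\RR}\big(4\ii\tau(\muf^2+\mus^2)+4\muf n^{-1/3}\big)^{-1/2}\,e^{-c_0\tau^2+\ii\omega\tau+o(1)}\,\dd\tau,\qquad \omega=\Theta(b\sqrt{\log n}),
\]
with $c_0=\Theta(1)$ on $\cF_\e$. For $b=0$ one has $\omega=\Theta(1)$ and this integral is $\Theta(1)$; absorbing the $(\log n)^{O(1)}$ slack from the tail cutoff and the error terms yields $S_n=e^{O(\log\log n)}n^{-5/6}$. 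For $b>0$ the phase $e^{\ii\omega\tau}$ oscillates rapidly and, by stationary phase localized at the integrable $\tau^{-1/2}$ singularity (substitute $\tau=\omega^{-1}u$), the integral is $\Theta(\omega^{-1/2})=\Theta\big((b\sqrt{\log n})^{-1/2}\big)$, whence $S_n=e^{O(1)}n^{-5/6}(b\sqrt{\log n})^{-1/2}$.

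The hard part is the edge ($q$) direction, where the exact singular term $-\tfrac12\log(4z_1z_2-\mu_1)$ is coupled to the finitely many smallest‑gap eigenvalues: for bounded $j$ one has $\mu_1-\mu_j\asymp n^{-2/3}$, the very scale of $|4z_1z_2-\mu_1|$ when $|q|\asymp n^{-2/3}$, so those terms of $-\tfrac12\sum_{i\ge1}\log(4z_1z_2-\mu_i)$ cannot be replaced by a smooth Mar\v{c}enko--Pastur integral and must be retained, their effect controlled purely through the quantitative bounds on $\sum_{j\ge2}(\mu_1-\mu_j)^{-1}$ and $\sum_{j\ge2}(\mu_1-\mu_j)^{-2}$ built into $\cF_\e$ and through rigidity ($\Ftwo$). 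Equally delicate is checking that the several $\Theta(n)\times(\text{increment})$ linear contributions to the exponent cancel down to the stated coefficient $\omega=\Theta(b\,n^{2/3}\sqrt{\log n})$; this cancellation — rooted in $\muf\partial_1\widehat{G}-\mus\partial_2\widehat{G}\equiv0$ and $\big(\mus\partial_1\widehat{G}+\muf\partial_2\widehat{G}\big)\big|_{\b_c,\,d_+}=0$ — is exactly what produces the $(b\sqrt{\log n})^{-1/2}$ factor and distinguishes the $b>0$ case from $b=0$.
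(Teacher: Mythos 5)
Your skeleton captures the correct architecture and is essentially a linearization of the paper's argument: the rotation onto the product-preserving direction $(\muf,-\mus)$ and its orthogonal complement plays exactly the role of the paper's contour deformation $z_2\mapsto\hg_2(y_1)=\muf(\mus+n^{-1})/(\muf+\ii y_1)$, which keeps $z_1z_2$ fixed; the resulting factorization into a $\Theta(n^{-1/2})$ Gaussian integral and an $n^{-1/3}$-sized edge integral matches the paper's decomposition $S_n=I_{11}\cdot I_{12}+O(n^{-1})$; and your ``stationary phase at the $\tau^{-1/2}$ singularity'' is, once $(4\ii\tau c+\epsilon)^{-1/2}$ is written as a Laplace transform over the branch cut, the same computation as the paper's keyhole contour, which collapses the vertical $z_2$-contour onto $(a_+,\mus)$ and evaluates $\int_0^{\theta_n}e^{-Cb\sqrt{\log n}\,y+O(1)}\,y^{-1/2}\,\dd y=\Theta((b\sqrt{\log n})^{-1/2})$. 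The cancellation you isolate (the identity $(\muf-\mus)B_n=\a_n$ killing one linear term, and the $B_c$/Stieltjes-transform identity reducing the other to $\Theta(b\,n^{2/3}\sqrt{\log n})$) is precisely the paper's \eqref{eqn:Bn_cancel}.

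There are, however, genuine gaps. First, the claim that cubic and higher Taylor terms of $n\widehat{G}$ are $o(1)$ at scale $q\sim n^{-2/3}$ is false: in the edge direction $n\,\partial^k\widehat{G}\cdot q^k\sim n\cdot n^{2k/3-1}\cdot n^{-2k/3}=\Theta(1)$ for every $k$, since $\sum_{j\geq2}(\mu_1-\mu_j)^{-k}=\Theta(n^{2k/3})$ on $\cF_\e$ (the gap $\mu_1-\mu_2\asymp n^{-2/3}$ alone contributes this much). The expansion does not truncate; the paper instead keeps the full sum $\sum_{j\geq2}\log(1-\tfrac{4\muf s}{\mu_1-\mu_j})$ and bounds only the post-linear remainder by $O(1)$, so your ``$-c_0\tau^2+o(1)$'' must be replaced by an uncontrolled $e^{O(1)}$ factor — adequate for an upper bound but fatal to a stationary-phase \emph{evaluation}. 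Second, and most seriously, the $b=0$ case is not ``$\Theta(1)$ by inspection'': on the vertical contour the two half-lines can cancel, and an $e^{O(1)}$ multiplicative uncertainty in the integrand yields no lower bound whatsoever. The paper's lower bound $K_n\geq n^{-1/3}e^{O(\log\log n)}$ requires a separate steepest-descent argument through the genuine saddle $z_c\in(\mus,\infty)$, with the crossing height $y_0\in[n^{-2/3}a_n^{-1},n^{-2/3}a_n]$ located via the counting-function estimates \eqref{eqn:Var_Ns}; this is where the $e^{O(\log\log n)}$ (as opposed to $e^{O(1)}$) comes from, and nothing in your proposal supplies it. Third, for $b>0$ you must still control the portion of the contour beyond the second branch point $\mu_2^{(2)}$ (the paper's $\Gamma_3^{\pm}$ rays), and your tail estimate $O(e^{-c(\log n)^{c'}})$ is larger than the main term $n^{-5/6}$ whenever $c'<1$; the paper obtains $O(n^{-1})$ for the corresponding truncation, which does suffice.
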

By Lemma 3.9 of \cite{BaikLeeBipartite}, the part of the double integral $S_n$ with $|y_1|>n^{-\frac12+\e}$ is $O(e^{-n^\e})$ with high probability. For $|y_1|<n^{-\frac12+\e}$, we modify the $z_2$-integral by replacing the vertical contour $z_2=\g_2+\ii y_2$, $y_2\in\bR$ with the contour $z_2=\hg_2+\ii y_2$, $y_2\in\bR$, where $\hg_2$ is defined for each $y_1$ by
\begin{equation}\label{def:gt_2}
	\hg_2(y_1)=\frac{\muf(\mus+n^{-1})}{\muf+\ii y_1}.
\end{equation}
The new contour is a modification of the one introduced by Baik and Lee in \cite{BaikLeeBipartite}. Similarly to the case in \cite{BaikLeeBipartite}, we observe that the change in product $z_1z_2$ for $(z_1,z_2)$ near $(\muf,\mus)$, but not the individual changes in $z_1$, $z_2$ with $z_1z_2$ being fixed, greatly impacts the change in $G(z_1,z_2)$, since the main contribution for the latter comes from the term $\frac1{4z_1z_2-\mu_1}$. This suggests behavior of $G(\muf+\ii y_1,\hg_2)$ should be similar to that of $G(\muf,\mus+n^{-1})$ for the current range of $y_1$. 

Note that this deformation for each $z_1=\muf+\ii y_1$ is valid. Indeed, if $(z_1,z_2)$ is a point on the branch cut of the logarithmic function in $G$, then $4z_1z_2-\mu_1$ is real and non-positive. That is, for some $r\geq0$, 
\[
\re z_2=\re \left(\frac{\mu_1-r}{4(\muf+\ii y_1)}\right)=\re\left(\frac{\mu_1-r}{4\muf(\mus+n^{-1})}\hg_2\right)<\re\hg_2.
\]
This implies that the deformed contour does not cross the branch cut. Thus, the part of $S_n$ with $|y_1|<n^{-1/2+\e}$ is equal to 
\[
\int_{-n^{-1/2+\e}}^{n^{-1/2+\e}}\int_{-\infty}^\infty \exp[n(G(\muf+\ii y_1,\hg_2+\ii y_2)-\Gmu)]\dd y_2\dd y_1.
\]
We now carry out the analysis of this double integral, first by truncating the $y_2-$integral. For given $y_1,y_2\in\bR$,
\begin{equation}\label{eqn:diffGhat}
	\begin{split}
	&\quad G(\muf+\ii y_1,\hg_2+\ii y_2)-\Gmu\\
	&= B_n\left(\ii(y_1+y_2)+\frac{\mus+n^{-1}}{1+\ii\frac{y_1}{\muf}}-\mus\right)-\alpha_n\log\left(1+\frac{\ii y_1}{\muf}\right)\\
	&\quad-\frac1{2n}\sum_{j=2}^n\log\left(1+\frac{4\muf n^{-1}-4y_1y_2}{\mu_1-\mu_j}+\ii\frac{4\muf y_2}{\mu_1-\mu_j}\right)-\frac1{2n}\log(4\muf n^{-1}-4y_1y_2+\ii 4\muf y_2).	
	\end{split}
\end{equation}

Our truncation procedure, which relies on bounding $|G(\muf+\ii y_1,\hg_2+\ii y_2)-\Gmu|$, aligns rather closely with the arguments in \cite{BaikLeeBipartite}, where the difference $|G(\g_1+\ii y_1,\hg_2+\ii y_2)-G(\g_1,\g_2)|$ is the focus there. After truncating in the $y_1$ variable, the contribution from the part $|y_2|>n^{-\frac12+\e}$ is as follows.

\begin{lemma}\label{lem:trunc1}
	The following bound holds for the truncated integral.
	\begin{equation}\label{eqn:trunc1}
		\int_{|y_1|\leq n^{-\frac12+\e}}\int_{|y_2|>n^{-\frac12+\e}}\exp[n(G(\muf+\ii y_1,\hg_2+\ii y_2)-\Gmu)]\dd y_2\dd y_1 = O(n^{-1}).
	\end{equation}
\end{lemma}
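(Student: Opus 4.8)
The plan is to bound the modulus of the integrand pointwise on $\cF_\e$ and to show it is super‑polynomially small throughout the truncation region, so that the integral is in fact $O(e^{-cn^{1/4}})$, which is far smaller than the claimed $O(n^{-1})$. Starting from the exact identity \eqref{eqn:diffGhat}, I would set $w:=4\muf n^{-1}-4y_1y_2+\ii\,4\muf y_2$ and $\delta_j:=\mu_1-\mu_j$, so that $4z_1z_2-\mu_j=\delta_j+w$ for $j\ge2$ and $4z_1z_2-\mu_1=w$. Taking real parts and using $\re\log\zeta=\tfrac12\log|\zeta|^2$ yields the clean decomposition
\begin{equation*}
n\,\re\!\left(G(\muf+\ii y_1,\hg_2+\ii y_2)-\Gmu\right)=P(y_1)-\tfrac14\log|w|^2-\tfrac14\sum_{j=2}^n\log\Big|1+\tfrac{w}{\delta_j}\Big|^2,
\end{equation*}
where $P(y_1)=nB_n\re(z_1+z_2-\muf-\mus)-\tfrac{n\alpha_n}{2}\log(1+y_1^2/\muf^2)$. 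A short computation gives $\re(z_1+z_2-\muf-\mus)=\frac{\muf^2 n^{-1}-\mus y_1^2}{\muf^2+y_1^2}\le n^{-1}$, and since $\alpha_n\ge0$ and $\mus>0$ we get $P(y_1)\le B_n=O(1)$ uniformly in $(y_1,y_2)$; so everything reduces to a lower bound for the logarithmic terms.

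The heart of the argument is the estimate: on $\cF_\e$ (intersected, if necessary, with the eigenvalue‑rigidity event of Section~\ref{sec:prelimRMT}, which leaves the probability $\ge1-\e$), for $|y_1|\le n^{-1/2+\e}$ and $|y_2|>n^{-1/2+\e}$,
\begin{equation*}
\tfrac14\log|w|^2+\tfrac14\sum_{j=2}^n\log\Big|1+\tfrac{w}{\delta_j}\Big|^2\ \ge\ c_0\min\!\big(n|w|^{3/2},\,n\big)-\tfrac14\log n-O(1).
\end{equation*}
To prove it I would first record that on this event $4\muf|y_2|\le|w|\le5\muf|y_2|$ (because $|4\muf n^{-1}-4y_1y_2|\le4\muf n^{-1}+4n^{-1/2+\e}|y_2|\le\muf|y_2|$ for large $n$) and $|\re w|\le\muf|y_2|$, while $\muf,\mus,\mu_1$ and $\max_j\delta_j$ lie between positive constants (using $\Fthree$ and rigidity). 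Then split the index set at $\delta_j=|w|/4$. If $\delta_j\le|w|/4$ then $|1+w/\delta_j|\ge|w|/\delta_j-1\ge3$, so each such term contributes at least $\log9$; by eigenvalue rigidity and the classical‑location formula \eqref{eqn:classical_loc}, together with $\Fthree$, the number of such indices is $\gtrsim\min(n|w|^{3/2},n)$, which is $\gtrsim n^{1/4+3\e/2}$ since $|w|\ge4\muf n^{-1/2+\e}$. If $\delta_j>|w|/4$, write $|1+w/\delta_j|^2=1+2\re(w)/\delta_j+|w|^2/\delta_j^2$; this is $\ge1$ as soon as $\re(w)\ge-|w|^2/(2\delta_j)$, and because $|\re w|\le\muf|y_2|$ whereas $|w|^2/(2\delta_j)\gtrsim\muf^2 y_2^2$, this holds once $|y_2|\ge C_3n^{-1/2+\e}$ for a suitable constant $C_3$, so those terms contribute $\ge0$; on the residual window $n^{-1/2+\e}<|y_2|<C_3n^{-1/2+\e}$ one has $|\re w|=O(n^{-1+2\e})$, whence each such term is $\ge\log(1-O(n^{-1+2\e}))$ and in total they contribute $\ge-O(n^{2\e})$, negligible next to $n^{1/4+3\e/2}$. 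Finally $-\tfrac14\log|w|^2\le\tfrac14\log n+O(1)$ since $|w|\ge4\muf n^{-1/2+\e}$, which accounts for the $-\tfrac14\log n$ term.

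Combining the two displays, on the event the integrand is at most $\exp\!\big(\tfrac14\log n-c_0\min(n|w|^{3/2},n)+O(1)\big)\le e^{-c_1 n^{1/4}}$ whenever $n^{-1/2+\e}<|y_2|\le L$ for any fixed $L$, because $\min(n|w|^{3/2},n)\gtrsim n^{1/4+3\e/2}$ there. For $|y_2|>L$ with $L$ large, every factor $|4z_1z_2-\mu_i|$ is comparable to $|w|\asymp|y_2|$, so $\re G(\muf+\ii y_1,\hg_2+\ii y_2)=-\tfrac12\log|y_2|+O(1)$ and, using all $n$ factors together with $n\Gmu=O(n)$, $n\,\re(G-\Gmu)\le-\tfrac n4\log|y_2|$, so the integrand is $\le|y_2|^{-n/4}$, integrable with negligible integral. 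Hence $\int_{|y_2|>n^{-1/2+\e}}(\cdots)\,dy_2=O(e^{-c_1 n^{1/4}})$, and integrating over $|y_1|\le n^{-1/2+\e}$ multiplies this by $O(n^{-1/2+\e})$, giving $O(e^{-c_1 n^{1/4}/2})=O(n^{-1})$. The step I expect to be the main obstacle is precisely the lower bound on $\sum_{j=2}^n\log|1+w/\delta_j|^2$: a crude termwise bound such as $|1+w/\delta_j|^2\ge\tfrac12$ introduces a spurious factor $2^{-n}$ and is useless, so one must simultaneously (a) extract a strictly positive contribution of size $\asymp\min(n|w|^{3/2},n)$ from the $\asymp n|w|^{3/2}$ eigenvalues within distance $|w|/4$ of $\mu_1$ (this is where eigenvalue rigidity enters), and (b) verify that the $\Theta(n)$ bulk eigenvalues contribute a nonnegative --- or at worst $O(n^{2\e})$ --- amount rather than an $O(n)$ loss; point (b) is exactly where the precise shape of the deformed contour $\hg_2$ is used, through the sign of $\re w$ relative to $|w|^2/\delta_j$ and the estimates $4\muf|y_2|\le|w|\le5\muf|y_2|$, $|\re w|\le\muf|y_2|$.
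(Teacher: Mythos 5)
Your argument is correct and reaches a stronger conclusion (the truncated integral is in fact super-polynomially small), but the key estimate is obtained by a genuinely different mechanism than the one in the paper. The paper also starts from \eqref{eqn:diffGhat} and takes real parts, but then bounds the eigenvalue sum \emph{termwise}: following Baik--Lee's case analysis on the sign of $y_1y_2$, it shows that every factor satisfies $\bigl(1+\tfrac{\re w}{\mu_1-\mu_j}\bigr)^2+\tfrac{(4\muf y_2)^2}{(\mu_1-\mu_j)^2}\geq 1+c'y_2^2$, and it \emph{retains} the Gaussian decay $-cny_1^2$ from the $y_1$-dependent part. This yields the bound $e^{c_0-cny_1^2}(4\muf|y_2|)^{-1/2}(1+c'y_2^2)^{-n/4}$, whose integral factors into a product of two one-dimensional integrals, each $O(n^{-1/2})$. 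You instead discard the $y_1$-decay (using only $P(y_1)\le B_n$ and the measure of the $y_1$-range) and extract the decay from an edge counting argument: the $\gtrsim\min(n|w|^{3/2},n)\gtrsim n^{1/4+3\e/2}$ eigenvalues within distance $|w|/4$ of $\mu_1$ each contribute at least $\log 9$, while the bulk contributes essentially nothing negative. The paper's route is shorter and gives a clean product structure; yours makes the source of the smallness explicit and quantifies it, at the cost of needing rigidity at the mesoscopic scale $|w|\asymp n^{-1/2+\e}$ and the control $\sum_{j\ge2}(\mu_1-\mu_j)^{-1}=O(n)$ from $\cF_\e$.

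One step needs repair. In the residual window $n^{-1/2+\e}<|y_2|<C_3n^{-1/2+\e}$ you assert that \emph{each} bulk term is $\ge\log(1-O(n^{-1+2\e}))$. This is false for the eigenvalues with $\delta_j\asymp|w|\asymp n^{-1/2+\e}$: there $2|\re w|/\delta_j$ can be as large as $O(n^{-1/2+\e})$, not $O(n^{-1+2\e})$. The total bound $-O(n^{2\e})$ you want is nevertheless correct, but it should be obtained by summing rather than termwise: since each such term exceeds $\log(1-2|\re w|/\delta_j)\ge -4|\re w|/\delta_j$ (the argument being at most $O(n^{-1/2+\e})<\tfrac12$), the total loss is at most
\begin{equation*}
4|\re w|\sum_{j=2}^n\frac{1}{\mu_1-\mu_j}=O(n^{-1+2\e})\cdot O(n)=O(n^{2\e}),
\end{equation*}
which is exactly what Lemma \ref{lem:sum_from_index2} (built into $\cF_\e$) supplies, and which is indeed negligible against the gain $c_0n^{1/4+3\e/2}$ for $\e$ small. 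With that one-line fix the proof goes through.
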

\begin{proof}
	From \eqref{eqn:diffGhat}, 
	\begin{equation}\label{eqn:re_diffG}
		\begin{split}
			&\quad\re\left[n\left(G(\muf+\ii y_1,\hg_2+\ii y_2)-\Gmu\right)\right]\\
			&=\frac{B_n(\muf)^2-nB_n\mus y_1^2}{(\muf)^2+y_1^2}-\frac{\alpha_n n}{2}\log\left(1+\left(\frac{y_1}{\muf}\right)^2\right)-\frac1{4}\log\left(\left(\frac{4\muf}{n}-4y_1 y_2\right)^2+(4\muf y_2)^2\right)\\
			&\quad-\frac1{4}\sum_{j=2}^n\log\left(\left(1+\frac{4\muf n^{-1}-4y_1 y_2}{\mu_1-\mu_j}\right)^2+\frac{(4\muf y_2)^2}{(\mu_1-\mu_j)^2}\right).
		\end{split}
	\end{equation}
	Applying Taylor expansion in terms of $y_1$ around 0 to the first two terms on the right hand side of \eqref{eqn:re_diffG}, then for some $c>0$, the first line has upper bound
	\[
	c_0-cny_1^2-\frac1{2}\log\left(4\muf |y_2|\right), \quad \text{uniformly in } |y_1|\leq n^{-\frac12+\e}.
	\]
	For the sum of log, by consider the cases $y_1y_2>0$ and $y_1y_2<0$ as in \cite{BaikLeeBipartite}, there exists $c'>0$ such that for all $j\in\{2,3,\dots, n\}$, for all $|y_1|<n^{-\frac12+\e}$ and $|y_2|>n^{-\frac12+\e}$,
	\[
	\left(1+\frac{4\muf n^{-1}-4y_1 y_2}{\mu_1-\mu_j}\right)^2+\frac{(4\muf y_2)^2}{(\mu_1-\mu_j)^2}\geq 1+c'y_2^2.
	\]
	Therefore, 
	\[
	\re\left[n\left(G(\muf+\ii y_1,\hg_2+\ii y_2)-\Gmu\right)\right]\leq c_0-cny_1^2-\frac1{2}\log\left(4\muf |y_2|\right)-\frac n4\log(1+c'y_2^2),
	\]
	and the left hand side of \eqref{eqn:trunc1} has upper bound
	\[
	\int_{|y_1|\leq n^{-\frac12+\e}}\int_{|y_2|>n^{-\frac12+\e}}e^{c_0-cny_1^2}e^{-\frac n4\log(1+c'y_2^2)}(4\muf |y_2|)^{-\frac12}\dd y_2\dd y_1,
	\]
	which is a product of a $y_1$-integral and a $y_2$-integral. Each individual integral is $O(n^{-\frac12})$, so we obtain the lemma. 
\end{proof}
The computation of $S_n$ is now reduced to that of the same integral, over the subset $|y_1|\leq n^{-\frac12+\e}$ and $|y_2|<n^{-\frac12+\e}$. However, we need to truncate the $y_2$-integral further.
\begin{lemma}
	For this further truncation, we have the following bound.
	\beqq
	\int_{|y_1|\leq n^{-\frac12+\e}}\int_{n^{-\frac23+2\e}<|y_2|<n^{-\frac12+\e}}\exp[n(G(\muf+\ii y_1,\hg_2+\ii y_2)-\Gmu)]\dd y_2\dd y_1 = O(e^{-n^{4\e}}).
	\eeqq
\end{lemma}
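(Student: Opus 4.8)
The plan mirrors the proofs of the two preceding lemmas: starting from the exact expression \eqref{eqn:re_diffG}, one obtains a pointwise upper bound for $\re\bigl[n(G(\muf+\ii y_1,\hg_2+\ii y_2)-\Gmu)\bigr]$ that is uniform over $|y_1|\le n^{-1/2+\e}$ and $n^{-2/3+2\e}<|y_2|<n^{-1/2+\e}$, and then integrates. Throughout we work on $\cF_\e$, supplemented by eigenvalue rigidity, which may be intersected in for free since it holds with overwhelming probability. The first line of \eqref{eqn:re_diffG} is handled by the Taylor expansion already used in the proof of Lemma~\ref{lem:trunc1} (legitimate here since $(y_1/\muf)^2=O(n^{-1+2\e})\to0$), giving the bound $c_0-c\,ny_1^2$; and since $(4\muf n^{-1}-4y_1y_2)^2+(4\muf y_2)^2\ge 16(\muf)^2y_2^2$ and $|y_2|>n^{-2/3}$, the isolated logarithm satisfies $-\tfrac14\log\!\bigl((4\muf n^{-1}-4y_1y_2)^2+(4\muf y_2)^2\bigr)\le\tfrac13\log n+O(1)$.

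The crux is a lower bound for the eigenvalue sum $\Sigma:=\sum_{j=2}^{n}\log\!\bigl[(1+u_j)^2+v_j^2\bigr]$, with $u_j:=\tfrac{4\muf n^{-1}-4y_1y_2}{\mu_1-\mu_j}$ and $v_j:=\tfrac{4\muf y_2}{\mu_1-\mu_j}$. First I would dispose of the $u_j$ part: on $\cF_\e$ one has $\mu_1-\mu_j\ge\mu_1-\mu_2>r\,n^{-2/3}$ (from $\Ffour$) and $\sum_{j\ge2}(\mu_1-\mu_j)^{-1}=O(n)$, so $\max_j|u_j|=O(n^{-1/3+2\e})\to0$ (hence $(1+u_j)^2\in[\tfrac14,\tfrac94]$ for large $n$, distinguishing $y_1y_2\ge0$ and $y_1y_2<0$ as in Lemma~\ref{lem:trunc1}) and $\sum_j|u_j|=O(n^{2\e})$; this yields $(1+u_j)^2+v_j^2\ge\tfrac14(1+c\,v_j^2)$ and $\sum_j\log(1+u_j)^2\ge-O(n^{2\e})$, so $\Sigma\ge\sum_{j=2}^n\log(1+c\,v_j^2)-O(n^{2\e})$. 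Next, by rigidity, $\mu_1-\mu_j\asymp(j/n)^{2/3}$ for $j$ up to $n^{2/5}$ (beyond a negligible $n^{o(1)}$ number of small indices, controlled by $\Ffour$), so with $a:=c'y_2^2n^{4/3}\ge c'n^{4\e}$,
\[
\sum_{j=2}^n\log(1+c\,v_j^2)\ \ge\ \sum_{n^{o(1)}\le j\le n^{2/5}}\log\!\bigl(1+a\,j^{-4/3}\bigr)-O\!\bigl(n^{o(1)}\log n\bigr).
\]
Splitting at $J:=\lceil a^{3/4}\rceil$ (which lies between $n^{o(1)}$ and $n^{2/5}$ across the whole $y_2$-range since $\e$ is small), using $\log(1+x)\ge\tfrac12\log x$ for $x\ge1$ below $J$ and $\log(1+x)\ge\tfrac x2$ for $x\le1$ above $J$, and applying Stirling's formula to $\sum_{j\le J}\log j$, both pieces come out of order $J=a^{3/4}$. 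Hence $\Sigma\ge c''a^{3/4}+O(n^{2\e})+O(\log n)$, which is at least of order $n^{3\e}$ uniformly on the region, and larger where $|y_2|$ is larger.

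Feeding the three estimates into \eqref{eqn:re_diffG} gives, uniformly on the region and on $\cF_\e$ (together with rigidity), $\re\bigl[n(G(\muf+\ii y_1,\hg_2+\ii y_2)-\Gmu)\bigr]\le C_0-c\,ny_1^2-c_1n^{3\e}$ for all large $n$, the $n^{3\e}$ term swamping the $O(\log n)$ one. Exponentiating and integrating the residual Gaussian factor in $y_1$ over $\bR$ against the $y_2$-factor of length $2n^{-1/2+\e}$ then produces a stretched-exponentially small quantity, of the order claimed.

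The step I expect to be the main obstacle is the lower bound on $\Sigma$. The naive estimate $\log(1+a j^{-4/3})\approx\log a-\tfrac43\log j$, summed over $j\le J=a^{3/4}$, very nearly cancels (because $\log J\sim\tfrac34\log a$), so one must retain the Stirling-order correction to extract the genuine $\Theta(a^{3/4})$ growth; moreover, one must verify that this growth, which is only a modest power of $n$, really does dominate both the $+\Theta(\log n)$ coming from the logarithm at the branch point and the $O(n^{2\e})$ error from the $u_j$ terms — this is precisely where $\Ffour$, the $\sum_{j\ge2}(\mu_1-\mu_j)^{-1}$-control built into $\cF_\e$, and eigenvalue rigidity are needed. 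The remaining estimates are routine and parallel the proof of Lemma~\ref{lem:trunc1}.
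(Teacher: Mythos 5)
Your argument is structurally the same as the paper's --- a pointwise upper bound on $\re\bigl[n(G(\muf+\ii y_1,\hg_2+\ii y_2)-\Gmu)\bigr]$ over the region, with the decisive negative contribution extracted from the eigenvalue sum --- but you extract that contribution differently, and the comparison is instructive. The paper simply counts the indices $j$ for which $4\muf|y_2|/(\mu_1-\mu_j)\ge 1$: each such term contributes a constant, and the remaining terms are controlled by $\log(1-x)\ge -2x$ together with $\sum_j(\mu_1-\mu_j)^{-1}=O(n)$, which is exactly your $O(n^{2\e})$ error. You instead evaluate the full sum $\sum_j\log(1+a\,j^{-4/3})$ by Stirling, which is sharper and correctly identifies the rate as $\Theta(a^{3/4})=\Theta(|y_2|^{3/2}n)$. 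The one discrepancy: at the bottom of the $y_2$-range this is $\Theta(n^{3\e})$, so your final bound is $O(e^{-cn^{3\e}})$ rather than the stated $O(e^{-n^{4\e}})$. This is not a defect of your proof: the number of indices with $\mu_1-\mu_j\lesssim|y_2|$ is $\asymp(n^{2/3}|y_2|)^{3/2}\ge n^{3\e}$, not $n^{4\e}$ (the paper's assertion $\mu_1-\mu_{n^{4\e}}\ll n^{-2/3+2\e}$ would require $8\e/3<2\e$), and either bound is vastly more than the $O(n^{-1})$ actually needed in \eqref{eqn:S_error}. Two small points to tighten: the inequality $(1+u_j)^2+v_j^2\ge\tfrac14(1+c\,v_j^2)$, applied termwise, would cost $-n\log 4$; what you actually need (and have the ingredients for) is the factorization $\log\bigl((1+u_j)^2+v_j^2\bigr)\ge\log(1+u_j)^2+\log\bigl(1+\tfrac49 v_j^2\bigr)$ followed by your bound $\sum_j\log(1+u_j)^2\ge -O(n^{2\e})$. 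Also, the discarded small indices contribute nonnegatively to $\sum_j\log(1+c\,v_j^2)$, so no $-O(n^{o(1)}\log n)$ correction is needed there.
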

\begin{proof}
	Computations similar to the proof of Lemma \eqref{lem:trunc1} gives
	\begin{equation}
		\begin{split}\label{eqn:diffG_trunc2}
		\re\left[n\left(G(\muf+\ii y_1,\hg_2+\ii y_2)-\Gmu\right)\right]
		&\leq c_0 -\frac1{4}\sum_{j=2}^n\log\left(\left(1+\frac{4\muf n^{-1}-4y_1 y_2}{\mu_1-\mu_j}\right)^2+\frac{(4\muf y_2)^2}{(\mu_1-\mu_j)^2}\right).
		\end{split}	
	\end{equation}
	Observe that $n^{-\frac23}\ll \mu_1-\mu_{n^{4\e}}\ll n^{-\frac23+2\e}$. Thus, for $2\leq j\leq n^{4\e}$, $\left(\frac{4\muf y_2}{\mu_1-\mu_j}\right)^2 \geq (4\muf)^2$ and we obtain
	\[
	-\frac14\sum_{j=1}^{n^{4\e}}\log\left(\left(1+\frac{4\muf n^{-1}-4y_1 y_2}{\mu_1-\mu_j}\right)^2+\frac{(4\muf y_2)^2}{(\mu_1-\mu_j)^2}\right)\leq -\frac12\log(\muf)n^{4\e}.
	\]
	
	For $j>n^{4\e}$, $\mu_1-\mu_j\geq \mu_1-\mu_{n^{4\e}}\gg n^{-\frac23}$. Since $|y_1|$, $|y_2|\leq n^{-\frac12+\e}$, we have $\mu_1-\mu_j \gg |4\muf n^{-1}-4y_1y_2|$. Using $\log(1-x)\geq -2x$ for $x\in(0,1)$, then for some constant $C,C'>0$, the sum with indices $j>n^{4\e}$ on the right hand side of \eqref{eqn:diffG_trunc2} has upper bound
	\beqq
	\begin{split} -\frac12\sum_{j=n^{4\e}+1}^{n}\log\left(1+\frac{4\muf n^{-1}-4y_1 y_2}{\mu_1-\mu_j}\right)
	&\leq 4|\muf n^{-1}-y_1y_2|\sum_{j=n^{4\e}+1}^{n}\frac1{\mu_1-\mu_j}\\
	&\leq Cn|\muf n^{-1}-y_1y_2|\leq C'n^{2\e}.
	\end{split}
	\eeqq
	Here, the second inequality holds with probability at least $1-\e$ by Lemma \ref{lem:sum_from_index2}. Thus, we obtain the uniform bound
	\[
	\re\left[n\left(G(\muf+\ii y_1,\hg_2+\ii y_2)-\Gmu\right)\right] \leq c_0-Cn^{4\e}
	\]
	for some constant $C>0$. This implies the lemma.
\end{proof}

Therefore, we have shown that,
\begin{equation}\label{eqn:S_error}
	S_n = \int_{ -n^{-\frac12+\e}}^{n^{-\frac12+\e}}\int_{-n^{-\frac23+2\e}}^{n^{-\frac23+2\e}}\exp[n(G(\muf+\ii y_1,\hg_2+\ii y_2)-\Gmu)]\dd y_2\dd y_1 + O(n^{-1}).
\end{equation}

We proceed to compute the double integral in \eqref{eqn:S_error}. For $|y_1|\leq n^{-\frac12+\e}$ and $|y_2|<n^{-\frac23+2\e}$, by Taylor series and the definitions of $\muf$ and $\mus$ in \eqref{def:muf_mus}, the second line of \eqref{eqn:diffGhat} for $G(\muf+\ii y_1,\hg_2+\ii y_2)-\Gmu$ is 
\[
B_n(n^{-1}+\ii y_2)-\ii\frac{B_n n^{-1}}{\muf}y_1-\frac{B_n(\frac{\muf+\mus}{2}+n^{-1})}{(\muf)^2}y_1^2+O(y_1^3),
\]
while the last line, after factorizing the arguments of logarithm functions, becomes
\begin{multline}
	-\frac1{2n}\sum_{j=2}^n\log\left(1+\frac{4\muf n^{-1}+4\ii\muf y_2}{\mu_1-\mu_j}\right)-\frac1{2n}\log(4\muf n^{-1}+4\ii\muf y_2)\\
	-\frac{1}{2n}\sum_{j=2}^n\log\left(1-\frac{4y_1y_2}{\mu_1-\mu_j+4\muf n^{-1}+4\ii\muf y_2}\right)-\frac{1}{2n}\log\left(1-\frac{4y_1y_2}{4\muf n^{-1}+4\ii\muf y_2}\right).
\end{multline}
Combine the above two displays, we obtain
\begin{equation}\label{eqn:exp_diffGmu}
	\begin{split}
		&\quad\exp[n(G(\muf+\ii y_1,\hg_2+\ii y_2)-\Gmu)]\\
		&=\exp\left[-\frac{\ii B_n }{\muf}y_1-\frac{B_n(\frac{\muf+\mus}{2}+n^{-1})}{(\muf)^2}ny_1^2\right]\\
		&\quad \cdot \exp\left[B_nn(n^{-1}+\ii y_2)-\frac12\log(4\muf n^{-1}+4\ii\muf y_2)-\frac12\sum_{j=2}^n\log(1+\frac{4\muf n^{-1}+4\ii\muf y_2}{\mu_1-\mu_j})\right]\\
		&\quad \cdot \exp\left[-\frac12\sum_{j=1}^n\log\left(1-\frac{4y_1y_2}{\mu_1-\mu_j+4\muf n^{-1}+4\ii\muf y_2}\right)+O(ny_1^3)\right].
	\end{split}
\end{equation}

Let $H(y_1,y_2)$ denote the product of the first two exponential factors on the right hand side of \eqref{eqn:exp_diffGmu}, and $L(y_1,y_2)$ be the last factor. That is,
\[
\exp[n(G(\muf+\ii y_1,\hg_2+\ii y_2)-\Gmu)]=H(y_1,y_2)L(y_1,y_2).
\]  There is a constant $c>0$ such that $\frac{B_n(\frac{\muf+\mus}{2}+n^{-1})}{(\muf)^2}>c$, so
\begin{equation}\label{eqn:H}
	\begin{split}
		|H(y_1,y_2)|
		&\leq \exp\left[B_n-\frac{B_n(\frac{\muf+\mus}{2}+n^{-1})}{(\muf)^2}ny_1^2-\frac12\re\sum_{j=2}^n\log(1+\frac{4\muf n^{-1}+4\ii\muf y_2}{\mu_1-\mu_j})\right]\\
		&\leq \exp\left[B_n-cny_1^2-\frac12\log\left(\frac{4\muf |y_2|}{\mu_1-\mu_2}\right)\right]\\
		&\leq C(\mu_1-\mu_2)^{\frac12}|y_2|^{-\frac12}
		e^{-cny_1^2} ,
	\end{split}
\end{equation}
for some constant $C>0$. On the other hand, by Lemma \ref{lem:sum_from_index2}, there exists constant $C>0$ such that 
\begin{equation}\label{eqn:sum_ell}
	\sum_{j=2}^{n}\frac1{|\mu_1-\mu_j+4\muf n^{-1}+4i\muf y_2|^\ell}\leq \sum_{j=2}^{n}\frac1{(\mu_1-\mu_j)^\ell}\leq 
	\begin{cases}
	Cn^{1+\e}, &\quad \ell=1,\\
	Cn^{\frac{2\ell}3+\e}, &\quad \ell=2,3,\dots
	\end{cases}
\end{equation}
At the same time,
\[
\left|\frac{4y_1y_2}{4\muf n^{-1}+4\ii\muf y_2}\right| \leq \frac{|y_1|}{\muf}=O(n^{-\frac12+\e}).
\]
Thus, applying Taylor series, we have
\begin{equation}\label{eqn:I2term}
	\begin{split}
		L(y_1,y_2)=1&+\sum_{k=1}^{n}\frac{2y_1y_2}{\mu_1-\mu_j+4\muf n^{-1}+4\ii\muf y_2}+ O(n^{-\frac12+3\e}).
	\end{split}
\end{equation}
Observe that 
\beqq
	|\mu_1-\mu_j+4\muf n^{-1}+4\ii\muf y_2|\geq \begin{cases}
	\mu_1-\mu_j, &\quad j=2,3,\dots, n,\\
	4\muf |y_2|, &\quad j=1.
	\end{cases}
\eeqq
Applying \eqref{eqn:sum_ell} with $\ell=1$, we obtain 
\begin{equation}\label{eqn:L_bound}
	|L(y_1,y_2)-1|\leq Cn|y_1y_2| 
	+C'n^{-\frac12+3\e}.
\end{equation}
We now write
\begin{equation}\label{eqn:sumI1_I2}
	\int_{ -n^{-\frac12+\e}}^{n^{-\frac12+\e}}\int_{-n^{-\frac23+2\e}}^{n^{-\frac23+2\e}}\exp[n(G(\muf+\ii y_1,\hg_2+\ii y_2)-\Gmu)]\dd y_2\dd y_1=I_1+I_2,
\end{equation}
where $I_2$ is given by
\begin{equation}
	I_2=\int_{ -n^{-\frac12+\e}}^{n^{-\frac12+\e}}\int_{-n^{-\frac23+2\e}}^{n^{-\frac23+2\e}} H(y_1,y_2)(L(y_1,y_2)-1) \dd y_2\dd y_1.
\end{equation}
By \eqref{eqn:H} and \eqref{eqn:L_bound}, there are constant $C_j>0, j=1,2,3$ such that
\begin{equation}
	\begin{split}
		|I_2|&\leq C_1\int_{ -n^{-\frac12+\e}}^{n^{-\frac12+\e}}\int_{-n^{-\frac23+2\e}}^{n^{-\frac23+2\e}} |H(y_1,y_2)|\left(n|y_1y_2|+n^{-\frac12+3\e}\right)\dd y_2\dd y_1\\
		&\leq C_2n(\mu_1-\mu_2)^{\frac12} \int_{ -n^{-\frac12+\e}}^{n^{-\frac12+\e}}\int_{-n^{-\frac23+2\e}}^{n^{-\frac23+2\e}} e^{-cny_1^2}\left(|y_1||y_2|^{\frac12}+n^{-\frac32+3\e}|y_2|^{-\frac12}\right) \dd y_2\dd y_1\\
		&\leq C_3 n^{-1+4\e}(\mu_1-\mu_2)^{\frac12}.
	\end{split}
\end{equation}
Together with \eqref{eqn:S_error} and \eqref{eqn:sumI1_I2}, this implies that on the event $\mu_1-\mu_2\leq n^{-2/3+\e}$,
\beqq
	S_n=I_1+O(n^{-1}).
\eeqq
Note that
\begin{equation}
	I_1=\int_{ -n^{-\frac12+\e}}^{n^{-\frac12+\e}}\int_{-n^{-\frac23+2\e}}^{n^{-\frac23+2\e}} H(y_1,y_2)\dd y_2\dd y_1
\end{equation}
is equal to the product of two single integrals $I_{11}$ and $I_{12}$ as follows.  First,
\begin{equation*}
	\begin{split}
		I_{11}
		&=\int_{ -n^{-\frac12+\e}}^{n^{-\frac12+\e}}\exp\left[-\frac{\ii B_n }{\muf}y_1-\frac{B_n(\frac{\muf+\mus}{2}+n^{-1})}{(\muf)^2}ny_1^2\right]\dd y_1\\
		&= n^{-\frac12}\int_{ -n^{\e}}^{n^{\e}}e^{-c_1x^2}\cos\left(\frac{c_2}{\sqrt{n}}x\right)\dd x, \quad (c_1,c_2):=\left(\frac{B_n(\frac{\muf+\mus}{2}+n^{-1})}{(\muf)^2},\frac{B_n}{\muf}\right).
	\end{split}
\end{equation*}
Using Taylor's series of cosine, we obtain that for some $C>0$,
\begin{equation}\label{eqn:I11}
	I_{11}=Cn^{-\frac12}\left(1+O(n^{-1+2\e})\right).
\end{equation}
Second, we have
\begin{equation}
	I_{12}
	=\int_{-n^{-\frac23+2\e}}^{n^{-\frac23+2\e}}\exp\left[n(G(\muf,\mus+n^{-1}+\ii y)-\Gmu)\right]\dd y.
\end{equation}
We first check that $I_{12}$ is close to the integral over the whole real line
\beq
K_n:=\int_{-\infty}^{\infty}\exp\left[n(G(\muf,\mus+n^{-1}+\ii y)-\Gmu)\right]\dd y.
\eeq
By \eqref{eqn:exp_diffGmu}, for all $y\in\bR$,
\beqq
\begin{split}
	\re\left[n\left(G(\muf,\mus+\frac1n+\ii y)-\Gmu\right)\right]&\leq c_0-
	\frac14\log\left((4\muf n^{-1})^2+(4\muf y_2)^2\right)\\ &\quad-\frac14\sum_{j=2}^n\log\left(\left(1+\frac{4\muf/n }{\mu_1-\mu_j}\right)^2+\left(\frac{4\muf y}{\mu_1-\mu_j}\right)^2\right).
\end{split}
\eeqq
In the case $n^{-\frac23+2\e}<|y|<n$, we use $-
\frac14\log\left((4\muf n^{-1})^2+(4\muf y_2)^2\right)\leq \frac12\log n$, and bound
\[
\sum_{j=2}^n\log\left(\left(1+\frac{4\muf /n}{\mu_1-\mu_j}\right)^2+\left(\frac{4\muf y}{\mu_1-\mu_j}\right)^2\right)\geq 2\sum_{j=2}^{n^{2\e}}\log\left(\frac{4\muf |y|}{\mu_1-\mu_j}\right)\geq Cn^{2\e}
\]
using the fact that $\mu_1-\mu_{n^{2\e}}\ll n^{-\frac23+2\e}$ with high probability. For $|y|>n$, we drop the negative term $-
\frac14\log\left((4\muf n^{-1})^2+(4\muf y_2)^2\right)$, while, for some $c>0$, 
\[
\sum_{j=2}^n\log\left(\left(1+\frac{4\muf /n}{\mu_1-\mu_j}\right)^2+\left(\frac{4\muf y}{\mu_1-\mu_j}\right)^2\right)\geq n \log(1+cy^2)\geq n\log(c|y|).
\]
Therefore, for some $C',C''>0$, it holds with high probability that 
\begin{equation}\label{eqn:diffK_I12}
	|K_n-I_{12}| \leq C'\left(n^{1/2}e^{-Cn^{-2\e}}+\int_0^\infty (cy)^{-\frac n4}\right)\dd y \leq C''e^{-c'n^{2\e}}.
\end{equation}

We determine in Subsection \ref{sec:Kn} that, on the event $\cF_\e$,
\begin{equation}\label{eqn:Kn}
	K_n=\begin{cases}
		e^{O(1)}n^{-\frac13}\left(b\sqrt{\log n} \right)^{-\frac12}, &\quad b>0,\\
		e^{O(\log\log n)}n^{-\frac13},&\quad b=0.
	\end{cases}
\end{equation}
Assuming \eqref{eqn:Kn} is true, then using \eqref{eqn:diffK_I12} and the fact that $S_n=I_{11}\cdot I_{12}+O(n^{-1})$, we obtain Lemma \ref{lem:Sn} .

\subsubsection{Proof of \eqref{eqn:Kn} when $b>0$}\label{sec:Kn}
For brevity, we introduce the following two notations to be used throughout the Subsection:
\begin{equation}
	a_+ =\frac{\mus+\mu_2^{(2)}}{2}= \frac{\mu_1}{8\muf}+\frac{\mu_2}{8\mu_2^{(1)}},
\end{equation}
where  $\mu_2^{(1)}:=\frac{\alpha_n+\sqrt{\alpha_n^2+\mu_2B_n^2}}{2B_n}$ and  $\mu_2^{(2)}:=\frac{-\alpha_n+\sqrt{\alpha_n^2+\mu_2B_n^2}}{2B_n}$.

We now show that the integral $K_n$, on the event $\cF_\e$, satisfies \eqref{eqn:Kn}, first under the assumption $b>0$. By Cauchy theorem, for every $r\in(0,n^{-1}]$,
\[
\ii K_n = \int_\Gamma \exp\left[n(G(\muf,z)-\Gmu)\right]\dd z,
\]
where $\Gamma=\Gamma_1\cup\Gamma_{2}^{\pm}\cup\Gamma_{3}^{\pm}$ is the vertical keyhole-like contour as in Figure \ref{fig:keyhole-contour}. In particular, given a function $\phi_r: \bR_+\to[0,\pi]$ of $r$ such that $\phi_r\to 0$ as $r\downarrow0$, we let $\Gamma_1$ be the arc $\{\mus+re^{i\theta}: \theta\in[-\pi+\phi_r, \pi-\phi_r]\}$, $\Gamma_{2}^{\pm}=\{x\pm r\sin\phi_r: x\in [a_+,\mus-r\cos\phi_r]\}$, and $\Gamma_{3}^{\pm}$ be the rays $\{a_+\pm iy: y\in [r\sin\phi_r, \infty)\}$. 
Then, for fixed $n$,
\begin{equation}\label{eqn:iKn}
	\ii K_n = \lim\limits_{r\downarrow 0} \int_\Gamma \exp\left[n(G(\muf,z)-\Gmu)\right]\dd z.
\end{equation}

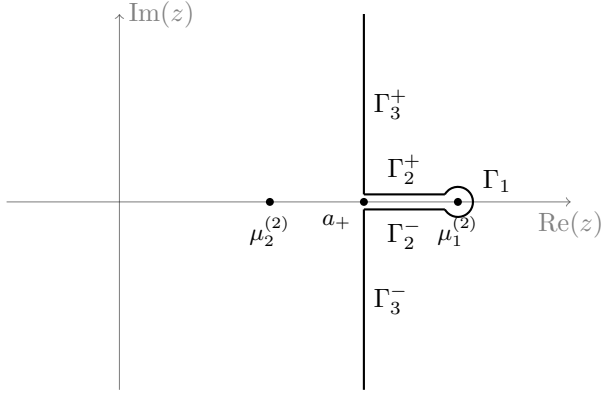
\begin{figure}
	\centering
	\begin{tikzpicture}[scale = 1]
		
		\draw[opacity=0.5, ->] (-3, 0) -- (4.5, 0) node[anchor=north] {$\mathrm{Re}(z)$};
		\draw[opacity=0.5, ->] (-1.5, -2.5) -- (-1.5, 2.5) node[anchor=west] {$\mathrm{Im}(z)$};
		
		\node[circle,fill=black,inner sep=0pt,minimum size=3pt,label=below:{\small\(\mus\)}] (a) at (3,0) {};
		
		\node[circle,fill=black,inner sep=0pt,minimum size=3pt,label=below:{\small\(\mu_2^{(2)}\)}] (a) at (0.5,0) {};
		
		\node[circle,fill=black,inner sep=0pt,minimum size=3pt,label=south west:{\small\(a_+\)}] (a) at (1.75,0) {};
		
		\draw [arrow] (2.82679491924,-0.1) coordinate (low) arc (-150:150:0.2) coordinate (top) node[midway,above right] {\(\Gamma_1\)};
		\draw [arrow] (top) -- (1.75, 0.1) coordinate (topmu) node[midway,above] {\(\Gamma_2^+\)};
		\draw [arrow] (1.75, -0.1) coordinate (lowmu) -- (low) node[midway,below] {\(\Gamma_2^-\)};
		\draw [arrow] (topmu) -- (1.75, 2.5) node[midway,right] {\(\Gamma_3^+\)};
		\draw[arrow] (1.75, -2.5) -- (lowmu) node[midway,right] {\(\Gamma_3^-\)};
	\end{tikzpicture}
	\caption{Keyhole-like contour of integration $\Gamma$.}
	\label{fig:keyhole-contour}
\end{figure}

For $\Gamma_1$, using the fact that $\log(x+\ii t) \to \log|x|+\ii\pi$ as $t\downarrow0$ for $x<0$, and $dz=\ii re^{\ii\theta}d\theta$ where $\theta$ takes values in $[-\pi-\phi_r,\pi+\phi_r]$ as described above, one can verify using Fubini's that for each fixed $n$, the integral over $\Gamma_1$ converges to 0 as $r\to 0$. 

We show in Lemma \ref{lem:G2pm} that, in the limit $r\downarrow 0$, the contribution from $\Gamma_2^+\cup\Gamma_2^-$ part of the contour satisfies the asymptotics \eqref{eqn:Kn} in both cases $b>0$ and $b=0$. In Lemma \ref{lem:G3pm}, we confirm that for any keyhole radius $r\in(0,1/n]$, with probability arbitrarily close to 1, the contribution from $\Gamma_3^+\cup\Gamma_3^-$ is little-o of that of $\Gamma_2^+\cup\Gamma_2^-$ when $b>0$. Together, the lemmas establish \eqref{eqn:Kn} when $b>0$.

\begin{lemma}\label{lem:G2pm}
	On the event $\cF_\e$, it holds that
	\begin{equation}\label{eqn:Gamma2}
		\lim\limits_{r\downarrow0}\int_{\Gamma_2^+\cup\Gamma_2^-}\exp\left[n(G(\muf,z)-\Gmu)\right]\dd z = \begin{cases}
			\ii e^{O(1)}n^{-\frac13}\left(b\sqrt{\log n}\right)^{-\frac12}, &\quad b>0,\\
			\ii e^{O(1)}n^{-\frac13},&\quad b=0.
		\end{cases}
	\end{equation}
\end{lemma}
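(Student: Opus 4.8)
The plan is to collapse the keyhole onto the real interval $[a_+,\mus]$, turning the contour integral into a single real integral with an integrable $u^{-1/2}$ endpoint singularity, and then run a Watson/Laplace-type analysis. On the slit plane $\CC\setminus(-\infty,\mus]$ write $n(G(\muf,z)-\Gmu)=n\Phi(z)-\tfrac12\log(4\muf z-\mu_1)$ with $\Phi(z):=B_n(z-\mus)-\tfrac1{2n}\sum_{j=2}^n\log\frac{4\muf z-\mu_j}{\mu_1-\mu_j}$. Since $\muf>\mu_2^{(1)}$ one has $a_+>\mu_2^{(2)}>\mu_2/(4\muf)\ge\mu_j/(4\muf)$ for $j\ge2$, so $4\muf x-\mu_j>0$ on a neighbourhood of $[a_+,\mus]$; thus $\Phi$ is analytic there and only $\log(4\muf z-\mu_1)$ jumps (by $2\pi\ii$) across the cut, so the boundary values of $(4\muf z-\mu_1)^{-1/2}$ on the two sides of $[a_+,\mus]$ differ by the factor $\pm\ii$. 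Letting $r\downarrow0$ (the tip contribution near $z=\mus$ is $O(r^{1/2})\to0$) and substituting $u=\mus-x$, I obtain
\[
\lim_{r\downarrow0}\int_{\Gamma_2^+\cup\Gamma_2^-}e^{n(G(\muf,z)-\Gmu)}\,\dd z=\frac{\ii}{\sqrt{\muf}}\int_0^{u_{\max}}\frac{e^{\psi(u)}}{\sqrt u}\,\dd u,\qquad u_{\max}:=\mus-a_+=\tfrac12(\mus-\mu_2^{(2)}),
\]
where $\psi(u):=-nB_nu-\tfrac12\sum_{j=2}^n\log\!\big(1-\tfrac{4\muf u}{\mu_1-\mu_j}\big)$ is real and smooth on $[0,u_{\max}]$ (on $\Ffour$, $4\muf u_{\max}<\mu_1-\mu_2\le\mu_1-\mu_j$ for large $n$).

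I then record the elementary properties driving the asymptotics: $\psi(0)=0$, $\psi'(0)=D_0:=-nB_n+2\muf\sum_{j=2}^n(\mu_1-\mu_j)^{-1}$, and $\psi''(u)=8\muf^2\sum_{j=2}^n(\mu_1-\mu_j-4\muf u)^{-2}>0$, so $\psi$ is convex on $[0,u_{\max}]$. Convexity gives, throughout $[0,u_{\max}]$, the tangent bound $\psi(u)\ge D_0u$ and the chord bound $\psi(u)\le\tfrac{u}{u_{\max}}\psi(u_{\max})$; moreover, linearizing the logarithm at $u_{\max}$ and using $\sum_{j\ge2}(\mu_1-\mu_j)^{-2}=O(n^{4/3})$ on $\cF_\e$, one gets $\psi(u_{\max})=D_0u_{\max}+O(1)$. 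Hence everything reduces to the size and sign of $D_0$.

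This is the crux. On $\cF_\e$, Lemma \ref{lem:sum_from_index2} gives $\tfrac1n\sum_{j=2}^n(\mu_1-\mu_j)^{-1}=s_{\MP}(d_+)+O(n^{-1/3})$ and $|\mu_1-d_+|=\Theta(n^{-2/3})$, so $D_0=n\big(2\muf\,s_{\MP}(d_+)-B_n\big)+O(n^{2/3})$. Evaluated at $\mu_1=d_+$, $B_n=B$, the function $2\muf\,s_{\MP}(d_+)-B=\tfrac{\alpha+\sqrt{\alpha^2+d_+B^2}}{B}\,s_{\MP}(d_+)-B$ vanishes at $B=B_c$ by the defining relation $s_{\MP}(d_+)=\tfrac{\sqrt{\alpha^2+d_+B_c^2}-\alpha}{d_+}$, and its $B$-derivative equals $-\tfrac{\alpha}{B^2}\big(1+\tfrac{\alpha}{\sqrt{\alpha^2+d_+B^2}}\big)s_{\MP}(d_+)-1<0$. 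Therefore, for fixed $b>0$ (so $B-B_c=b\,n^{-1/3}\sqrt{\log n}/\sqrt{\la(1+\la)}>0$), $2\muf\,s_{\MP}(d_+)-B_n=-\Theta(b\,n^{-1/3}\sqrt{\log n})$, whence $D_0=-|D_0|$ with $|D_0|=\Theta(b\,n^{2/3}\sqrt{\log n})$ and $\psi(u_{\max})=(1+o(1))D_0u_{\max}=-\Theta(b\sqrt{\log n})<0$; for $b=0$ the same expansion gives $2\muf\,s_{\MP}(d_+)-B_n=O(n^{-2/3})$, hence $|D_0|=O(n^{2/3})$, $|D_0|u_{\max}=O(1)$ and $\psi(u_{\max})=O(1)$.

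The conclusion is then routine. For $b>0$: the chord bound and $\psi(u_{\max})/u_{\max}=D_0+O(n^{2/3})=(1+o(1))D_0$ give $\psi(u)\le\tfrac12 D_0u$ for large $n$, so $\int_0^{u_{\max}}u^{-1/2}e^{\psi(u)}\dd u\le\int_0^\infty u^{-1/2}e^{D_0u/2}\dd u=O(|D_0|^{-1/2})$, while the tangent bound gives $\int_0^{u_{\max}}u^{-1/2}e^{\psi(u)}\dd u\ge\int_0^{1/|D_0|}u^{-1/2}e^{-|D_0|u}\dd u=\Omega(|D_0|^{-1/2})$; hence the integral is $e^{O(1)}|D_0|^{-1/2}=e^{O(1)}n^{-1/3}(b\sqrt{\log n})^{-1/2}$, and multiplying by $\tfrac{\ii}{\sqrt\muf}$ ($\muf=\Theta(1)$) yields the stated value. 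For $b=0$: both bounds on $\psi$ are $O(1)$ on $[0,u_{\max}]$, so $\int_0^{u_{\max}}u^{-1/2}e^{\psi(u)}\dd u=e^{O(1)}\cdot 2\sqrt{u_{\max}}=e^{O(1)}n^{-1/3}$, yielding the stated value. I expect the main obstacle to be the third step: $nB_n$ and $2\muf\sum_{j\ge2}(\mu_1-\mu_j)^{-1}$ are each of order $n$, and one must extract that their difference is of order $n^{2/3}\sqrt{\log n}$ with a definite negative sign — exactly the vanishing of $2\muf\,s_{\MP}(d_+)-B$ at $\beta_c$ — which forces careful bookkeeping of the $O(n^{2/3})$-level errors (from Lemma \ref{lem:sum_from_index2} and from linearizing $-\tfrac12\sum_j\log(1-\cdot)$ at $u_{\max}$) to make sure they stay below the surviving $\Theta(\sqrt{\log n})$ term (resp.\ $\Theta(1)$ term).
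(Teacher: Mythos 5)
Your proposal is correct and follows essentially the same route as the paper: collapse the keyhole onto $[a_+,\mus]$ to get $\tfrac{\ii}{\sqrt{\muf}}\int_0^{u_{\max}}u^{-1/2}e^{\psi(u)}\dd u$, linearize the exponent up to a uniform $O(1)$ using $\sum_{j\ge2}(\mu_1-\mu_j)^{-2}=O(n^{4/3})$, and extract the sign and size of the linear coefficient from the vanishing of $2\muf s_{\MP}(d_+)-B$ at $B_c$ (the paper's \eqref{eqn:Bn_cancel}). The only difference is cosmetic: the paper rescales $y=n^{2/3}s$ and evaluates the resulting Gamma-type integral directly, whereas you sandwich it via the tangent/chord bounds from convexity of $\psi$ — both yield the same $e^{O(1)}$ conclusion.
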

\begin{proof}
	Recall that, if $z \in \Gamma_2^{\pm}$, then $z=x\pm \ii r\sin \phi_r$ where $x\in[a_+,\mus-r\cos \phi_r]$. Set $s=\mus-x$, we have
	\beqq
	\begin{split}
		n(G(\muf,z)-\Gmu)
		&=-nB_n(\mus-x)\pm \ii n B_n r\sin\phi_r-\frac12\sum_{j=2}^n\log\left(1-\frac{4\muf(\mus-x)\mp \ii 4\muf r\sin\phi_r}{\mu_1-\mu_j}\right)\\
		&\quad\quad-\frac12\log(-4\muf(\mus-x)\pm \ii 4\muf r\sin\phi_r)\\
		&\stackrel{r\downarrow0}{\to} -B_nns-\frac12\sum_{j=2}^n\log\left(1-\frac{4\muf(\mus-x)}{\mu_1-\mu_j}\right)-\frac12\log(4\muf(\mus-x))\mp \ii\frac\pi 2.
	\end{split}
	\eeqq
	Let $A$ be the left hand side of \eqref{eqn:Gamma2}. We then obtain 
	\begin{equation}\label{eqn:Gamma2_simplified}
		A=\frac{2\ii}{\sqrt{4\muf}}\int_0^{\mus-a_+}\exp\left(-B_nns-\frac12\sum_{j=2}^n\log(1-\frac{4\muf s}{\mu_1-\mu_j})\right)\frac{\dd s}{\sqrt{s}}.
	\end{equation}
	Observe that $\frac{4\muf s}{\mu_1-\mu_j} \in [0,\frac12]$ for all $s\in[0,\mus-a_+]$ and all $j$. As $0<-\log(1-x)-x\leq x^2$ for $x\in[0,\frac12]$, there exists $\zeta\in[0,1]$ such that 
	\begin{equation}\label{eqn:exponent_Gamma2}
		\begin{split}
			-B_nns-\frac12\sum_{j=2}^n\log(1-\frac{4\muf s}{\mu_1-\mu_j})
			&=-B_nns+2\muf s\sum_{j=2}^n\frac1{\mu_1-\mu_j}+\frac{\zeta(4\muf s)^2}2\sum_{j=2}^n\frac1{(\mu_1-\mu_j)^2}.
		\end{split}
	\end{equation}
Define $y:=n^{2/3}s \in[0,n^{2/3}(\mus-a_+)]$, and let  $\omega_{1n}$, $\omega_{2n}$ be random variables given by 
	\begin{equation}\label{eqn:omega}
		\sum_{j=2}^n\frac1{n^{\frac23}(\mu_1-\mu_j)} = s_{\MP}(d_+)n^{\frac13}+\omega_{1n}, \quad \sum_{j=2}^n\frac1{\left(n^{\frac23}(\mu_1-\mu_j)\right)^2} = \omega_{2n}.
	\end{equation}
Then, \eqref{eqn:exponent_Gamma2} simplifies to 
\beq
-B_nns-\frac12\sum_{j=2}^n\log(1-\frac{4\muf s}{\mu_1-\mu_j})=n^{\frac13}y\left(-B_n + 2\muf s_{\MP}(d_+)\right)+\left[y(2\muf\omega_{1n}+2(\muf)^2\zeta \omega_{2n} y^2\right],
\eeq
where the term inside the square brackets is $O(1)$, uniformly for $y\in[0,n^{2/3}(\mus-a_+)]$. Observe also 
	\beqq
	-B_n + 2\muf s_{\MP}(d_+)= -B_n+\frac{\a_n+\sqrt{\a_n^2+\mu_1B_n^2}}{B_n}s_{\MP}(d_+),
	\eeqq
	where $B_n-B_c=\Theta(\b-\b_c)$ and $B_c$ satisfies $\sqrt{\a^2+d_+B_c^2}=\a+d_+s_{\MP}(d_+)$. Therefore, applying Taylor expansion to the above expression with respect to $B_n$ near $B_c$ and $\mu_1$ near $d_+$, using  $\mu_1-d_+=O(n^{-2/3})$ on the event $\cF_\e$, we obtain
	\begin{equation}\label{eqn:Bn_cancel}
		\begin{split}
			-B_n + 2\muf s_{\MP}(d_+)
			&=-\frac{2s_{\MP}(d_+)\la^{\frac12}}{\sqrt{1+\la}}(\beta-\beta_c)+O\left((\beta-\beta_c)^2\right).
		\end{split}
	\end{equation}
	Thus, on the event $\cF_\e$,
	\beq
	-B_nns-\frac12\sum_{j=2}^n\log(1-\frac{4\muf s}{\mu_1-\mu_j})=-\frac{2s_{\MP}(d_+)\la^{\frac12}b\sqrt{\log n}}{\sqrt{1+\la}}y +O(1),
	\eeq
	and we arrive at
	\begin{equation*}
		A=\frac{\ii e^{O(1)}}{n^{\frac13}}\int_0^{n^{\frac23}(\mus-a_+)}\exp\left(-\frac{2s_{\MP}(d_+)\la^{\frac12}b\sqrt{\log n}}{\sqrt{1+\la}}y\right)\frac{\dd y}{\sqrt{y}}=\begin{cases}
			\ii e^{O(1)}n^{-\frac13}b^{-\frac12}(\log n)^{-\frac14}, &\quad b>0,\\
			\ii e^{O(1)}n^{-\frac13}, &\quad b=0.
		\end{cases}
	\end{equation*}
	This completes the proof of the lemma.
\end{proof}

\begin{lemma}\label{lem:G3pm}
	Let $\theta_n=\frac{n^{2/3}(\mus-\mu_2^{(2)})}{2}$. For $b\geq 0$ and for every $0<r<n^{-1}$, on the event $\cF_\e$, 	
	\begin{equation*}
		\left|\int_{\Gamma_3^+\cup\Gamma_3^-}\exp\left[n(G(\muf,z)-\Gmu)\right]\dd z\right|\leq n^{-\frac13}\exp\left(-\frac{2s_{\MP}(d_+)\sqrt{\la}\theta_n}{\sqrt{1+\la}}b\sqrt{\log n}+O(1)\right).
	\end{equation*}
\end{lemma}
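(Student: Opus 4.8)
The plan is to bound the $\Gamma_3^{\pm}$ contribution by a single real integral, extract the claimed exponential factor, and show that what is left is $O(n^{-1/3})$; the argument runs parallel to the proof of Lemma~\ref{lem:G2pm}, with the radial variable $s$ there now frozen at its extreme value $s=\mus-a_+$ and the imaginary direction $y$ playing the r$\hat{\text{o}}$le of the free variable. Parametrizing $\Gamma_3^{\pm}$ by $z=a_{+}\pm\ii y$, $y\in[r\sin\phi_r,\infty)$, $\dd z=\pm\ii\,\dd y$, and noting that for every $r\in(0,n^{-1})$ the interval $[r\sin\phi_r,\infty)$ is contained in $[0,\infty)$, we get the uniform-in-$r$ bound
\[
\Bigl|\int_{\Gamma_3^{+}\cup\Gamma_3^{-}}\exp\bigl[n(G(\muf,z)-\Gmu)\bigr]\,\dd z\Bigr|
\le 2\int_0^{\infty}\exp\Bigl(\re\bigl[n(G(\muf,a_{+}+\ii y)-\Gmu)\bigr]\Bigr)\,\dd y .
\]
Using \eqref{eq:G_def}, the identities $4\muf\mus=\mu_1$ and $a_{+}=\tfrac12(\mus+\mu_2^{(2)})$, and setting $\delta:=\mus-\mu_2^{(2)}$ and $w:=\mus-z$ (so $\re w=\delta/2$, $|w|=\sqrt{\delta^2/4+y^2}$ and $4\muf z-\mu_j=(\mu_1-\mu_j)-4\muf w$), a direct computation gives
\[
\re\bigl[n(G(\muf,a_{+}+\ii y)-\Gmu)\bigr]
=-\tfrac12 B_n n\delta-\tfrac12\log(4\muf|w|)-\tfrac14\sum_{j=2}^{n}\log\!\bigl[(1-a_j)^2+b_j^2\bigr],
\]
where $a_j:=\tfrac{2\muf\delta}{\mu_1-\mu_j}$ and $b_j:=\tfrac{4\muf y}{\mu_1-\mu_j}$.

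Next I would split off the $y$-independent part. Adding and subtracting $\muf\delta\sum_{j=2}^{n}(\mu_1-\mu_j)^{-1}$, that part equals $-\tfrac12 B_n n\delta+\muf\delta\sum_{j=2}^{n}(\mu_1-\mu_j)^{-1}=n^{1/3}\theta_n\bigl(-B_n+2\muf\,s_{\MP}(d_+)\bigr)+O(1)$ on $\cF_\e$, using $\delta=2\theta_n n^{-2/3}$, Lemma~\ref{lem:sum_from_index2}, and $\theta_n=O(1)$ on $\cF_\e$. The Taylor expansion of $-B_n+2\muf s_{\MP}(d_+)$ about $B=B_c$, identical to the one in the proof of Lemma~\ref{lem:G2pm}, together with $\beta-\beta_c=bn^{-1/3}\sqrt{\log n}$ and $\mu_1-d_+=O(n^{-2/3})$, turns this into $-\tfrac{2s_{\MP}(d_+)\sqrt\la\,\theta_n}{\sqrt{1+\la}}b\sqrt{\log n}+O(1)$, which is precisely the exponent in the statement. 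So it remains to prove that
\[
\int_0^{\infty}\exp\!\Bigl[-\tfrac12\log(4\muf|w|)+\sum_{j=2}^{n}\bigl(-\tfrac14\log[(1-a_j)^2+b_j^2]-\tfrac{a_j}{2}\bigr)\Bigr]\,\dd y=O(n^{-1/3}).
\]

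For this I would argue as in Lemma~\ref{lem:trunc1}, splitting the range of $y$. Since $\mu_1-\mu_j\ge\mu_1-\mu_2$ for $j\ge2$ and $2\muf\delta<\tfrac12(\mu_1-\mu_2)$ (a short computation from \eqref{def:muf_mus}), one has $a_j\le a_2<\tfrac12$ uniformly, so the elementary inequality $-\tfrac14\log[(1-a)^2+b^2]-\tfrac a2\le\tfrac{a^2}{2}$, valid for $0\le a<\tfrac12$, gives $\sum_{j=2}^{n}(\cdots)\le\tfrac12\sum_{j=2}^{n}a_j^2=(2\muf\delta)^2\,O\bigl(\sum_{j=2}^{n}(\mu_1-\mu_j)^{-2}\bigr)=O(\theta_n^2)=O(1)$ on $\cF_\e$; combined with $|w|\ge\delta/2\gtrsim n^{-2/3}$ this already shows the integrand is $O(n^{1/3})$ on $\{0\le y\lesssim\mu_1-\mu_2\}$, a window of length $\Theta(n^{-2/3})$, so this range contributes $O(n^{-1/3})$. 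For larger $y$ one must retain part of the $b_j$-decay discarded above: bounding $\log[(1-a_j)^2+b_j^2]\ge\log b_j^2$ on the block $\{j:\mu_1-\mu_j\le4\muf y\}$ and counting that block through $\Ftwo$ (its cardinality is $\Theta(ny^{3/2})$, by the edge behaviour $p_{\MP}(x)\asymp\sqrt{d_+-x}$) yields $\sum_{j=2}^{n}(\cdots)\le-cny^{3/2}+O(1)$ for $\mu_1-\mu_2\lesssim y\lesssim1$; the substitution $v=ny^{3/2}$ then makes $\int e^{-\frac12\log(4\muf|w|)-cny^{3/2}}\,\dd y=O(n^{-1/3})$, while a $-\tfrac n4\log(1+cy^2)$-type bound as in Lemma~\ref{lem:trunc1} makes the tail $y\gtrsim1$ negligible. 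Summing the three ranges gives the lemma.

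I expect the last step to be the main obstacle: one needs an estimate for $\sum_{j=2}^{n}\log[(1-a_j)^2+b_j^2]$ that is at once sharp for $y\asymp n^{-2/3}$ (where the integrand genuinely peaks at order $n^{1/3}$ over a window of width $n^{-2/3}$) and strong enough for $y$ of order $1$ (where one needs enough logarithmic gain from the bulk of the spectrum to beat the narrowness of that window), which forces one to turn the eigenvalue-spacing information in $\cF_\e$ and $\Ftwo$ into a precise count of eigenvalues lying within $O(y)$ of the edge. Everything else is essentially a re-run of the $\Gamma_2^{\pm}$ computation in Lemma~\ref{lem:G2pm} and of the tail estimates in Lemma~\ref{lem:trunc1}.
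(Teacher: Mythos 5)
Your proposal is correct in substance and arrives at the right answer, but it organizes the estimate differently from the paper. The paper splits $n(G(\muf,a_++\ii t)-\Gmu)$ into the boundary value $n(G_+(\muf,a_+)-\Gmu)$ at $t\downarrow 0$ plus a remainder $n\widetilde{G}(t)=-\tfrac14\sum_j\log\bigl(1+(4\muf t)^2/(4\muf a_+-\mu_j)^2\bigr)$ that is manifestly non-positive; it then keeps only a \emph{fixed} number $k>2$ of the top summands to get $\int_0^\infty e^{n\re\widetilde{G}(t)}\dd t=O(n^{-2/3})$, while the boundary value is computed exactly as in Lemma \ref{lem:G2pm} and contributes the exponential factor plus $\tfrac13\log n$ from $-\tfrac12\log(4\muf(\mus-a_+))$. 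You instead subtract only the linear-in-$\delta$ part $\tfrac12\sum_j a_j$ and estimate the full $y$-integral of the quadratic-and-beyond remainder by splitting into three ranges; the ledger is the same ($n^{1/3}\times n^{-2/3}$ versus your $\int y^{-1/2}\times(\text{decay})$), and your identification of the exponential factor via Lemma \ref{lem:sum_from_index2} and the expansion of $-B_n+2\muf s_{\MP}(d_+)$ coincides with the paper's. Two remarks. First, your intermediate-range counting argument is heavier machinery than needed: since the terms with $j$ outside the block are already controlled by $\sum_j a_j^2=O(1)$ (the second defining condition of $\cF_\e$), retaining just $k\geq 2$ near-edge terms, each contributing $-\tfrac12\log b_j\asymp-\tfrac12\log(n^{2/3}y)$, already makes $\int_{n^{-2/3}}^\infty y^{-1/2}(n^{2/3}y)^{-k/2}\dd y=O(n^{-1/3})$ — this is exactly the paper's device. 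Second, and this is the one point you should repair: the event $\cF_\e$ of Section 4 does \emph{not} contain $\Ftwo$ (it only controls $\sum_j(\mu_1-\mu_j)^{-1}$, $\sum_j(\mu_1-\mu_j)^{-2}$, $|\mu_1-d_+|$ and $\mu_1-\mu_2$), so the eigenvalue count "$\Theta(ny^{3/2})$ eigenvalues within $O(y)$ of the edge" is not available on the stated event; either enlarge the event or, better, replace the counting by the fixed-$k$ argument above, for which $\Ffour$ and the $\sum_j(\mu_1-\mu_j)^{-2}$ bound suffice.
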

\begin{proof}
	Since $G(\muf,\overline{z})=\overline{G(\muf, z)}$ for all $z\in\bC$, it suffices to bound the integral over $\Gamma_3^+$. We define
	\[
	G_+(\muf,a_+)=\lim\limits_{t\downarrow0}G(\muf, a_++\ii t), \quad \widetilde{G}(t)=G(\muf, a_++\ii t)-G_+(\muf,a_+).
	\]
	Then, for $z\in\Gamma_3^+$, 
	\[
	n(G(\muf,z)-\Gmu) = n(G_+(\muf,a_+)-\Gmu))+ n\widetilde{G}(t),
	\]
	and we have
	\begin{equation}\label{eqn:int_G3}
		\begin{split}
			\left|\int_{\Gamma_3^+}\exp\left[n(G(\muf,z)-\Gmu)\right]\dd z\right|
			&\leq \left|e^{n(G_+(\muf,a_+)-\Gmu)}\right|\int_0^\infty e^{n\re \widetilde{G}(t)} \dd t.
		\end{split}
	\end{equation}
	For fixed $k>2$,
	\[
	n\re\widetilde{G}(t)=-\frac14\sum_{j=1}^{n}\log\left(1+\left(\frac{4\muf t}{4\muf a_+-\mu_j}\right)^2\right) \leq-\frac14\sum_{j=2}^{n}\log\left(1+\left(\frac{4\muf t}{\mu_1-\mu_j}\right)^2\right)\leq-\frac{k}{4}\log\left(1+\xi^{-2}n^{\frac43}t^2\right),
	\]
	where $\xi:=\frac{n^{2/3}}{4\muf}|\mu_1-\mu_{k+1}|$ is $O(1)$ on the event $\cF_\e$. Thus,
	\begin{equation}
		\int_0^\infty e^{n\re\widetilde{G}(t)}\dd t\leq\int_0^\infty(1+\xi^{-2}n^{\frac43}t^2)^{-\frac k4}\dd t=\exp\left(-\frac23 \log n+O(1)\right).
	\end{equation}
	At the same time, on the event $\cF_\e$, $ \theta_n=\Theta(n^{2/3}(\mu_1-\mu_2))=\Theta(1)$. Thus,  similar to the proof of Lemma \ref{lem:G2pm}, we obtain that 
	\begin{equation}
		\begin{split}
			n(G_+(\muf,a_+)-\Gmu)
			&=-B_nn^{\frac13}\theta_n-\frac12\sum_{j=2}^n\log\left(1-\frac{4\muf\theta_n}{n^{\frac23}(\mu_1-\mu_j)}\right)-\frac12\log(4\muf n^{-2/3}\theta_n)-\frac{\ii\pi}2\\
			&=-n^{\frac13}\theta_n\left(B_n-2\muf s_{\MP}(d_+)-\omega_{1n}n^{-\frac13}\right)+(\zeta \theta_n)^2\omega_{2n}+\frac{\log n}3+O(1)\\
			&=-\frac{2\sqrt{\la}s_{\MP}(d_+)\theta_n}{\sqrt{1+\la}}b\sqrt{\log n}+\frac{\log n}3+O(1),
		\end{split}		
	\end{equation}
	on the event $\cF_\e$. Applying the above two displays to \eqref{eqn:int_G3}, we obtain the lemma.
\end{proof}

\subsubsection{Proof of \eqref{eqn:Kn} when $b=0$}

Observe that when $b=0$, Lemmas \ref{lem:G2pm} and \ref{lem:G3pm} using keyhole contour shows that, with probability $1-\e$ for arbitrary small $\e>0$, the contribution from the vertical and horizontal parts of the contour are both $n^{-\frac13}e^{O(1)}$. This provides the upper bound for $K_n$. As some cancellation between the two contributions can occur, further analysis is required for the lower bound. 
In this section, we use the steepest descent contour of $G(\muf, z)$ crossing the real line above $\mus$ to obtain the needed lower bound
\[
K_n \geq n^{-\frac13}e^{O(\log\log n)}.
\]
The argument is inspired by the one provided by Johnstone et al in \cite{JKOP2}.

\begin{lemma}
	There exists a unique saddle point of $G(\muf,z)$ on $z\in(\mus, \infty)$.
\end{lemma}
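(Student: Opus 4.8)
The plan is to reduce the claim to an elementary convexity argument for the derivative $\partial_z G(\muf,z)$ restricted to the real interval $(\mus,\infty)$. Writing out $G(\muf,z) = B_n(\muf+z) - \frac{1}{2n}\sum_{i=1}^n\log(4\muf z-\mu_i) - \alpha_n\log\muf$, the last term does not depend on $z$, and $G(\muf,\cdot)$ is holomorphic on the slit plane $\{4\muf z-\mu_1\notin(-\infty,0]\}$, which contains a neighbourhood of $(\mus,\infty)$ because $4\muf\mus=\mu_1\geq\mu_i$ forces $4\muf z-\mu_i>0$ for every $i$ once $z>\mus$. Hence a saddle point of $G(\muf,\cdot)$ in $(\mus,\infty)$ is exactly a zero in that interval of
\[
\partial_z G(\muf,z) = B_n - \frac{2\muf}{n}\sum_{i=1}^n\frac{1}{4\muf z-\mu_i}.
\]

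First I would record the sign facts: $B_n = \frac{m}{\sqrt{n(n+m)}}\beta>0$, and $\muf = \frac{\alpha_n+\sqrt{\alpha_n^2+\mu_1 B_n^2}}{2B_n}>0$ since $\alpha_n\geq0$ and $B_n>0$. Then I would differentiate once more,
\[
\partial_z^2 G(\muf,z) = \frac{8\muf^2}{n}\sum_{i=1}^n\frac{1}{(4\muf z-\mu_i)^2}>0\qquad\text{for }z\in(\mus,\infty),
\]
so that $z\mapsto G(\muf,z)$ is strictly convex on $(\mus,\infty)$ and $\partial_z G(\muf,\cdot)$ is strictly increasing there; in particular it has at most one zero, and any zero is a non-degenerate saddle.

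Next I would check the two endpoint limits. As $z\downarrow\mus$ the $i=1$ summand $\frac{1}{4\muf z-\mu_1}$ diverges to $+\infty$ while the remaining summands stay bounded and positive, so $\partial_z G(\muf,z)\to-\infty$; as $z\to+\infty$ every summand tends to $0$, so $\partial_z G(\muf,z)\to B_n>0$. By continuity and the intermediate value theorem there is a zero in $(\mus,\infty)$, and by the strict monotonicity just established it is unique. This zero is the asserted unique (and non-degenerate) saddle point, which also realizes the minimum of the convex function $z\mapsto G(\muf,z)$ on $(\mus,\infty)$.

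There is no genuinely hard step here: the argument is convexity plus the intermediate value theorem. The only points deserving a line of care are that the $z$-independent term $-\alpha_n\log\muf$ drops out of $\partial_z G$, so the positivity and monotonicity are clean, and that a zero of the holomorphic derivative on the real axis automatically gives a saddle of $\re G(\muf,\cdot)$ in the sense needed for the subsequent steepest-descent deformation, which is immediate since $G(\muf,\cdot)$ is a function of a single complex variable.
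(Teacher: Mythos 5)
Your proof is correct and follows essentially the same route as the paper: compute $\partial_2 G(\muf,z)$, note it is increasing (equivalently $\partial_2^2G>0$) on $(\mus,\infty)$, and combine the limits $-\infty$ as $z\downarrow\mus$ and $B_n>0$ as $z\to\infty$ with the intermediate value theorem. The extra checks you include (positivity of $B_n$ and $\muf$, non-degeneracy of the saddle) are consistent with, and slightly more explicit than, the paper's argument.
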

\begin{proof}
	Observe that 
	\[
	\partial_2 G(\muf,z)=B_n-\frac{1}{2n}\sum_{j=1}^{n}\frac{4\muf}{4\muf z-\mu_j}
	\]
	is an increasing function of $z$ on the interval $(\mus, \infty)$, and that
	\[
	\lim\limits_{z\downarrow\mus}\partial_2G(\muf,z)=-\infty, \quad \lim\limits_{z\to\infty}\partial_2G(\muf, z)=B_n>0.
	\]
	Thus, there is a unique solution $z_c\in (\mus, \infty)$ to the equation $\partial_2G(\muf,z)=0$. Moreover, $\partial_2^2G(\muf,z)>0$ for all $z>\mus$. Thus, $z_c$ is a saddle point of $\re[G(\muf, z)]$.
\end{proof}

Let $\Gs$ be the steepest descent contour of $G(\muf, z)$ crossing $z_c$. For $z=x+\ii y\in\Gs$, 
\begin{equation*}
	0=\im[G(\muf,z)]=B_ny-\frac1{2n}\sum_{j=1}^n\arg(4\muf x-\mu_i+\ii 4\muf y),
\end{equation*}
which implies $\Gs$ is symmetric with respect to the $x-$axis. Moreover, for fixed $y>0$, $\arg(4\muf x-\mu_i+\ii4\muf y)$ is strictly decreasing in $x$. This suggests there is at most one solution $x$ to $\im[G(\muf, x+\ii y)]=0$ for any $y>0$. The same applies to $y<0$ by symmetry. We then parameterize $\Gs=\{\Gs(t):0<t<1\}$ such that $\im\Gs(t)$ is increasing in $t$. 

As $B_n|y|\uparrow\frac\pi 2$, $x\to-\infty$ so $\Gs(0^+)=-\infty-\ii\frac{\pi}{2B_n}$ and $\Gs(1^-)=-\infty+\ii\frac{\pi}{2B_n}$. We obtain $\re\Gs(t)$ is bounded above, and $K_n$ as in \eqref{eqn:iKn} satisfies
\[
\ii K_n=\int_{\Gs}\exp\left[n(G(\muf, z)-\widehat{G})\right]\dd z.
\]
We now consider points on the contour $\Gs$ with real part $\mus$. 
\begin{lemma}\label{lem:y0}
	The function
	\[
	f(y):=\im[G(\muf, \mus+\ii y)]=B_ny-\frac{\pi}{4n}-\frac{1}{2n}\sum_{j=2}^n\arctan\left(\frac{4\muf y}{\mu_1-\mu_j}\right)
	\]
	has a unique positive root $y_0$. Furthermore, for any sequence $a_n\to \infty$, $a_n=O(n^\delta)$ for any $\delta>0$, 
	\begin{equation}\label{eqn:y0_bounds}
		n^{-2/3}a_n^{-1}\leq y_0\leq n^{-2/3}a_n, \quad \text{asymptotically almost surely.}
	\end{equation}
\end{lemma}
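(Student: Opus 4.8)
The plan is to first pin down $y_0$ as a zero of a strictly convex function, and then to squeeze it between $n^{-2/3}a_n^{-1}$ and $n^{-2/3}a_n$ by evaluating $f$ at those two points. For the convexity: for $y>0$ each summand $\arctan\!\big(\tfrac{4\muf y}{\mu_1-\mu_j}\big)$, $j\ge2$, is strictly concave in $y$, since $\tfrac{d^2}{dy^2}\arctan(cy)=-\tfrac{2c^3y}{(1+c^2y^2)^2}<0$ for $c>0$; hence $f$ is strictly convex on $(0,\infty)$. Since $f(0^+)=-\tfrac{\pi}{4n}<0$, $\tfrac1{2n}\sum_{j\ge2}\arctan(\cdots)\le\tfrac{n-1}{2n}\cdot\tfrac\pi2<\tfrac\pi4$ is bounded, and $B_ny\to\infty$, we get $f(y)\to+\infty$; a strictly convex function on $(0,\infty)$ that is negative at $0^+$ and diverges to $+\infty$ has exactly one zero $y_0$, with $f<0$ on $(0,y_0)$ and $f>0$ on $(y_0,\infty)$. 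Thus \eqref{eqn:y0_bounds} reduces to showing, on an event of probability $\to1$ and for large $n$, that $f(n^{-2/3}a_n^{-1})<0$ and $f(n^{-2/3}a_n)>0$.

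For the quantitative part I would work on $\cF_\e$, enlarged to also contain the edge event $\Ftwo$ of Lemma~\ref{lem:Aj}. Write $\nu_j:=n^{2/3}(\mu_1-\mu_j)$, so that $\nu_j\ge\nu_2>r$ for $j\ge2$ (by $\Ffour$), $\nu_j\asymp j^{2/3}$ uniformly for $K\le j\le n^{2/5}$ (by $\Ftwo$ and $\Fthree$), and $\muf=\Theta(1)$. Substituting $y=\theta n^{-2/3}$, using $x-\tfrac{x^3}{3}\le\arctan x\le x$ for $x\ge0$, and using $\sum_{j\ge2}\nu_j^{-1}=n^{1/3}s_{\MP}(d_+)+\omega_{1n}$ from \eqref{eqn:omega}, one gets the identity
\[
n\,f(\theta n^{-2/3})=\theta n^{1/3}\big(B_n-2\muf s_{\MP}(d_+)\big)-\frac\pi4-2\muf\theta\,\omega_{1n}+\tfrac12 R(\theta),\qquad R(\theta):=\sum_{j\ge2}\Big(\tfrac{4\muf\theta}{\nu_j}-\arctan\tfrac{4\muf\theta}{\nu_j}\Big)\ge0,
\]
and $0\le R(\theta)\le\tfrac13(4\muf\theta)^3\sum_{j\ge2}\nu_j^{-3}=O(\theta^3)$, since $\sum_{j\ge2}\nu_j^{-3}\le r^{-1}\omega_{2n}=O(1)$. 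The decisive input is the critical cancellation $B_n-2\muf s_{\MP}(d_+)=O(n^{-2/3})$: it holds because $b=0$ forces $\b=\b_c$, so that Taylor-expanding in $(\a_n,B_n,\mu_1)$ about $(\a,B_c,d_+)$ and using $\a_n-\a,B_n-B_c=O(n^{-1})$, $\mu_1-d_+=O(n^{-2/3})$, together with the identity $2\muf s_{\MP}(d_+)=B_c$ valid at the critical parameters (which is precisely the relation defining $B_c$; compare \eqref{eqn:Bn_cancel}), yields the claim. Hence $n\,f(\theta n^{-2/3})=-\tfrac\pi4+O(\theta n^{-1/3})+O(\theta)+\tfrac12 R(\theta)$.

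For $\theta=a_n^{-1}\to0$ all corrections are $o(1)$ (in particular $R(\theta)=O(a_n^{-3})$), so $n\,f(n^{-2/3}a_n^{-1})=-\tfrac\pi4+o(1)<0$ and $y_0>n^{-2/3}a_n^{-1}$. For $\theta=a_n\to\infty$, with $a_n=O(n^\delta)$ and $\delta$ small enough that $a_n^{3/2}\ll n^{2/5}$ and $a_nn^{-1/3}=o(1)$, the point is the matching \emph{lower} bound $R(a_n)\gtrsim a_n^{3/2}$: restrict to the $j$ with $\nu_j\ge4\muf\theta$, for which $x_j:=4\muf\theta/\nu_j\le1$ and $x_j-\arctan x_j\ge\tfrac2{15}x_j^3$; by $\Ftwo$ and $\Fthree$ there is a block $[J_1,2J_1]\subset[K,n^{2/5}]$ with $J_1\asymp\theta^{3/2}$ on which $\nu_j\asymp\theta$, and hence
\[
R(\theta)\ \ge\ \tfrac2{15}(4\muf\theta)^3\!\!\sum_{j\in[J_1,2J_1]}\!\!\nu_j^{-3}\ \ge\ c\,\theta^3\cdot\big(\#[J_1,2J_1]\big)\cdot(C\theta)^{-3}\ \asymp\ \theta^{3/2}.
\]
Consequently $n\,f(n^{-2/3}a_n)\ge c'a_n^{3/2}-O(a_n)-\tfrac\pi4+o(1)\to+\infty$, so $y_0<n^{-2/3}a_n$. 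Letting $\e\downarrow0$ yields \eqref{eqn:y0_bounds}.

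I expect the main obstacle to be the lower bound $R(a_n)\gtrsim a_n^{3/2}$: it is in effect a lower bound on the number of eigenvalues within distance $\asymp a_nn^{-2/3}$ of $\mu_1$, i.e.\ on the eigenvalue counting function at the spectral edge on scale $j\asymp a_n^{3/2}$, and when $a_n$ grows slowly the plain rigidity event $\Fone$ is too coarse, so one really needs the sharper localization of $\Ftwo$ (Lemma~\ref{lem:Aj}). The other load-bearing ingredient is the cancellation $B_n-2\muf s_{\MP}(d_+)=O(n^{-2/3})$, special to $b=0$: it removes the otherwise dominant $\Theta(\theta n^{1/3})$ contributions to $n\,f(\theta n^{-2/3})$, leaving the sign of $f$ at the test points to be governed by the $O(1)$-size terms $-\tfrac\pi4$ and $R(\theta)$.
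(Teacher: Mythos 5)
Your proof is correct and follows the same overall strategy as the paper: uniqueness of $y_0$ from convexity plus the boundary values $f(0^+)=-\tfrac{\pi}{4n}<0$ and $f(\infty)=+\infty$, and then the sign of $f$ at the two test points $a_n^{\pm1}n^{-2/3}$, with the cancellation $B_n-2\muf s_{\MP}(d_+)=o(n^{-1/3})$ (special to $\b=\b_c$) killing the would-be dominant term. Your treatment of the lower bound on $y_0$ is essentially identical to the paper's (Taylor-expand $\arctan$ and invoke Lemma \ref{lem:sum_from_index2}). The only genuine divergence is in the upper bound: the paper uses $\arctan x\le x-1$ for $x>1+\tfrac\pi2$ together with a Chebyshev/variance bound on the edge counting function $\cN_s$ to produce a gain of $j^*/(2n)\gtrsim a_n^{3/2}/n$, whereas you use $x-\arctan x\ge\tfrac{2}{15}x^3$ on a block of $\asymp a_n^{3/2}$ indices located via the event $\Ftwo$ of Lemma \ref{lem:Aj}. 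Both mechanisms rest on the same underlying input --- a lower bound of order $a_n^{3/2}$ on the number of eigenvalues within $O(a_nn^{-2/3})$ of $\mu_1$ --- and your version is equally valid; it has the mild advantage of reusing an event already built into the paper's framework, at the cost of needing $a_n^{3/2}\ll n^{2/5}$, which is harmless here since $a_n$ is subpolynomial.
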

\begin{proof}
	Existence and uniqueness of $y_0>0$ follows from the fact that $f(y)$ is continuous, convex function on $[0,\infty)$ with $f(0)=-\frac\pi{4n}$ and $\lim\limits_{y\to\infty}f(y)=\infty$. 	
	
	Let $y_-$, $y_+$ denote the bounds $a_n^{-1}n^{-2/3}$ and $a_nn^{-2/3}$, respectively. We now verify \eqref{eqn:y0_bounds} by showing that a.a.s., $f(y_-)<0<f(y_+)$. First, using $\arctan(x)\geq x-x^2/4$ for $x\geq 0$ and Lemma \ref{lem:sum_from_index2}, then with probability $1-\e$ for arbitrary $\e>0$, 
	\begin{align*}
		f(y_-)
		&\leq y_-\left(B_n-\frac{2\muf}{n}\sum_{j=2}^n\frac{1}{\mu_1-\mu_j}\right)-\frac\pi{4n}+\frac{(4\muf y_-)^2}{8n}\sum_{j=2}^n\frac1{(\mu_1-\mu_j)^2}\\
		&=y_-\left(B_n-2\muf s_{\MP}(d_+)+O(n^{-1/3})\right)-\frac\pi{4n}+y_-^2\cdot O(n^{1/3})\\
		&=-\frac\pi{4n}+o(n^{-1})<0.
	\end{align*}
	In the last equality, $B_n-2\muf s_{\MP}(d_+)=O(n^{-1-\tau})$ due to rigidity of $\mu_1$ and the fact $B_n=B_c+O(n^{-1-\tau})$ for any $\tau>0$. The second part of the proof relies on the following statistics regarding the eigenvalues of a matrix from the Laguerre orthogonal ensemble. Let
	\begin{align*}
		j_0&=\#\{j:\mu_j>d_+-\frac13a_n n^{-2/3}\}, \\ j^*&=\#\{j:\mu_j>\mu_1-\left(1+\frac\pi2\right)^{-1}a_nn^{-2/3}\}.
	\end{align*}	
	
	By Chebyshev's inequality and \eqref{eqn:Var_Ns}, for some $c>0$, it holds a.a.s. that $j_0\geq ca_n^{3/2}$. Combine with the observation that a.a.s., $\mu_1-d_+=\Theta(n^{-2/3})\ll a_nn^{-2/3}$, we obtain
	\begin{equation}\label{eqn:jstar}
		j^*\geq j_0\geq ca_n^{3/2} \quad \text{a.a.s.}
	\end{equation}
	Since $\arctan(x)\leq x-1$ for $x>1+\frac\pi{2}$ and $j^*=\max\{j:\frac{y_+}{\mu_1-\mu_j}>1+\frac\pi{2}\}$, we have
	\begin{equation*}
		\arctan\left(\frac{4\muf y_+}{\mu_1-\mu_j}\right)\leq \frac{4\muf y_+}{\mu_1-\mu_j}-\mathbbm{1}_{\{j\leq j^*\}}.
	\end{equation*}
	Lemma \ref{lem:sum_from_index2} and the above display imply that a.a.s.,
	\begin{align*}
		f(y_+)
		&=B_ny_+-\frac1{2n}\sum_{j=2}^n\arctan\left(\frac{4\muf y_+}{\mu_1-\mu_j}\right)-\frac\pi{4n}\\
		&\geq B_ny_+-\frac1{2n}\sum_{j=2}^n\frac{4\muf y_+}{\mu_1-\mu_j}+\frac{j^*}{2n}-\frac\pi{4n}\\
		&\geq y_+\cdot O(n^{-1/3})+\frac{ca_n^{3/2}}{2n}-\frac\pi{4n},
	\end{align*}
	which is strictly positive as $y_+=a_nn^{-2/3}$. We obtain the lemma. 
\end{proof}
Let $z_0=\mus+\ii y_0$, and consider the subset 
\[
\Gamma_0=\{z\in\Gs: |\im z|\leq y_0\},
\]
which is a connected curve with endpoints $z_0, \overline{z_0}$ by the parameterization. We have now obtained the needed tools to bound $K_n$ as follows. 

Observe that $G(\muf, z)-\widehat{G}$ is real on $\Gs$ and is monotone decreasing as $z$ moves away from the point $z_c$ along $\Gs$. Also, $\frac{\dd y}{\dd t}>0$ from the parameterization. Therefore,
\begin{equation}
	\begin{split}
		K_n
		=\frac1{\ii}\int_{\Gs}\exp\left[n(G(\muf, z)-\widehat{G})\right]\dd z
		&\geq \int_{-y_0}^{y_0}\exp\left[n\re(G(\muf, z(y))-\widehat{G})\right]\dd y\\
		&\geq 2y_0\exp\left[n\re(G(\muf, z_0)-\widehat{G})\right].
	\end{split}
\end{equation}
Here,
\begin{equation}
	\begin{split}
		\log y_0+n\re(G(\muf, z_0)-\widehat{G})
		&= \log y_0-\frac12\log(4\muf y_0)-\frac14\sum_{j=2}^n\log\left(1+\frac{(4\muf y_0)^2}{(\mu_1-\mu_j)^2}\right)\\
		&\geq \frac12\log y_0-\frac{(4\muf y_0)^2}{4}\sum_{j=2}^n\frac1{(\mu_1-\mu_j)^2}\\
		&\geq -\frac13\log n +O(\log\log n).
	\end{split}
\end{equation}
The last inequality holds a.a.s., using Lemma \ref{lem:y0} with $a_n^2=\log\log n$ and the fact  $\sum_{j=2}^n\frac1{(\mu_1-\mu_j)^2}$ is $O(n^{4/3})$ under the event $\cF_\e$. This completes the proof of the lower bound of $K_n$.

\subsection{Low temperature free energy}
Finally, using the contour integral computations from the previous section, we obtain the following lemma for the limiting fluctuations of the free energy on the low temperature side of the critical temperature window.
\begin{lemma}
If $\beta=\beta_c+bn^{-1/3}\sqrt{\log n}$ for some fixed $b\geq0$, then the free energy satisfies 
\beqq
\frac{m+n}{\sqrt{\frac16\log n}}\left(F_{n,m}(\beta)-F(\beta)+\frac{1}{12}\frac{\log n}{n+m}\right) \to \mathcal{N}(0,1)+\frac{\sqrt{6}\la^{\frac14}b}{(1+\la)^{\frac12}(1+\la^{\frac12})^{\frac23}}\TW_1,
\eeqq
where 	
\begin{equation}
F(\b)=f_\la+\frac{\la}{1+\la}A(d_+,B)-\frac12\log\b-\frac{\la}{2(1+\la)}C_\la.
\end{equation}
\end{lemma}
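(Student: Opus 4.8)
The plan is to assemble ingredients already in hand. From the decomposition \eqref{eqn:Fn} of $F_{n,m}(\b)$ together with the identity $Q_n=e^{n\hG}S_n$, immediate from the definition \eqref{def:Sn} of $S_n$, we get
\[
F_{n,m}(\b)=\frac{n}{n+m}\hG+\frac{1}{n+m}\log S_n+f_\la-\tfrac12\log\b+\frac{\la}{1+\la}\frac{\log n}{n}+O(n^{-1}).
\]
First I would insert the expansion of $\hG$ from Lemma \ref{lem:Ghat} in the form \eqref{eqn:hG_T1nT2n}, and the asymptotics of $S_n$ from Lemma \ref{lem:Sn}. In both cases $b>0$ and $b=0$ the latter gives $\log S_n=-\tfrac56\log n+O(\log\log n)$, using $\log\big((b\sqrt{\log n})^{-1/2}\big)=O(\log\log n)$, hence $\frac{1}{n+m}\log S_n=-\tfrac56\frac{\log n}{n+m}+O\big(\tfrac{\log\log n}{n}\big)$. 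All of this is carried out on the event $\cF_\e$, where the error bounds from the cited lemmas hold deterministically, with constants allowed to depend on $\e$.

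The next step is bookkeeping. Using $n/m=\la+O(n^{-1})$ one has $\frac{n}{n+m}=\frac{\la}{1+\la}+O(n^{-1})$ and $\frac{\la}{1+\la}\frac{\log n}{n}=\frac{\log n}{n+m}+O\big(\tfrac{\log n}{n^2}\big)$; since $A(d_+,B)$ and $C_\la$ are $O(1)$, the order-one terms collapse to $\frac{\la}{1+\la}A(d_+,B)-\frac{\la}{2(1+\la)}C_\la+f_\la-\tfrac12\log\b=F(\b)+O(n^{-1})$. The $\frac{\log n}{n+m}$ contributions come from exactly three places: the $+\frac{\la}{1+\la}\frac{\log n}{n}$ in \eqref{eqn:Fn} (coefficient $+1$), the $-\frac{\log n}{4n}$ inside $\hG$ (coefficient $-\tfrac14$ after multiplying by $\tfrac{n}{n+m}$), and the $n^{-5/6}$ in $S_n$ (coefficient $-\tfrac56$); these sum to $1-\tfrac14-\tfrac56=-\tfrac1{12}$. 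Finally, the fluctuating term of \eqref{eqn:hG_T1nT2n}, multiplied by $\tfrac{n}{n+m}$, contributes $\big(\tfrac1{\sqrt6}T_{1n}+c_bT_{2n}\big)\tfrac{\sqrt{\log n}}{n+m}$ with $c_b:=\frac{\la^{1/4}b}{(1+\la^{1/2})^{2/3}(1+\la)^{1/2}}$. Thus, on $\cF_\e$,
\[
F_{n,m}(\b)-F(\b)+\frac{1}{12}\frac{\log n}{n+m}=\Big(\tfrac1{\sqrt6}T_{1n}+c_bT_{2n}\Big)\frac{\sqrt{\log n}}{n+m}+O\Big(\frac{\log\log n}{n}\Big).
\]

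Multiplying by $\frac{n+m}{\sqrt{\frac16\log n}}$ and using $n+m=\Theta(n)$, the error becomes $O\big(\log\log n/\sqrt{\log n}\big)=o(1)$ while the leading term becomes exactly $T_{1n}+\sqrt6\,c_b\,T_{2n}$, which carries the coefficient $\frac{\sqrt6\la^{1/4}b}{(1+\la)^{1/2}(1+\la^{1/2})^{2/3}}$ in the statement. I would then invoke: Theorem \ref{thm:CLT} with $\sigma_n=0$ (the case $\g=d_+$) to get $T_{1n}\to\cN(0,1)$, where $T_{1n}$ is the $\sum_i\log|d_+-\mu_i|$ statistic of \eqref{eqn:T1nT2n}; Lemma \ref{lem:TW_mu1} in the rescaled form \eqref{eqn:mu1_rescaled} to get $T_{2n}\to\TW_1$; and the asymptotic independence of $T_{1n}$ and $T_{2n}$ proved in Section \ref{sec:independence}, which upgrades these to joint convergence to a $\cN(0,1)$ independent of $\TW_1$. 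To pass from ``on $\cF_\e$'' to convergence in distribution of the whole sequence, for bounded continuous $g$ split $\E[g(\cdot)]$ over $\cF_\e$ and $\cF_\e^c$: on $\cF_\e$ the rescaled quantity differs from $T_{1n}+\sqrt6\,c_bT_{2n}$ by a uniformly $o(1)$ amount, while $\PP(\cF_\e^c)\le\e$; letting $n\to\infty$ and then $\e\to0$ yields the claim. When $b=0$ the $\TW_1$ coefficient vanishes and only $T_{1n}\to\cN(0,1)$ is used.

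All the genuine difficulty is upstream: the contour-integral estimates behind Lemma \ref{lem:Sn} (especially the keyhole analysis for $b>0$), the edge CLT of Theorem \ref{thm:CLT}, and the independence argument of Section \ref{sec:independence}. Within this final lemma itself the only point needing care is the arithmetic of the three $\frac{\log n}{n+m}$ contributions (obtaining exactly $-\tfrac1{12}$) together with checking that every discarded term is $o\big(\sqrt{\log n}/(n+m)\big)$ after the final rescaling; the $\cF_\e$-truncation is routine.
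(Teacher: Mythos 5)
Your proposal is correct and follows essentially the same route as the paper: decompose $F_{n,m}(\b)$ via \eqref{eqn:Fn} and $Q_n=e^{n\hG}S_n$, insert Lemma \ref{lem:Ghat} (in the form \eqref{eqn:hG_T1nT2n}) and Lemma \ref{lem:Sn}, collect the $\frac{\log n}{n+m}$ coefficients to get $1-\tfrac14-\tfrac56=-\tfrac1{12}$, and conclude from Theorem \ref{thm:CLT}, Lemma \ref{lem:TW_mu1}, and the independence result of Section \ref{sec:independence}. The only (harmless) additions are that you spell out the $\cF_\e$-truncation argument and the $\sigma_n=0$ instantiation of the edge CLT, which the paper leaves implicit.
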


\begin{proof}
By \eqref{def:Sn}, 
\beqq
\frac{1}{n+m}\log Q_n= \frac{n}{n+m}\widehat{G}+\frac{1}{n+m}\log S_n.
\eeqq
Note that $\frac{1}{n+m}\log S_n =-\frac{5}{6}\frac{\log n}{n+m}+O(n^{-1}\log\log n)$ by Lemma \ref{lem:Sn}, while the quantity $\hG$ is computed in Lemma \ref{lem:Ghat}. Combining them, we get
\begin{multline}
\frac{1}{n+m}\log Q_n=\frac{\la}{1+\la}A(d_+,B) -\frac76\frac{\log n}{n+m}-\frac1{2(n+m)}\sum_{i = 1}^n\log|d_+-\mu_i|+\frac{\la^{\frac34}bn^{-\frac13}\sqrt{\log n}}{(1+\la)^{\frac32}d_+}(\mu_1-d_+)+O(\tfrac{\log\log n}{n}). 
\end{multline}
Apply this to \eqref{eqn:Fn}, we obtain
\beqq
\begin{split}
	F_{m,n}(\b)&=f_\la+\frac{\la}{1+\la}A(d_+,B)-\frac12\log\b-\frac16\frac{\log n}{n+m}
	\\&\quad 
	-\frac1{2(n+m)}\sum_{i = 1}^n\log|d_+-\mu_i|+\frac{\la^{\frac34}bn^{-\frac13}\sqrt{\log n}}{(1+\la)^{\frac32}d_+}(\mu_1-d_+)+O(\tfrac{\log\log n}{n}).
\end{split}
\eeqq
In terms of variables $T_{1n}$ and $T_{2n}$ as in \eqref{eqn:T1nT2n}, we get
\begin{equation}
\begin{split}
F_{m,n}(\b)&=f_\la+\frac{\la}{1+\la}A(d_+,B)-\frac12\log\b-\frac{\la}{2(1+\la)}C_\la-\frac1{12}\frac{\log n}{n+m}\\
&\quad +\frac{\sqrt{\frac16\log n}}{n+m}\left(T_{1n}+\frac{\sqrt{6}\la^{\frac14}b}{(1+\la)^{\frac12}(1+\la^{\frac12})^{\frac23}}T_{2n} \right)+O(\tfrac{\log\log n}{n}).
\end{split}	
\end{equation}
The theorem then follows since $T_{1n}\stackrel{d}{\to}\cN(0,1)$ by Theorem \ref{thm:CLT}, and $T_{2n}\stackrel{d}{\to}\TW_1$ by Lemma \ref{lem:TW_mu1}.
\end{proof}

The fact that the Gaussian and Tracy--Widom limits are independent in shown in the next section.

\section{Independence of Gaussian and Tracy--Widom variables (low temperature)}\label{sec:independence}

Recall the quantities 
\beq\begin{split}
&T_{1n}:=\frac{C_\la n-\frac16 \log n-\sum_{i=1}^n\log|d_+-\mu_i|}{\sqrt{\frac23\log n}},\qquad \quad
T_{2n}:=\frac{n^{2/3}(\mu_1-d_+)}{\sqrt{\la}(1+\sqrt{\la})^{4/3}},\\
&C_\la=(1-\la^{-1})\log(1+\la^{\frac12})+\log(\la^{\frac12})+\la^{-\frac12}
\end{split}\eeq
The goal of this section is to show that, given an LOE matrix $M_{n,m}$ (which we assume without loss of generality to be in tridiagonal form), with probability arbitrarily close to one,
\begin{itemize}
\item $T_{1n}=\frac{Z_n}{\sqrt{\frac23 \log n}}+o(1)$ for $Z_n$ depending only on the upper left minor of size $n-2n^{1/3}(\log n)^3$ \\of the matrix $M_{n,m}$, and
\item $T_{2n}=Y_n+o(1)$ for $Y_n$ depending only on the lower right minor of size $2n^{1/3}(\log n)^3$ of the matrix.
\end{itemize}

Our proofs draw on ideas from the paper \cite{JKOP2}, which proves a similar result in the case of Wigner ensembles.  We also make use of results from \cite{CWL_CLT}, which  studies the asymptotics of the quantity $\sum_{i=1}^n\log|\gamma-\mu_i|$ for $\g\geq d_+$ by analyzing a recurrence on the determinants of the minors of $M_{n,m}$.   In order to demonstrate the asymptotic independence of $T_{1n}$ and $T_{2n}$, we need not only the main theorem of \cite{CWL_CLT}, but also many of the intermediate lemmas which involve recurrences on the matrix entries.  For this purpose, we briefly summarize the set-up from that paper along with the key notations that are used.

Recall from \eqref{eq:tridiagonal} that the tridiagonal representation of $M_{n,m}$ depends on $\chi$-squared random variables $\{a_i^2\}$, $\{b_i^2\}$. The paper \cite{CWL_CLT} works with centered and rescaled versions of these, denoted by $\a_i$ and $\b_i$ respectively, which are defined as
\beq\label{eq:alpha_beta_def}
\a_i=\frac{a_i^2-(m-n+i)}{|\rho_i^+|},
\qquad\b_i=\frac{b_{i-1}^2-(i-1)}{|\rho_i^+|}.
\eeq
Here, the scaling factor $\rho_i^+$ is one of the characteristic roots of the recurrence on determinants of the minors of $M_{n,m}$.  This turns out to be a convenient rescaling since it prevents the iterates from blowing up.  More precisely,
\beq\label{eq:rho_def}
\rho_i^\pm:=-\frac12\left(\gamma m-(m-n+2i-1)
\pm\sqrt{(\gamma m-(m-n+2i-1))^2-4(m-n+i-1)(i-1)}
\right).
\eeq
Throughout the proofs, we will also use the notations
\beq\label{eq:tau_delta_def}
\tau_i=\frac{m-n+i}{|\rho_i^+|},
\qquad\d_i=\frac{i-1}{|\rho_i^+|}.
\eeq

\subsection{Proof for $T_{1n}$}
\begin{lemma}
There exists a random random $Z_n$, depending only on the upper left minor of size $n-2n^{1/3}(\log n)^3$ of the matrix $M_{n,m}$ such that
\beqq
T_{1n}=\frac{Z_n}{\sqrt{\frac23\log n}}+o(1).
\eeqq
\end{lemma}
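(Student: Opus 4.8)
The plan is to pass to the tridiagonal form $M_{n,m}=BB^T$ from \eqref{eq:tridiagonal} and work with the characteristic polynomials of the leading principal minors, as in \cite{CWL_CLT}. For $1\le k\le n$ set $q_k:=\det\!\big(d_+I_k-\tfrac1m(BB^T)_k\big)$, so that $\sum_{i=1}^n\log|d_+-\mu_i|=\log|q_n|$, and $q_k=c_kq_{k-1}-e_kq_{k-2}$ with $c_k=d_+-\tfrac{a_k^2+b_{k-1}^2}{m}$, $e_k=\tfrac{a_{k-1}^2b_{k-1}^2}{m^2}$, $b_0:=0$. Put $K:=2n^{1/3}(\log n)^3$ and propagate the recurrence over the last $K$ steps via the transfer matrices $\mathcal T_k=\left(\begin{smallmatrix}c_k&-e_k\\1&0\end{smallmatrix}\right)$: writing $\mathcal T=\mathcal T_n\cdots\mathcal T_{n-K+1}$,
\[
\log|q_n|=\log|q_{n-K}|+\log\big|\mathcal T_{11}+\mathcal T_{12}\,w\big|,\qquad w:=q_{n-K-1}/q_{n-K}.
\]
The factors $q_{n-K},q_{n-K-1}$, hence $w$, are functions of the upper left $(n-K)\times(n-K)$ minor, while $\mathcal T$ depends only on $a_{n-K},\dots,a_n,b_{n-K},\dots,b_{n-1}$. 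Setting $D_n^\star:=\bE[\log|q_n/q_{n-K}|]$ and
\[
Z_n:=C_\la n-\tfrac16\log n-\log|q_{n-K}|-D_n^\star,
\]
we see that $Z_n$ is measurable with respect to the upper left minor of size $n-2n^{1/3}(\log n)^3$, so the lemma is equivalent to the statement that, with probability tending to $1$,
\beq\label{eq:T1n_reduction}
\log\big|q_n(d_+)/q_{n-K}(d_+)\big|=D_n^\star+o\!\big(\sqrt{\log n}\big).
\eeq

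To prove \eqref{eq:T1n_reduction} I would first detach the $w$-dependence. The ratio $w=1/R_{n-K}$ (one step of the recurrence $R_k=c_k-e_k/R_{k-1}$) is a diagonal resolvent entry of the $(n-K)$-minor evaluated at $d_+$, which sits a distance $\Theta(n^{-2/3}(\log n)^3)$ above that minor's edge; the concentration estimates for these ratio/resolvent quantities proved in \cite{CWL_CLT} give $w=w^\star+o_{\bP}(1)$ for a deterministic $w^\star$, and on $\event$ (together with the probability events of \cite{CWL_CLT}) the map $w\mapsto\log|\mathcal T_{11}+\mathcal T_{12}w|$ is Lipschitz with an $O(1)$ (at worst polylogarithmic) constant, so this replacement costs $o(1)$. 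Running the recurrence from index $n-K$ with initial data $(q_{n-K},q_{n-K-1})\leftarrow(1,w^\star)$ then identifies $\log|\mathcal T_{11}+\mathcal T_{12}w^\star|$ with $\sum_{i=1}^K\log|d_+-\nu_i|$, where $\nu_1\ge\cdots\ge\nu_K$ are the eigenvalues of the lower right $K\times K$ block of $\tfrac1m BB^T$ after a $\Theta(1)$ perturbation at its $(1,1)$ corner; a short analysis of this (``ramped Toeplitz plus $O(m^{-1/2})$ noise'') block shows $|d_+-\nu_1|=n^{-2/3+o(1)}$ on the good event. The remaining task is to show that this edge log-determinant is deterministic up to $o(\sqrt{\log n})$. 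I would split $\{\nu_i\}$ into the $O(\mathrm{poly}\log n)$ eigenvalues within the Tracy--Widom window of $d_+$ and the rest: for the former, the cumulative fluctuation of $\sum\log|d_+-\nu_i|$ is $o(\sqrt{\log n})$ by the rigidity of the edge (Airy) point process --- the mechanism used for Wigner matrices in \cite{JKOP2} --- while the remaining, mesoscopic $\nu_i$ contribute $o(1)$ because the block's entries fluctuate only on the scale $m^{-1/2}\ll K^{-1/2}$, much below that of a genuine $K$-dimensional edge ensemble; here one uses the intermediate estimates of \cite{CWL_CLT} on the rescaled recurrence, i.e. on $\rho_i^\pm$ and on $\tau_i,\d_i,\a_i,\b_i$ from \eqref{eq:alpha_beta_def}--\eqref{eq:tau_delta_def}. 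A concentration inequality (via the martingale structure of the recurrence in the independent variables $a_k^2,b_k^2$, or McDiarmid together with the stability estimates of \cite{CWL_CLT}) then upgrades the resulting variance bound $o(\log n)$ to the high-probability statement \eqref{eq:T1n_reduction}.

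The main obstacle is the last step, the near-edge analysis: the indices $n-K+1,\dots,n$ sit at the turning point of the recurrence, where the characteristic roots $\rho_k^+$ and $\rho_k^-$ coalesce, and one must show that the fluctuation accumulated there is $o(\sqrt{\log n})$ rather than the $\Theta(\sqrt{\log n})$ that a naive count of the top $K$ eigenvalues would suggest. This is precisely where the size $K=2n^{1/3}(\log n)^3$ is calibrated --- the extra $(\log n)^3$ makes the passage from $\log|q_n|$ to $\log|q_{n-K}|$ lossless at the $\sqrt{\log n}$ scale while keeping $K$ small enough for the last-block term to be negligible --- and where the interplay between the rescaled-recurrence lemmas of \cite{CWL_CLT} and Airy-process rigidity in the style of \cite{JKOP2} is indispensable.
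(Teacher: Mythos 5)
Your algebraic reduction is fine: with $Z_n:=C_\la n-\tfrac16\log n-\log|q_{n-K}(d_+)|-D_n^\star$, the lemma is indeed equivalent to showing $\log|q_n(d_+)/q_{n-K}(d_+)|=D_n^\star+o_{\PP}(\sqrt{\log n})$. But the step that carries all the difficulty is exactly the one you leave as a heuristic, and it is also the step where your route departs from what actually works. The paper never analyzes the recursion at the point $d_+$ itself. It first uses the comparison identity from \cite{CWL_CLT} to replace $\sum_i\log|d_+-\mu_i|$ by $\sum_i\log|d_++\bar{\sigma}_nn^{-2/3}-\mu_i|$ with $\bar{\sigma}_n=(\log\log n)^3$, up to explicit deterministic terms and an $o(\sqrt{\log n})$ error; at the shifted point the characteristic roots $\rho_i^\pm$ stay separated, the recursion linearizes into $\sum_i g_{i+1}X_i+\sum_i\a_i-g_3\a_2$ with the $X_i$ \emph{pairwise independent}, and the tail $\sum_{i\geq n-K}g_{i+1}X_i$ has variance $O(\log\log n)$ by the explicit bounds $\EE X_i^2=O(n^{-1})$ and $g_i=O(n^{1/3}\bar{\sigma}_n^{-1/2})$ near the edge. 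That variance computation is the entire content of the proof; nothing like an Airy-rigidity argument or a spectral analysis of the bottom block is needed.

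Two concrete gaps in your version. First, the claim that $w\mapsto\log|\mathcal T_{11}+\mathcal T_{12}w|$ is Lipschitz with an $O(1)$ (or polylog) constant is unjustified and is false in general: the derivative is $\mathcal T_{12}/(\mathcal T_{11}+\mathcal T_{12}w)$, and the denominator equals $q_n(d_+)/q_{n-K}(d_+)$ up to normalization, which is small precisely when $d_+$ is close to an eigenvalue of the full matrix --- an event you cannot exclude, since $|d_+-\mu_1|=\Theta(n^{-2/3})$ and $\mu_1$ may lie on either side of $d_+$. You would need quantitative lower bounds on $|q_n(d_+)|$ and upper bounds on $|\mathcal T_{12}|$ on a good event, neither of which is supplied. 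Second, the assertion that the log-determinant at $d_+$ of the perturbed bottom-right $K\times K$ block concentrates to $o(\sqrt{\log n})$ is exactly the turning-point problem you name as ``the main obstacle'': at $\g=d_+$ the roots $\rho_i^+,\rho_i^-$ coalesce as $i\to n$ (Lemma \ref{lem:rho} gives $|\rho_i^+|-|\rho_i^-|=\Theta(n^{1/2}(n-i+1)^{1/2})$), the product of transfer matrices is not controlled by the lemmas of \cite{CWL_CLT} in this regime, and ``rigidity of the Airy point process'' for this structured $K\times K$ block is not a citable fact --- its edge statistics are not those of a $K$-dimensional LOE. As written, the proposal identifies the right quantity to control but does not prove the concentration statement on which the lemma rests; the shift to $d_++(\log\log n)^3n^{-2/3}$ and the pairwise-independent decomposition are what make that statement provable.
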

\begin{proof}
We begin our analysis of $T_{1n}$ by remarking that it is tricky to analyze the distribution of $\sum_{i=1}^n\log|d_+-\mu_i|$ directly because of how close $d_+$ is to the eigenvalues $\{\mu_i\}$.  For this reason, \cite{CWL_CLT} uses the technique of first analyzing the sum $\sum_{i=1}^n\log|\g-\mu_i|$ for
\beq\label{eqn:gamma}
\g=d_++\sigma_n n^{-2/3},
\eeq
then analyzing the original sum by comparison to the shifted one.  We employ a similar technique here.  More precisely, we take
\beq
\sigma_n=\bar{\sigma}_n:=\left(\log\log n\right)^3.
\eeq
 From line (7.3) of \cite{CWL_CLT}, we have
\beq\label{eq:CLT_shiftedsum}
\sum_{i=1}^n\log|d_+-\mu_i|
=\sum_{i=1}^n\log|d_++\bar{\sigma}_nn^{-2/3}-\mu_i|
-C_1\bar{\sigma}_n n^{1/3}+C_2\bar{\sigma}_n^{3/2}+o(\sqrt{\log n}).
\eeq
where
\beq
C_1=\frac{1}{\la^{1/2}(1+\la^{1/2})},\qquad C_2=\frac{2}{3\la^{3/4}(1+\la^{1/2})^2}.
\eeq
Furthermore, from Lemma 3.1 and Section 4 of \cite{CWL_CLT}, we can rewrite the sum on the righthand side of \eqref{eq:CLT_shiftedsum} as
\beq\label{eq:sum_intermsof_Li}
\sum_{i=1}^n\log|d_++\bar{\sigma}_nn^{-2/3}-\mu_i|
=C_\la n-\sum_{i=3}^n L_i -\frac16\log n+C_1\bar{\sigma}_n n^{1/3}-C_2\bar{\sigma}_n^{3/2}+o(\sqrt{\log n})
\eeq
where $C_1,C_2$ are the same constants from \eqref{eq:CLT_shiftedsum} and $L_i$ is given by the recursive formula
\beq\label{eq:Li_def}
L_i:=\xi_i+\omega_i L_{i-1} \text{ for }i\geq4,\qquad L_3:=\xi_3.
\eeq
with
\beq\label{eq:xi_def}
\xi_i:=\a_i+\b_i(1+\tau_{i-1})+\a_{i-1}\d_i, \qquad
\omega_i:=\tau_{i-1}\d_i.
\eeq
Thus, combining \eqref{eq:CLT_shiftedsum} and \eqref{eq:sum_intermsof_Li} with the definition of $T_{1n}$, we get
\beq
T_{1n}=\frac{\sum_{i=3}^nL_i}{\sqrt{\frac23\log n}}+o(1).
\eeq
It remains to show that $\sum_{i=3}^n L_i=Z_n+o(\sqrt{\log n})$ for some $Z_n$ depending only on the upper left minor of $M_{n,m}$ of size $n-2n^{1/3}(\log n)^3$.  From the recursive definition of $L_i$, we have, for any $j\geq 4$,
\beqq
\sum_{i=3}^nL_i=\sum_{i=3}^{n}\xi_i+\w_i\xi_{i-1}+\cdots+\w_i\cdots\w_4\xi_3 
=\sum_{i=3}^{n}g_{i+1}\xi_i
\eeqq
where $g_i = 1+\w_i+\w_i\w_{i+1}+\dots +\w_i\dots\w_n$ for $3\leq i \leq n$.  Now we would like to compare this sum to a similar sum, truncated at index $i=n-2n^{1/3}(\log n)^3$ and show that their difference is small, with probability arbitrarily close to 1. As this will involve computing the variance of the difference between the sums, we would like to eliminate the dependence between consecutive terms in the sum by rewriting 
\beqq
\sum_{i = 3}^n L_i = \sum_{i = 3}^n g_{i+1}X_i + \sum_{i = 3}^n \a_i - g_3\a_2.
\eeqq
where 
\begin{equation}\label{defn:Xi}
X_i = (1 + \tau_{i-1})(\delta_i \alpha_{i-1} + \beta_i), \quad 3 \leq i \leq n.
\end{equation}
Now we define
\beq
Z_n=\sum_{i=3}^{\lfloor n-2n^{1/3}(\log n)^3\rfloor}g_{i+1}X_i.
\eeq
This gives us
\beq\label{eq:Li-Zn}
\sum_{i=3}^nL_i-Z_n=\sum_{i=\lceil n-2n^{1/3}(\log n)^3\rceil}^ng_{i+1}X_i+\sum_{i=3}^n\a_i-g_3\a_2.
\eeq
It follows from line (5.21) of \cite{CWL_CLT} that $\sum_{i=3}^n\alpha_i-g_3\a_2=o(\sqrt{\log n})$ with probability $1-n^{-1/2}$.  Finally, we bound the variance of the remaining sum on the right hand side of \eqref{eq:Li-Zn}.  Since $\{X_i\}$ are pairwise independent and $\{g_i\}$ are deterministic, we have 
\beqq
\EE\Big[\Big(\sum_{i=\lceil n-2n^{1/3}(\log n)^3\rceil}^ng_{i+1}X_i\Big)^2\Big]=
\sum_{i=\lceil n-2n^{1/3}(\log n)^3\rceil}^ng_{i+1}^2\EE X_i^2
\eeqq
From (4.42) of \cite{CWL_CLT}, we have $\EE X_i^2=O(n^{-1})$ uniformly in $i$.  Combining Lemma 5.1 and Corollary 2.9 of \cite{CWL_CLT}, we have
\beq
g_i=\begin{cases}O(n^{1/2}(n-i)^{-1/2}) & i\leq n-n^{1/3}\sigma_n\\
O(n^{1/3}\sigma_n^{-1/2}) & i\geq n-n^{1/3}\sigma_n.
\end{cases}
\eeq
Thus, we can bound the sum as follows:
\beqq\begin{split}
\sum_{i=\lceil n-2n^{1/3}(\log n)^3\rceil}^ng_{i+1}^2\EE X_i^2
&\leq\sum_{i=\lceil n-2n^{1/3}\sigma_n(\log n)^3\rceil}^{\lfloor n-n^{1/3}\sigma_n\rfloor}\frac{n}{n-i}\cdot\frac{C}{n}
+\sum_{i=\lceil n-n^{1/3}\sigma_n\rceil}^n\frac{n^{2/3}}{\sigma_n}\cdot\frac{C}{n}\\
&=O(\log\log n)+O(1).
\end{split}\eeqq
This completes the proof of the lemma concerning $T_{1n}$.
\end{proof}

\subsection{Proof for $T_{2n}$}
We now verify that, $T_{2n}=Y_n+o(1)$, for some random variable $Y_n$ depending only on the bottom-right minor of size $2n^{\frac13}(\log n)^3$ of the matrix $M_{n,m}$ (in fact, we get a much tighter tail bound than $o(1)$). 
Recall that $T_{2n}$ is a shifted re-scaling of the largest eigenvalue $\mu_1$, and it converges to the Tracy--Widom distribution. Thus, $Y_n$, if it exists, must converge to the same limit, while only depending on the bottom corner of $M_{n,m}$. The following lemma shows that the largest eigenvalue of the minor described above, with the same transformation as in $T_{2n}$, is a good choice for $Y_n$.

\begin{lemma}\label{lem:diff_mu}
	Let $\widetilde{\mu}_1$ be the largest eigenvalue of the bottom-right minor of $M_{n,m}$ of size $p>2n^{\frac13}(\log n)^3$. Then, for any $D>0$ and $\e>0$, with probability at least $1-\e$, 
	\[
	|\mu_1-\widetilde{\mu}_1| = O(n^{-D}).
	\]
	Furthermore, by setting $Y_n=\frac{n^{2/3}(\widetilde{\mu}_1-d_+)}{\sqrt{\la}(1+\sqrt{\la})^{4/3}}$ and taking $D>\frac23$ arbitrarily large, we have
	\[
	T_{2n}=Y_n+O(n^{-D+2/3}).
	\]
\end{lemma}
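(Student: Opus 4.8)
The second assertion is immediate from the first: on the event $\{|\mu_1-\widetilde{\mu}_1|=O(n^{-D})\}$ one has $T_{2n}-Y_n=\frac{n^{2/3}(\mu_1-\widetilde{\mu}_1)}{\sqrt{\la}(1+\sqrt{\la})^{4/3}}=O(n^{2/3-D})$, with $D$ arbitrary, so I would concentrate on the eigenvalue comparison. The plan is to work with the tridiagonal representation $T_n=BB^T$ of \eqref{eq:tridiagonal} (which has the same spectrum as $M_{n,m}$), split its index set at $k:=n-p$ into a top-left $k\times k$ block $A_{11}$ and a bottom-right $p\times p$ block $A_{22}$ (so that $m\widetilde{\mu}_1=\lambda_1(A_{22})$), the off-diagonal block carrying only the single entry $a_kb_k$, and then combine Cauchy interlacing — which gives $m\mu_1=\lambda_1(T_n)\ge\lambda_1(A_{22})=m\widetilde{\mu}_1$ for free — with a localisation bound for the top eigenvector $v=(v^{(1)},v^{(2)})$ of $T_n$. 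Expanding $m\mu_1=v^TT_nv$ in blocks and using the variational characterisation of $\lambda_1(A_{22})$ together with $\|A_{11}\|\le\lambda_1(T_n)=O(n)$ and $\|A_{12}\|=a_kb_k=O(n)$ (with overwhelming probability, both factors being $O(\sqrt{n})$ by $\chi^2$-concentration) yields $m\mu_1-\lambda_1(A_{22})=O(n\,\|v^{(1)}\|)$, i.e.\ $\mu_1-\widetilde{\mu}_1=O(\|v^{(1)}\|)$; so everything reduces to showing that $\|v^{(1)}\|^2=\sum_{i\le n-p}v_i^2$ is smaller than any power of $n$ with probability $\ge1-\e$.

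\noindent\textbf{The localisation estimate.}
This is the heart of the argument, and I would prove it by the recurrence method of \cite{CWL_CLT}, following the strategy of \cite{JKOP2} for the analogous Wigner statement. Solving the eigenvector recurrence for $T_n$ at the eigenvalue $\mu_1$, the components are, up to the deterministic factor $\prod_{\ell<i}a_\ell b_\ell$, the characteristic polynomials of the top-left minors of $T_n$ evaluated at $\mu_1$; after renormalising these by the characteristic roots $\rho_j^\pm$ of \eqref{eq:rho_def}, the analysis of \cite{CWL_CLT} shows that the renormalised quantities are flat in $j$ up to a factor $e^{o(\sqrt{\log n})}$, so that
\beqq
\frac{|v_i|}{|v_n|}=e^{o(\sqrt{\log n})}\prod_{\ell=i}^{n-1}\Big|\frac{\rho_\ell^-}{\rho_\ell^+}\Big|^{1/2},\qquad\Big|\frac{\rho_\ell^-}{\rho_\ell^+}\Big|<1 .
\eeqq
A direct estimate of this discrete WKB product gives $\prod_{\ell=i}^{n-1}|\rho_\ell^-/\rho_\ell^+|^{1/2}=\exp\!\big(-\tfrac23\sqrt{d_+}\,\big((n-i)/n^{1/3}\big)^{3/2}(1+o(1))\big)$ uniformly for $n^{1/3}\ll n-i\ll n$; at $n-i=p=2n^{1/3}(\log n)^3$ the exponent is $-\Theta((\log n)^{9/2})$, which dominates the $o(\sqrt{\log n})$ prefactor, so $|v_i/v_n|\le e^{-\Theta((\log n)^{9/2})}$ and this beats $n^{-D'}$ for every $D'$. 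Summing the squares over the at most $n$ indices $i\le n-p$ and using $|v_n|\le1$ then gives $\|v^{(1)}\|\le\sqrt{n}\,e^{-\Theta((\log n)^{9/2})}$, as required.

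\noindent\textbf{Main obstacle.}
I expect two technical points to carry the weight. First, the $e^{o(\sqrt{\log n})}$ factor above absorbs the $\chi^2$-fluctuations of the tridiagonal entries, which perturb every $\rho_\ell^\pm$ and every renormalised determinant; over the range $n-i\lesssim\sqrt{n\log n}$ these per-site fluctuations are in fact larger than the per-site WKB drift, so the decay has to be extracted from the accumulated drift while the accumulated fluctuation is controlled by a concentration (martingale) bound — this is precisely the estimate carried out in \cite{CWL_CLT}, and the work here is to localise its conclusions to the bottom-right corner of size $p$ and to verify that the accumulated fluctuation there is $o(\sqrt{\log n})$, hence negligible against $(\log n)^{9/2}$. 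Second, $\mu_1$ (equivalently $\widetilde{\mu}_1$) is itself random, whereas the recurrence bounds of \cite{CWL_CLT} are stated for a deterministic spectral parameter $\gamma=d_++\sigma_nn^{-2/3}$ ranging over a window; since $\mu_1$ lies in such a window with $\sigma_n=O(1)$ on an event of probability $\ge1-\e$ by Lemma \ref{lem:TW_mu1}, a net argument together with Lipschitz continuity in $\gamma$ lets one evaluate these bounds at $\gamma=\mu_1$. The interlacing step, the variational reduction, and the remaining bookkeeping (including the $\chi^2$ bounds used above) are routine.
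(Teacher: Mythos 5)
Your proposal is correct and follows essentially the same route as the paper: Cauchy interlacing for $\mu_1\ge\widetilde{\mu}_1$, a quadratic-form/variational bound reducing the difference to the mass $\|v^{(1)}\|$ of the principal eigenvector on the first $n-p$ coordinates, and eigenvector localisation proved via the tridiagonal recurrence with the characteristic roots $\rho_j^{\pm}$ and $\chi^2$-concentration of $a_jb_j$, exactly as in the paper's Lemmas \ref{lem:vi}--\ref{lem:ratio}. The only cosmetic difference is that the paper evaluates the recurrence directly at the (random) $\mu_1$ on the event $\Fthree$ rather than via a net-and-Lipschitz argument, and it only needs the decay of the transfer product over a single window of length $n^{1/3}$ rather than your full WKB exponent.
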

The key ingredient to bounding the difference $\mu_1-\widetilde{\mu}_1$ lies in controlling the first $n-2n^{1/3}(\log n)^3$ components of an eigenvector corresponding to $\mu_1$. In particular, we need the following result.
\begin{lemma}\label{lem:vi}
	If $\bv=(v_1,\dots,v_n)^T$ is a principal eigenvector of $M_{n,m}$, then for any $\e>0$ and $d>0$, with probability at least $1-\e$, we have
	\[
	\max_{j\leq n-2n^{\frac13}(\log n)^3}\frac{|v_j|}{\|\bv\|} < n^{-d}.
	\]	
\end{lemma}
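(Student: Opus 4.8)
The plan is to pass to the scalar three-term recurrence satisfied by the entries of the Perron eigenvector and to exploit the fact that, near the upper spectral edge, this recurrence is hyperbolic with a contraction rate that integrates to $\exp(c(\log n)^{9/2})$ over a window of length $2n^{1/3}(\log n)^3$. Write the tridiagonal matrix with eigenvalues $\{\mu_i\}$ as having diagonal entries $q_i$ and strictly positive off-diagonal entries $e_i$ (recall from \eqref{eq:tridiagonal} that $e_i=\tfrac1m a_ib_i>0$). Since this matrix is irreducible with nonnegative entries, $\mu_1$ is simple with a strictly positive eigenvector; take $\bv$ to be this eigenvector, normalized so that $v_n=1$, so that $\|\bv\|\ge1$ and it suffices to prove $v_j\le n^{-d}$ for $j\le n-K$, $K:=2n^{1/3}(\log n)^3$, with probability $\ge1-\e$. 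From $e_{i-1}v_{i-1}+q_iv_i+e_iv_{i+1}=\mu_1v_i$ the ratios $r_i:=v_{i+1}/v_i>0$ satisfy the backward continued-fraction recurrence $r_{i-1}=e_{i-1}\big/\big((\mu_1-q_i)-e_ir_i\big)$ with terminal value $r_n=0$; equivalently $R_i:=e_ir_i=P_i(\mu_1)/P_{i-1}(\mu_1)$, with $P_k$ the characteristic polynomial of the top-left $k\times k$ block, which is positive for $k<n$ by Cauchy interlacing ($\mu_1$ strictly exceeds every eigenvalue of every proper principal minor). Since $v_j=\prod_{i=j}^{n-1}r_i^{-1}$, everything reduces to the two estimates $\prod_{i=n-K}^{n-1}r_i\ge n^{d}$ and $r_i\ge1$ for $i\le n-K$.

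The second step is the deterministic analysis. Freezing $\mu_1$ at $d_+$ and $q_i,e_i$ at their means $\bar q_i,\bar e_i$ (so that $m\bar q_i=m-n+2i-1$, $m^2\bar e_i^2=(m-n+i-1)(i-1)$) yields a recurrence whose two fixed points are the characteristic roots $R_\pm(i)$ of $R^2-(d_+-\bar q_i)R+\bar e_i^2=0$ — up to sign these are the $\rho_i^{\pm}$ of \eqref{eq:rho_def}, and the expansions I need ($R_+(i)R_-(i)=\bar e_i^2$, $R_+(n)=R_-(n)$, and $R_+(n-k)/\bar e_{n-k}=1+c_\la\sqrt{k/n}+O(k/n)$ for some $c_\la>0$ when $1\le k\le n/2$) follow from the estimates on $\rho_i^{\pm},\tau_i,\delta_i$ recorded in \cite{CWL_CLT}. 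Iterating the backward map from $R_n=0$, one checks that $R_{n-k}\approx\tfrac{k}{k+1}\bar e_{n-k}$ while $k\ll n^{1/3}$ — a transition window across which $\prod r_i$ loses only a factor $e^{O(\log n)}$ — and that $R_{n-k}$ then locks onto the dominant branch, $R_{n-k}=R_+(n-k)(1+o(1))$, for $n^{1/3}\ll k\le n/2$, where in particular $r_i\ge1$. Summing $\log r_i=\log\big(R_i/e_i\big)$ over $i$ and using $\sum_{k\le K}\sqrt k\asymp K^{3/2}$ with $K^{3/2}/\sqrt n=2^{3/2}(\log n)^{9/2}$ gives $\prod_{i=n-K}^{n-1}r_i=\exp\big(c_\la\sum_{Cn^{1/3}\le k\le K}\sqrt{k/n}+O(\log n)\big)=\exp\big(c(\log n)^{9/2}(1+o(1))\big)$, which exceeds $n^{d}$ for every fixed $d$ once $n$ is large.

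The third step — and the main obstacle — is to transfer this to the random recurrence. On the event $\cF_\e$ (or $\event$) intersected with the Tracy–Widom bound $|\mu_1-d_+|\le Cn^{-2/3}$ (Lemma \ref{lem:TW_mu1}) and the standard $\chi^2$-concentration $\max_i(|\alpha_i|+|\beta_i|)\le(\log n)^{2}$ (from \eqref{eq:alpha_beta_def} by a union bound over $\chi^2$ tails, as in \cite{CWL_CLT}) — an event of probability $\ge1-\e$ — the deviations of $\mu_1,q_i,e_i$ from $d_+,\bar q_i,\bar e_i$ are small enough over the last $\sim3n^{1/3}(\log n)^3$ indices that the continued fraction $R_i$ stays within a multiplicative factor $\exp(o(\log n))$ of its deterministic counterpart: each step in the hyperbolic regime $k\gg n^{1/3}$ perturbs $\log R_i$ by $O((\log n)^2/\sqrt n)$, and the transition window perturbs it by $O(\log n)$. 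This is the delicate point, because the fixed points $R_\pm(i)$ coalesce at the corner $i=n$, so the recurrence is only weakly hyperbolic in a window of width $\sim n^{1/3}$ there; one must show the random iterates emerge from that window aligned with the dominant branch $R_+$ and are not knocked off it before the $(\log n)^{9/2}$ worth of contraction has accumulated — exactly the kind of recurrence-stability analysis carried out for \eqref{eq:Li_def} in \cite{CWL_CLT}, with the Perron positivity $R_i>0$ ruling out intermediate blow-ups of the continued fraction. Granting this, $\prod_{i=n-K}^{n-1}r_i\ge\exp(\tfrac c2(\log n)^{9/2})\ge n^{d}$ and $r_i\ge1$ for $i\le n-K$, hence $v_j\le n^{-d}$ for all $j\le n-K$ on that event, and the lemma follows.
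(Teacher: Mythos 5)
Your reduction to the ratio recurrence, the positivity of $R_i=P_i(\mu_1)/P_{i-1}(\mu_1)$ via interlacing, and the quantitative mechanism (a per-step growth factor $r_i\approx 1+c_\la\sqrt{k/n}$ at distance $k$ from the corner, whose product over the window beats any power of $n$) all match what the paper actually does. But there are two problems, one of which is not merely a gap but a wrong dynamical claim.

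First, the direction of iteration. For the backward map $R_{i-1}=e_{i-1}^2/\bigl((\mu_1-q_i)-R_i\bigr)$, the fixed point $R_+$ is \emph{repelling} and $R_-$ is \emph{attracting}: writing $R_++R_-=\mu_1-q$ and $R_+R_-=e^2$, one has $f'(R_-)=R_-/R_+<1$ and $f'(R_+)=R_+/R_->1$. So iterating backward from $R_n=0$ cannot "lock onto the dominant branch": the deterministic backward iterate emerges from the parabolic window at distance $\Theta(1/k)$ from the (nearly coalesced) roots and thereafter tracks $R_-(n-k)$ to within $O(1/k)$, which for $k\gg n^{1/3}$ is negligible compared with the gap $R_+-R_-=\Theta(\sqrt{k/n})$. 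Taken at face value your deterministic step gives $r_i\approx R_-/e_i<1$, i.e.\ an eigenvector localized at the \emph{top-left} corner — the opposite of what you need. The true solution does sit on the $R_+$ branch, but this is forced by the boundary condition at the \emph{top} ($P_{-1}=0$, $P_0=1$), for which $R_+$ is the forward-attracting fixed point; recovering it from the backward iteration would require controlling cancellations to precision $e^{-\Theta((\log n)^{9/2})}$, i.e.\ it is as hard as the lemma itself. This is exactly why the paper runs the recurrence forward: it sets $v_1=1$, defines $F_j=\frac{v_{j+1}}{v_j}\cdot\frac{a_jb_j}{|\rho_j^+|}-1$ (deviation from the \emph{large} root), checks $F_1=O(n^{-1/2})$, and propagates smallness forward using the contraction factors $\omega_j<1$.

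Second, even granting the branch selection, the entire stochastic content — that the random perturbations $\alpha_i,\beta_i$ of size $n^{-1/2}\sqrt{\log n}$, which individually swamp the deterministic hyperbolicity $\sqrt{k/n}$ for $k\ll\sqrt n$, do not knock the iterate off the branch before $(\log n)^{3/2}$ worth of contraction accumulates — is deferred with ``granting this.'' That stability estimate is the substance of the lemma; in the paper it is Lemma \ref{lem:Fi}, proved in Appendix \ref{sec:appendix2} by an explicit forward expansion $F_j=\omega_j\cdots\omega_2F_1+\sum\omega\cdots\omega\,\xi+\cdots$ together with the bound $1-\omega_j=\Theta(\sqrt{(n-j)/n})$ and an induction controlling the nonlinear terms $\phi_j$. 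Note also that the paper needs only a window of $k=\lceil n^{1/3}\rceil$ consecutive ratios (yielding decay $e^{-c(\log n)^{3/2}}$, already $o(n^{-d})$), so your $r_i\ge 1$ claim for all smaller indices, which would require branch control all the way down to $i=O(1)$, is avoidable.
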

Lemma \ref{lem:vi} itself relies on the following two auxiliary Lemmas \ref{lem:Fi} and \ref{lem:ratio}, both of which depend on the random entries in the tridiagonal matrix form. We include their proofs in the Appendix \ref{sec:appendix2}. 
\begin{lemma}\label{lem:Fi}
	Let $\mu_1$ be the largest eigenvalue of $M_{n,m}$. Let $\{F_j\}_{j=1}^{n-1}$ be the sequence given by
	\beqq
	F_1=-1+\frac{\mu_1 m-a_1^2}{|\rho_1^+|}, \quad F_j=-1+ \frac{\mu_1m-(a_j^2+b_{j-1}^2)}{|\rho_j^+|}+\frac{(a_{j-1}b_{j-1})^2}{|\rho_j^+||\rho_{j-1}^+|}\cdot\frac{1}{1+F_{j-1}} \text{ for } j=2,\dots, n-1.
	\eeqq
	Here, $\rho_j^+$ is given by \eqref{eq:rho_def} with $\gamma=d_+$. 
	Then, for every $\e>0$, with probability at least $1-\e$,
	\begin{equation}\label{eqn:maxRi}
		\max_{j\leq n-n^{\frac13}(\log n)^3}|F_j| = o(n^{-\frac13}).
	\end{equation}	
\end{lemma}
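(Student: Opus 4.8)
The plan is to treat $F_j$ as a Riccati-type iterate that stays pinned near the reference value $0$. Using \eqref{eq:alpha_beta_def} to write $a_j^2=(m-n+j)+|\rho_j^+|\alpha_j$ and $b_{j-1}^2=(j-1)+|\rho_j^+|\beta_j$, together with the elementary identities $d_+m-(m-n+2j-1)=|\rho_j^+|+|\rho_j^-|$ and $(m-n+j-1)(j-1)=|\rho_j^+||\rho_j^-|$ read off from \eqref{eq:rho_def}, one checks that $0$ is a fixed point of the deterministic (mean) recursion up to a small drift, and that the recursion rearranges into the ``error'' form
\beq
F_j=b_j+\nu_j+(\kappa_j+\eta_j)\,\frac{F_{j-1}}{1+F_{j-1}},\qquad \kappa_j:=\frac{|\rho_j^-|}{|\rho_{j-1}^+|},
\eeq
where $b_j$ is a deterministic bias, $\kappa_j\in(0,1)$ is a contraction coefficient, and $\nu_j,\eta_j$ collect the fluctuation contributions (linear and bilinear in $\alpha_{j-1},\alpha_j,\beta_j$, together with the shift term $(\mu_1-d_+)m/|\rho_j^+|$). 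Three structural facts power the argument. First, $\kappa_j<1$ for all $j<n$, and the discriminant asymptotics $(d_+m-(m-n+2j-1))^2-4(m-n+j-1)(j-1)\asymp n(n-j)$ give the quantitative gap $1-\kappa_j\asymp\sqrt{(n-j)/n}$, which at the right endpoint $j=n-n^{1/3}(\log n)^3$ is $\asymp n^{-1/3}(\log n)^{3/2}$. Second, $b_j=|\rho_j^-|\big(|\rho_j^+|^{-1}-|\rho_{j-1}^+|^{-1}\big)=O(n^{-2/3})$ uniformly, and $O(n^{-1})$ for $j$ a macroscopic distance below $n$. Third, on the event that $\mu_1$ is rigid (Lemma \ref{lem:TW_mu1}, so $|\mu_1-d_+|=O(n^{-2/3})$) and the normalized $\chi^2$-variables concentrate ($|\alpha_i|,|\beta_i|\lesssim n^{-1/2}\log n$, with $\E\alpha_i^2,\E\beta_i^2=O(n^{-1})$), the noise terms are $O(n^{-1/2}\log n)$ per step with second moment $O(n^{-1})$ and the bilinear pieces are smaller still; all of this holds with probability $\ge1-\e$ for a suitable choice of constants.

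The main step is a bootstrap. Let $j_\ast$ be the first index with $|F_j|>n^{-1/3}(\log n)^{-1/4}$. On the good event and for $j\le j_\ast$ one has $\big|\tfrac{F_{j-1}}{1+F_{j-1}}-F_{j-1}\big|=O(F_{j-1}^2)=o(n^{-2/3})$ and $\kappa_j+\eta_j<1$ (this uses $1-\kappa_j\gg n^{-1/2}$, which is why the cutoff cannot be pushed past $n-n^{1/3}(\log n)^3$), so the recursion linearizes to $F_j=b_j+\nu_j+\kappa_j F_{j-1}+o(n^{-2/3})$. Iterating, and using that the geometric weights obey $\sum_{k\le j}\prod_{\ell=k+1}^{j}\kappa_\ell\le(1-\max_{\ell\le j}\kappa_\ell)^{-1}\lesssim n^{1/3}(\log n)^{-3/2}$, we obtain
\beq
F_j=\sum_{k}\Big(\prod_{\ell=k+1}^{j}\kappa_\ell\Big)\nu_k+o(n^{-1/3}),
\eeq
since the accumulated linearization errors, the initial term (negligible because $F$ starts at $O(n^{-1})$ for small indices), and the deterministic part $\sum_k(\prod\kappa_\ell)b_k\le\big(\sum_k\prod\kappa_\ell\big)O(n^{-2/3})$ are all $o(n^{-1/3})$; likewise the common-shift contribution $(\mu_1-d_+)m\sum_k(\prod\kappa_\ell)|\rho_k^+|^{-1}=O(n^{-2/3})\cdot n^{1/3}(\log n)^{-3/2}=o(n^{-1/3})$.

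It remains to bound $\Xi_j:=\sum_k\big(\prod_{\ell=k+1}^{j}\kappa_\ell\big)\tilde\nu_k$, where $\tilde\nu_k$ is $\nu_k$ stripped of its common-shift part. The $\tilde\nu_k$ are sub-exponential with $\E\tilde\nu_k^2=O(n^{-1})$ uniformly, and each $\tilde\nu_k$ depends only on $a_k,a_{k-1},b_{k-1}$, so the even- and odd-indexed subfamilies are each mutually independent; by Bernstein's inequality $\Xi_j$ has a Gaussian tail on scales up to $\sqrt{\log n}$ times its standard deviation, with variance proxy $\sum_k\big(\prod_{\ell=k+1}^{j}\kappa_\ell\big)^2\E\tilde\nu_k^2\lesssim n^{-1}(1-\max_{\ell\le j}\kappa_\ell^2)^{-1}\lesssim n^{-2/3}(\log n)^{-3/2}$, the worst case being $j$ near the right endpoint. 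A union bound over the at most $n$ indices $j\le n-n^{1/3}(\log n)^3$ then yields $\max_j|\Xi_j|\lesssim\big(n^{-2/3}(\log n)^{-3/2}\cdot\log n\big)^{1/2}=n^{-1/3}(\log n)^{-1/4}=o(n^{-1/3})$ with probability $\ge1-\e$. Combining with the deterministic estimates gives $|F_j|=o(n^{-1/3})$ for all $j\le n-n^{1/3}(\log n)^3$ on the good event, hence $j_\ast>n-n^{1/3}(\log n)^3$, which closes the bootstrap.

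I expect the delicate point to be exactly the near-endpoint regime: there the contraction is only $1-\kappa_j\asymp n^{-1/3}(\log n)^{3/2}$, so the iterate retains an effective memory of $\asymp n^{1/3}(\log n)^{-3/2}$ steps, and one must verify that the accumulated bias, the common fluctuation of $\mu_1$, and the variance of the noise all remain $o(n^{-1/3})$ (respectively $o(n^{-2/3})$) over such a window — these estimates just barely close with the stated cutoff $n^{1/3}(\log n)^3$, which is simultaneously large enough that $\kappa_j+\eta_j<1$ survives the $O(n^{-1/2}\log n)$ stochastic perturbation. The overall scheme follows the Wigner analysis of \cite{JKOP2} and reuses the three-term-recurrence set-up and the root estimates of \cite{CWL_CLT}.
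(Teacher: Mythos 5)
Your proposal follows essentially the same route as the paper's proof: the same rearrangement of the Riccati recursion into a contracted linear part plus noise (your $\kappa_j$ is the paper's $\omega_j=\tau_{j-1}\delta_j=|\rho_j^-|/|\rho_{j-1}^+|$, your common-shift term is the paper's $\eta_j=(\mu_1-d_+)m/|\rho_j^+|$), the same quantitative gap $1-\omega_j\asymp\sqrt{(n-j+1)/n}$ from the discriminant asymptotics (Lemmas \ref{lem:rho} and \ref{lem:1-wi}), the same geometric-sum bound $\sum_k\prod_\ell\omega_\ell\le(1-\omega_j)^{-1}=O(n^{1/3}(\log n)^{-3/2})$, and the same bootstrap/induction to absorb the nonlinear $F_{j-1}/(1+F_{j-1})$ corrections at the threshold $2C_3n^{-1/3}(\log n)^{-1/4}$. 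The one genuine point of divergence is how the dominant stochastic sum $\sum_k(\prod_{\ell>k}\omega_\ell)\xi_k$ is controlled uniformly in $j$: the paper identifies it with the iterate $L_j$ of \eqref{eq:Li_def} and imports the maximal bound $\max_{j\le j_0}|L_j|=O(n^{-1/3}(\log n)^{-1/4})$ from line (6.17) of \cite{CWL_CLT}, whereas you give a self-contained argument via even/odd splitting into independent subfamilies, Bernstein's inequality with variance proxy $O(n^{-2/3}(\log n)^{-3/2})$, and a union bound over $j$. Your substitution is sound (the sub-exponential scale $O(n^{-1})$ of the $\alpha_i,\beta_i$ keeps you well inside the Gaussian regime of Bernstein, and $C\sqrt{\log n}$ standard deviations still lands at $o(n^{-1/3})$), and it buys independence from the companion paper at the cost of redoing a computation that already exists there; the paper's choice buys brevity and consistency with the notation $L_j$ that is reused in the $T_{1n}$ argument of Section \ref{sec:independence}.
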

\begin{lemma}\label{lem:ratio}
Given $\e>0$, then for sufficiently large $n$ and $a_i,b_i$ as defined in \eqref{eqn:aibi}, we have 
\beq
\bP\left(\left|\max_{j\leq n-n^{1/3}(\log n)^3}a_jb_j-\sqrt{mn}\right|\leq (e\log n)^2n^{1/2}\right)\geq 1-\e.
\eeq
\end{lemma}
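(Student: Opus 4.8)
Here is how I would prove Lemma~\ref{lem:ratio}. The idea is simply to control $a_j$ and $b_j$ separately by Gaussian concentration and then glue the two estimates together; there is no deep obstacle, only a scale-matching point to keep in mind.

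\textbf{Setup.} By \eqref{eqn:aibi}, $a_j = \|G\|$ with $G$ a standard Gaussian vector in $\RR^{m-n+j}$ and $b_j = \|G'\|$ with $G'$ an independent standard Gaussian vector in $\RR^{j}$. Since $\mathbf{u}\mapsto\|\mathbf{u}\|$ is $1$-Lipschitz and $|\,\EE\|G\|-\sqrt{\dim}\,|\le 1$ (because $\EE\|G\|\le\sqrt{\dim}$ by Jensen and $(\EE\|G\|)^2=\dim-\Var\|G\|\ge\dim-1$), the Gaussian concentration inequality gives, for every $s\ge 0$,
\[
\PP\bigl(\,|a_j-\sqrt{m-n+j}\,|\ge 1+s\,\bigr)\le 2e^{-s^2/2},\qquad
\PP\bigl(\,|b_j-\sqrt{j}\,|\ge 1+s\,\bigr)\le 2e^{-s^2/2}.
\]
Taking $s=C\sqrt{\log n}$ for a large constant $C$ makes each of these probabilities $o(n^{-2})$.

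\textbf{Upper bound.} Union-bounding over the at most $n$ indices $j\le n-n^{1/3}(\log n)^3$, with probability $1-o(n^{-1})$ we have $a_j\le\sqrt{m-n+j}+C'\sqrt{\log n}$ and $b_j\le\sqrt{j}+C'\sqrt{\log n}$ simultaneously. Multiplying and using that $j\mapsto(m-n+j)j$ is increasing on $[0,n]$ with value $mn$ at $j=n$ (so $\sqrt{(m-n+j)j}\le\sqrt{mn}$), together with $\sqrt m+\sqrt n=O(\sqrt n)$ (which follows from $n/m\to\la$), we obtain
\[
a_jb_j\le \sqrt{(m-n+j)j}+C'\sqrt{\log n}\,(\sqrt m+\sqrt n)+(C')^2\log n\le \sqrt{mn}+O\bigl(\sqrt{n\log n}\bigr),
\]
uniformly over such $j$. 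Since $\sqrt{n\log n}\ll n^{1/2}(\log n)^2$, for $n$ large this is at most $\sqrt{mn}+(e\log n)^2 n^{1/2}$, so $\max_{j\le n-n^{1/3}(\log n)^3}a_jb_j\le\sqrt{mn}+(e\log n)^2 n^{1/2}$ on this event.

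\textbf{Lower bound.} It suffices to exhibit one good index. Let $\ell:=\lceil n^{1/3}(\log n)^3\rceil$ and $j^*:=n-\ell$, so $m-n+j^*=m-\ell$. With probability $1-o(n^{-2})$ we have $a_{j^*}\ge\sqrt{m-\ell}-C'\sqrt{\log n}$ and $b_{j^*}\ge\sqrt{n-\ell}-C'\sqrt{\log n}$, hence
\[
a_{j^*}b_{j^*}\ge \sqrt{(m-\ell)(n-\ell)}-C'\sqrt{\log n}\,(\sqrt m+\sqrt n).
\]
Since $(m-\ell)(n-\ell)=mn-(m+n)\ell+\ell^2\ge mn-2m\ell$ and $2m\ell/\sqrt{mn}=\Theta(n^{1/3}(\log n)^3)$, we get $\sqrt{(m-\ell)(n-\ell)}\ge\sqrt{mn}-O(n^{1/3}(\log n)^3)$, so $a_{j^*}b_{j^*}\ge\sqrt{mn}-O(n^{1/3}(\log n)^3)-O(\sqrt{n\log n})\ge\sqrt{mn}-(e\log n)^2 n^{1/2}$ for $n$ large. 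As $\max_{j\le n-n^{1/3}(\log n)^3}a_jb_j\ge a_{j^*}b_{j^*}$, intersecting the two events (each of probability $\ge 1-\e/2$ for $n$ large) yields the lemma.

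\textbf{The main point.} The only thing to watch is the interplay between two scales: the excised block has size $n^{1/3}(\log n)^3$, which forces the deterministic mean $\sqrt{(m-n+j)j}$ at the largest admissible index $j^*$ to fall short of $\sqrt{mn}$ by $\Theta(n^{1/3}(\log n)^3)$, while the allowed tolerance is $(e\log n)^2 n^{1/2}$, which also dominates the $\Theta(\sqrt{n\log n})$-size random fluctuations that survive the union bound. Since $n^{1/3}(\log n)^3=o(n^{1/2}(\log n)^2)$, both error sources sit comfortably inside the window, so no delicate estimate is needed and the constant $(e\log n)^2$ is far from sharp; essentially the lemma is a concentration-plus-union-bound statement.
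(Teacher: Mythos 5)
Your proof is correct, but it takes a different route from the paper's. You control $a_j$ and $b_j$ separately, using that the Euclidean norm of a standard Gaussian vector is $1$-Lipschitz (so each of $a_j,b_j$ concentrates within $O(\sqrt{\log n})$ of $\sqrt{\dim}$), and then combine the two via a union bound for the upper deviation and a single witness index $j^*=n-\lceil n^{1/3}(\log n)^3\rceil$ for the lower deviation. The paper instead treats the product directly: it views $a_j^2b_j^2$ as a degree-$4$ polynomial in $m-n+2j$ independent standard Gaussians, computes its mean $j(m-n+j)$ and variance, and invokes the concentration inequality for Gaussian polynomials (Corollary 5.49 of \cite{AS17}) with $s=(e\log n)^2$ — which is where the specific constant $(e\log n)^2$ in the statement comes from — before splitting into the same two one-sided events ($p_1$ bounded by the single index $j_0$, $p_2$ by a product over $j$ using independence, which plays the role of your union bound). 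Your argument is more elementary (no hypercontractivity-type input), and in fact yields a slightly sharper upper-deviation bound of order $\sqrt{n\log n}$ rather than $n^{1/2}(\log n)^2$; the paper's approach avoids factoring the product and keeps the computation in terms of a single polynomial. Both correctly identify the key scale comparison: the deterministic deficit $\sqrt{mn}-\sqrt{(m-\ell)(n-\ell)}=\Theta(n^{1/3}(\log n)^3)$ and the surviving random fluctuations both sit inside the window $(e\log n)^2 n^{1/2}$.
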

\begin{proof}[Proof of Lemma \ref{lem:vi}]
	From the tridiagonal representation \eqref{eq:tridiagonal} and the notations presented at the beginning of Section \ref{sec:independence}, we obtain the system of linear equations
	\begin{equation*}
		\begin{cases}
			\left(\frac{a_1^2}m-\mu_1\right)v_1+\frac{a_1b_1}mv_2=0,&\quad\\
			\frac{a_{j-1}b_{j-1}}mv_{j-1}+\left(\frac{a^2_j+b^2_{j-1}}m-\mu_1\right)v_j+\frac{a_jb_j}mv_{j+1}=0, &\quad j=2,\dots, n-1.
		\end{cases}
	\end{equation*}
	With probability 1, $a_j>0$ and $b_j>0$ for $j=1,\dots, n-1$. This implies $v_1\neq 0$ (otherwise, $\bv$ is the zero vector). In fact, as functions of positive, continuous random variables $a_1, \dots, a_{j-1},b_1,\dots, b_{j-1}$, it holds with probability 1 that $v_j\neq0$ for each $j$. Thus, we rescale $\bv$ to have $v_1=1$ and obtain
	\begin{equation}
		v_2=\frac{\mu_1m-a_1^2}{a_1b_1}, \quad v_{j+1}=\frac{\mu_1 m-(a_j^2+b_{j-1}^2)}{a_jb_j}v_j-\frac{a_{j-1}b_{j-1}}{a_jb_j}v_{j-1}, \quad j=2,\dots, n-1. 
	\end{equation}
	We introduce the following quantity
	\beq
	F_j=\frac{v_{j+1}}{v_j}\cdot\frac{a_jb_j}{|\rho_j^+|}-1, \quad \text{for } j=1, \dots, n-1.
	\eeq 
	Here, $\rho_j^+$ is given in \eqref{eq:rho_def} with $\g=d_+$.
	Set $k = \lceil n^{\frac13} \rceil$, and let $j\leq n-2n^{1/3}(\log n)^3$. Observe that 
	\begin{equation}\label{eqn:vi}
		\frac{|v_j|}{\|\bv\|}\leq \left|\frac{v_j}{v_{j+k}}\right|=\prod_{l=j}^{j+k-1}(1+F_l)^{-1}\prod_{l=j}^{j+k-1}\frac{(a_lb_l)/m}{|\rho_l^+|/m}.
	\end{equation}
	Since $\{F_l\}_{l=1}^{n-1}$ satisfies the hypothesis of Lemma \ref{lem:Fi} and each $l\in[j,j+k-1]$ satisfies $l\leq n -n^{\frac13}(\log n)^3$, it follows that, with probability $1-\e/2$, we have $\prod_{l=j}^{j+k-1}(1+F_l)^{-1} = 1+o(1)$.
	
	We then consider the product $\prod_{l=j}^{j+k-1}\frac{(a_lb_l)/m}{|\rho_l^+|/m}$. As $|\rho_l^+|$ is decreasing in $l$ by \eqref{eq:rho_def},
	\begin{equation}\label{eqn:ratio_rho}
		\begin{split}
			\frac{|\rho_l^+|}m
			&\geq \frac{\left|\rho_{n-n^{1/3}(\log n)^3}^+\right|}m\\
			&=\frac{d_+ m-(m+n-2n^{1/3}(\log n)^3-1)}{2m}\left(1+\sqrt{1-\frac{4(m-n^{1/3}(\log n)^3-1)(n-n^{1/3}(\log n)^3-1)}{(d_+ m-(m+n-2n^{1/3}(\log n)^3-1))^2}}\right).
		\end{split}
	\end{equation}
	Using $d_+ m= m+n+2\sqrt{mn}$, the first factor on the right hand side of \eqref{eqn:ratio_rho} is $\sqrt{\la}(1+O(n^{-\frac23}(\log n)^3))$, while the expression under the square root is $\Theta(n^{-\frac23}(\log n)^3)$. Therefore, there is a constant $c>0$ such that
	\[
	\frac{|\rho_l^+|}m \geq \sqrt{\la}+cn^{-\frac13}(\log n)^{\frac32} \quad \text{ for all } l\leq n-n^{\frac13}(\log n)^3.
	\] 
	Combining this with Lemma \ref{lem:ratio}, we obtain that, for some $c'>0$, with probability $1-\e/2$,
	\beq
	\prod_{l=j}^{j+k-1}\frac{(a_lb_l)/m}{|\rho_l^+|/m}\leq (1-c'n^{-\frac13}(\log n)^{\frac32}+o(n^{-\frac13}))^k.
	\eeq
	Therefore, with probability $1-\e$,
	\begin{align*}
		\max_{j\leq n-2n^{\frac13}(\log n)^3}\frac{|v_j|}{\|\bv\|}\leq \max_{j\leq n-2n^{\frac13}(\log n)^3}\left|\frac{v_j}{v_{j+k}}\right|=\exp\left(-c'(\log n)^{3/2}+o(1)\right).
	\end{align*}
	The above quantity is $O(n^{-\log^{1/2} n+o(1)})$, smaller than any $n^{-d}$ for sufficiently large $n$. This completes the proof of Lemma \ref{lem:vi}.
\end{proof}

We now have the necessary tools to prove Lemma \ref{lem:diff_mu} and conclude our argument of asymptotic independence.
\begin{proof}[Proof of Lemma \ref{lem:diff_mu}]
	We observe that $\widetilde{\mu}_1$ is equal to the largest eigenvalue of $M^{(p)}_{n,m}$ where
	\begin{equation}
		mM^{(p)}_{n,m}= \begin{bmatrix}
			0&0&&&&&\\
			0&\ddots&\ddots&&&&\\
			&\ddots&0&0&&&\\
			&&0&a^2_{n-p+1}+b^2_{n-p}&a_{n-p+1}b_{n-p+1}&&\\
			&&&a_{n-p+1}b_{n-p+1}&\ddots&\ddots&\\
			&&&&\ddots&\ddots&a_{n-1}b_{n-1}\\
			&&&&&a_{n-1}b_{n-1}&a^2_{n}+b^2_{n-1}\\
		\end{bmatrix}.
	\end{equation}
	This implies $\mu_1\geq\widetilde{\mu}_1$. We now verify the upper bound on $\mu_1-\widetilde{\mu}_1$.
	
	Set $\bv=(v_1,\dots,v_n)^T$ to be a normalized principal eigenvector, i.e. $\bv$ is a unit vector satisfying $\bv^TM_{n,m}\bv=\mu_1$. Since $\widetilde{\mu}_1\geq \bv^TM^{(p)}_{n,m}\bv$, it follows that, for $\bv_{:n-p}:=(v_1,\dots,v_{n-p})^T$ and 
	\begin{equation}
		(m-p)M_{n-p,m-p}= 
		\begin{bmatrix}
			a_1^2&a_1b_1&&&\\
			a_1b_1&a_2^2+b_1^2&a_2b_2&&\\
			&a_2b_2&a_3^2+b_2^2&\ddots&\\
			&&\ddots&\ddots&a_{n-l-1}b_{n-l-1}\\
			&&&a_{n-p-1}b_{n-p-1}&a_{n-p}^2+b_{n-p-1}^2
		\end{bmatrix},
	\end{equation}
	we have
	\begin{equation}\label{eqn:diff_mu}
		\begin{split}
			\mu_1-\widetilde{\mu}_1 
			&\leq \bv^T\left(M_{n,m}-M^{(p)}_{n,m}\right)\bv\\
			&=\frac{m-p}{m}\bv_{:n-p}^TM_{n-p,m-p}\bv_{:n-p}+\frac{2a_{n-p}b_{n-p}}{m}v_{n-p+1}v_{n-p}.
		\end{split}
	\end{equation}
	
	As $p>2n^{\frac13}(\log n)^3$, Lemma \ref{lem:vi} implies that for any $d>0$ and $\e>0$, then with probability $1-\e/3$, $\|\bv_{:n-p}\|^2=O(n^{-2d+1})$ and $\max{|v_{n-p+1}|, |v_{n-p}}|=O(n^{-d})$. Furthermore, $\frac{2}{m}a_{n-p}b_{n-p}=O(1)$ by Lemma \ref{lem:ratio}, and $\|M_{n-p,m-p}\|=O(1)$ (due to being a rescaled LOE matrix), and each of these $O(1)$ bounds holds with probability $1-\e/3$. Therefore, \eqref{eqn:diff_mu} implies $\mu_1-\widetilde{\mu}_1 = O(n^{-2d+1})$ with probability $1-\e$. Setting $d=\frac12(D+1)$, we obtain the first statement of Lemma \ref{lem:diff_mu}. The second one then follows immediately from the observation  $T_{2n}-Y_n=\Theta(n^{2/3}(\mu_1-\widetilde{\mu}_1))$.
\end{proof}

\begin{appendix}
\section{Appendix: Section 2 proofs}\label{sec:appendix} 
	In this appendix, we provide a proof for Lemma \ref{lem:Aj}, and then apply it to prove Lemma \ref{lem:diff_stieltjes_gK}.
\subsection{Proof of Lemma \ref{lem:Aj}}
	Before beginning the main proof, we need the following preliminary results.
\begin{lemma}\label{lem:counting}
	Let $\{\mu_j\}_{j=1}^n$ be eigenvalues of a scaled LUE or LOE matrix $\frac1m M_{n,m}$. Assume $s$ is such that $s>C$ for some $C>0$ and $s=o(n^{2/3})$ as $n\to\infty$. The following statements hold for $\cN_s:=\#\{i: \mu_i \in [d_+-sn^{-2/3},\infty) \}$.
	\begin{align}
		\bE\cN_s&=\frac{2}{3\pi\la^{3/4}d_+}s^{3/2}+O(s^{5/2}n^{-2/3}).\label{eqn:bE_Ns}\\
		\Var(\cN_s)&=\frac{3}{4\pi^2}\log(s)(1+o(1)). \label{eqn:Var_Ns}
	\end{align}
\end{lemma}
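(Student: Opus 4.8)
The plan is to establish the two asymptotics by separate arguments: \eqref{eqn:bE_Ns} from the averaged spectral density, and \eqref{eqn:Var_Ns} from the determinantal structure of LUE (with a Pfaffian analogue for LOE).

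\textbf{Expectation.} Write $\bE\cN_s=n\int_{d_+-sn^{-2/3}}^{\infty}\bar\rho_n(x)\,\dd x$, where $n\bar\rho_n$ denotes the one-point intensity of the eigenvalue process. Using the classical asymptotics of the Wishart one-point function (namely $\bar\rho_n(x)=p_{\MP}(x)+O(n^{-1})$ in the bulk, and the Airy-kernel form within $O(n^{-2/3})$ of $d_+$), one checks that $\bE\cN_s$ differs from $n\int_{d_+-sn^{-2/3}}^{d_+}p_{\MP}(x)\,\dd x$ by $O(sn^{-2/3})$ (at worst $O(1)$ when $s$ stays bounded, which is harmless since the statement is applied with $s\to\infty$). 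Then, using $d_+-d_-=4\sqrt\la$ to expand $p_{\MP}(d_+-u)=\frac{\sqrt u}{\pi\la^{3/4}d_+}\bigl(1+O(u)\bigr)$ and integrating over $u\in[0,sn^{-2/3}]$, the leading contribution becomes $\frac{2}{3\pi\la^{3/4}d_+}s^{3/2}$, with an additional $O(s^{5/2}n^{-2/3})$ error from the $O(u)$ correction; this gives \eqref{eqn:bE_Ns}.

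\textbf{Variance.} I would treat LUE first, where the eigenvalue process is determinantal with a self-adjoint projection kernel $K_n$, so that $\Var(\cN_s)=\iint_{A\times A^{c}}K_n(x,y)^2\,\dd x\,\dd y$ with $A=[d_+-sn^{-2/3},\infty)$; this identity uses the reproducing property $\int_{\R}K_n(x,y)^2\,\dd y=K_n(x,x)$. Let $x_0=d_+-sn^{-2/3}$ be the unique boundary point of $A$, and $\varrho_0=np_{\MP}(x_0)\asymp\sqrt s\,n^{2/3}$ the local density there. First, the part of the double integral with $x$ or $y$ at distance $\gg sn^{-2/3}$ from $x_0$ is $O(1)$, by the envelope bound $K_n(x,y)^2\le\pi^{-2}(x-y)^{-2}$ in the bulk together with the exponentially small behavior of $K_n$ beyond $d_+$. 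On the remaining region I would replace $K_n$ by the bulk sine kernel $\frac{\sin(\pi\varrho(w)(x-y))}{\pi(x-y)}$ with local density $\varrho(w)=np_{\MP}(w)$, using a quantitative one-cut sine-kernel approximation for Laguerre ensembles; the averaging of $\sin^2$ to $\tfrac12$ then makes the double integral accumulate a logarithm over the scales between the mean spacing $1/\varrho_0$ and the window width $sn^{-2/3}$. Since $\varrho_0\cdot sn^{-2/3}\asymp s^{3/2}\asymp\bE\cN_s$, this produces $\Var(\cN_s)=\tfrac{1}{2\pi^2}\log(s^{3/2})\,(1+o(1))=\tfrac{3}{4\pi^2}\log s\,(1+o(1))$. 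The LOE case is handled analogously, with the LOE Pfaffian kernel in place of $K_n$ (alternatively via the Forrester--Rains superposition relating LOE and LUE eigenvalue configurations, as used for Lemma~\ref{lem:diff_stieltjes_gK}), the logarithmic rate again being controlled by the local density at $x_0$; one could also appeal to a Gustavsson-type Gaussian limit theorem for near-edge eigenvalue counting functions of Laguerre ensembles, which yields \eqref{eqn:Var_Ns} directly.

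\textbf{Main obstacle.} The variance is the crux. The delicate point is the \emph{sharp} logarithmic constant: one must control the kernel double integral uniformly across all scales from the microscopic spacing $1/\varrho_0$ at $x_0$ up to the macroscopic window width $sn^{-2/3}$ --- exactly the range over which the local density $np_{\MP}$ degenerates from $\asymp\sqrt s\,n^{2/3}$ toward its vanishing edge value --- and pinning down the constant $\tfrac{3}{4\pi^2}$ (equivalently $\tfrac{1}{2\pi^2}\log\bE\cN_s$) requires tracking the $\sin^2$-averaging rather than a mere envelope bound, and keeping the sine-kernel approximation error negligible out to the window boundary (and, for LOE, adapting this to the Pfaffian kernel). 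By contrast the expectation is routine once the mean-density asymptotics are in hand.
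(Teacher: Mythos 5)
Your route is genuinely different from the paper's. The paper does not re-derive these asymptotics at all: for LUE it cites Lemmas~1 and~4 of Su \cite{Su06}, which state precisely \eqref{eqn:bE_Ns} and \eqref{eqn:Var_Ns} for complex Wishart matrices near the soft edge (after matching the $1/n$ versus $1/m$ normalization), and it then transfers both estimates to LOE via the Forrester--Rains identity $\mathrm{even}(\mathrm{LOE}_{n,m}\cup\mathrm{LOE}_{n+1,m+1})=\mathrm{LUE}_{n,m}$ (Theorem~\ref{thm:ForresterRains}), using boundedness of the decimation discrepancy and eigenvalue interlacing for the expectation. What you propose instead --- the one-point intensity computation for the mean, and the identity $\Var(\cN_s)=\iint_{A\times A^c}K_n(x,y)^2\,\dd x\,\dd y$ plus a quantitative sine-kernel approximation for the variance --- is essentially a from-scratch reproof of Su's lemmas (themselves modeled on Gustavsson's GUE argument). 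Your identification of the scales producing the logarithm, and of the count $s^{3/2}$ in the window giving $\frac{1}{2\pi^2}\log(s^{3/2})=\frac{3}{4\pi^2}\log s$, is correct for LUE, but making the sine-kernel replacement uniform over the mesoscopic window near the edge is exactly the "main obstacle" you name, and it is the part the paper avoids by citation. The expectation argument is fine in outline and matches what Su's Lemma~1 delivers.

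One concrete caution on your LOE step: carrying out the variance computation "analogously with the Pfaffian kernel," or invoking a Gustavsson-type CLT at $\beta=1$, yields a number variance \emph{twice} the $\beta=2$ value, i.e.\ $\frac{3}{2\pi^2}\log s$ rather than $\frac{3}{4\pi^2}\log s$; the Forrester--Rains relation forces the same factor, since the variance of the half-sum of two independent LOE counts must reproduce the LUE variance. So you should not expect the identical constant to drop out for LOE, and your claim that the Pfaffian computation gives \eqref{eqn:Var_Ns} verbatim is not right as written. This is immaterial downstream --- the lemma is only ever used through Chebyshev as $\Var(\cN_s)=O(\log s)$ --- but if you pursue your route you should either track the $\beta$-dependence honestly or state the LOE variance only up to a constant.
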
	

The lemma is the analog of Proposition 6.5 from \cite{LandonSosoe}, which bounds the expectation and variance of the counting function in the case of GOE matrices. There, the result was obtainted by applying the corresponding result for GUE matrices by Gustavsson \cite{Gustavsson}, and the relation between eigenvalues of Gaussian orthogonal and unitary ensembles in \cite{ForresterRains}. The proof of \cite{LandonSosoe} works in our case, up to translating from Gaussian to Laguerre ensembles. For completeness, we reproduce it here, first proving for LUE matrices using a result in \cite{Su06}, then extend to LOE matrices using the following result. 
\begin{theorem}[Theorem 5.2 of \cite{ForresterRains}]\label{thm:ForresterRains}
	For independent eigenvalue point processes $\text{LOE}_{n,m}$, $\text{LOE}_{n+1,m+1}$, 
	\[
	\text{even}(\text{LOE}_{n,m}\cup\text{LOE}_{n+1,m+1})=\text{LUE}_{n,m},
	\]
	where the notation $\text{even}(\cdot)$ denotes the set containing only the even numbered elements among the ordered list of elements in the original set.
\end{theorem}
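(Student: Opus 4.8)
\emph{Proof approach.} This is the Laguerre instance of the family of superposition (``decimation'') identities of Forrester and Rains, and the plan is to reproduce their argument in this case. First I would record the three joint eigenvalue densities explicitly. With $a:=(m-n-1)/2$ and weight $w(x)=x^{a}e^{-x/2}$ on $(0,\infty)$, the ensemble $\mathrm{LOE}_{n,m}$ has density $\propto\prod_{i=1}^n w(\lambda_i)\prod_{i<j}|\lambda_i-\lambda_j|$, the ensemble $\mathrm{LOE}_{n+1,m+1}$ has the \emph{same} weight $w$ but $n+1$ points, and $\mathrm{LUE}_{n,m}$ has density $\propto\prod_{i=1}^n \lambda_i^{m-n}e^{-\lambda_i}\prod_{i<j}(\lambda_i-\lambda_j)^2$; the weight $\lambda^{m-n}e^{-\lambda}$ of $\mathrm{LUE}_{n,m}$ is, up to the conventional shift of the Laguerre parameter between the $\beta=1$ and $\beta=2$ ensembles, the square of the $\mathrm{LOE}$ weight. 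So what must be shown is that sorting the $2n+1$ points of the two independent orthogonal ensembles and keeping the $n$ even-indexed ones produces exactly this $\beta=2$ density.

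\emph{Main reduction.} Fix the target (even-indexed) points $z_1<\dots<z_n$; together with the endpoints they split $[0,\infty)$ into the $n+1$ intervals $I_0=[0,z_1),\,I_1=(z_1,z_2),\,\dots,\,I_n=(z_n,\infty)$, and because even and odd indices alternate in the sorted union, exactly one ``odd'' point lies in each $I_k$. The density of the even points is then obtained by (i) summing the product of the two ensemble densities over all $\binom{2n+1}{n}=\sum_k\binom{n}{k}\binom{n+1}{n-k}$ ways of labelling the $2n+1$ sorted points by their source ensemble, and (ii) integrating each odd point over its interval $I_k$. The analytic engine for step (ii) is the Dixon--Anderson (Anderson) interlacing integral: with ``outer points'' $\{0,z_1,\dots,z_n,+\infty\}$ and the $n+1$ odd points as the interlacing ``inner points'', it evaluates the integral of a product of Vandermonde factors in the odd points against the Laguerre weights to a closed form proportional to products of $(z_j-z_i)$ and $w(z_i)$ (the powers being fixed by the Dixon--Anderson parameters) times an explicit ratio of Gamma functions; the endpoints $0$ and $+\infty$ are incorporated through the $|t|^{a}$ and $e^{-t}$ factors of $w$, the latter via the confluent Jacobi$\to$Laguerre limit of the integral.

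\emph{Assembling and normalization.} After the interlacing integrals, the odd points that belonged to the first ensemble and those that belonged to the second each contribute a Vandermonde factor in the $z_i$'s, which must combine with the Vandermonde factors already present among the even points (themselves split between the two ensembles) and, summed over all labellings, collapse to $\prod_{i<j}(z_j-z_i)^2$ together with the correct power of each $w(z_i)$. I would establish this collapse by induction on $n$: the cases $n=0,1$ are a direct computation, and the inductive step peels off the largest even point $z_n$ and the single odd point in $I_n=(z_n,\infty)$ — an interlacing integral over one new variable — reducing the identity to its version for $n-1$. The overall constant is pinned down at the end by the Laguerre case of the Selberg integral, so that the resulting density is literally that of $\mathrm{LUE}_{n,m}$ and not merely proportional to it.

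\emph{Expected main obstacle.} The crux is precisely this combinatorial collapse: showing that the sum over the source-labellings, once each summand has been reduced by Dixon--Anderson, telescopes to the clean squared-Vandermonde form. This uses in an essential way the ``classical weight'' structure of the Laguerre weight (that $w$ integrated against a linear factor over an interval reproduces boundary data of $w$, the integration-by-parts identity underlying Dixon--Anderson), and the bookkeeping at the $+\infty$ endpoint, together with the parameter shift needed to land exactly on $\mathrm{LUE}_{n,m}$, is the most delicate point. A possible alternative, heavier in algebra but avoiding the labelling sum, is to use the Pfaffian representation of the $\beta=1$ correlation functions via a $2\times2$ matrix kernel: one writes the correlation functions of the even points of the superposition in terms of these $2\times2$ Pfaffians and shows they reduce to a single scalar determinant, which directly identifies the even-point process as the determinantal ($\beta=2$) Laguerre ensemble; there the difficulty is the kernel algebra rather than a combinatorial identity.
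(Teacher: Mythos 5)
The paper does not prove this statement at all: it is quoted verbatim as Theorem 5.2 of \cite{ForresterRains} and used as a black box in the proof of Lemma \ref{lem:counting}, so there is no internal proof to compare your attempt against. What you have written is, in effect, a reconstruction of the original Forrester--Rains superposition/decimation argument, and the overall strategy --- write the two $\beta=1$ joint densities with the common weight $x^{(m-n-1)/2}e^{-x/2}$, fix the even-indexed points, integrate the $n+1$ interlacing odd points over the gaps via a Dixon--Anderson type identity, and sum over the assignments of the $2n+1$ points to the two ensembles --- is the right family of ideas and is essentially how the superposition identities for classical weights are established in the literature.

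As it stands, however, your text is a plan rather than a proof, and the steps you defer are precisely the content of the theorem. First, the collapse of the labelling sum to $\prod_{i<j}(z_j-z_i)^2$ times the correct weight is only asserted; note that the square of the LOE weight is $x^{m-n-1}e^{-x}$ while the $\mathrm{LUE}_{n,m}$ weight is $x^{m-n}e^{-x}$, so an extra factor $\prod_i z_i$ must be produced by the interlacing integrals (together with the contributions of the end gaps $[0,z_1)$ and $(z_n,\infty)$); calling this a ``conventional shift'' understates it --- it is an honest discrepancy the computation must generate, and verifying that bookkeeping, with the Gamma-factor constants, is where the proof lives. Second, the proposed induction is not clearly well-posed: after integrating out only the odd point in $(z_n,\infty)$ and deleting $z_n$, the remaining configuration is not distributed as a union of two smaller LOEs, because the number of points assigned to each source ensemble varies with the labelling term; to make the induction work you would have to recast the claim as a purely algebraic identity about the labelled sum of integrals (with all constants tracked) and prove that identity, which your sketch does not do. If your intention is simply to cite Forrester--Rains, as the paper does, that is fine; if the intention is a self-contained proof, these two points are genuine gaps that must be filled.
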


\begin{proof}[Proof of Lemma \ref{lem:counting}] 
	In the case of LUE matrix, the lemma follows from the results of Su in \cite{Su06}. Namely, the first inequality holds by Lemma 1 of \cite{Su06}, which states that 
	\begin{align*}
	\bE\#\{j:\mu_j\in [t_n,\infty)\}
	&=n\int_{t_n}^{\beta_{n,m}}p_{\MP}(x)dx\\
	&= \frac{\sqrt{\beta_{n,m}-\alpha_{n,m}}}{3\pi \beta_{n,m}}n(\beta_{n,m}-t_n)^{3/2}+ O(n(\beta_{n,m}-t_n)^{5/2}).
	\end{align*}
	As the matrix in \cite{Su06} is scaled by $1/n$ instead of $1/m$ as in this paper, our interval of interest $[d_+-sn^{-2/3}, \infty)$ corresponds to $t_n=\beta_{n,m}-\frac{s}\la n^{-2/3}$ in \cite{Su06}. 
	Meanwhile, the inequality for variance directly follows from Lemma 4 there.
	
	We now consider the case of LOE matrix. Let $M^{(1)}_{n,m}$, $M^{(1)}_{n+1,m+1}$ be independent LOE matrices of size $n\times m$ and $(n+1)\times(m+1)$, respectively, and let $M^{(2)}_{n,m}$ be a LUE matrix of size $n\times m$. Set $X^{(2)}_{n,m}$ to be the number of eigenvalues of $M^{(2)}_{n,m}$ that are at least $m\left(d_+-sn^{-\frac23}\right)$. We define $X^{(1)}_{n,m}$ and $X^{(1)}_{n+1,m+1}$ similarly, for the two LOE matrices.
	Theorem \ref{thm:ForresterRains} implies that there is a random variable $Y$ and a random variable $Z\in[0,1]$ such that 
	\[
	X^{(2)}_{n,m}\stackrel{d}{=}Y, \quad Y-Z=\frac12\left(X^{(1)}_{n,m}+X^{(1)}_{n+1,m+1}\right).
	\]
	The estimates \eqref{eqn:bE_Ns} and \eqref{eqn:Var_Ns} holds for $Y$ by the previous paragraph. The estimate \eqref{eqn:Var_Ns} for $Y$, together with boundedness of $Z$ and the fact $X^{(1)}_{n,m}$ and $X^{(1)}_{n+1,m+1}$ are independent, imply that \eqref{eqn:Var_Ns} holds for the $X^{(1)}$'s as well. Now, 
	\begin{equation}\label{eqn:bE}
		\bE[X^{(2)}_{n,m}]=\frac12\left(\bE[X^{(1)}_{n,m}]+\bE[X^{(1)}_{n+1,m+1}]\right)+c, \quad \text{for some } c\in[0,1].
	\end{equation}
	From the tridiagonal form of Laguerre ensembles, the top left $n\times n$ minor of $M^{(1)}_{n+1,m+1}$ has the same distribution as $M^{(1)}_{n,m}$. The eigenvalues of this minor interlace those of $M^{(1)}_{n+1,m+1}$, which implies there is a random variable $\tilde{X}^{(1)}_{n,m}$ with the same distribution as $X^{(1)}_{n,m}$ and satisfies
	\[
	|\tilde{X}^{(1)}_{n,m}-X^{(1)}_{n+1,m+1}|\leq 1.
	\]
	We then obtain \eqref{eqn:bE_Ns} for $X^{(1)}_{n,m}$ and $X^{(1)}_{n+1,m+1}$, using \eqref{eqn:bE_Ns} for $X^{(2)}_{n,m}$, \eqref{eqn:bE} and the above inequality. 
\end{proof}
 We now have the needed tools to prove Lemma \ref{lem:Aj}.

\begin{proof}[Proof of Lemma \ref{lem:Aj}]
For $j=1,\dots, n^{2/5}$ and $t>0$, by definition,
	\begin{equation}\label{eqn:PAj_from_PNT}
		\begin{split}
			\bP(A_j\geq t)=\bP\left(\mu_j\geq d_+-\left(\left(C^\star j\right)^{2/3}-t\right)n^{-2/3}\right)=	\bP(\cN_T\geq j),	
		\end{split}
	\end{equation}
	where $C^\star=\tfrac32 \pi\la^{3/4}d_+$ and $T=T(j,t):=\left(C^\star j\right)^{2/3}-t$. If $\bE\cN_T<j$, then 
	\beq\label{eqn:NT_Markov}
	\bP(\cN_T\geq j)\leq\bP(|\cN_T-\bE\cN_T|\geq j-\bE\cN_T)\leq \frac{\Var\cN_T}{( j-\bE\cN_T)^2}.
	\eeq
In order to make use of this inequality, we need to know what values of $t$ (depending on $j$) satisfy $\bE\cN_T<j$. By Lemma \ref{lem:counting}, there exist $K,c_0>0$ such that, for any $c_1>0$ and any sufficiently large $n$, if $K\leq j\leq n^{2/5}$ and $0<t<(C^\star j)^{2/3}-c_1$, then
\beq\label{eqn:j-EN_bound}\begin{split}
j-\EE\cN_T&\geq j-\tfrac{1}{C^\star}((C^\star j)^{2/3}-t)^{3/2}-c_0j^{5/3}n^{-2/3}\\
&\geq j-j\left(1-\frac{t}{(C^\star j)^{2/3}}\right)^{3/2}-c_0\; \geq\; \frac{tj^{1/3}}{(C^\star)^{2/3}}-c_0.
\end{split}\eeq
In particular, this means that $\EE\cN_{T}<j$ is satisfied (along with the conditions of Lemma \ref{lem:counting}) when $c_0(C^\star)^{2/3} j^{-1/3}<t<(C^\star j)^{2/3}-c_1$ and $K\leq j\leq n^{2/5}$ (note that one should choose $K>c_0$).
Thus, for $t,j$ satisfying these conditions, we combine \eqref{eqn:PAj_from_PNT}-\eqref{eqn:j-EN_bound} with the variance bound from Lemma \ref{lem:counting} to conclude that, for some $c_2>0$ and sufficiently large $n$,
\beq
\bP(A_j\geq t)\leq \frac{c_2 \log j}{((C^\star)^{-2/3}tj^{1/3}-c_0)^2}.
\eeq
Next, taking $T'=\left(C^\star j\right)^{2/3}+t$ we can follow the same argument to bound $\PP(A_j<-t)$.  This time, we find that $\bE\cN_{T'}\geq j$ is satisfied (along with the conditions of Lemma \ref{lem:counting}) when $c_0(C^\star)^{2/3} j^{-1/3}<t\ll n^{2/3}$ and $K\leq j\leq n^{2/5}$.  Then, for $t,j$ satisfying these conditions,  and for some $c_3>0$ with sufficiently large $n$, 
	\beq
	\begin{split}
		\bP(A_j\leq -t)
		&=\bP(\mu_j <d_+-T'n^{-2/3})\\
		&\leq \bP(|\cN_{T'}-\bE\cN_{T'}|> \bE\cN_{T'}-j)\leq c_3\frac{\log j+\log(1+t)}{((C^\star)^{-2/3}tj^{1/3}-c_0)^2}.
	\end{split}
	\eeq
	Thus, for $j,t$ satisfying $K\leq j\leq n^{2/5}$ and $c_0(C^\star)^{2/3} j^{-1/3}<t<(C^\star j)^{2/3}-c_1$, we have	
\beqq
	\bP(|A_j|\geq t)=O\left(\frac{\log j+\log(1+t)}{((C^\star)^{-2/3}tj^{1/3}-c_0)^2}\right).
	\eeqq
Taking $t=\la j^{2/3}$, then for all $k\geq K$,
	\beqq
	\bP\left(\bigcup_{k\leq j\leq n^{2/5}}\left\{|A_j|\geq \la j^{2/3}\right\}\right)=O\left(\sum_{j=k}^{n^{2/5}}\frac{\log j}{j^2}\right)=O\left(\frac{\log k}{k}\right).
	\eeqq	
This bound holds uniformly for $K\leq k\leq n^{2/5}$.  Taking $k\to\infty$ (for example $k=n^{1/5}$), we obtain \eqref{eqn:probAj}.
	
It remains to prove the second part of the lemma.  Set $t^*=c_0(C^\star)^{2/3}j^{-1/3}$.  For $K\leq j\leq n^{2/5}$, we have 
	\begin{align*}
		\bE\left[\mathbbm{1}_{\{n^{2/3}(\mu_j-d_+)\leq-C\}}\left|A_j\right|\right] 
		&\leq\int_0^{\infty}\bP(A_j\geq t)\dd t+\int_0^{\infty}\bP(-A_j\leq -t)\dd t\\
		&\leq \left(t^*+\int_{t^*}^{(C^\star j)^{2/3}-C}\bP(A_j>t)\dd t+0\right)+\left(t^*+\int_{t^*}^{n^{\frac23-\delta}}\bP(-A_j\leq -t)\dd t+o(n^{-1})\right)\\
		&\leq 2t^*+C' \int_{t^*}^{\infty}\frac{\log j +\log(1+t)}{((C^\star)^{-2/3}tj^{1/3}-c_0)^2}\dd t\\
		&\leq 2t^*+C''\frac{\log j }{j^{1/3}} = O\left(\frac{\log j}{j^{1/3}}\right),
	\end{align*} 
where, in the second line, we obtained $\int_{(C^\star j)^{2/3}-C}^\infty\PP(A_j\geq t)\dd t=0$ from the indicator in the expectation, and $\int_{n^{\frac23-\delta}}^\infty\PP(-A_j\leq-t)\dd t=o(n^{-1})$ from eigenvalue rigidity.
	\end{proof}

\subsection{Proof of Lemma \ref{lem:diff_stieltjes_gK}}
We observe that 
\beq
S_2:=\frac1n\sum_{i=K+1}^n\frac1{(z-\mu_j)^l}-\int_{d_-}^{g_K} \frac{1}{(z-y)^l}p_{\MP}(y)\dd y
=\sum_{i>K}\int_{g_i}^{g_{i-1}}\frac{(z-y)^l-(z-\mu_i)^l}{(z-\mu_i)^l(z-y)^l}p_{\MP}(y)\dd y.
\eeq
The modulus of this sum satisfies
		\beqq\label{eq:S2_bound}\begin{split}
		|S_2|
		&\leq\sum_{i>K}\int_{g_i}^{g_{i-1}}\frac{l\max\{|z-y|,|z-\mu_i|\}^{l-1} |\mu_i-y|}{|z-\mu_i|^l|z-y|^l}p_{\MP}(y)\dd y\\
		&\leq\sum_{i>K}\int_{g_i}^{g_{i-1}}\frac{l\cdot |(\mu_i-g_i)+(g_i-y)|}{\min\{|z-\mu_i|,|z-y|\}^{l+1}}p_{\MP}(y)\dd y.
		\end{split}\eeqq	
		We now split the sum as $S_{21}+S_{22}$, summing over $K\leq i\leq n^{2/5}$ and $i>n^{2/5}$, respectively. First, consider $K\leq i\leq n^{2/5}$. By Lemma \ref{lem:Aj}, given $\e>0$, on the event $\event$, there exists $c>0$ such that, for sufficiently large $n$, $n^{2/3}(d_+-\mu_i)\geq ci^{2/3}$ uniformly for all $i$ in this range. Combining with the facts that $\re z\geq d_+$ and $d_+\geq \mu_i$ for $i\geq K$ on $\event$, we have
		\begin{equation}\label{eqn:x-eval}
			n^{2/3}|z-\mu_i|\geq \max\{n^{2/3}|z-d_+|,\; ci^{2/3}\}.
		\end{equation}
		Meanwhile, there exists $C>0$, independent of $n$, such that $C^{-1}i^{2/3}\leq n^{2/3}(d_+-g_i)\leq Ci^{2/3}$ for all $i$ (see, for example, \cite{BaikLeeSSK}). Thus, \eqref{eqn:x-eval} also holds for $n^{2/3}|z-y|$, uniformly for $y\in(g_i,g_{i-1})$. 
		For the numerator, we have $n^{2/3}(y-g_i) \leq n^{2/3}(g_{i-1}-g_i)\leq ci^{-1/3}$, using
		\beqq
		\frac{1}{n} = \int_{g_i}^{g_{i-1}}p_{\MP}(y)\dd y\geq c\sqrt{d_+-g_i}(g_{i-1}-g_i).
		\eeqq
		By \eqref{eqn:classical_loc}, $n^{2/3}(\mu_i-g_i)= A_i+O\left(\frac{i^{4/3}}{n^{2/3}}\right)$, where $A_i$ is given in \eqref{eqn:Aj}.  
		The term $i^{-1/3}$ is of larger order than $n^{-2/3}i^{4/3}$ when $K\leq i \leq n^{2/5}$, and they have the same order when $i=\Theta(n^{2/5})$. Thus,
		\beq
			\mathbbm{1}_{\event}\frac{l\cdot|(\mu_i-g_i)+(g_i-y)|}{\min\{|z-\mu_i|,|z-y|\}^{l+1}}
			\leq Cln^{\frac23 l}\frac{i^{-1/3}+|A_i|}{i^{\frac23(l+1)}+(n^{\frac23}|z-d_+|)^{l+1}}, \quad K\leq i\leq n^{2/5}.
		\eeq
		By Lemma \ref{lem:Aj}, 
		\beq\label{eq:S12_bound}\begin{split}
			\bE\left[\mathbbm{1}_{\event}|S_{21}|\right]
			&\leq Cln^{\frac23 l-1}\sum_{K\leq i\leq n^{2/5}}\frac{i^{-1/3}+\bE\left[\mathbbm{1}_{\event}|A_i|\right]}{i^{\frac23(l+1)}+(n^{\frac23}|z-d_+|)^{l+1}}
			\\&
			\leq C'ln^{\frac23 l-1}\sum_{K\leq i\leq n^{2/5}}\frac{i^{-1/3}\log i}{i^{\frac23(l+1)}+(n^{\frac23}|z-d_+|)^{l+1}}.
			\end{split}\eeq
Next, we consider two separate cases and conclude that, for some $C''>0,$
\beqq
\bE\left[\mathbbm{1}_{\event}|S_{21}|\right]\leq\begin{cases} C''n^{\frac23 l-1}\frac{\log(n^{2/3}|z-d_+|)}{(n^{2/3}|z-d_+|)^{l}} 
& K^{2/3}<n^{2/3}|z-d_+|,\\
C''n^{\frac23 l-1} & K^{2/3}\geq n^{2/3}|z-d_+|.\end{cases}
\eeqq
The bound in the first case is obtained by evaluating the right hand side of \eqref{eq:S12_bound} separately for $i^{2/3}<n^{2/3}|z-d_+|$ and $i^{2/3}>n^{2/3}|z-d_+|$. The bound in the second case follows from the convergence of $\sum_{i=K}^\infty i^{-\frac23 l-1}\log i$ for all $l\geq1$.  Thus, we obtain
		\beq
		\bE\left[\mathbbm{1}_{\event}|S_{21}|\right]=O\left(n^{\frac23 l-1}\cdot\min\left\{\left|\frac{\log(n^{2/3}|z-d_+|)}{(n^{2/3}|z-d_+|)^l}\right|,\;1\right\}\right).
		\eeq
		
		Lastly, for $S_{22}$, we bound the numerator (which is $l\cdot|\mu_i-y|$) using rigidity and bound $n^{2/3}|z-y|\geq ci^{2/3}$ by \eqref{eqn:x-eval}, while $|z-\mu_i| \geq\max\{ |z-d_+|,\;d_+-\mu_i\}$, where, with high probability,
		\beqq
		d_+-\mu_i \geq \begin{cases}
			c>0, &\quad i>n/2,\\
			ci^{2/3}n^{-2/3}, &\quad n^{2/5}<i<n/2, \text{ using rigidity with $\delta<\frac2{15}$ and \eqref{eqn:classical_loc}}.
		\end{cases}
		\eeqq
		We obtain
		\beq\begin{split}
			\mathbbm{1}_{\event}|S_{22}|
			&\leq Cln^{\frac23 l-1+\delta}\sum_{i>n^{2/5}}\frac{1}{i^{\frac23(l+1)}\min\{i^{1/3}, (n+1-i)^{1/3}\}}\\
			&\leq C'ln^{\frac23 l-1+\delta}\left(\sum_{i= n^{2/5}}^{n/2}i^{-\frac23 l-1}+\sum_{i>n/2}\frac{1}{n^{\frac23 (l+1)}(n+1-i)^{1/3}}\right)= O(n^{\frac23 l-1}\cdot n^{-4l/15+\delta}),
		\end{split}\eeq
		which is $o(\bE\left[\mathbbm{1}_{\event}|S_{21}|\right])$, provided $\delta<4l/15$.
		This completes our proof of Lemma \ref{lem:diff_stieltjes_gK}.

\section{Appendix: Section 5 proofs}\label{sec:appendix2}
In this section, we provide our proofs of Lemmas \ref{lem:Fi} and \ref{lem:ratio}. The proof of Lemma \ref{lem:Fi} requires asymptotic bounds on $|\rho_j^\pm|$ when $\gamma=d_+$ and a few related quantities, which we state in the following two lemmas. Similar results were developed for the case $\g>d_+$ in Lemmas 2.7 and 2.8 in \cite{CWL_CLT}.

\begin{lemma}\label{lem:rho}
	The following asymptotic bounds hold, uniformly in $i\geq 2$ (where $i$ can be fixed or $n$-dependent):
	\begin{enumerate}[(i)]
		\item $|\rho_i^+|=\Theta(n)$, $|\rho_i^-|=O(n)$,
		\item $|\rho_i^+|-|\rho_i^-|=\Theta(n^{1/2}(n-i+1)^{1/2})$,
		\item $|\rho_i^-|-|\rho_{i-1}^-|=O((\frac{n}{n-i+1})^{1/2})$ and $|\rho_{i-1}^+|-|\rho_i^+|=O((\frac{n}{n-i+1})^{1/2})$.
	\end{enumerate}
\end{lemma}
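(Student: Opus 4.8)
The plan is to reduce everything to explicit estimates on the characteristic roots, following the scheme of Lemmas 2.7--2.8 of \cite{CWL_CLT} (which treat the case $\g>d_+$), now specialized to $\g=d_+$. Since $d_+=(1+\sqrt\la)^2$ and $n/m=\la+O(n^{-1})$, one has $\la m=n+O(1)$ and $\sqrt\la\,m=\sqrt{mn}+O(1)$, so that, writing $\g m=d_+ m$,
\beq
D_i:=\g m-(m-n+2i-1)=2\sqrt\la\,m+2(n-i)+O(1),\qquad E_i:=\sqrt{D_i^2-4(m-n+i-1)(i-1)},
\eeq
and $\rho_i^\pm=-\tfrac12(D_i\pm E_i)$. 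As $(m-n+i-1)(i-1)\ge0$ for $i\ge1$, the radicand is nonnegative and $0\le E_i\le D_i$; hence $\rho_i^\pm\le0$, $|\rho_i^+|=\tfrac12(D_i+E_i)$ and $|\rho_i^-|=\tfrac12(D_i-E_i)$.

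First I would show that $D_i=\Theta(n)$ uniformly over $2\le i\le n$: the term $2\sqrt\la\,m=\Theta(n)$ dominates while $0\le2(n-i)\le2n$, so $D_i$ lies between constant multiples of $n$ even when $i$ is close to $n$. Part (i) then follows at once from $\tfrac12D_i\le|\rho_i^+|\le D_i$ and $0\le|\rho_i^-|\le\tfrac12D_i$.

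The heart of the argument is the observation that $E_i^2$ is an affine function of $i$: in $D_i^2$ and in $4(m-n+i-1)(i-1)$ the leading $4i^2$ terms cancel. A short computation should yield the exact identity $E_{i-1}^2-E_i^2=4\g m-4=\Theta(n)$, independent of $i$, together with $E_n^2=D_n^2-4(m-1)(n-1)=\Theta(n)$ at the right endpoint; in particular $E_i^2>0$ throughout, so the roots remain real at the spectral edge. Telescoping gives $E_i^2=E_n^2+(n-i)(4\g m-4)=\Theta\!\big(n(n-i+1)\big)$ uniformly in $i$, and part (ii) is then just $|\rho_i^+|-|\rho_i^-|=E_i$. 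For part (iii) I would use the exact relation $D_{i-1}-D_i=2$ to write $|\rho_{i-1}^+|-|\rho_i^+|=1+\tfrac12(E_{i-1}-E_i)$ and $|\rho_i^-|-|\rho_{i-1}^-|=-1+\tfrac12(E_{i-1}-E_i)$, and then bound
\beq
E_{i-1}-E_i=\frac{E_{i-1}^2-E_i^2}{E_{i-1}+E_i}=\frac{4\g m-4}{E_{i-1}+E_i}=\Theta\!\left(\Big(\tfrac{n}{n-i+1}\Big)^{1/2}\right),
\eeq
using $E_{i-1},E_i=\Theta(n^{1/2}(n-i+1)^{1/2})$ together with $n-i+2\le2(n-i+1)$; since $(n/(n-i+1))^{1/2}\ge1$ the additive $\pm1$ is absorbed, giving the claimed $O\!\big((n/(n-i+1))^{1/2}\big)$ bounds.

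I do not expect a genuine obstacle: everything is elementary algebra. The one point requiring care is uniformity as $i\to n$, where $n-i+1=\Theta(1)$, so that one must check the $2\sqrt\la\,m$ term keeps both $D_i$ and $E_n^2$ of order $n$ (equivalently, that $\rho_i^\pm$ stay real and of the stated sizes at the edge). The $O(1)$ discrepancies coming from $n/m=\la+O(n^{-1})$ are harmless for all the $\Theta$ and $O$ assertions, and the two differences that drive part (iii), $D_{i-1}-D_i=2$ and $E_{i-1}^2-E_i^2=4\g m-4$, are exactly constant, so no error accumulates.
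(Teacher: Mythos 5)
Your proof is correct and follows essentially the same route as the paper's: elementary algebra on the explicit roots, with the difference-of-squares identity $E_{i-1}^2-E_i^2=4d_+m-4$ driving part (iii) exactly as in the paper. The only (cosmetic) differences are that the paper gets part (i) from monotonicity of $|\rho_i^\pm|$ in $i$ plus endpoint evaluation rather than your sandwich $\tfrac12 D_i\le|\rho_i^+|\le D_i$, and writes the exact closed form $E_i^2=4\bigl(\sqrt{mn}+1+(m+n+2\sqrt{mn}-1)(n-i+1)\bigr)$ for part (ii) instead of telescoping the affine function.
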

\begin{proof}
	To show (i) for $|\rho_i^-|$, observe that $|\rho_i^-|$ is increasing in $i$, and
	\begin{align*}
		|\rho_n^-|=\frac12\left(2\sqrt{mn}+1-2\sqrt{\sqrt{mn}+n+m-\tfrac34}+O(1)\right)=O(n).
	\end{align*}
	Similarly, part (i) for $|\rho_i^+|$ holds since $|\rho_i^+|$ is decreasing in $i$, $|\rho_2^+|<2\sqrt{mn}+2n=\Theta(n)$, and
	\beqq
	|\rho_n^+|>\frac12\left(d_+m-(m+n-1)\right)=\frac12\left(2\sqrt{mn}+1\right)=\Theta(n).
	\eeqq
For part (ii) we have
	\begin{align*}
	|\rho_i^+|-|\rho_i^-|&=\sqrt{(2\sqrt{mn}-1+2(n-i+1))^2-4(m-(n-i+1))(n-(n-i+1))}\\
	&=2\sqrt{\sqrt{mn}+1+(m+n+2\sqrt{mn}-1)(n-i+1)}=\Theta(n^{1/2}(n-i+1)^{1/2}).
	\end{align*}
Next, we verify (iii) by showing that $|\rho_i^-|-|\rho_{i-1}^-|+ |\rho_{i-1}^+|-|\rho_i^+|=O((\frac{n}{n-i+1})^{1/2})$. Indeed, the left hand side can be written as 
	\begin{align*}
		\left(|\rho_{i-1}^+|-|\rho_{i-1}^-|\right)- \left(|\rho_i^+|-|\rho_i^-|\right)=\frac{\left(|\rho_{i-1}^+|-|\rho_{i-1}^-|\right)^2-\left(|\rho_i^+|-|\rho_i^-|\right)^2}{|\rho_{i-1}^+|-|\rho_{i-1}^-|+ |\rho_i^+|-|\rho_i^-|},
	\end{align*}
	where numerator of the last ratio simplifies to $4d_+m-4=\Theta(n)$, and the denominator is $\Theta(n^{1/2}(n-i+1)^{1/2})$ by part (ii).  
	\end{proof}

\begin{lemma}\label{lem:1-wi}
	There exist constants $0<C_1<C_2$ such that, for sufficiently large $n$, and $2\leq i\leq n$,
	\beqq
	C_1\left(\frac{n-i+1}{n}\right)^{1/2}\leq 1-\w_i\leq C_2\left(\frac{n-i+1}{n}\right)^{1/2}.
	\eeqq
\end{lemma}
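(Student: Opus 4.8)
The plan is to reduce $1-\w_i$ to an explicit ratio of the characteristic roots $\rho_j^\pm$ and then quote the asymptotics in Lemma \ref{lem:rho}. From the definitions \eqref{eq:xi_def} and \eqref{eq:tau_delta_def},
\[
\w_i=\tau_{i-1}\d_i=\frac{(m-n+i-1)(i-1)}{|\rho_{i-1}^+|\,|\rho_i^+|}.
\]
The first step is the algebraic identity that $(m-n+i-1)(i-1)$ is precisely the product of the two roots at index $i$: with $\g=d_+$ and $P_i:=d_+m-(m-n+2i-1)$, formula \eqref{eq:rho_def} gives $\rho_i^+\rho_i^-=\tfrac14\big(P_i^2-(P_i^2-4(m-n+i-1)(i-1))\big)=(m-n+i-1)(i-1)$. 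Since $P_i>0$ for $i\le n$ (indeed $P_i\ge 2\sqrt{mn}+1$, using $d_+m=m+n+2\sqrt{mn}$) and the quantity under the square root in \eqref{eq:rho_def} equals $(|\rho_i^+|-|\rho_i^-|)^2\ge 0$, which is $\Theta(n(n-i+1))$ by Lemma \ref{lem:rho}(ii), both roots $\rho_i^\pm$ are negative; hence $|\rho_i^+|\,|\rho_i^-|=(m-n+i-1)(i-1)$. Substituting, the mixed index cancels and $\w_i=|\rho_i^-|/|\rho_{i-1}^+|$, so
\[
1-\w_i=\frac{|\rho_{i-1}^+|-|\rho_i^-|}{|\rho_{i-1}^+|}.
\]

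It then suffices to prove $|\rho_{i-1}^+|-|\rho_i^-|=\Theta\!\big(n^{1/2}(n-i+1)^{1/2}\big)$, since $|\rho_{i-1}^+|=\Theta(n)$ by Lemma \ref{lem:rho}(i). For the lower bound I would use that $|\rho_j^+|$ is nonincreasing in $j$ (immediate from \eqref{eq:rho_def}, and noted in the proof of Lemma \ref{lem:rho}): then $|\rho_{i-1}^+|\ge|\rho_i^+|$, so $|\rho_{i-1}^+|-|\rho_i^-|\ge|\rho_i^+|-|\rho_i^-|\ge c_1\,n^{1/2}(n-i+1)^{1/2}$ by Lemma \ref{lem:rho}(ii). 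For the upper bound I would split $|\rho_{i-1}^+|-|\rho_i^-|=\big(|\rho_{i-1}^+|-|\rho_i^+|\big)+\big(|\rho_i^+|-|\rho_i^-|\big)$ and bound the first term by $O\!\big((n/(n-i+1))^{1/2}\big)$ via Lemma \ref{lem:rho}(iii) and the second by $O\!\big(n^{1/2}(n-i+1)^{1/2}\big)$ via Lemma \ref{lem:rho}(ii); because $n-i+1\ge 1$ gives $(n/(n-i+1))^{1/2}\le n^{1/2}(n-i+1)^{1/2}$, the whole difference is $O\!\big(n^{1/2}(n-i+1)^{1/2}\big)$. Dividing by $|\rho_{i-1}^+|=\Theta(n)$ yields $1-\w_i=\Theta\!\big(((n-i+1)/n)^{1/2}\big)$, which is the claim.

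The only point needing a separate word is the boundary value $i=2$, where the argument refers to $|\rho_1^\pm|$ while Lemma \ref{lem:rho} is stated for $i\ge 2$. Here $|\rho_1^-|=0$ and $|\rho_1^+|=P_1=d_+m-(m-n+1)=\Theta(n)$, so the relevant estimates persist, and one checks directly that $\w_2=|\rho_2^-|/|\rho_1^+|=O(1/n)$, giving $1-\w_2=\Theta(1)=\Theta\big(((n-1)/n)^{1/2}\big)$. I do not anticipate a genuine obstacle: the argument is short once Lemma \ref{lem:rho} is available, and the only care required is the sign bookkeeping — verifying that $\rho_i^\pm<0$, so that the $|\rho_i^\pm|$ are the positive roots of $x^2-P_ix+(m-n+i-1)(i-1)=0$ and Vieta's formulas apply to their absolute values.
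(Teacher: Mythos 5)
Your proof is correct and follows essentially the same route as the paper: both reduce to the identity $\w_i=|\rho_i^-|/|\rho_{i-1}^+|$ and then invoke Lemma \ref{lem:rho}, the only differences being that you derive that identity from Vieta's formulas (the paper simply recalls it) and that you estimate $|\rho_{i-1}^+|-|\rho_i^-|$ by an additive split via $|\rho_i^+|$ using parts (ii)--(iii), whereas the paper sandwiches $\w_i$ between the same-index ratios $|\rho_{i-1}^-|/|\rho_{i-1}^+|$ and $|\rho_i^-|/|\rho_i^+|$ using monotonicity of the roots and then applies (i)--(ii). Your separate check of the boundary case $i=2$ is a sensible added precaution, and no step fails.
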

\begin{proof}
We recall that $\w_i=\frac{|\rho_i^-|}{|\rho_{i-1}^+|}$.  Using the bounds $\frac{|\rho_{i-1}^-|}{|\rho_{i-1}^+|}<\w_i<\frac{|\rho_i^-|}{|\rho_i^+|}$ we obtain
\beqq
\frac{|\rho_i^+|-|\rho_i^-|}{|\rho_i^+|}<1-\w_i<\frac{|\rho_{i-1}^+|-|\rho_{i-1}^-|}{|\rho_{i-1}^+|}.
\eeqq
Using Lemma \ref{lem:rho}, the left and right sides of this inequality are both $\Theta((\frac{n-i+1}{n})^{1/2})$, uniformly in $i$, which gives the desired bounds.
\end{proof}

\subsection{Proof of Lemma \ref{lem:Fi}}
Using $\frac{1}{1+F_{i-1}}=1-\frac{F_{i-1}}{1+F_{i-1}}=1-F_{i-1}+\frac{F_{i-1}^2}{1+F_{i-1}}$ and the notations in \eqref{eq:alpha_beta_def} and \eqref{eq:tau_delta_def}, we have
$F_1=\frac{\mu_1-d_+}{|\rho_1^+|/m}-\a_1$,
and for $j=2,\dots, n-1$,
\begin{equation*}
	F_j = -1+\frac{\mu_1}{|\rho_j^+|/m}-\left(\a_j+\b_j+\tau_j+\d_j\right)-(\a_{j-1}+\tau_{j-1})(\b_j+\d_j)\left(1-F_{j-1}+\frac{F_{j-1}^2}{1+F_{j-1}}\right).
\end{equation*}
As $1+\tau_j+\d_j=\frac{d_+ m}{|\rho_j^+|}-\frac{|\rho_j^-|}{|\rho_j^+|}$, we re-arrange the terms to have 
\begin{equation}
	F_j = \eta_j- \xi_j +\w_j F_{j-1}+\phi_j, 
\end{equation}
where we define
\begin{align}
	\eta_j&=\frac{\mu_1-d_+}{|\rho_j^+|/m}
	,\\
	\phi_j &=-\w_j+\frac{|\rho_j^-|}{|\rho_j^+|}-\a_{j-1}\b_j +(\a_{j-1}\b_j+\a_{j-1}\d_j+\tau_{j-1}\b_j)\frac{F_{j-1}}{1+F_{j-1}}-\w_j\frac{F_{j-1}^2}{1+F_{j-1}}\label{eqn:vare_i},
\end{align}
and $\xi_j$ is given in \eqref{eq:xi_def}. Note that, by Lemma \ref{lem:rho},
\begin{equation}\label{eqn:wi_rho}
	0<\w_j-\frac{|\rho_j^-|}{|\rho_j^+|}= \frac{|\rho_j^-|}{|\rho_j^+|}\frac{|\rho_{j-1}^+|-|\rho_j^+|}{|\rho_{j-1}^+|}= O(n^{-\frac12}(n-j+1)^{-\frac12}).
\end{equation}
Expanding the recurrence iteratively, we get 
\begin{equation}\label{eqn:Fi}
	\begin{split}
		F_j
		=\w_j\dots\w_2F_1&+\left(\eta_j+\w_j\eta_{j-1}+\dots+\w_j\dots\w_3\eta_2\right)\\
		&-\left(\xi_j+\w_j\xi_{j-1}+\dots+\w_j\dots\w_3\xi_2 \right)\\
		&+\left(\phi_j+\w_j\phi_{j-1}+\dots+\w_j\dots\w_3\phi_2\right).
	\end{split}
\end{equation}

On the event $\cF^{(3)}_{s,t}$, which holds with probability $1-\e/6$ for some $s,t$ depending on $\e$, $|\mu_1-d_+|\leq tn^{-\frac23}$. As $|\rho_i^+|/m=\Theta(1)$ for all $i\leq n$, we obtain 
\[
\max_{j\leq n }|\eta_j|=O\left(n^{-\frac23}\right).
\]
We recall that $\a_j,\b_j$ are the centered and scaled version of $\chi$-squared random variables $a_j^2, b_{j-1}^2$, respectively and as such, they can be bounded using concentration of sub-gamma random variables (see, e.g. Theorem 2.3 of \cite{BLM_concentration}).  In particular, there exists some constant $c$ such that, for all $j\leq n$ and for all $t>0$, 
\beq
\PP(|\a_j|>c(\sqrt{\tfrac{t}{n}}+\tfrac tn))\leq2e^{-t}
\eeq
and likewise for each $\b_j$, so we conclude that, for any $\e$, with probability at least $1-\e/6$,
\begin{equation}\label{eqn:max_alpha_beta}
	\max\{|\a_j|,|\b_j|: j\leq n\}\leq cn^{-\frac12}\sqrt{\log n}.
\end{equation}
Thus, for some constant $C_1>0$, with probability $1-\e/3$,
\begin{equation}\label{eqn:prod_F1}
	|\w_j\dots\w_2F_1|\leq |F_1|=|\eta_1-\a_1|\leq C_1n^{-\frac12}.
\end{equation}
As $\w_j$ is increasing in $j$, 
\beqq
1+\w_j+\w_j\w_{j-1}+\dots+\w_j\dots\w_3\leq 1+\w_j+\w_j^2+\dots=\frac1{1-\w_j}.
\eeqq
By Lemma \ref{lem:1-wi}, $1-\w_j=\Theta\left(\left(\frac{n-j}{n}\right)^{\frac12}\right)$.  Thus, setting $j_0:=\lfloor n-n^{\frac13}(\log n)^3 \rfloor$, we observe that, for some constant $C_2$, with probability $1-\e/3$,
\begin{equation}\label{eqn:max_sum_eta}
	\max_{j\leq j_0}|\eta_j+\w_j\eta_{j-1}+\dots+\w_j\dots\w_3\eta_2|\leq \max_{j\leq j_0}\left(|\eta_j|\frac1{1-\w_j}\right)\leq C_2n^{-\frac13}(\log n)^{-\frac32}.
\end{equation}
Having bounded the first line of \eqref{eqn:Fi}, we turn to the second line and recall the definition of $L_j$ in \eqref{eq:Li_def}. We have
\[
\xi_j+\w_j\xi_{j-1}+\dots+\w_j\dots\w_3\xi_2 = L_j+\w_j\dots\w_3\xi_2.
\]
Note that $\max_{j\leq n }|\xi_j|=O(n^{-\frac12}\sqrt{\log n})$ on the event \eqref{eqn:max_alpha_beta}. We also have, for some constant $C_3>0$, with probability $1-O(n^{-1})$, 
\beq\label{eqn:max_Li}
\max_{j\leq j_0 }|L_j|=O(n^{-\frac13}(\log n)^{-\frac14}).
\eeq 
The details for this bound can be obtained using a similar argument to the one found in Section 6.2 of \cite{CWL_CLT}.  
In particular, the bound \eqref{eqn:max_Li} follows from line (6.17) of that paper (where the notations $\alpha$ and $Y_i$ can be translated as $\alpha=2$ and $Y_i=L_i+O(n^{-\frac12})$ in our context).
Thus, for some constant $C_3>0$, with probability $1-\e/3$,
\begin{equation}\label{eqn:sum_xi}
	\max_{j\leq j_0}|\xi_j+\w_j\xi_{j-1}+\dots+\w_j\dots\w_3\xi_2|\leq C_3n^{-\frac13}(\log n)^{-\frac14}.
\end{equation}
Consider the event
\begin{equation}
	\cG :=\{\eqref{eqn:prod_F1}, \eqref{eqn:max_sum_eta}, \text{ and }  \eqref{eqn:sum_xi} \text{ hold} \},
\end{equation}
which holds with probability $1-\e$, for sufficiently large $n$. We now show that on this event, the third line of \eqref{eqn:Fi} is $o(n^{-\frac13})$. Since this quantity depends on $F_l$'s up to $F_{j-1}$, we can control it in the process of using induction to show 
\begin{equation}\label{eqn:Fi_indc}
	\max_{j\leq j_0}|F_j| = o(n^{-\frac13}) \quad \text{on the event } \cG.
\end{equation}
More specifically, we will show that $\max_{j\leq j_0}|F_j| < 2C_3n^{-\frac13}(\log n)^{-\frac14}$ where $C_3$ is the constant from \eqref{eqn:sum_xi}. The base case holds by \eqref{eqn:prod_F1}. Assume $\max_{l\leq j-1}|F_{l}| < 2C_3n^{-\frac13}(\log n)^{-\frac14}$. 
Then, by \eqref{eqn:vare_i}, \eqref{eqn:wi_rho} and \eqref{eqn:max_alpha_beta}, 
\beqq
\max_{l\leq j}|\phi_l|=o(n^{-\frac23}).
\eeqq
Note that the above maximum also includes $\phi_j$. Thus, for some constant $C_4>0$, 
\beqq
|\phi_j+\w_j\phi_{j-1}+\dots+\w_j\dots\w_3\phi_2|\leq \max_{l\leq j}|\phi_l|\frac1{1-\w_j}\leq C_4n^{-\frac13}(\log n)^{-3/2}.
\eeqq
Finally, by \eqref{eqn:Fi}, we have that on $\cG$,
\beqq
|F_j|\leq C_1n^{-\frac23}(\log n) +C_2n^{-\frac13}(\log n)^{-\frac12}+C_3n^{-\frac13}(\log n)^{-\frac14}+C_4n^{-\frac13}(\log n)^{-3/2} <2C_3n^{-\frac13}(\log n)^{-\frac14}.
\eeqq

This completes the induction step, and we obtain the lemma.

\subsection{Proof of Lemma \ref{lem:ratio}}
Fix $\e>0$. For $j_0=\lfloor n-n^{1/3}(\log n)^3 \rfloor$ and $t=(e\log n)^2$, it suffices to show that, for sufficiently large $n$, each of the probabilities
\begin{align}
	p_1&:=\bP\left(\max_{j\leq j_0}a_jb_j<\sqrt{mn}-tn^{1/2}\right)\leq \bP\left(a_{j_0}b_{j_0}<\sqrt{mn}-tn^{1/2}\right) \text{ and }\\
	p_2&:=\bP\left(\max_{j\leq j_0}a_jb_j>\sqrt{mn}+tn^{1/2}\right)=1-\prod_{j=1}^{j_0}\bP\left(a_jb_j<\sqrt{mn}+tn^{1/2}\right)
\end{align}
is less than $\e/2$. For any $j=1,2,\dots, j_0$, observe that
\beq
a_j^2b_j^2\stackrel{(d)}{=}\left(\sum_{i=1}^{m-n+j}g_i^2\right)\left(\sum_{k=1}^{j}(g'_k)^2\right),
\eeq
where $\stackrel{(d)}{=}$ denotes equality in distribution, and  $g_1,\dots, g_{m-n+j}, g'_1, \dots, g'_j$ are independent standard gaussian variables. This implies that $\bE a_j^2b_j^2=j(m-n+j)$ and $\Var(a_j^2b_j^2)=2j(m-n+j)(m-n+2j+2)$. Viewing $a_j^2b_j^2$ as a gaussian polynomial of degree 4 in $m-n+2j$ variables $g_i$'s and $g'_k$'s, we have the following concentration result from \cite{AS17} (see Corollary 5.49): For any $s\geq (2e)^2$, 
\beq\label{eqn:poly_conc}
\bP\left(|a_j^2b_j^2-j(m-n+j)|\geq s\sqrt{2j(m-n+j)(m-n+2j+2)}\right)\leq \exp\left(-2\sqrt{s}/e\right).
\eeq

Apply this result to $(a_{j_0}b_{j_0})^2$ with $s=(e\log n)^2$, we obtain $p_1\leq n^{-2}$. At the same time, \eqref{eqn:poly_conc} implies $\bP\left(a_jb_j<\sqrt{mn}+tn^{1/2}\right)\geq 1-n^{-2}$ for all $1\leq j\leq j_0$, which yields $p_2\leq 1-e^{-c/n}$ for some $c>0$. This completes the proof of the lemma.
\end{appendix}

\pagebreak

\begin{bibdiv}
\begin{biblist}

\bib{bipartitebiology}{article}{
      author={Agliari, Elena},
      author={Barra, Adriano},
      author={Bartolucci, Silvia},
      author={Galluzzi, Andrea},
      author={Guerra, Francesco},
      author={Moauro, Francesco},
       title={Parallel processing in immune networks},
        date={2013Apr},
     journal={Phys. Rev. E},
      volume={87},
       pages={042701},
         url={https://link.aps.org/doi/10.1103/PhysRevE.87.042701},
}

\bib{ALR87}{article}{
      author={Aizenman, M.},
      author={Lebowitz, J.~L.},
      author={Ruelle, D.},
       title={{Some rigorous results on the Sherrington-Kirkpatrick spin glass
  model}},
        date={1987},
     journal={Communications in Mathematical Physics},
      volume={112},
      number={1},
       pages={3 \ndash  20},
}

\bib{OLT2014}{article}{
      author={Ajanki, Oskari},
      author={Erd\H{o}s, L\'{a}szl\'{o}},
      author={Kr\"{u}ger, Torben},
       title={Local semicircle law with imprimitive variance matrix},
        date={2014},
     journal={Electron. Commun. Probab.},
      volume={19},
       pages={no. 33, 9},
         url={https://doi.org/10.1214/ECP.v19-3121},
      review={\MR{3216567}},
}

\bib{ACM20}{article}{
      author={Alberici, Diego},
      author={Barra, Adriano},
      author={Contucci, Pierluigi},
      author={Mingione, Emanuele},
       title={Annealing and replica-symmetry in deep {B}oltzmann machines},
        date={2020},
        ISSN={0022-4715,1572-9613},
     journal={J. Stat. Phys.},
      volume={180},
      number={1-6},
       pages={665\ndash 677},
         url={https://doi.org/10.1007/s10955-020-02495-2},
      review={\MR{4131007}},
}

\bib{ACM21b}{article}{
      author={Alberici, Diego},
      author={Camilli, Francesco},
      author={Contucci, Pierluigi},
      author={Mingione, Emanuele},
       title={The solution of the deep {B}oltzmann machine on the {N}ishimori
  line},
        date={2021},
        ISSN={0010-3616,1432-0916},
     journal={Comm. Math. Phys.},
      volume={387},
      number={2},
       pages={1191\ndash 1214},
         url={https://doi.org/10.1007/s00220-021-04165-0},
      review={\MR{4315670}},
}

\bib{ACM21a}{article}{
      author={Alberici, Diego},
      author={Contucci, Pierluigi},
      author={Mingione, Emanuele},
       title={Deep {B}oltzmann machines: rigorous results at arbitrary depth},
        date={2021},
        ISSN={1424-0637,1424-0661},
     journal={Ann. Henri Poincar\'{e}},
      volume={22},
      number={8},
       pages={2619\ndash 2642},
         url={https://doi.org/10.1007/s00023-021-01027-2},
      review={\MR{4289496}},
}

\bib{AndersonGuionnetZeitouni}{book}{
      author={Anderson, Greg~W.},
      author={Guionnet, Alice},
      author={Zeitouni, Ofer},
       title={An introduction to random matrices},
      series={Cambridge Studies in Advanced Mathematics},
   publisher={Cambridge University Press, Cambridge},
        date={2010},
      volume={118},
        ISBN={978-0-521-19452-5},
      review={\MR{2760897}},
}

\bib{AS17}{book}{
      author={Aubrun, Guillaume},
      author={Szarek, Stanis\l aw~J.},
       title={Alice and {B}ob meet {B}anach},
      series={Mathematical Surveys and Monographs},
   publisher={American Mathematical Society, Providence, RI},
        date={2017},
      volume={223},
        ISBN={978-1-4704-3468-7},
         url={https://doi.org/10.1090/surv/223},
        note={The interface of asymptotic geometric analysis and quantum
  information theory},
      review={\MR{3699754}},
}

\bib{bipartiteAuffingerChen}{article}{
      author={Auffinger, Antonio},
      author={Chen, Wei-Kuo},
       title={Free energy and complexity of spherical bipartite models},
        date={2014},
        ISSN={0022-4715,1572-9613},
     journal={J. Stat. Phys.},
      volume={157},
      number={1},
       pages={40\ndash 59},
         url={https://doi.org/10.1007/s10955-014-1073-0},
      review={\MR{3249903}},
}

\bib{BaiSilverstein}{article}{
      author={Bai, Z.~D.},
      author={Silverstein, Jack~W.},
       title={C{LT} for linear spectral statistics of large-dimensional sample
  covariance matrices},
        date={2004},
        ISSN={0091-1798},
     journal={Ann. Probab.},
      volume={32},
      number={1A},
       pages={553\ndash 605},
         url={https://doi.org/10.1214/aop/1078415845},
      review={\MR{2040792}},
}

\bib{BW10}{article}{
      author={Bai, Zhidong},
      author={Wang, Xiaoying},
      author={Zhou, Wang},
       title={Functional {CLT} for sample covariance matrices},
        date={2010},
        ISSN={1350-7265,1573-9759},
     journal={Bernoulli},
      volume={16},
      number={4},
       pages={1086\ndash 1113},
         url={https://doi.org/10.3150/10-BEJ250},
      review={\MR{2759170}},
}

\bib{BaikLeeSSK}{article}{
      author={Baik, Jinho},
      author={Lee, Ji~Oon},
       title={Fluctuations of the free energy of the spherical
  {S}herrington-{K}irkpatrick model},
        date={2016},
        ISSN={0022-4715},
     journal={J. Stat. Phys.},
      volume={165},
      number={2},
       pages={185\ndash 224},
         url={https://doi.org/10.1007/s10955-016-1610-0},
      review={\MR{3554380}},
}

\bib{BaikLeeBipartite}{article}{
      author={Baik, Jinho},
      author={Lee, Ji~Oon},
       title={Free energy of bipartite spherical {S}herrington-{K}irkpatrick
  model},
        date={2020},
        ISSN={0246-0203},
     journal={Ann. Inst. Henri Poincar\'{e} Probab. Stat.},
      volume={56},
      number={4},
       pages={2897\ndash 2934},
         url={https://doi.org/10.1214/20-AIHP1062},
      review={\MR{4164860}},
}

\bib{Banerjee_2019}{article}{
      author={Banerjee, Debapratim},
       title={Fluctuation of the free energy of
  {S}herrington{\textendash}{K}irkpatrick model with
  {C}urie{\textendash}{W}eiss interaction: The paramagnetic regime},
        date={2019nov},
     journal={Journal of Statistical Physics},
      volume={178},
      number={1},
       pages={211\ndash 246},
}

\bib{BCMT15}{article}{
      author={Barra, Adriano},
      author={Contucci, Pierluigi},
      author={Mingione, Emanuele},
      author={Tantari, Daniele},
       title={Multi-species mean field spin glasses. {R}igorous results},
        date={2015},
        ISSN={1424-0637,1424-0661},
     journal={Ann. Henri Poincar\'{e}},
      volume={16},
      number={3},
       pages={691\ndash 708},
         url={https://doi.org/10.1007/s00023-014-0341-5},
      review={\MR{3311887}},
}

\bib{BGGPT14}{article}{
      author={Barra, Adriano},
      author={Galluzzi, Andrea},
      author={Guerra, Francesco},
      author={Pizzoferrato, Andrea},
      author={Tantari, Daniele},
       title={Mean field bipartite spin models treated with mechanical
  techniques},
        date={2014},
        ISSN={1434-6028,1434-6036},
     journal={Eur. Phys. J. B},
      volume={87},
      number={3},
       pages={Art. 74, 13},
         url={https://doi.org/10.1140/epjb/e2014-40952-4},
      review={\MR{3180909}},
}

\bib{bipartiteneuralnet1}{article}{
      author={Barra, Adriano},
      author={Genovese, Giuseppe},
      author={Guerra, Francesco},
       title={The replica symmetric approximation of the analogical neural
  network},
        date={2010jul},
     journal={Journal of Statistical Physics},
      volume={140},
      number={4},
       pages={784\ndash 796},
}

\bib{BGG11}{article}{
      author={Barra, Adriano},
      author={Genovese, Giuseppe},
      author={Guerra, Francesco},
       title={Equilibrium statistical mechanics of bipartite spin systems},
        date={2011},
        ISSN={1751-8113,1751-8121},
     journal={J. Phys. A},
      volume={44},
      number={24},
       pages={245002, 22},
         url={https://doi.org/10.1088/1751-8113/44/24/245002},
      review={\MR{2800855}},
}

\bib{bipartiteneuralnet2}{article}{
      author={Barra, Adriano},
      author={Genovese, Giuseppe},
      author={Sollich, Peter},
      author={Tantari, Daniele},
       title={Phase diagram of restricted {B}oltzmann machines and generalized
  {H}opfield networks with arbitrary priors},
        date={2018Feb},
     journal={Phys. Rev. E},
      volume={97},
       pages={022310},
         url={https://link.aps.org/doi/10.1103/PhysRevE.97.022310},
}

\bib{BS22}{article}{
      author={Bates, Erik},
      author={Sohn, Youngtak},
       title={Free energy in multi-species mixed {$p$}-spin spherical models},
        date={2022},
        ISSN={1083-6489},
     journal={Electron. J. Probab.},
      volume={27},
       pages={Paper No. 52, 75},
         url={https://doi.org/10.1214/22-ejp780},
      review={\MR{4416676}},
}

\bib{BLM_concentration}{book}{
      author={Boucheron, St\'{e}phane},
      author={Lugosi, G\'{a}bor},
      author={Massart, Pascal},
       title={Concentration inequalities},
   publisher={Oxford University Press, Oxford},
        date={2013},
        ISBN={978-0-19-953525-5},
         url={https://doi.org/10.1093/acprof:oso/9780199535255.001.0001},
        note={A nonasymptotic theory of independence, With a foreword by Michel
  Ledoux},
      review={\MR{3185193}},
}

\bib{Chen13}{article}{
      author={Chen, Wei-Kuo},
       title={The {A}izenman-{S}ims-{S}tarr scheme and {P}arisi formula for
  mixed {$p$}-spin spherical models},
        date={2013},
        ISSN={1083-6489},
     journal={Electron. J. Probab.},
      volume={18},
       pages={no. 94, 14},
         url={https://doi.org/10.1214/EJP.v18-2580},
      review={\MR{3126577}},
}

\bib{CWL_CLT}{article}{
      author={Collins-Woodfin, Elizabeth},
      author={Le, Han~Gia},
       title={An edge {CLT} for the log determinant of {L}aguerre beta
  ensembles},
        date={2023},
     journal={Ann. Inst. H. Poincar\'{e} Probab. Statist. (to appear),
  arXiv:2209.03271},
}

\bib{CN95}{article}{
      author={Comets, F.},
      author={Neveu, J.},
       title={The {S}herrington-{K}irkpatrick model of spin glasses and
  stochastic calculus: the high temperature case},
        date={1995},
        ISSN={0010-3616,1432-0916},
     journal={Comm. Math. Phys.},
      volume={166},
      number={3},
       pages={549\ndash 564},
         url={http://projecteuclid.org/euclid.cmp/1104271703},
      review={\MR{1312435}},
}

\bib{crisanti1992sphericalp}{article}{
      author={Crisanti, Andrea},
      author={Sommers, H-J},
       title={The spherical {$p$}-spin interaction spin glass model: {T}he
  statics},
        date={1992},
     journal={Zeitschrift f{\"u}r Physik B Condensed Matter},
      volume={87},
      number={3},
       pages={341\ndash 354},
}

\bib{DW21}{article}{
      author={Dey, Partha~S.},
      author={Wu, Qiang},
       title={Fluctuation results for multi-species {S}herrington-{K}irkpatrick
  model in the replica symmetric regime},
        date={2021},
        ISSN={0022-4715,1572-9613},
     journal={J. Stat. Phys.},
      volume={185},
      number={3},
       pages={Paper No. 22, 40},
         url={https://doi.org/10.1007/s10955-021-02835-w},
      review={\MR{4338695}},
}

\bib{DumitriuEdelman}{article}{
      author={Dumitriu, Ioana},
      author={Edelman, Alan},
       title={Matrix models for beta ensembles},
        date={2002},
        ISSN={0022-2488},
     journal={J. Math. Phys.},
      volume={43},
      number={11},
       pages={5830\ndash 5847},
         url={https://doi.org/10.1063/1.1507823},
      review={\MR{1936554}},
}

\bib{Edelman2013}{incollection}{
      author={Edelman, Alan},
      author={Wang, Yuyang},
       title={Random matrix theory and its innovative applications},
        date={2013},
   booktitle={Advances in applied mathematics, modeling, and computational
  science},
      editor={Melnik, Roderick},
      editor={Kotsireas, Ilias~S.},
   publisher={Springer US},
     address={Boston, MA},
       pages={91\ndash 116},
         url={https://doi.org/10.1007/978-1-4614-5389-5_5},
}

\bib{ForresterRains}{incollection}{
      author={Forrester, Peter~J.},
      author={Rains, Eric~M.},
       title={Interrelationships between orthogonal, unitary and symplectic
  matrix ensembles},
        date={2001},
   booktitle={Random matrix models and their applications},
      series={Math. Sci. Res. Inst. Publ.},
      volume={40},
   publisher={Cambridge Univ. Press, Cambridge},
       pages={171\ndash 207},
      review={\MR{1842786}},
}

\bib{FZ87}{article}{
      author={Fr\"{o}hlich, J.},
      author={Zegarli\'{n}ski, B.},
       title={Some comments on the {S}herrington-{K}irkpatrick model of spin
  glasses},
        date={1987},
        ISSN={0010-3616,1432-0916},
     journal={Comm. Math. Phys.},
      volume={112},
      number={4},
       pages={553\ndash 566},
         url={http://projecteuclid.org/euclid.cmp/1104160052},
      review={\MR{910578}},
}

\bib{Genovese23}{article}{
      author={Genovese, Giuseppe},
       title={Minimax formula for the replica symmetric free energy of deep
  restricted {B}oltzmann machines},
        date={2023},
        ISSN={1050-5164,2168-8737},
     journal={Ann. Appl. Probab.},
      volume={33},
      number={3},
       pages={2324\ndash 2341},
         url={https://doi.org/10.1214/22-aap1868},
      review={\MR{4583672}},
}

\bib{Gustavsson}{article}{
      author={Gustavsson, Jonas},
       title={Gaussian fluctuations of eigenvalues in the {GUE}},
        date={2005},
        ISSN={0246-0203},
     journal={Ann. Inst. H. Poincar\'{e} Probab. Statist.},
      volume={41},
      number={2},
       pages={151\ndash 178},
         url={https://doi.org/10.1016/j.anihpb.2004.04.002},
      review={\MR{2124079}},
}

\bib{Johnstone2001}{article}{
      author={Johnstone, Iain~M.},
       title={On the distribution of the largest eigenvalue in principal
  components analysis},
        date={2001},
        ISSN={0090-5364},
     journal={Ann. Statist.},
      volume={29},
      number={2},
       pages={295\ndash 327},
         url={https://doi.org/10.1214/aos/1009210544},
      review={\MR{1863961}},
}

\bib{JKOP1}{article}{
      author={Johnstone, Iain~M.},
      author={Klochkov, Yegor},
      author={Onatski, Alexei},
      author={Pavlyshyn, Damian},
       title={An edge {CLT} for the log determinant of {W}igner ensembles},
        date={2020},
     journal={arXiv preprint arXiv:2011.13723},
}

\bib{JKOP2}{article}{
      author={Johnstone, Iain~M.},
      author={Klochkov, Yegor},
      author={Onatski, Alexei},
      author={Pavlyshyn, Damian},
       title={Spin glass to paramagnetic transition in spherical
  {S}herrington-{K}irkpatrick model with ferromagnetic interaction},
        date={2021},
     journal={arXiv preprint arXiv:2104.07629},
}

\bib{kosterlitz1976spherical}{article}{
      author={Kosterlitz, JM},
      author={Thouless, DJ},
      author={Jones, Raymund~C},
       title={Spherical model of a spin-glass},
        date={1976},
     journal={Phys. Rev. Lett.},
      volume={36},
      number={20},
       pages={1217},
}

\bib{lambertpaquette}{article}{
      author={Lambert, Gaultier},
      author={Paquette, Elliot},
       title={Strong approximation of {G}aussian $\beta$-ensemble
  characteristic polynomials: the edge regime and the stochastic {A}iry
  function},
        date={2021},
     journal={arXiv preprint arXiv:2009.05003},
}

\bib{Landon_crit}{article}{
      author={Landon, Benjamin},
       title={Free energy fluctuations of the two-spin spherical {SK} model at
  critical temperature},
        date={2022},
        ISSN={0022-2488},
     journal={J. Math. Phys.},
      volume={63},
      number={3},
       pages={Paper No. 033301, 22},
         url={https://doi.org/10.1063/5.0054298},
      review={\MR{4387140}},
}

\bib{LandonSosoe}{article}{
      author={Landon, Benjamin},
      author={Sosoe, Philippe},
       title={Fluctuations of the overlap at low temperature in the 2-spin
  spherical {SK} model},
        date={2022},
        ISSN={0246-0203},
     journal={Ann. Inst. Henri Poincar\'{e} Probab. Stat.},
      volume={58},
      number={3},
       pages={1426\ndash 1459},
         url={https://doi.org/10.1214/21-aihp1205},
      review={\MR{4452639}},
}

\bib{Liu21}{article}{
      author={Liu, Qun},
       title={Fluctuations for the bipartite {S}herrington-{K}irkpatrick
  model},
        date={2021},
        ISSN={0022-4715,1572-9613},
     journal={J. Stat. Phys.},
      volume={184},
      number={1},
       pages={Paper No. 12, 21},
         url={https://doi.org/10.1007/s10955-021-02801-6},
      review={\MR{4283044}},
}

\bib{Lytova09}{article}{
      author={Lytova, A.},
      author={Pastur, L.},
       title={Central limit theorem for linear eigenvalue statistics of random
  matrices with independent entries},
        date={2009},
        ISSN={0091-1798,2168-894X},
     journal={Ann. Probab.},
      volume={37},
      number={5},
       pages={1778\ndash 1840},
         url={https://doi.org/10.1214/09-AOP452},
      review={\MR{2561434}},
}

\bib{Marcenko_1967}{article}{
      author={Mar\v{c}enko, V.~A.},
      author={Pastur, L.~A.},
       title={Distribution of eigenvalues in certain sets of random matrices},
        date={1967},
     journal={Mat. Sb. (N.S.)},
      volume={72 (114)},
       pages={507\ndash 536},
      review={\MR{0208649}},
}

\bib{PanchenkoSKBook}{book}{
      author={Panchenko, Dmitry},
       title={The {S}herrington-{K}irkpatrick model},
   publisher={Springer Science and Business Media LLC},
        date={2013},
}

\bib{Panchenko14}{article}{
      author={Panchenko, Dmitry},
       title={The {P}arisi formula for mixed {$p$}-spin models},
        date={2014},
        ISSN={0091-1798,2168-894X},
     journal={Ann. Probab.},
      volume={42},
      number={3},
       pages={946\ndash 958},
         url={https://doi.org/10.1214/12-AOP800},
      review={\MR{3189062}},
}

\bib{Panchenko15}{article}{
      author={Panchenko, Dmitry},
       title={The free energy in a multi-species {S}herrington-{K}irkpatrick
  model},
        date={2015},
        ISSN={0091-1798,2168-894X},
     journal={Ann. Probab.},
      volume={43},
      number={6},
       pages={3494\ndash 3513},
         url={https://doi.org/10.1214/14-AOP967},
      review={\MR{3433586}},
}

\bib{Parisi80}{article}{
      author={Parisi, G},
       title={A sequence of approximated solutions to the {S}-{K} model for
  spin glasses},
        date={1980apr},
     journal={Journal of Physics A: Mathematical and General},
      volume={13},
      number={4},
       pages={L115},
         url={https://dx.doi.org/10.1088/0305-4470/13/4/009},
}

\bib{PillaiYin2014}{article}{
      author={Pillai, Natesh~S.},
      author={Yin, Jun},
       title={Universality of covariance matrices},
        date={2014},
        ISSN={1050-5164},
     journal={Ann. Appl. Probab.},
      volume={24},
      number={3},
       pages={935\ndash 1001},
         url={https://doi.org/10.1214/13-AAP939},
      review={\MR{3199978}},
}

\bib{SH09}{article}{
      author={Salakhutdinov, Ruslan},
      author={Hinton, Geoffrey},
       title={An efficient learning procedure for deep {B}oltzmann machines},
        date={2012},
        ISSN={0899-7667,1530-888X},
     journal={Neural Comput.},
      volume={24},
      number={8},
       pages={1967\ndash 2006},
         url={https://doi.org/10.1162/NECO_a_00311},
      review={\MR{2978160}},
}

\bib{Soshnikov2002}{incollection}{
      author={Soshnikov, Alexander},
       title={A note on universality of the distribution of the largest
  eigenvalues in certain sample covariance matrices},
        date={2002},
      volume={108},
       pages={1033\ndash 1056},
         url={https://doi.org/10.1023/A:1019739414239},
      review={\MR{1933444}},
}

\bib{Su06}{article}{
      author={Su, Zhonggen},
       title={Gaussian fluctuations in complex sample covariance matrices},
        date={2006},
        ISSN={1083-6489},
     journal={Electron. J. Probab.},
      volume={11},
       pages={no. 48, 1284\ndash 1320},
         url={https://doi.org/10.1214/EJP.v11-378},
      review={\MR{2268545}},
}

\bib{Subag23_crittemp}{article}{
      author={Subag, Eliran},
       title={On the second moment method and {RS} phase of multi-species
  spherical spin glasses},
        date={2023},
        ISSN={1083-6489},
     journal={Electron. J. Probab.},
      volume={28},
       pages={Paper No. 50, 21},
         url={https://doi.org/10.1214/23-ejp933},
      review={\MR{4570231}},
}

\bib{Subag23}{article}{
      author={Subag, Eliran},
       title={T{AP} approach for multispecies spherical spin glasses {II}: the
  free energy of the pure models},
        date={2023},
        ISSN={0091-1798,2168-894X},
     journal={Ann. Probab.},
      volume={51},
      number={3},
       pages={1004\ndash 1024},
         url={https://doi.org/10.1214/22-aop1605},
      review={\MR{4583061}},
}

\bib{TalagrandSSK}{article}{
      author={Talagrand, Michel},
       title={Free energy of the spherical mean field model},
        date={2006},
        ISSN={0178-8051},
     journal={Probab. Theory Related Fields},
      volume={134},
      number={3},
       pages={339\ndash 382},
         url={https://doi.org/10.1007/s00440-005-0433-8},
      review={\MR{2226885}},
}

\bib{TalagrandSK}{article}{
      author={Talagrand, Michel},
       title={The {P}arisi formula},
        date={2006},
        ISSN={0003-486X},
     journal={Ann. of Math. (2)},
      volume={163},
      number={1},
       pages={221\ndash 263},
         url={https://doi.org/10.4007/annals.2006.163.221},
      review={\MR{2195134}},
}

\end{biblist}
\end{bibdiv}

\end{document}